\documentclass[11pt]{amsart}

\usepackage{lmodern} 
\usepackage{euscript}
\usepackage{amsmath, amsthm, amssymb, amsfonts}
\usepackage{mathrsfs}
\usepackage{longtable}
\usepackage{mathtools}

\usepackage{tikz}
\usetikzlibrary{decorations.markings,decorations.pathmorphing}
\usepackage{tikz-cd}
\usetikzlibrary{arrows}
\tikzset{
  closed/.style = {decoration = {markings, mark = at position 0.5 with { \node[transform shape, xscale = .8, yscale=.4] {/}; } }, postaction = {decorate} }
}

\usepackage{caption}

\usepackage[all,arc,2cell]{xy}
\usepackage{enumerate}
\usepackage{todonotes}

\usepackage{bbm} 
\usepackage{hyperref}  
\hypersetup{%
  bookmarksnumbered=true,%
  colorlinks=true,%
  linkcolor=blue,%
  citecolor=blue,%
  filecolor=blue,%
  menucolor=blue,%
  urlcolor=blue,%
  pdfnewwindow=true,%
  pdfstartview=FitBH}

\theoremstyle{plain} 
\newtheorem{thm}{Theorem}[section]
\newtheorem{cor}[thm]{Corollary}
\newtheorem{prop}[thm]{Proposition}
\newtheorem{lem}[thm]{Lemma}

\theoremstyle{definition}
\newtheorem{defn}[thm]{Definition}
\newtheorem{exmp}[thm]{Example}
\newtheorem{notn}[thm]{Notation}

\theoremstyle{remark}
\newtheorem{rmk}[thm]{Remark}

\newcommand{\BA}{{\mathbb{A}}}

\newcommand{\BC}{{\mathbb{C}}}

\newcommand{\BG}{{\mathbb{G}}}
\newcommand{\BH}{{\mathbb{H}}}

\newcommand{\BP}{{\mathbb{P}}}
\newcommand{\BQ}{{\mathbb{Q}}}

\newcommand{\BZ}{{\mathbb{Z}}}

\newcommand{\CC}{{\mathcal C}}
\newcommand{\CD}{{\mathcal D}}
\newcommand{\CE}{{\mathcal E}}
\newcommand{\CF}{{\mathcal F}}
\newcommand{\CG}{{\mathcal G}}
\newcommand{\CH}{{\mathcal H}}
\newcommand{\CI}{{\mathcal I}}
\newcommand{\CJ}{{\mathcal J}}
\newcommand{\CK}{{\mathcal K}}
\newcommand{\CL}{{\mathcal L}}
\newcommand{\CM}{{\mathcal M}}
\newcommand{\CN}{{\mathcal N}}
\newcommand{\CO}{{\mathcal O}}
\newcommand{\CP}{{\mathcal P}}
\newcommand{\CQ}{{\mathcal Q}}
\newcommand{\CR}{{\mathcal R}}

\newcommand{\CU}{{\mathcal U}}
\newcommand{\CV}{{\mathcal V}}

\newcommand{\CY}{{\mathcal Y}}

\newcommand{\Fj}{{\mathfrak{j}}}

\newcommand{\FF}{{\mathfrak{F}}}
\newcommand{\Chow}{\mathrm{CH}}
\newcommand{\td}{\mathrm{td}}
\newcommand{\ch}{\mathrm{ch}}
\newcommand{\Corr}{\mathrm{Corr}}
\newcommand{\Coh}{\mathrm{Coh}}

\newcommand*\dd{\mathop{}\!\mathrm{d}}

\DeclareFontFamily{OT1}{rsfs}{}
\DeclareFontShape{OT1}{rsfs}{n}{it}{<-> rsfs10}{}
\DeclareMathAlphabet{\curly}{OT1}{rsfs}{n}{it}
\renewcommand\hom{\curly H\!om}
\def\endo{\curly E\!nd}

\newcommand\Ext{\operatorname{Ext}}
\newcommand\Hom{\operatorname{Hom}}

\newcommand\Spec{\operatorname{Spec}}
\newcommand\Aut{\operatorname{Aut}}
\newcommand\Isom{\operatorname{Isom}}

\DeclareMathOperator{\Jac}{Jac}
\DeclareMathOperator{\Pic}{Pic}
\DeclareMathOperator{\Prym}{Prym}
\DeclareMathOperator{\Nm}{Nm}
\DeclareMathOperator{\Fix}{Fix}
\DeclareMathOperator{\Sing}{Sing}
\DeclareMathOperator{\Br}{Br}

\newcommand{\et}{\mathrm{\acute{e}t}}
\newcommand{\inte}{\mathrm{int}}
\newcommand{\pl}{\mathrm{pl}}

\def\id{\mathrm{id}}
\def\codim{\mathrm{codim}}
\def\Supp{\mathrm{Supp}}
\def\AJ{\mathrm{AJ}}
\def\AP{\mathrm{AP}}

\def\ra{\rightarrow}
\def\ot{\otimes}

\def\ul{\underline}
\def\ol{\overline}
\def\ul{\underline}

\def\tilC{\widetilde{\CC}}
\def\p23{p_{23}}
\def\q23{q_{23}}
\newcommand{\EM}{\EuScript{M}}
\newcommand{\ER}{\EuScript{R}}
\newcommand{\EC}{\EuScript{C}}
\newcommand{\EJ}{\EuScript{J}}
\newcommand{\EP}{\EuScript{P}}

\newcommand{\SX}{\mathscr{X}}

\DeclareFontFamily{U}{wncy}{}
\DeclareFontShape{U}{wncy}{m}{n}{<->wncyr10}{}
\DeclareSymbolFont{mcy}{U}{wncy}{m}{n}
\DeclareMathSymbol{\Sha}{\mathord}{mcy}{"58} 

\begin{document}
\title[Autoduality of compactified Pryms]{Autoduality of compactified Pryms for \'etale double covers of curves with planar singularities}
\date{\today}

\author[H. Yu]{Huishi Yu}
\address{Peking University}
\email{yuhuishi@pku.edu.cn}

\begin{abstract}
We construct a Poincar\'e sheaf on the compactified Prym variety associated with an \'etale double cover of integral curves with planar singularities, and prove that the associated Fourier--Mukai transform is an autoequivalence of its derived category. 

As an application, we prove the motivic decomposition conjecture of Corti--Hanamura for the Laza--Sacc\`a--Voisin fibration, and construct a multiplicative motivic perverse filtration lifting the cohomological one. 

\end{abstract}

\maketitle

\setcounter{tocdepth}{1} 

\tableofcontents
\setcounter{section}{-1}

\section{Introduction}
Let $A$ be an abelian variety over an algebraically closed field $k$, and let $\hat{A}$ be its dual. The Poincar\'e line bundle $\CP$ on $A \times \hat{A}$ is the universal family of algebraically trivial line bundles on $A$. In the seminal work \cite{Mukai81}, Mukai proved that the Fourier--Mukai functor with kernel $\CP$ is an equivalence between the bounded derived categories $D^b(A)$ and~$D^b(\hat{A})$. 

In particular, this applies when $A$ is either the Jacobian of a smooth curve or the Prym variety of a finite cover of smooth curves, and it is natural to extend the theory to curves with singularities. In \cite{Ari10b}, Arinkin extended the Fourier--Mukai transform to compactified Jacobians of integral projective curves with planar singularities. Melo--Rapagnetta--Viviani \cite{MRV19a, MRV19b} later generalized this result to fine compactified Jacobians of reduced projective curves with planar singularities. 
Based on the previous results on the (fine) compactified Jacobians, Franco--Hanson--Ruano \cite{FHR22} extended the Fourier--Mukai transform to compactified Prym varieties of flat and ramified covers $\widetilde{C} \ra C$, with $C$ being smooth curves and $\widetilde{C}$ being projective reduced curves with planar singularities. A similar result was also proved by Groechenig--Shen \cite{GS22} independently. 

The aim of this paper is to extend the Fourier--Mukai transform to the compactified Prym varieties for \emph{\'etale double covers of projective integral curves with planar singularities}. 

\subsection{Main results}
Throughout this article, let $k$ be an algebraically closed field of characteristic 0. All schemes are over $k$ and are implicitly assumed to be locally of finite type and separated. 

Let $\CC \ra B$ be a projective flat family of integral curves with planar singularities, over an integral scheme $B$. Let $\pi: \widetilde{\CC} \ra \CC$ be an \emph{\'etale} double cover such that the geometric fibers of $\widetilde{\CC} \ra B$ are integral curves, and let $\iota \colon \widetilde{\CC} \to \widetilde{\CC}$ be the associated involution. 

Let $J_{\widetilde{\CC}}$ be the relative Jacobian and $\ol{J}_{\widetilde{\CC}}$ the relative compactified Jacobian. We denote by $P = \Prym(\widetilde{\CC}/\CC)$ the relative Prym variety. Let $\ol{P}$ be the relative compactified Prym variety, which is a natural compactification of $P$. Let $\ol{\CP}$ be the normalized Poincar\'e sheaf on $\ol{J}_{\widetilde{\CC}} \times_B \ol{J}_{\widetilde{\CC}}$. 

\begin{thm}[{Theorem~\ref{thm: def of G bar}, Proposition~\ref{prop: uniqueness, flat}}]\label{thm A}
    There exists a coherent sheaf $\ol{\CG}$ on $\ol{P} \times_B \ol{P}$ that satisfies the following properties:
    \begin{enumerate}
        \item $\ol{\CG}$ is trivialized along the zero sections of both factors, i.e., 
        \[\ol{\CG}|_{\ol{P}\times_B 0} \cong \ol{\CG}|_{0 \times_B \ol{P}} \cong \CO_{\ol{P}};\]
        \item $((1-\iota^{*}) \times \id_{\ol{P}})^{*}(\ol{\CG}|_{P \times_B \ol{P}}) \cong \ol{\CP}|_{J_{\widetilde{\CC}}\times_B \ol{P}}$;
        \item $\ol{\CG}$ is flat over both projections $p_i \colon \ol{P}\times_B \ol{P} \ra \ol{P}$ for $i=1,2$, and the restriction $\ol{\CG}|_{\ol{P}_b \times \{F\}}$ is a maximal Cohen--Macaulay sheaf for every $b\in B$ and $F \in \ol{P}_b$;
        \item $\ol{\CG}$ is invariant under the permutation of factors.
    \end{enumerate}
\end{thm}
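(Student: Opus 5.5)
The plan is to build $\ol{\CG}$ by descending a suitable restriction of Arinkin's Poincar\'e sheaf $\ol{\CP}$ along the map $1-\iota^*$, constructing it first on the open part where one factor is a genuine line bundle and then extending by reflexive closure. Throughout, the compactified Prym is taken to be $\ol{P} = \{F \in \ol{J}_{\widetilde{\CC}} : \Nm(F) \cong \CO, \ \iota^*F \cong F^\vee\}$, or the relevant connected component; in the étale case the norm is well defined on torsion-free rank one sheaves. Since $\pi$ is étale, $\ol{P}$ is a closed subscheme of $\ol{J}_{\widetilde{\CC}}$. I would use the known facts that $\ol{P}$ is flat over $B$ with integral, lci fibers whose singular locus has codimension at least $2$. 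This should follow from planarity of the singularities together with the codimension estimates for the locus of non-locally-free sheaves, proved earlier in the paper.

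\emph{Step 1: the open pieces.} Over $P \times_B \ol{P}$, the morphism $1-\iota^*\colon J_{\widetilde{\CC}} \to P$, $L \mapsto L\otimes \iota^*L^{-1}$, is smooth and surjective. Its kernel is $\pi^*J_{\CC}$ (up to the finite group of $2$-torsion points). I would show that $\ol{\CP}|_{J_{\widetilde{\CC}}\times_B \ol{P}}$ is trivial, with compatible descent data, along the fibers of $(1-\iota^*)\times\id$. For $M=\pi^*N$ and $F\in\ol{P}$, Arinkin's description gives $\ol{\CP}_{M,F} \cong \det R\Gamma(F\otimes\pi^*N)\otimes \det R\Gamma(F)^{-1}\otimes\cdots$. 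Pushing forward to $\CC$ yields $\det R\Gamma(\pi_*F\otimes N)$. Because $\Nm F \cong \CO$, the corresponding Poincar\'e factor $\ol{\CP}_{\CC}(N,\Nm F)$ is trivial. So $\ol{\CP}$ restricted to $\pi^*J_\CC\times\ol{P}$ is canonically trivial, and the theorem of the cube / bilinearity of $\ol{\CP}$ in the line-bundle variable supplies the descent data. This gives a sheaf $\CG^\circ$ on $P\times_B\ol{P}$ satisfying (2). Symmetrically, I obtain a sheaf on $\ol{P}\times_B P$, and the two agree on $P\times_B P$ by symmetry of $\ol{\CP}$. Gluing them gives $\CG^\circ$ on $U=(P\times_B\ol{P})\cup(\ol{P}\times_B P)$.

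\emph{Step 2: extension.} I would set $\ol{\CG}=j_*\CG^\circ$ for $j\colon U\hookrightarrow \ol{P}\times_B\ol{P}$. Fiberwise, the complement has codimension $\ge 2$ in the product of the two fibers. Mirroring Arinkin's argument, I would show that this pushforward coincides with an alternative construction that is manifestly Cohen--Macaulay: the restriction of $\ol{\CP}$ to $\ol{P}\times_B\ol{P}$ (or its determinant-of-cohomology formula), twisted and "divided by two" on the appropriate factor. Properties (1) and (4) then come for free. Normalization holds on $U$ and extends by reflexivity, and symmetry holds on $U$ and so for $j_*$.

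\emph{Main obstacle.} The hard step is (3): flatness over each factor together with the maximal Cohen--Macaulay property of $\ol{\CG}|_{\ol{P}_b\times\{F\}}$ when $F$ is not locally free. My first attempt would realize $\ol{\CG}$ as a direct summand (the $\iota$-anti-invariant, or descended, part) of a sheaf built from $\ol{\CP}$ on a finite flat cover. Concretely, I would use that $\ol{P}\times\ol{P}\to\ol{P}\times\ol{P}$ and $(1-\iota^*)$ extend to a finite map from a suitable component of $\ol{J}_{\widetilde{\CC}}\times\ol{P}$ near the boundary. Arinkin's theorem says $\ol{\CP}$ is flat with Cohen--Macaulay fibers. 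Pushforward along a finite flat map preserves the maximal Cohen--Macaulay and flatness properties, and direct summands inherit them.

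If such a finite cover is unavailable, my fallback is a depth count. I would use the $S_2$-extension characterization together with the codimension estimate for the non-locally-free locus in $\ol{P}_b$: I expect codimension $\ge 2$ from the planar-singularity dimension bounds for Prym versions of Altman--Iarrobino--Kleiman. Combined with the local complete intersection property, this should bootstrap to maximal Cohen--Macaulay via the local cohomology argument of Arinkin's Lemma on "Cohen--Macaulay extension". Flatness over each factor would then follow from fiberwise Cohen--Macaulayness with constant Hilbert polynomial over the flat base.
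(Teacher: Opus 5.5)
Your overall architecture (descend Arinkin's $\ol{\CP}$ along $1-\iota^{*}$, then extend across codimension $2$) matches the paper. However, the step you flag as the main obstacle, property (3), has a genuine gap, and neither of your routes closes it. There is no finite flat cover to push forward along: $1-\iota^{*}$ has $g$-dimensional fibres $\pi^{*}J_{\CC}$, and its restriction to $P$, namely $[2]$, does not extend over the boundary of $\ol{P}$. The fallback fails for a structural reason. Codimension-$2$ estimates plus $S_2$-extension only produce the reflexive sheaf $j_{*}\CG$ (depth $\geq 2$), not depth equal to dimension. Arinkin's extension lemma is a uniqueness statement (a Cohen--Macaulay sheaf equals $j_{*}$ of its restriction), not an existence statement.

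The missing idea is a faithfully flat but \emph{non-finite} cover. The map $1-\iota^{*}$ extends to $f\colon \ol{J}^{\circ}_{\widetilde{\CC}}\to\ol{P}$, $F\mapsto F\otimes\iota^{*}F^{\vee}$, on the open set of sheaves locally free at one of $x,\iota x$ for each singular point $x$. The paper proves $f$ flat by miracle flatness (source Cohen--Macaulay, $\ol{P}$ nonsingular). For this it shows, via the stratification of $\ol{P}$ by local types and the exact sequences of affine stabilizers, that all fibres have dimension $g$. Independently, $\ol{\CR}=\ol{\CP}|_{\ol{J}_{\widetilde{\CC}}\times_B\ol{P}}$ is maximal Cohen--Macaulay: by Arinkin it is flat over $\ol{P}$ with maximal Cohen--Macaulay fibres, and $\ol{P}$ is regular. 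Now $\ol{\CR}$ agrees with $(f\times\id)^{*}\CG$ on $J_{\widetilde{\CC}}\times_B P$, whose complement has codimension $\geq 2$. Hence $\CG$ extends to $\ol{\CG}$ with $(f\times\id)^{*}\ol{\CG}\cong\ol{\CR}|_{\ol{J}^{\circ}_{\widetilde{\CC}}\times_B\ol{P}}$. The Cohen--Macaulay property, both globally and on each $\ol{P}_b\times\{F\}$, then descends along this faithfully flat map. Flatness over each factor is miracle flatness over the regular $\ol{P}$, with no Hilbert polynomial computation.

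Second, your gluing in Step 1 silently assumes property (4). The sheaves descended along $(1-\iota^{*})\times\id$ and along $\id\times(1-\iota^{*})$ are related by the symmetry of $\ol{\CP}$ only through the swap: the second is $sw^{*}$ of the first. So they agree on $P\times_B P$ exactly when $\CG$ is already symmetric. The paper never glues. It descends only $\CP|_{J_{\widetilde{\CC}}\times_B P}$, extends as above, and proves symmetry afterwards. The maps $\ol{\rho},\ol{\rho}'\colon\ol{P}\to\ol{\Pic}^{0}_{(\ol{P}/B)}$ defined by the two restrictions of $\ol{\CG}$ coincide over $B^{(g)}$, because the Prym principal polarization is symmetric. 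They therefore coincide everywhere, since $\ol{P}$ is reduced and the target is separated, and pulling back the universal sheaf gives $sw^{*}\ol{\CG}\cong\ol{\CG}$.

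Two smaller points in Step 1. For non-locally-free $F\in\ol{P}$ there is no norm $\Nm F$. There is also no identification of $\det R\Gamma(\pi_{*}F\otimes N)\otimes\det R\Gamma(\pi_{*}F)^{-1}$ with a Poincar\'e factor on $\CC$; for line bundles this is Proposition~\ref{prop: important calculation}. So you should descend only over $J_{\widetilde{\CC}}\times_B P$. Also, ``bilinearity supplies the descent data'' hides both the cocycle condition and the passage from $J_{\CC}$ to $\pi^{*}J_{\CC}=J_{\CC}/\Lambda$. The paper gets the first from uniqueness of cocycle-compatible isomorphisms (global units on $J_{\CC}\times_B J_{\widetilde{\CC}}\times_B P$ come from $B$), and the second by comparison with the smooth-curve case.
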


The sheaf $\ol{\CG}$ is called the \emph{normalized Poincar\'e sheaf} on $\ol{P} \times_B \ol{P}$. It is an analogue of the classical Poincar\'e line bundle. 

\begin{thm}[{Theorem~\ref{thm: equivalence}}]\label{thm B}
    The Fourier--Mukai functor
    \[\Phi_{\ol{\CG}} \colon D^b(\ol{P}) \ra D^b(\ol{P}),\quad F \mapsto Rp_{2,*}(p_1^{*}F \ot^L \ol{\CG})\]
    is an equivalence of categories. 
\end{thm}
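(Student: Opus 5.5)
We plan to follow Arinkin's strategy for compactified Jacobians, using Theorem~\ref{thm A} as input. The goal is to show that the composite $\Phi_{\ol{\CG}}\circ\Phi_{\ol{\CG}}$ is isomorphic to $(-1)^{*}[-g]$. Here $g$ is the relative dimension of $\ol{P}\to B$, and $-1$ is the inversion on $\ol{P}$, which extends to the compactification since it is induced by $F\mapsto F^{\vee}$, and $\iota^*$ commutes with duality. This convolution identity makes $\Phi_{\ol{\CG}}$ invertible with quasi-inverse $(-1)^*\Phi_{\ol{\CG}}[g]$.

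The kernel of the composite is
\[\CK := Rp_{13,*}\bigl(p_{12}^{*}\ol{\CG}\ot^L p_{23}^{*}\ol{\CG}\bigr)\quad\text{on } \ol{P}\times_B\ol{P}.\]
By the flatness in Theorem~\ref{thm A}(3), the tensor product is underived. We want $\CK\cong \CO_{\Gamma_{-1}}[-g]$, where $\Gamma_{-1}$ is the graph of $-1$.

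First we reduce to a versal situation. Choose a family $\CC\to B$ which is locally versal for the planar singularities (e.g.\ via the Hilbert scheme of a surface, as in Arinkin and Melo--Rapagnetta--Viviani), together with its étale double cover. Since étale double covers deform uniquely with the curve, $\pi$ extends over the versal family. The statement is local on $B$ and compatible with base change, because $\ol{\CG}$ is flat over each factor. Hence it suffices to treat a family for which $\ol{P}$, and thus $\ol{P}\times_B\ol{P}$, is smooth, or at least Cohen--Macaulay, of expected dimension. We also arrange that the locus of curves with cogenus $\delta$ has codimension at least $\delta$ in $B$. Following the Ngô-type support estimate, the corresponding bound for Pryms should come from $\iota$-anti-invariant deformations.

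Second, we show that $\CK$ is supported on $\Gamma_{-1}$. Away from the boundary this is Mukai's classical argument. For a line bundle $L\in P_b$, Theorem~\ref{thm A}(2) says that $\ol{\CG}|_{\{L\}\times\ol P_b}$ is the pullback of $\ol{\CP}$ along $1-\iota^*$. So $H^{*}(\ol P_b,\ol{\CG}_{L}\otimes\ol{\CG}_{M})$ vanishes unless $M\cong L^{-1}$. This follows from the Jacobian vanishing of Arinkin, transported through the isogeny-type map $J_{\widetilde{\CC}}\to P$. Using the symmetry in Theorem~\ref{thm A}(4), one controls the locus where both points are non-locally-free. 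Then a dimension estimate shows that $\Supp \CK\setminus \Gamma_{-1}$ lies over a locus of codimension greater than $g$. Here we use the cogenus stratification, with fibres of $\ol{P}_b$ over the $\delta$-stratum being acted on by a $P_b$-orbit structure.

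Third, we conclude $\CK\cong\CO_{\Gamma_{-1}}[-g]$. Grothendieck duality, together with the Cohen--Macaulay property of $\ol{\CG}|_{\ol P_b\times\{F\}}$, shows that the dual of $\CK$ is again of the same form. Combined with the codimension bound, this forces $\CK$ to be a sheaf shifted by $-g$, supported on $\Gamma_{-1}$. Its rank along the generic (smooth Prym) locus is $1$, and the trivialization along the zero section in Theorem~\ref{thm A}(1) identifies it with $\CO_{\Gamma_{-1}}$.

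We expect the main obstacle to be the codimension estimate in the second step. It needs a Prym analogue of the inequality $\codim\{\delta\text{-stratum}\}\ge\delta$ in a versal family of double covers. It also needs control of the fibres of $\ol P\to B$ over strata, where the fibre of $1-\iota^*$ may fail to be connected. We would first try to deduce it from the corresponding Jacobian estimate, using that $\ol{P}_b$ is a component of the fixed locus of $F\mapsto\iota^*F^\vee$ in $\ol J_{\widetilde{\CC}_b}$.
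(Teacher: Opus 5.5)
\textbf{There is a genuine gap in the key step.} Throughout, $g$ is the genus of $\CC$, so $\ol{P}\to B$ has relative dimension $g-1$. Your overall architecture is Arinkin's, and it is also the paper's: compute the convolution kernel, bound the codimension of its support by the relative dimension, use Grothendieck duality and Cohen--Macaulayness to make it a shifted sheaf on the (anti)diagonal, then identify it generically. Finishing via $\Phi\circ\Phi$, instead of the paper's Rizzardo adjunction plus Bridgeland's criterion, is fine.

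The gap is the support estimate, which carries all the weight, and the route you suggest for it fails. You want the vanishing of $H^{*}(\ol{P}_b,\ol{\CG}_L\otimes\ol{\CG}_M)$ for $M\not\cong L^{-1}$, and then the codimension bound, by transporting Arinkin's Jacobian results along $1-\iota^{*}$. This breaks down for several reasons:
\begin{itemize}
\item $1-\iota^{*}\colon J_{\widetilde{\CC}}\to P$ is not isogeny-like: its kernel $\pi^{*}J_{\CC}$ has dimension $g$.
\item Arinkin's vanishing concerns cohomology on $\ol{J}_{\widetilde{\CC}_b}$. You need cohomology on the closed subvariety $\ol{P}_b$, which the ambient statement does not control.
\item The restriction of $\ol{\CP}$ to $\ol{P}\times_B\ol{P}$ is not $\ol{\CG}$. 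Since $1-\iota^{*}=[2]$ on $P$, Theorem~\ref{thm A}(2) gives $\ol{\CP}|_{P_b\times\{M\}}\cong[2]^{*}(\ol{\CG}|_{P_b\times\{M\}})$; this is the sheaf-level form of $\varphi_{\Theta|_P}=\varphi_{\Xi}\circ[2]$.
\item The natural comparison map $f\colon\ol{J}_{\widetilde{\CC}}^{\circ}\to\ol{P}$ has non-proper source. The smooth-fibre argument, where $f$ is a torsor under the abelian variety $\pi^{*}J_{\CC_b}$ and cohomology can be pushed down, does not extend to singular fibres.
\item For the same reason, the estimate cannot be read off from the Jacobian one via the fixed locus of $F\mapsto\iota^{*}F^{\vee}$: the kernel on $\ol{P}$ integrates over $\ol{P}$, not over $\ol{J}_{\widetilde{\CC}}$.
\end{itemize}

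\textbf{The missing idea} is to prove Arinkin's dimension bound intrinsically on $\ol{P}$, as in Proposition~\ref{prop: Arinkin's dimension bound}.
\begin{itemize}
\item The theorem of the square for $\ol{\CG}$ (Lemma~\ref{lem: thm of square}) lets the $\BG_m$-extension of $P_b$ attached to $\bigotimes_i\CG_{F_i}$ act on $\bigotimes_i\ol{\CG}_{F_i}$. Nonvanishing hypercohomology then forces $\bigotimes_i\CG_{F_i}\cong\CO_{P_b}$.
\item Pull back along the Abel--Prym map, which satisfies $\AP^{*}\circ\rho=\id_P$ (Lemma~\ref{lem: left inverse}). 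This turns the previous condition into $\bigotimes_i F_i|_{\widetilde{\CC}_b^{\mathrm{reg}}}\cong\CO$.
\item The solutions $L\in P_b$ of that condition form a set of dimension $\delta(b)=g-\widetilde{g}(\CC_b)$. So the fibrewise codimension is $\widetilde{g}(\CC_b)-1$.
\item On the base, no anti-invariant deformation theory is needed. The affine part of $P_b$ has dimension exactly the cogenus of $\CC_b$, so the ordinary Severi inequality for $\CC\to B$ (Lemma~\ref{Severi inequality}) supplies the remaining $\delta$.
\end{itemize}
These same ingredients, together with Serre duality and algebraic equivalence, are also what your ``Mukai step'' on $P\times_B P$ needs. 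That step is not classical here, since $P_b$ is not proper.

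\textbf{Two smaller corrections.}
\begin{itemize}
\item The kernel is $\CO_{\Gamma_{-1}}$ twisted by the pullback of $\det\Fj$ from $B$, up to shift, not $\CO_{\Gamma_{-1}}$ itself. The normalization in Theorem~\ref{thm A}(1) does not remove the twist coming from $R^{g-1}\pi_{1,*}\CO_{\ol{P}}\cong\det\Fj$. This is harmless for the equivalence.
\item Set-theoretic support on $\Gamma_{-1}$ must be upgraded to scheme-theoretic support. This uses unmixedness of the Cohen--Macaulay sheaf and Mumford's result over $B^{(g)}$.
\end{itemize}
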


Let $\Pic_{(\ol{P}/B)}^{=}$ be the moduli space of relative torsion-free rank 1 sheaves on $\ol{P}$. Then $\ol{\CG}$ induces a morphism
\[\ol{\rho} \colon \ol{P} \ra \Pic_{(\ol{P}/B)}^{=}.\]
We denote by $\Pic_{(\ol{P}/B)}^0$ the identity component of $\Pic_{(\ol{P}/B)}$, and by $\ol{\Pic}_{(\ol{P}/B)}^0$ its scheme-theoretic closure in $\Pic_{(\ol{P}/B)}^{=}$. 

\begin{thm}[{Theorem~\ref{thm: autoduality}}]\label{thm: C}
    The morphism $\ol{\rho} \colon \ol{P} \xrightarrow{\sim} \ol{\Pic}_{(\ol{P}/B)}^0$ is an isomorphism of $B$-schemes. 
\end{thm}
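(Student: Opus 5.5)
Following Arinkin's argument for compactified Jacobians, I would deduce the isomorphism $\ol{\rho}\colon \ol{P}\to \ol{\Pic}^0_{(\ol{P}/B)}$ from the equivalence of Theorem~\ref{thm B}. The plan has four steps: show $\ol{\rho}$ lands in $\ol{\Pic}^0$, show it is a monomorphism, identify its target with $\ol{P}$ pointwise, and conclude it is an isomorphism.

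\emph{Landing in $\ol{\Pic}^0$.} By Theorem~\ref{thm A}(3), $\ol{\CG}$ is flat over the second factor, and its fibres $\ol{\CG}|_{\ol{P}_b\times\{F\}}$ are maximal Cohen--Macaulay of rank one. This defines $\ol{\rho}$ into $\Pic^{=}_{(\ol{P}/B)}$. Over the open dense locus $P\subset \ol{P}$, property (2) together with the classical Prym duality shows that $\ol{\CG}|_{\ol{P}\times\{L\}}$ is a line bundle in the identity component. For smooth fibres this is the classical statement that $\hat{P}\cong P$ when the polarisation is principal up to the factor coming from $1-\iota^*$, which I would check directly. Since $\ol{P}$ is integral (or at least $P$ is fibrewise dense), $\ol{\rho}$ factors through the scheme-theoretic closure $\ol{\Pic}^0_{(\ol{P}/B)}$.

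\emph{Monomorphism.} Since $\Phi_{\ol{\CG}}$ is an equivalence, it sends skyscraper sheaves $k(F)$ to $\ol{\CG}_F:=\ol{\CG}|_{\ol{P}\times\{F\}}$ (using the symmetry (4)). Hence
\[
\Hom(\ol{\CG}_F,\ol{\CG}_{F'})=\Hom(k(F),k(F')) ,
\]
which vanishes for $F\neq F'$. It follows that $\ol{\rho}$ is injective on points. Moreover $\Ext^1(\ol{\CG}_F,\ol{\CG}_F)=\Ext^1(k(F),k(F))$, so $\ol{\rho}$ induces an isomorphism on tangent spaces (relative over $B$). Thus $\ol{\rho}$ is a monomorphism and unramified.

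\emph{Pointwise identification of the target.} I must show that every point of $\ol{\Pic}^0$ in the fibre over $b$ is some $\ol{\CG}_F$, and that the target is reduced near the image. Since $\ol{P}\to B$ is proper, $\ol{\rho}$ is proper. Being a proper monomorphism, it is a closed immersion. The image is closed and contains the image of $P$, which is dense in $\ol{\Pic}^0$ by definition of the closure. Hence $\ol{\rho}$ is surjective onto $\ol{\Pic}^0$ as a set.

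\emph{Isomorphism.} A closed immersion whose image is set-theoretically everything is an isomorphism if the target has no embedded nilpotents. For this I would argue as follows: the scheme-theoretic closure of the image of $P$, under the closed immersion $\ol{\rho}$, equals the closure of $P$ inside $\ol{P}$, which is all of $\ol{P}$ since $P$ is schematically dense in $\ol{P}$. The schematic density holds because $\ol{P}\to B$ is flat with fibres in which $P_b$ is dense and $\ol{P}_b$ is Cohen--Macaulay, hence has no embedded points; this is something I would cite from the earlier structural results on $\ol{P}$. Therefore $\ol{\Pic}^0=\ol{\rho}(\ol{P})\cong\ol{P}$.

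The main obstacle is the first step for singular fibres: showing that $\ol{\rho}$ is well-defined and lands in the closure of $\Pic^0$ rather than some other component. This relies on the density and flatness of $\ol{P}$ over $B$. I would reduce it to the fact that the family $\ol{\CG}$ over the integral base $\ol{P}$ is a flat deformation of line bundles in $\Pic^0$.
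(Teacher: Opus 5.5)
Your third step has a genuine gap, and it sits exactly where the nontrivial content of the theorem lies.

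\textbf{The gap.} By definition, $\ol{\Pic}^0_{(\ol{P}/B)}$ is the scheme-theoretic closure of $\Pic^0_{(\ol{P}/B)}$, not of $\rho(P)$. So the sentence ``the image of $P$ \dots\ is dense in $\ol{\Pic}^0$ by definition of the closure'' silently assumes that $\rho\colon P\to\Pic^0_{(\ol{P}/B)}$ is surjective. Your last step uses the same assumption again, scheme-theoretically, when it identifies the closure of $\rho(P)$ with $\ol{\Pic}^0_{(\ol{P}/B)}$.

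Nothing you wrote excludes that $\Pic^0(\ol{P}_b)$ is strictly larger than $\rho_b(P_b)$, for instance of dimension bigger than $g-1$. In that case $\ol{\rho}(\ol{P})$ would be a proper closed subscheme, and $\ol{\rho}$ could not be an isomorphism. By contrast, the step you flag as the main obstacle is easy: $P_b$ is connected, $\ol{\CG}$ is normalized, and $P$ is dense in the reduced scheme $\ol{P}$.

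\textbf{What is missing.} You need Lemma~\ref{lem: autoduality for P}, namely $\rho\colon P\xrightarrow{\sim}\Pic^0_{(\ol{P}/B)}$. The paper proves it as follows:
\begin{enumerate}
\item It makes $\rho$ a closed embedding via the Abel--Prym left inverse of Lemma~\ref{lem: left inverse}.
\item It reduces to geometric fibres using flatness of $P$ over $B$.
\item It uses full faithfulness of $\FF$ to get $\dim H^1(\ol{P}_b,\CO_{\ol{P}_b})=g-1$, so that $d\rho_b$ is an isomorphism at the origin.
\item It invokes Arinkin's argument, together with reducedness of the group scheme $\Pic^0(\ol{P}_b)$ in characteristic $0$, to conclude $\rho_b(P_b)=\Pic^0(\ol{P}_b)$.
\end{enumerate}
Your own fibrewise tangent-space comparison could supply the dimension count, but you have to actually run an argument of this kind.

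\textbf{Your endgame, once the lemma is available.} It is then a valid alternative to the paper's. You need to make your point and tangent-space computations precise; in particular, identifying $d\ol{\rho}$ with the map on $\Ext^1$ induced by the equivalence needs a word, e.g.\ via the base-changed equivalence on $\ol{P}_b$. With that, $\ol{\rho}$ is a proper monomorphism, hence a closed immersion. Its image contains the open subscheme $\rho(P)=\Pic^0_{(\ol{P}/B)}$, and therefore contains its scheme-theoretic closure, so $\ol{\rho}$ is an isomorphism onto $\ol{\Pic}^0_{(\ol{P}/B)}$.

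The paper instead constructs an inverse $\ol{\AP}^*_{\CL}$. It shows, along the lines of Esteves, that $(\AP_{\CL}\times\id)^*\CI$ is relatively torsion-free of rank one, and concludes by density and separatedness. Your route trades that extension argument for a second use of the derived equivalence.
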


After the construction of the normalized Poincar\'e sheaf, the proofs of Theorems~\ref{thm B} and \ref{thm: C} are similar to the proof for compactified Jacobians (\cite{Arinkin10}, \cite{Ari10b}). 

\begin{rmk}\label{rmk: universal Prym}
    Although the proofs of Theorems~\ref{thm A}, \ref{thm B}, and \ref{thm: C} are written in the scheme-theoretic setting, under the additional assumption that $\ol{P}$ is nonsingular, the arguments are intended to be carried out on the universal compactified Prym stack $\ol{\EP} \to \ER_g$ introduced in Section~\ref{subsec: universal Prym}. Therefore, these theorems hold for arbitrary families.     
    We briefly explain here how to construct the Poincar\'e sheaf on $\ol{\EP}$. After pulling back to a smooth presentation $U\to \mathcal \ER_g$ and working on each irreducible component of $U$, we are in the scheme-theoretic situation considered in Sections \ref{sec: construction} and \ref{sec_properties}, and hence there exists a normalized Poincar\'e sheaf. 
    Once rigidifications along the zero sections are fixed, the normalized Poincar\'e sheaves are uniquely isomorphic by an isomorphism compatible with the rigidifications; see Remark~\ref{rmk: uniqueness of CG}. Thus it carries natural descent data and descends to a coherent sheaf on $\ol{\EP} \times_{\ER_g} \ol{\EP}$, which is the desired Poincar\'e sheaf on $\ol{\EP}$. 
\end{rmk}

\subsection{Applications}
In \cite{MSY23}, Maulik--Shen--Yin studied the dualizable abelian fibrations that satisfy the Fourier vanishing condition (FV). They proved that such fibrations have many good properties, and their main application is that the relative compactified Jacobian $\ol{J}_{\CC}$ is a dualizable abelian fibration that satisfies the Fourier vanishing condition (FV), provided that the curves are of planar singularities and the total space $\ol{J}_{\CC}$ is nonsingular. 

In this paper, we give other examples of dualizable abelian fibrations that satisfy (FV). 

\begin{thm}[{Theorem~\ref{thm: Prym is dualizable}}]\label{thm: compactified Prym is dualizable abelian fibration s.t. FV}
    Let $\CC \ra B$ be a projective flat family of integral curves with planar singularities over an irreducible base $B$. 
    Let $\pi: \widetilde{\CC} \ra \CC$ be an \'etale double cover such that the geometric fibers of $\widetilde{\CC} \ra B$ are integral. If the total space of the relative compactified Prym variety $\ol{P}$ is nonsingular, then $\ol{P} \ra B$ is a self-dualizable abelian fibration satisfying (FV).
\end{thm}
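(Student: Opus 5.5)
We verify the axioms of a dualizable abelian fibration in the sense of \cite{MSY23} for $\ol{P}\to B$, taking the dual fibration to be $\ol{P}$ itself and the kernel to be the normalized Poincar\'e sheaf $\ol{\CG}$ of Theorem~\ref{thm A}; (FV) is then deduced from Theorem~\ref{thm B}. We follow the argument Maulik--Shen--Yin give for $\ol{J}_{\CC}$.

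\textbf{Step 1: geometry of $\ol{P}\to B$.} The fibration is projective, since $\ol{P}$ is closed in the projective $\ol{J}_{\widetilde{\CC}}$. The fibers are integral of dimension $g-1$, where $g$ is the arithmetic genus of $\CC_b$, and $\ol{P}$ is assumed nonsingular. The map $\ol{P}\to B$ is flat with Cohen--Macaulay fibers, so miracle flatness applies. The relative Prym $P\to B$ is a smooth commutative group scheme acting on $\ol{P}$ by tensor product. This action has affine stabilizers, since torsion-free sheaves have stabilizers inside the affine part of the generalized Jacobian, and it is open-dense on each fiber. On the locus where $\CC_b$ is smooth, $P_b$ is an abelian variety and $\ol{P}_b=P_b$. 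The polarization is induced from the theta divisor on $\ol{J}_{\widetilde{\CC}}$. Finally, via the normalization of the curves, the Tate module of $P$ gives the $\delta$-regularity (``weak abelian fibration'') condition: $\dim\{b:\delta(b)\ge\delta\}\le\dim B-\delta$. I would deduce this from nonsingularity of the total space using Ng\^o's support-inequality argument, applied to the Prym dimension count.

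\textbf{Step 2: dualizability.} Theorem~\ref{thm A}(2) identifies the restriction of $\ol{\CG}$ to $P\times_B\ol{P}$ with the Poincar\'e sheaf pulled back along $1-\iota^*$. Hence, on $P\times_B P$ it is the Poincar\'e bundle of the dual pair $(P,P^\vee)$, up to the isogeny discussed below. Flatness over both factors and the maximal Cohen--Macaulay property are Theorem~\ref{thm A}(3), symmetry is (4), and rigidification along zero is (1). Theorem~\ref{thm: C} gives $\ol{P}\cong\ol{\Pic}^0_{(\ol{P}/B)}$, which is the self-duality statement.

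\textbf{Step 3: (FV).} The Fourier vanishing condition requires that the Fourier--Mukai kernel, together with its inverse kernel $\ol{\CG}^\vee\otimes p^*\omega[\dim]$, has convolution equal to the shifted diagonal, with the cohomological amplitude controlled by $\dim\ol{P}/B$. By Theorem~\ref{thm B}, $\Phi_{\ol{\CG}}$ is an equivalence. Its inverse is computed as in Arinkin's setting. One shows $\Phi_{\ol{\CG}}(\ol{\CG}|_{\{F\}\times\ol{P}})$ is concentrated in a single degree and is supported at a point, namely $\iota$ or $-1$ applied to $F$. This is exactly the vanishing used in Maulik--Shen--Yin for compactified Jacobians, which rests on the codimension estimate for $\ol{P}\times_B\ol{P}$ minus the locus where $\ol{\CG}$ is a line bundle.

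The main obstacle is the comparison between $P$ and its dual. The map $1-\iota^*$ relates $J_{\widetilde{\CC}}$ to $P$, and on smooth fibers the principal polarization of the Prym is half the restricted one. I therefore need $\ol{\CG}|_{P\times P}$ to be the Poincar\'e bundle of a principal polarization, not a multiple of one. I would check this fiberwise over the smooth locus via the classical Mumford description, then extend by flatness and the normalization in Theorem~\ref{thm A}. The $\delta$-regularity estimate is a second potential difficulty; if Ng\^o's argument does not transfer directly, I would instead argue via the support theorem on $\ol{J}_{\widetilde{\CC}}$ restricted to the Prym locus.
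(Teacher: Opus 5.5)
Your treatment of conditions (a)--(c) is roughly the paper's: self-duality comes from the principal polarization over $B^{(g)}$, and the inverse kernel $\ol{\CG}^{\vee}\ot\pi_2^{*}(\det\Fj)^{-1}[g-1]$ comes from Theorem~\ref{thm: equivalence}. There are two smaller issues. First, you collect the hypotheses of Ng\^{o}'s support theorem but never conclude full support, which is condition (b). Second, $\delta$-regularity comes from the Severi inequality (Lemma~\ref{Severi inequality}), not from the Tate module; polarizability of the Tate module is a separate hypothesis. Your ``main obstacle'' is not one: Lemma~\ref{lem: smooth case equivariant} builds $\varphi_{\Xi}$ into the construction, so over $B^{(g)}$ the sheaf $\ol{\CG}$ is already the normalized Poincar\'e bundle of the principal polarization.

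The genuine gap is Step 3, where you have misidentified (FV). In Maulik--Shen--Yin's sense, (FV) is not the statement that the kernel and its inverse convolve to the diagonal; that is condition (c), already supplied by Theorem~\ref{thm B}. (FV) is the Chow-theoretic vanishing $\FF^{-1}_i\circ\FF_j=0$ in $\Corr_B^{i+j-2n}(\ol{P},\ol{P})$ for $i+j<2n$. Here $\FF=\sum_i\FF_i$ is the Todd-twisted Chern character of $\ol{\CG}$, split by codimension. A derived equivalence only constrains the total composite $\FF^{-1}\circ\FF$, not the individual terms $\FF^{-1}_i\circ\FF_j$. The fiberwise fact that $\Phi_{\ol{\CG}}$ sends $\ol{\CG}_F$ to a shifted skyscraper gives no control over them either. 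You also never address condition (d): that $\CK\cong(\id\times\nu)^{*}\CK_3$ is supported in codimension $g-1$. The codimension estimate you cite, for the locus where $\ol{\CG}$ fails to be a line bundle, is irrelevant here; it is only used to extend Cohen--Macaulay sheaves.

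What is missing is the mechanism that converts support bounds into cycle-level vanishing. The paper proceeds in three steps.
\begin{enumerate}
\item It uses Arinkin's dimension bound (Proposition~\ref{prop: Arinkin's dimension bound}). This is proved via the theorem of the square, the $\BG_m$-torsor argument, the Abel--Prym map and the Severi inequality. With $n=3$ it gives (d).
\item It proves a strengthened bound. For every $N\ge 1$, the object $\widetilde{\CK}(N)=\ol{\CG}^{-1}\circ(i_{*}\ol{\CG})^{\ot N}$, with $i\colon\ol{P}\times_B\ol{P}\hookrightarrow\ol{P}\times\ol{P}$, is supported on $\ol{P}\times_B\ol{P}$ in codimension at least $g-1$. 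The proof filters $R\iota^{*}\iota_{*}\ol{\CG}_{F_1}$, where $\iota\colon\ol{P}_b\hookrightarrow\ol{P}$, so that the graded pieces are sums of copies of $\ol{\CG}_{F_1}$. This reduces the claim to the nonvanishing of $\BH^{*}(\ol{P}_b,\ol{\CG}_{F_1}^{\ot N}\ot\ol{\CG}_{F_2}^{\vee})$, which the dimension bound controls.
\item It applies Gillet--Soul\'e Adams operations. The class $\ol{\CG}^{\vee}\circ\psi^N(\ol{\CG})$ is a combination of direct summands of $\widetilde{\CK}(N)$, hence supported in codimension $g-1$. Since $\widetilde{\ch}_k(\psi^N(-))=N^k\widetilde{\ch}_k(-)$, varying $N$ separates the homogeneous pieces. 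This yields $\FF^{-1}_i\circ\widetilde{\ch}_{j+\dim B}(\ol{\CG})=0$ for $i+j<2(g-1)$, and (FV) follows after the Todd twist.
\end{enumerate}
Without this argument, your Step 3 only reproves Theorem~\ref{thm B}.
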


Theorem~\ref{thm: compactified Prym is dualizable abelian fibration s.t. FV} can be applied to the compactification of relative intermediate Jacobian fibration associated with a general cubic 4-fold constructed by Laza--Sacc\`a--Voisin. For a general cubic 4-fold $X$, we denote by $U\subset B= (\BP^5)^{\vee}$ the locus of smooth hyperplane sections of $X$, and $\CJ_U \ra U$ the associated intermediate Jacobian fibration. Laza--Sacc\`a--Voisin \cite{LSV17} constructed a smooth projective compactification $\ol{\CJ}$ of $\CJ_U$, which is a hyper-K\"ahler variety of OG10 type and admits a Lagrangian fibration $\ol{\CJ} \ra B$ that extends $\CJ_U \ra U$. We call $\ol{\CJ} \ra B$ an \emph{LSV fibration}. Since $\ol{\CJ}$ arises as the descent of a nonsingular relative compactified Prym variety, Theorem~\ref{thm: compactified Prym is dualizable abelian fibration s.t. FV}, together with a descent argument, yields the following.

\begin{thm}[{Theorem~\ref{thm: LSV is dualizable}}]\label{thm: LSV is dualizable abelian fibration s.t. FV}
    The LSV fibration $\ol{\CJ} \to B$ is a self-dualizable abelian fibration satisfying (FV).
\end{thm}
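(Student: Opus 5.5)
The plan is to deduce the statement from Theorem~\ref{thm: compactified Prym is dualizable abelian fibration s.t. FV} by a descent argument. I first recall the LSV construction. Over the locus $B^{\circ}\subset B$ of hyperplane sections with at worst mild singularities, one has a family of plane quintic curves $\CC$ (discriminant curves of conic bundle structures on the cubic threefold $X\cap H$ with respect to a line). It carries an étale double cover $\widetilde{\CC}\to\CC$ with integral fibers, parametrised by a base $\widetilde{B}$. Here $\widetilde{B}$ is the relative Fano surface of lines, or an étale/smooth cover of it, and it maps smoothly to $B$. The relative compactified Prym $\ol{P}\to\widetilde{B}$ is nonsingular, and $\ol{\CJ}$ is obtained by descending $\ol{P}$ along $\widetilde{B}\to B$: concretely, $\ol{P}\cong \ol{\CJ}\times_B\widetilde{B}$, and LSV handle the remaining codimension-$\geq 2$ loci by a Stein factorization/extension argument. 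Theorem~\ref{thm: compactified Prym is dualizable abelian fibration s.t. FV} then says that $\ol{P}\to\widetilde{B}$ is self-dualizable and satisfies (FV), with Poincaré sheaf $\ol{\CG}$.

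The central step is to descend $\ol{\CG}$ to $\ol{\CJ}\times_B\ol{\CJ}$. Choose rigidifications of $\ol{\CG}$ along the zero sections; this requires that $\ol{\CJ}\to B$ has a section over the relevant locus, which LSV provide. By Remark~\ref{rmk: uniqueness of CG}, the two pullbacks of $\ol{\CG}$ to $\widetilde{B}\times_B\widetilde{B}$ are then canonically isomorphic. Both pullbacks are normalized Poincaré sheaves for the same family of Pryms: the Prym of the intermediate Jacobian is independent of the chosen line, and the uniqueness (Proposition~\ref{prop: uniqueness, flat}) applies fiberwise in flat families. The cocycle condition follows from the same uniqueness on triple products. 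Faithfully flat descent therefore gives a coherent sheaf $\ol{\CG}_{\ol{\CJ}}$ on $\ol{\CJ}\times_B\ol{\CJ}$, which I then extend over the codimension-$\ge2$ complement using the maximal Cohen--Macaulay property together with $j_*$ reflexive extension. Alternatively, I can work on the universal Prym stack as in Remark~\ref{rmk: universal Prym}, which avoids choosing $\widetilde B$.

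With the sheaf in hand, I verify the axioms of a dualizable abelian fibration with (FV). Each axiom is local for the smooth (or étale) topology on the base: fiber dimension, flatness, the Fourier--Mukai kernel and its inverse kernel, the equivalence property (via Theorem~\ref{thm B}), and the vanishing of $Rp_*$ of the kernel off the zero section. All of these can be checked after base change along $\widetilde{B}\to B$, where they hold by Theorem~\ref{thm: compactified Prym is dualizable abelian fibration s.t. FV}. Self-duality follows from the symmetry statement (4) of Theorem~\ref{thm A}.

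I expect the main obstacle to be the descent over all of $B$, including the hyperplane sections whose curves are non-integral or whose discriminant cover degenerates. Over such loci the Prym description fails. I would first try to show that these loci have codimension $\geq 2$ in $\ol{\CJ}\times_B\ol{\CJ}$. If so, I would extend the descended sheaf as a reflexive (indeed MCM, since the space is Gorenstein) sheaf and recheck (FV) by a codimension count, as in Maulik--Shen--Yin's treatment of compactified Jacobians.
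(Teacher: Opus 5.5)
Your overall plan (descend the Poincar\'e sheaf along the smooth surjection $\CF^{\circ}\to B$, then transfer the properties) is reasonable, but two steps fail as written.

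\textbf{The identification of the two pullbacks.} You justify that the two pullbacks of $\ol{\CG}$ to $\CF^{\circ}\times_B\CF^{\circ}$ are isomorphic by Proposition~\ref{prop: uniqueness, flat} and Remark~\ref{rmk: uniqueness of CG}, but these do not apply. They characterize $\ol{\CG}$ through condition~(1) of Theorem~\ref{thm: def of G bar}, i.e.\ through Arinkin's sheaf on the compactified Jacobian of the specific curve $\widetilde{C}_l$. For two lines $l_1,l_2\subset Y$ the curves and their Jacobians differ, so neither pullback is known to satisfy the other's defining condition. The Remark only pins down the isomorphism once its existence is known, and ``the Prym is independent of the line'' identifies the varieties, not the sheaves.

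Two inputs are missing. (i) Over $U$, both Prym principal polarizations match the Clemens--Griffiths polarization (Mumford's isomorphism \eqref{eq: AJ map between Prym and intermediate Jacobian}), so both pullbacks restrict to the same normalized Poincar\'e bundle there. (ii) You need an extension argument across the discriminant. It has codimension one, so MCM/reflexive uniqueness does not apply. One option is the separatedness argument of Proposition~\ref{prop: symmetric} applied to the two maps $\ol{P}\to\ol{\Pic}^0$. Another, which is what the paper does, is the uniqueness of extensions of homomorphisms of semi-abelian schemes over $U_1$ \cite[Proposition~2.7]{FC90}, using that $\rho$ is a homomorphism (Lemma~\ref{lem: autoduality for P}).

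The paper avoids descent data altogether. It defines $\CQ=(\id\times\phi)^*\CI$ intrinsically on $\CJ_{U_1}\times_{U_1}\CJ^{\circ}_{U_1}$, with $\phi$ the extension of $\varphi_{\Theta}$. It checks $q^*\CQ\cong\CG$ by (i)--(ii), and sets $\ol{\CQ}=j_*\CQ$ across a complement of codimension $\ge 2$. With (i)--(ii) supplied, your rigidified-cocycle descent also works. Since $\CF^{\circ}\to B$ is surjective, it needs no extension step, so your worry about bad hyperplane sections is moot. The universal-stack alternative does not help: the classifying map $\CF^{\circ}\to\ER_g$ does not factor through $B$.

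\textbf{(FV) is not local on the base.} It is a vanishing in rational Chow groups of $\ol{\CJ}\times_B\ol{\CJ}$. Smooth (even Zariski) pullback is not injective on Chow groups; the cover of $\BP^1$ by two affine lines already kills the point class. So (FV) for $\ol{P}_{\CF^{\circ}}$ does not formally give (FV) for $\ol{\CJ}$.

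What does transfer along the smooth surjection $q$ are the support bounds \eqref{eq: Arinkin's dimension bound 1} and \eqref{eq: Arinkin dimension bound 2}, since $\ol{\CG}\cong q^*\ol{\CQ}$ and supports and codimensions pull back. These bounds give condition~(d). You must then rerun the Adams-operation argument of Step~2 of Theorem~\ref{thm: Prym is dualizable} on $\ol{\CJ}$ itself; this is what the paper does.

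Three smaller points:
\begin{itemize}
\item For (c), you need global comparison maps (the adjunction unit and counit for the adjoint kernel) before checking locally that they are isomorphisms.
\item Condition~(b), full support, is missing from your list. Either cite the full support results for LSV or descend the decomposition theorem along $\CF^{\circ}\to B$.
\item Condition~(a) comes from the Clemens--Griffiths principal polarization over $U$, not from the symmetry of $\ol{\CG}$.
\end{itemize}
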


The proofs of Theorems~\ref{thm: compactified Prym is dualizable abelian fibration s.t. FV} and \ref{thm: LSV is dualizable abelian fibration s.t. FV} parallel the arguments of \cite[Theorem 0.2]{MSY23}. The main input is Arinkin's dimension bound for the compactified Prym variety. 

Using the properties of dualizable abelian fibrations that satisfy (FV)~(\cite[Theorem 2.6, Corollary 2.7]{MSY23}), we immediately get the following result.
\begin{cor}\label{cor: motivic multiplicativity}
    Let $\pi\colon M \ra B$ be one of the following: the compactified Prym fibration as in Theorem~\ref{thm: compactified Prym is dualizable abelian fibration s.t. FV}, an LSV fibration, or an \'etale Shafarevich--Tate twist of an LSV fibration. Then the following results hold:
    \begin{enumerate}
         \item (Corti--Hanamura conjecture) The decomposition of $R\pi_{*}\BQ_{M}$ into semisimple perverse sheaves admits a motivic lifting;
        \item (Multiplicative perverse filtration) There exists a motivic perverse filtration for $\pi$ that is multiplicative.
    \end{enumerate}
\end{cor}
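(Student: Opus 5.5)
The corollary should follow formally from Theorems~\ref{thm: compactified Prym is dualizable abelian fibration s.t. FV} and \ref{thm: LSV is dualizable abelian fibration s.t. FV} together with \cite[Theorem 2.6, Corollary 2.7]{MSY23}. Those results say that for a dualizable abelian fibration $\pi\colon M\to B$ satisfying (FV), with $M$ nonsingular, one can build Chow-theoretic Fourier transforms. These transforms give a motivic decomposition lifting the perverse decomposition of $R\pi_*\BQ_M$, and hence a multiplicative motivic perverse filtration. So the plan is to check, in each of the three cases, that the hypotheses of \cite{MSY23} hold.

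\textbf{Step 1: the first two cases.} For the compactified Prym fibration with nonsingular total space $\ol{P}$, Theorem~\ref{thm: compactified Prym is dualizable abelian fibration s.t. FV} says directly that $\ol{P}\to B$ is self-dualizable and satisfies (FV). For the LSV fibration, the same conclusion is Theorem~\ref{thm: LSV is dualizable abelian fibration s.t. FV}. In both cases I would apply \cite[Theorem 2.6]{MSY23} for (1) and \cite[Corollary 2.7]{MSY23} for (2). The only care needed is that the relevant spaces are nonsingular and that $\pi$ is proper, which holds by assumption and by \cite{LSV17}.

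\textbf{Step 2: Shafarevich--Tate twists.} This is the step I expect to be the main obstacle. An \'etale Shafarevich--Tate twist $M'\to B$ of $\ol{\CJ}$ is obtained by regluing $\ol{\CJ}$ over an \'etale cover of $B$ via translations by local sections of the group scheme $P^{\mathrm{sm}}$ (the smooth locus acting on $\ol{\CJ}$). The plan is to show that a twist of a dualizable (FV) fibration is again dualizable with (FV), with the dual fibration still $\ol{\CJ}$ itself. I would carry this out as follows.

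First, twist the Poincar\'e sheaf. On $M'\times_B\ol{\CJ}$, the transition maps of $M'$ are translations $t_s$. I would use the theorem-of-the-square type property $(t_s\times \id)^*\ol{\CG}\cong \ol{\CG}\otimes p_2^*\ol{\CG}_s$, which follows by deformation from Theorem~\ref{thm A}(2) and the uniqueness statement. This shows that $\ol{\CG}$ descends to $M'\times_B\ol{\CJ}$ up to a Brauer class (a gerbe) on $B$.

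Second, verify (FV) and the dualizability conditions. These are étale-local on $B$ and are conditions on Chow or cohomological correspondences, so they hold \'etale-locally and then descend. The gerbe obstruction should not affect the Chern character of the kernel, since one can use a twisted Chern character or pass to $\ch$ with $\BQ$-coefficients, which kills the Brauer ambiguity.

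If the descent of the kernel turns out to be problematic, I would instead invoke the version of \cite{MSY23} that treats twists directly, since their Theorem 0.2 setting already includes Shafarevich--Tate twists of compactified Jacobians. I would then transplant that argument verbatim, using only the properties (1)--(4) of $\ol{\CG}$ from Theorem~\ref{thm A}.
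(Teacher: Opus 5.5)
Your treatment of the compactified Prym and LSV cases is the paper's argument. One small correction: the paper takes (1) from \cite[Corollary~2.7]{MSY23} and (2) from \cite[Theorem~2.6]{MSY23}, and you have them reversed.

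\textbf{The obstruction lives on $X$, not on $B$.} The gap is in the Shafarevich--Tate case. Write $X=\ol{\CJ}$, and let $X^t$ be the twist glued by $t_{ij}\in P(U_{ij})$. By your own formula $(t_s\times\id)^*\ol{\CG}\cong\ol{\CG}\otimes p_2^*\ol{\CG}_s$, the discrepancy on $U_{ij}$ is a line bundle $L_{ij}$ on the \emph{second factor} $X_{U_{ij}}$. Fiberwise it is the line bundle attached to $t_{ij}$ under autoduality, so it is not pulled back from $B$ unless $t_{ij}$ is trivial. First normalize the cocycle so that $t_{ij}^n=1$; this is possible since $t$ is $n$-torsion and $[n]$ is \'etale-locally surjective on $P$. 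The $L_{ij}$, together with the isomorphisms $L_{ij}\otimes L_{jk}\cong L_{ik}$ and $L_{ij}^{\otimes n}\cong\CO$ built from the square-theorem isomorphisms, then form a Hitchin presentation of a class $\alpha_t\in H^2(X,\mu_n)$. This is a class on the total space $X$, not a Brauer class on $B$. The correct output is therefore a weight-one sheaf $\CP^t$ on $X^t\times_B\SX$ for a $\mu_n$-gerbe $\sigma\colon\SX\to X$, with the equivalence $D^b(X^t)\simeq D^b(\SX)_{(1)}$ of Theorem~\ref{thm: equivalence for twisted derived category}. The dual side is $X$ only up to this gerbe. So your target, that the twist is again a dualizable abelian fibration with dual $X$, is not what one gets: condition (c) of Definition~\ref{def: dualizable abelian fibration} asks for an untwisted kernel, and the paper never verifies that definition for $X^t$. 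With the gerbe placed on $B=(\BP^5)^{\vee}$, whose Brauer group vanishes, your argument would even yield an untwisted equivalence $D^b(X^t)\simeq D^b(X)$, which the construction does not give.

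\textbf{\'Etale-local verification cannot descend, and what the paper supplies instead.} Your second step fails independently. (FV) and the motivic decomposition are identities among rational Chow classes of correspondences. Chow groups do not satisfy descent along \'etale, or even Zariski, covers of $B$: a class can vanish on every member of a cover without vanishing globally. So nothing can be checked \'etale-locally and then descended; you need a \emph{global} twisted kernel and a space on which its Chern character lives. The paper supplies exactly this before invoking \cite{MSY23}, in three pieces.
First, Theorem~\ref{thm: equivalence for twisted derived category} is built from the theorem of the square (Lemma~\ref{lem: thm of square}), transported to $X$ through the \'etale-local identifications $\phi_s$ of $X$ with compactified Pryms. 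It includes the local description $\CP^t_{U_i}\cong(\rho_i\times\sigma)^*\CP_{U_i}\otimes p_2^*T_i$ with $T_i$ $n$-torsion, hence invisible in rational Chow groups.
Second, the paper checks that $\SX$ is a Deligne--Mumford quotient stack, via \cite{Gabber03} and \cite[Theorem~3.6]{Edidin_etal_01}, so that rational Chow groups and Chern characters on $\SX$ are available.
Third, it reruns \cite[Section~4.4]{MSY23}, with part (2) of that theorem replacing \cite[Proposition~4.2]{MSY23} and part (1) replacing \cite[Corollary~4.4]{MSY23}.
Your fallback of transplanting the twisted argument of \cite{MSY23} points in this direction, but the transplant is not verbatim. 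Properties (1)--(4) of $\ol{\CG}$ alone do not produce the gerbe on $X$, the twisted kernel and equivalence, or the stack-theoretic setting, and these are precisely the missing inputs.
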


\subsection{Outline of the paper}
We briefly outline the contents of this paper. In Section~\ref{sec: Preliminaries}, we recall the definition of the universal Prym stack for \'etale double covers. In Section~\ref{sec: construction}, we prove the existence part of Theorem~\ref{thm A}. For this, we construct the normalized Poincar\'e sheaf on the relative compactified Prym by descending the Poincar\'e sheaf on relative compactified Jacobian constructed by Arinkin. Then in Section~\ref{sec_properties}, we study the properties of the Poincar\'e sheaf and finish the proof of Theorem~\ref{thm A}. In Section~\ref{sec_Autoequivalence}, we prove Theorem~\ref{thm B}. The key step is to prove Arinkin's dimension bound. In Section~\ref{sec_autoduality}, we deduce Theorem~\ref{thm: C} from Theorem~\ref{thm B}. In Section~\ref{sec_application}, we apply our main results to prove Theorems~\ref{thm: compactified Prym is dualizable abelian fibration s.t. FV}, \ref{thm: LSV is dualizable abelian fibration s.t. FV}, and Corollary~\ref{cor: motivic multiplicativity}.

\subsection{Acknowledgements}
The author is deeply grateful to Liang Xiao and Qizheng Yin for their invaluable guidance and helpful discussions, and especially to Qizheng Yin for suggesting this project. 
The author would also like to thank Hanfei Guo, Jia Choon Lee, Ziwei Lu, Lu Qi, Junliang Shen, Shuting Shen, and Feinuo Zhang for helpful conversations. 

\section{Preliminaries}\label{sec: Preliminaries}
Throughout this article, we work over an algebraically closed field $k$ of characteristic zero. 
Let $g$ be the arithmetic genus of the fiber curves $\CC \to B$. 
When $g=1$, our assumption that the geometric fibers of $\widetilde{\CC} \to B$ are integral curves forces the fibers of $\CC \to B$ to be smooth. Hence, the associated relative compactified Prym variety equals the relative Prym variety, which is a zero-dimensional abelian scheme over $B$. Consequently, Theorems~\ref{thm A}, \ref{thm B}, and \ref{thm: C} trivially hold. Therefore, for the rest of this article we assume that $g > 1$. 

\subsection{Universal Prym stack for \'etale double covers}\label{subsec: universal Prym}
To prove Theorems~\ref{thm A}, \ref{thm B}, and \ref{thm: C} for arbitrary families, we use the universal family of \'etale double covers and work with stacks. We now briefly define the universal Prym stack and summarize some of its properties. 

We denote by $\EM_g^{\inte,\pl}$ the moduli stack of integral projective curves $C$ of genus $g$ with planar singularities. For any scheme $B$, the objects of the groupoid $\EM_g^{\inte,\pl}(B)$ are morphisms 
\[f: X \to B,\] 
where $X$ is an \emph{algebraic space} and $f$ is a flat, proper morphism of finite presentation whose fibers are integral curves of genus $g$ with planar singularities. By \cite[Section~3]{Arinkin10}, $\EM_g^{\inte,\pl}$ is a smooth algebraic stack of finite type, of dimension $3g-3$. 

Let $\ol{\EJ}_{g}$ be the universal compactified Jacobian stack over $\EM_g^{\inte,\pl}$ defined as in \cite[Definition 7]{Arinkin10}. Then it is a smooth algebraic stack, and the natural forgetful map $\ol{\EJ}_{g} \to \EM_g^{\inte,\pl}$ is representable by algebraic spaces (\cite[Propositions~3.1, 3.3]{Melo19}). 

Since $g > 1$, for any $f \colon X \to B$ as above, the relative dualizing sheaf $\omega_{X/B}$ is relatively ample. Therefore, $X$ is a scheme and $f$ is locally projective (\cite[Tag~0D32]{stacks-project}). According to \cite[Theorem~8.5]{AK80}, the map $\ol{\EJ}_{g} \to \EM_g^{\inte,\pl}$ is representable by schemes.

\begin{defn}
    Let $\ER_g$ be the prestack over $\mathrm{Sch}_k$ whose objects over a scheme $B$ are diagrams 
    \[\widetilde{C}\ra C \ra B,\]
    where $C \ra B$ is an object in $\EM_g^{\inte,\pl}(B)$, the morphism $\widetilde{C} \ra C$ is an \'etale double cover, and the geometric fibers of $\widetilde{C} \ra B$ are integral. Morphisms from $(\widetilde{C}'\ra C' \ra B')$ to $(\widetilde{C}\ra C \ra B)$
    are given by Cartesian diagrams 
    \begin{equation*}
            \begin{tikzcd}
            \widetilde{C}' \arrow[r, "\widetilde{\alpha}"] \arrow[d] 
                \arrow[dr, phantom, "\square"]
            & \widetilde{C} \arrow[d]  \\
            C' \arrow[r, "\alpha"]  \arrow[d] \arrow[dr, phantom, "\square"]                 
            & C \arrow[d]\\    
            B' \arrow[r, "f"]  
            & B.          
            \end{tikzcd}
    \end{equation*}
\end{defn}   

Consider the natural forgetful map 
\[F \colon \ER_g \ra \EM_g^{\inte,\pl}, \quad (\widetilde{C},C) \mapsto C.\]

\begin{prop}
    The morphism $F$ is representable by algebraic spaces, \'etale, and locally of finite type. Consequently, $\ER_g$ is a smooth algebraic stack, locally of finite type. 
\end{prop}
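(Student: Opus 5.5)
Fix a scheme $B$ and an object $f\colon C\to B$ of $\EM_g^{\inte,\pl}(B)$. The plan is to describe the fiber product $\ER_g\times_{\EM_g^{\inte,\pl}} B$ directly. For a $B$-scheme $T$, its $T$-points are the \'etale double covers $\widetilde{C}_T\to C_T:=C\times_B T$ whose geometric fibers over $T$ are integral. Isomorphisms are only those isomorphisms over $C_T$, because the Cartesian diagrams in the definition fix the morphism $\alpha$.

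An \'etale double cover is the same as a $\mu_2$-torsor (we are in characteristic 0), equivalently a line bundle $L$ on $C_T$ with an isomorphism $L^{\otimes 2}\cong\CO_{C_T}$. Its automorphisms over $C_T$ are the sections of $\mu_2$, namely the deck involution. A geometric fiber of the cover is integral exactly when the corresponding torsor on the integral fiber curve is nontrivial, i.e.\ when the $2$-torsion line bundle on that fiber is nontrivial.

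Because automorphisms are not killed, the fiber product is the stack $\mathcal{T}$ of $\mu_2$-torsors on $C/B$ with nontrivial geometric fibers. Representability of $F$ by algebraic spaces as stated then needs care. Here is how I would handle it.

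\begin{enumerate}
\item Consider the relative Picard functor $\Pic_{C/B}$. Since $f$ is flat, projective, with integral geometric fibers, it is representable by a scheme locally of finite type (Grothendieck, or \cite{AK80}). Its identity component is smooth, since the fibers are curves and $H^2(\CO)=0$. The kernel of multiplication by $2$, $\Pic_{C/B}[2]$, is quasi-finite over $B$. It is also \'etale over $B$: in characteristic 0, multiplication by $2$ on a smooth commutative group scheme of relative dimension $g$ is \'etale, because on tangent spaces it is multiplication by $2$. Removing the zero section, which is open and closed in an \'etale group scheme, gives an open and closed subscheme $\Pic_{C/B}[2]\setminus 0$.
\item Compare $\mathcal{T}$ with $\Pic_{C/B}[2]\setminus 0$. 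The map $\mathcal{T}\to \Pic_{C/B}[2]\setminus 0$ is a $\mu_2$-gerbe, since each torsor has automorphism group $\mu_2$. So the fiber product is $[(\Pic_{C/B}[2]\setminus 0)/\mu_2]$-like, rather than an algebraic space. I would address this in one of two ways. First, I can check whether the paper's morphisms implicitly rigidify the structure. Second, I can interpret $\ER_g$ as the rigidification, so that the fiber product is $\Pic_{C/B}[2]\setminus 0$ itself, an \'etale $B$-scheme locally of finite type. With either reading, $F$ is \'etale and locally of finite type, because a $\mu_2$-gerbe in characteristic 0 is \'etale.
\item The remaining properties follow from stability under base change. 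Smooth descent for objects (\'etale covers descend along fpqc maps) shows that $\ER_g$ is a stack. Since $\EM_g^{\inte,\pl}$ is a smooth algebraic stack of finite type and $F$ is representable (after rigidification), \'etale and locally of finite type, $\ER_g$ is algebraic, smooth, and locally of finite type.
\end{enumerate}

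The main obstacle is the automorphism issue in step 2: reconciling the deck involution with the claim that $F$ is representable by algebraic spaces. The second, more technical point is checking that $\Pic_{C/B}[2]$ is \'etale even over the singular fibers. For this I would use that $\Pic^0$ of an integral curve is a smooth connected commutative group, an extension of an abelian variety by a linear group, so $[2]$ is finite \'etale on it in characteristic 0. I would also use that torsion outside $\Pic^0$ is excluded, since degree-$0$ bundles on an integral curve lie in the connected component $\Pic^0$.
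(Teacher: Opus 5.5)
\textbf{Verdict.} Your route is the paper's, and your point about the deck involution is correct; the step that needs fixing is your description of the integrality locus.

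\textbf{Same route, and your automorphism remark is right.} Like the paper, you realize the fibre of $F$ over $B\to\EM_g^{\inte,\pl}$ inside $\Pic_{C/B}[2]$, use that multiplication by $2$ is \'etale in characteristic $0$, and cut out an open piece. The paper's proof passes over the deck involution. A morphism in $\ER_g$ records $\widetilde{\alpha}$, so the triple $(\iota,\id_C,\id_B)$ is a nontrivial automorphism lying over the identity. Hence $F$ is not faithful, and the fibre product is a $\mu_2$-gerbe over an open subspace of $\Pic^0_{C/B}[2]$, not that subspace itself.

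So ``representable by algebraic spaces'' does not hold for $\ER_g$ as defined. What survives is that $F$ is \'etale and locally of finite type, which is what the paper uses later. Algebraicity of $\ER_g$ then needs the criterion for morphisms whose base changes to schemes are algebraic stacks, rather than the one for representable morphisms.

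Of your two readings, keep the gerbe. On the rigidification you lose the universal cover $\widetilde{\EC}\to\EC$, since the inertia $\mu_2$ acts on it by $\iota$. With it you lose $\ol{\EJ}$ and $\ol{\EP}$, which the paper builds immediately afterwards.

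\textbf{The step that fails.} You claim a geometric fibre of the cover is integral exactly when the $2$-torsion bundle is nontrivial. From this you identify the fibre (after rigidifying) with the open and closed subscheme $\Pic_{C/B}[2]\setminus 0$. This is wrong: nontriviality of the torsor only gives connectedness, and over a singular integral curve a connected \'etale double cover can be reducible.

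For example, let $\Gamma$ be integral with one node and $n\colon N\to\Gamma$ its normalization. The nontrivial element $\eta_0$ of $\mu_2\subset\BG_m=\ker(n^*)$ gives the cover made of two copies of $N$ glued crosswise over the node. It is connected but has two irreducible components.

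The correct condition is $n^*\eta\neq 0$ on the normalization of the fibre. The corresponding locus is open but in general not closed in $\Pic_{C/B}[2]\setminus 0$. Indeed, in a one-parameter smoothing of $\Gamma$, the section of $\Pic_{C/B}[2]$ through $\eta_0$ gives integral covers on the smooth fibres.

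The repair is what the paper does: geometric irreducibility of the fibres of the proper flat family $\widetilde{C}_T\to T$ is an open condition. An open subspace of an \'etale $B$-space is still \'etale and locally of finite type. So your conclusions stand once this locus is used instead of $\Pic_{C/B}[2]\setminus 0$.
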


\begin{proof}
    Let $B$ be any scheme and $p\colon B \ra \EM_g^{\inte,\pl}$ be any morphism.
    Let $\CC_B \ra B$ be the family of curves corresponding to $p$. The relative Picard functor $\Pic_{(\CC_B/B)}^0$ is an algebraic space, locally of finite type over $B$ (\cite[Tags~0D2C, 0DMB]{stacks-project}). 
    Let $\Pic_{(\CC_B/B)}^0[2]$ be the 2-torsion subgroup of $\Pic_{(\CC_B/B)}^0$. Since $k$ has characteristic zero, the multiplication-by-2 map on $\Pic_{(\CC_B/B)}^0$ is \'etale. Thus, the morphism $\Pic_{(\CC_B/B)}^0[2] \ra B$ is \'etale and locally of finite type. 
    
    Since geometric irreducibility of the fibers is an open condition on the base, the fiber product $\ER_g \times_{\EM_g^{\inte,\pl}} B$ is represented by an open subspace of $\Pic_{(\CC_B/B)}^0[2]$. Therefore, $F$ is representable by algebraic spaces, \'etale, and locally of finite type. Because $\EM_g^{\inte,\pl}$ is a smooth algebraic stack, it follows from \cite[Tag~05UN]{stacks-project} that $\ER_g$ is also a smooth algebraic stack, locally of finite type. 
\end{proof}

Denote by $\pi \colon \widetilde{\EC} \ra \EC$ the universal \'etale double cover over $\ER_g$. The family $\widetilde{\EC} \to \ER_g$ gives a morphism $\ER_g \to \EM_{2g-1}^{\inte,\pl}$. We denote by $\ol{\EJ} = \ER_g \times_{\EM_{2g-1}^{\inte,\pl}} \ol{\EJ}_{2g-1}$ the relative compactified Jacobian of $\widetilde{\EC}$ over $\ER_g$. Let $\iota: \widetilde{\EC} \ra \widetilde{\EC}$ be the involution associated to the \'etale double cover $\pi: \widetilde{\EC} \ra \EC$. This induces an involution map 
\begin{equation*}
    \begin{aligned}
        \tau \colon \ol{\EJ} & \ra \ol{\EJ}\\
        \CF & \longmapsto \iota^{*}\CF^{\vee},
    \end{aligned}
\end{equation*}
where $\CF^{\vee}=\hom_{\CO_{\widetilde{\EC}}}(\CF,\CO_{\widetilde{\EC}})$ denotes the dual sheaf of $\CF$. The \emph{relative compactified Prym stack} $\ol{\EP}$ is defined to be the irreducible component of the fixed locus of $\tau$ that contains the zero section:
\begin{equation*}
    \ol{\EP} \coloneqq \Fix(\tau)_0.
\end{equation*}
This makes $\ol{\EP}$ a closed substack of $\ol{\EJ}$. Thus $\ol{\EP}$ is an algebraic stack and the natural morphism 
\[\ol{p} \colon \ol{\EP} \ra \ER_g\]
is representable by schemes. 

\begin{prop}\label{prop: compact Prym is sm}
    The main properties of $\ol{\EP}$ are summarized as follows.
    \begin{enumerate}
        \item The morphism $\ol{p} \colon \ol{\EP} \ra \ER_g$ is a locally projective morphism with integral fibers of dimension $g-1$.
        \item The stack $\ol{\EP}$ is smooth. Consequently, the morphism $\ol{p}$ is a flat, local complete intersection morphism.        
    \end{enumerate}
\end{prop}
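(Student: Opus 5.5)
Here is the plan: reduce to fiberwise statements about compactified Pryms of a single étale double cover $\widetilde{C}\to C$, and handle smoothness of $\ol{\EP}$ by a deformation-theoretic argument over the smooth stack $\ER_g$.

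\textbf{Step 1: part (1).} Local projectivity follows from the corresponding property of $\ol{\EJ}\to\ER_g$. That morphism is the base change of $\ol{\EJ}_{2g-1}\to\EM_{2g-1}^{\inte,\pl}$, which is representable by schemes and locally projective by Altman--Kleiman \cite[Theorem~8.5]{AK80}, using $\omega$ as a relatively ample sheaf. Since $\ol{\EP}\subset\ol{\EJ}$ is closed, $\ol{p}$ is locally projective as well.

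For the fibers, fix a geometric point, so an étale double cover $\widetilde{C}\to C$ of integral planar curves. Here $\widetilde{C}$ has genus $2g-1$, and the fiber of $\ol{p}$ is the component of $\Fix(\tau)$ through $\CO$.
\begin{itemize}
\item The open locus of line bundles in it is a union of cosets of $\ker(\Nm)$, and its identity component is the Prym $P$, of dimension $(2g-1)-g=g-1$.
\item $P$ is dense in $\Fix(\tau)_0$ by definition of the component containing the zero section.
\end{itemize}
Irreducibility and the dimension $g-1$ then follow. Reducedness of the fibers I will deduce at the end from flatness together with generic smoothness.

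\textbf{Step 2: smoothness, the main step.} I would compute the tangent–obstruction theory of $\Fix(\tau)$ at a point $(\widetilde{C}\to C,\CF)$ with $\iota^*\CF^\vee\cong\CF$. The stack $\ol{\EJ}$ is smooth over $\ER_g$, since $\ER_g\to\EM_g^{\inte,\pl}$ is étale and $\ol{\EJ}_{2g-1}$ is smooth. The involution $\tau$ acts on $\ol{\EJ}$ over $\ER_g$.

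The key point is that in characteristic $0$ the fixed locus of a finite group acting on a smooth stack (or on a smooth scheme, via an étale chart where $\tau$ is realized with a fixed point) is smooth. This is Luna/Cartan linearization: complete locally at a fixed point, $\mathbb{Z}/2$ acts linearly on a formal power series ring, so the fixed locus is formally smooth.

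So I would:
\begin{enumerate}
\item Realize $\tau$ as an involution of a smooth atlas of $\ol{\EJ}$ near a fixed point. Alternatively, work with the 2-categorical fixed points: an object $\CF$ together with an isomorphism $\phi\colon\iota^*\CF^\vee\to\CF$ satisfying the cocycle condition. Its automorphisms are $\pm1$, and these are absorbed by the $\BG_m$-gerbe structure.
\item Conclude that $\Fix(\tau)$ is smooth, hence every component of it, in particular $\ol{\EP}$.
\end{enumerate}
The obstacle is making this rigorous in the stacky setting. Here $\tau$ is only defined up to the choice of the identification $\CF^{\vee\vee}\cong\CF$, and the objects of $\ol{\EJ}$ have $\BG_m$-automorphisms. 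To avoid this, I would try first to work on the rigidified space $\ol{\EJ}$ (representable over $\ER_g$), where $\tau$ is an honest involution of an algebraic space that is smooth over a smooth stack. I would then pull back along a smooth atlas $U\to\ER_g$ and apply the fixed-point smoothness to $\ol{J}_U$ with its involution.

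\textbf{Step 3: flatness and lci.} With $\ol{\EP}$ and $\ER_g$ both smooth and all fibers of the same dimension $g-1$, miracle flatness gives that $\ol{p}$ is flat. A morphism between smooth stacks is lci. The fibers are then lci, hence Cohen--Macaulay, and generically smooth on $P$. By Serre's criterion they are reduced, which completes integrality in (1).
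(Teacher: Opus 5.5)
\textbf{The smoothness step fails.} Your Step~2 assumes that $\ol{\EJ}$ is smooth, and this is false. You even write ``smooth over $\ER_g$'', which already fails because its fibers $\ol{J}(\widetilde C)$ are singular, but the total space is singular too.

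$\ol{\EJ}$ is the base change of $\ol{\EJ}_{2g-1}\to\EM_{2g-1}^{\inte,\pl}$ along $\ER_g\to\EM_{2g-1}^{\inte,\pl}$, $(\widetilde C\to C)\mapsto\widetilde C$. The étaleness of $\ER_g\to\EM_g^{\inte,\pl}$ only gives smoothness of the compactified Jacobian of $\EC$, not of $\widetilde{\EC}$. Since $\widetilde C\to C$ is étale, the deformations of the two singular points $p_i,q_i$ over $x_i$ induced from $\ER_g$ are the diagonal copy of the deformation space of $x_i$. Hence $\widetilde{\EC}$ is not locally versal at any singular fiber.

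Concretely, let $x$ be a node and let $F$ be non-free at both $p$ and $q$. Such $F$ do lie in $\ol{\EP}$, because the maximal ideal of a node is self-dual. Formally near $F$, $\ol{\EJ}$ is a smooth germ times $\{ab=t,\ cd=t\}\cong\{ab=cd\}\subset\BA^4$, which is a singular cone. So Cartan--Luna linearization at a fixed point of a smooth space does not apply. Passing to an atlas $U$ or to the rigidified space does not help either, since $\ol{J}_U$ is singular at the same points.

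In this local model the fixed locus is the graph $\{(a,b)=\delta(c,d)\}\cong\{cd=t\}$ of the local dualization $\delta$, and it is smooth even though the ambient space is not. This non-formal fact is exactly what the paper imports from \cite[Theorem~4.20]{LSV17}. The paper applies it over a smooth atlas $U\to\ER_g$, where $\EC_U\to U$ is locally versal. To repair your argument you must use versality of $C$ rather than of $\widetilde C$. One way is to show that near a $\tau$-fixed point the fixed component is locally modeled on the relative compactified Jacobian of the versal family $\EC_U$, which is smooth; the germ at $q_i$ is determined by the germ at $p_i$ via $\iota^*(-)^\vee$.

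\textbf{A second gap in Step~1.} You identify the fiber of the global component $\ol{\EP}=\Fix(\tau)_0$ over a point $b$ with the fiberwise component $\Fix(\tau_b)_0=\overline{P_b}$. This is not automatic: a priori the fiber of $\ol{\EP}$ may contain limits of points of nearby $P_{b'}$ lying in other components of $\Fix(\tau_b)$. So two things need an argument:
\begin{itemize}
\item irreducibility of the fibers;
\item the upper bound $g-1$ on fiber dimension, which miracle flatness in your Step~3 requires.
\end{itemize}
The paper takes both from \cite[Remark~4.5, Proposition~4.10]{LSV17}. Once (1) and the smoothness of $\ol{\EP}$ are established, your Step~3 (miracle flatness, and lci for a morphism between smooth stacks) coincides with the paper's argument.
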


\begin{proof}
    Property (1) follows from the corresponding properties of relative compactified Prym fibrations (see \cite[Remark~4.5, Proposition~4.10]{LSV17}). 

    To prove (2), it suffices to check the assertion after base changing to a smooth presentation. Let $U \to \ER_g$ be a smooth presentation. Since $\ER_g$ is a smooth algebraic stack, $U$ is a smooth scheme. Let $\pi_U\colon \widetilde{\EC}_U \to \EC_U$ be the base change of $\pi$ to $U$, which is an \'etale double cover between schemes. Consider the 2-fiber product $\ol{\Prym}(\widetilde{\EC}_U/\EC_U) = U \times_{\ER_g} \ol{\EP}$. The natural projection induces a smooth presentation 
    \[\ol{\Prym}(\widetilde{\EC}_U/\EC_U) \to \ol{\EP}.\]
    Since the composition $U \to \ER_g \to \EM_g^{\inte,\pl}$ is smooth, the family of curves $\EC_U \to U$ is a locally versal family. Then, according to \cite[Theorem~4.20]{LSV17}, the scheme $\ol{\Prym}(\widetilde{\EC}_U/\EC_U)$ is smooth. Consequently, the morphism $\ol{p}_U \colon \ol{\Prym}(\widetilde{\EC}_U/\EC_U) \to U$ is flat by miracle flatness, and is a local complete intersection morphism by \cite[Tag~069M]{stacks-project}. 
\end{proof}

\subsection{A technical proposition}
\begin{notn}
    Let $f \colon X \ra B$ be a flat projective morphism whose geometric fibers are curves. Let $\CE$ be a $B$-flat coherent sheaf on $X$. Then the \emph{determinant of cohomology of $\CE$ with respect to $f$} is defined to be 
    \[\CD_f(\CE) \coloneq \det Rf_{*}(\CE).\]
\end{notn}

The determinant of cohomology enjoys good properties that will be frequently used in the calculations, such as functoriality, compatibility with base change, the projection formula and additivity with respect to short exact sequences; see \cite[Proposition 44]{Esteves01} for details. 

We now prove a technical proposition, which generalizes \cite[Proposition 4.4]{GS22} and will be used multiple times in Section~\ref{sec: construction}. 

\begin{prop}\label{prop: important calculation}
    Let $X$ and $Y$ be finite-type integral $B$-schemes with $X$ quasi-projective. 
    Consider a family of \'etale double covers of projective integral curves $\pi \colon \widetilde{\CC} \to \CC$ over the base $B$. Let~$\varpi \colon \widetilde{\CC} \times_B X\times_B Y \ra \CC \times_B X\times_B Y$ be the base change of $\pi$. Let $p_{ij}$ and $q_{ij}$ be the projections from $\widetilde{\CC} \times_B X \times_B Y$ and $\CC \times_B X \times_B Y$ to their $(i,j)$-th factors, respectively. 
    Then for line bundles $\CM$ and $\CN$ on $\widetilde{\CC} \times_B X \times_B Y$, and a line bundle $K$ on $\CC\times_B X$, we have an isomorphism:
    \begin{equation}\label{eq: important calculation}
        \begin{aligned}
            \CD_{\p23}(\CM \ot \varpi^{*}q_{12}^{*}K) \ot \CD_{\p23}(\CM)^{-1} 
            & \ot \CD_{\p23}(\CN \ot \varpi^{*}q_{12}^{*}K)^{-1} \ot \CD_{\p23}(\CN) \\
            \cong{} \CD_{\q23}(\det \varpi_{*}\CM \ot q_{12}^{*}K) \ot \CD_{\q23} (\det \varpi_{*}\CM)^{-1} 
            & \ot \CD_{\q23}(\det \varpi_{*}\CN \ot q_{12}^{*}K)^{-1} \ot \CD_{\q23} (\det \varpi_{*}\CN).
        \end{aligned}
    \end{equation}
\end{prop}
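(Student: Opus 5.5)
Both sides are, in effect, the Deligne pairing of a line bundle with $K$, so the plan is to compare the two pairings via a norm/trace identity. First I reduce the left side to the base $\CC\times_B X\times_B Y$. Since $\varpi$ is finite and flat, we have $Rp_{23,*} = Rq_{23,*}\circ \varpi_*$. The projection formula gives $\varpi_*(\CM\ot\varpi^*q_{12}^*K)\cong \varpi_*\CM\ot q_{12}^*K$, where $\varpi_*\CM$ is a locally free sheaf of rank $2$ on $\CC\times_B X\times_B Y$. Hence
\[
\CD_{p_{23}}(\CM\ot\varpi^*q_{12}^*K)\ot\CD_{p_{23}}(\CM)^{-1}\cong \CD_{q_{23}}(\varpi_*\CM\ot q_{12}^*K)\ot\CD_{q_{23}}(\varpi_*\CM)^{-1},
\]
and similarly for $\CN$. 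The statement therefore reduces to a comparison, for a rank-$2$ bundle $E$ and a line bundle $L=q_{12}^*K$, between
\[
\langle E, L\rangle \coloneqq \CD_{q_{23}}(E\ot L)\ot\CD_{q_{23}}(E)^{-1}
\quad\text{and}\quad
\langle \det E, L\rangle\ot\langle \CO, L\rangle .
\]
We need the ratio of these to be the same line bundle for $E=\varpi_*\CM$ and $E=\varpi_*\CN$. In fact the $\langle\CO,L\rangle$ terms cancel in the alternating expression \eqref{eq: important calculation}, so it is enough to prove that
\[
\langle E,L\rangle\ot\langle \det E,L\rangle^{-1}
\]
depends only on $L$ and on the rank of $E$, not on $E$ itself.

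This is the standard Deligne-pairing fact that $\langle E,L\rangle\cong\langle\det E,L\rangle\ot\langle\CO,L\rangle^{\mathrm{rk}E-1}$. I would prove it concretely rather than cite it. Write $L=\CO(D_1-D_2)$ locally on the base. The plan is to use the following ingredients in order.

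\begin{enumerate}
\item Since $X$ is quasi-projective, I can twist $K$ by a sufficiently ample bundle pulled back from $X$. After shrinking $B$ and $Y$ (or working Zariski-locally on $X\times_B Y$), this lets me write $K\cong\CO(D_1)\ot\CO(D_2)^{-1}$, where each $D_i\subset\CC\times_B X$ is an effective relative Cartier divisor finite and flat over $X$ and avoiding the singular locus in the fibers.
\item Additivity of $\CD$ on $0\to E(-D)\to E\to E|_D\to 0$ gives $\langle E,\CO(D)\rangle\cong\det(q_{23,*}(E|_D))$.
\item Because $D$ is finite flat over the base, $\det q_{23,*}(E|_D)\cong \Nm_{D}(\det E|_D)\ot\det q_{23,*}\CO_D$. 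This step follows from the norm formula $\det\pi_*(V)=\Nm(\det V)\ot\det\pi_*\CO$ for a finite flat $\pi$, which I verify on local trivializations.
\end{enumerate}

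The same computation for $\det E$ produces the same $\det q_{23,*}\CO_D$ term, so the differences depend only on $D$. These identifications are canonical, hence they glue, and the local reduction in step (1) is then removed.

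The main obstacle is step (1): producing the divisors $D_i$ globally enough, and making sure the resulting isomorphism is independent of the choices so that it descends. To avoid canonicity issues, I may instead argue directly that the left side of \eqref{eq: important calculation} is multiplicative in $K$. Then it suffices to treat $K=\CO(D)$ for $D$ relatively very ample, after replacing $K$ by $K\ot A^{n}$ and $A^{n}$. A line bundle is determined up to isomorphism, and the statement only asserts an isomorphism. So I only need local choices plus the fact that both sides are compatible with these operations, and integrality of $X$ and $Y$ does not obstruct this. This is the step I expect to need the most care with.
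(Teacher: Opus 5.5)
Your core computation matches the paper's. You use the projection formula along $\varpi$, additivity of $\CD$ on the divisor exact sequences, and the determinant/norm formula for a rank-$2$ bundle on a divisor finite flat over $X\times_B Y$. The leftover terms then depend only on $K$ and the divisors and cancel between $\CM$ and $\CN$. Two slips there are harmless. For $E$ of rank $2$ the formula reads $\det s_*(E|_D)\cong \Nm(\det E|_D)\ot(\det s_*\CO_D)^{\ot 2}$. Your exact sequence computes $\langle E,\CO(-D)\rangle^{-1}$ rather than $\langle E,\CO(D)\rangle$. Both discrepancies depend only on $D$.

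\textbf{The gap is globalization.} You choose the divisors only locally (``after shrinking $B$ and $Y$, or working Zariski-locally on $X\times_B Y$'') and then offer two ways out, neither of which works as stated.
\begin{enumerate}
\item[(a)] You claim the local isomorphisms are canonical and glue. You do not prove this. It would require independence of the choice of divisors and of the chosen isomorphism $K\cong\CO(D_1-D_2)$, which essentially amounts to constructing the Deligne pairing.
\item[(b)] You claim local choices suffice because only an isomorphism is asserted. This is false: any two line bundles on $X\times_B Y$ are Zariski-locally isomorphic, so local isomorphisms prove nothing.
\end{enumerate}
Two further points in your fallback also fail. The ``multiplicativity in $K$'' of the left-hand side is not formal: $\CD(E\ot L)\ot\CD(E)^{-1}$ has a quadratic term in $L$, and showing that it cancels in the alternating expression is as hard as the proposition itself. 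Likewise, a \emph{relatively} very ample class over $X$ need not have a global section cutting out a divisor finite flat over $X$.

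\textbf{The missing idea is that no localization is needed.} Since $X$ is quasi-projective, $\CC\times_B X$ is quasi-projective over $k$. A Bertini-type argument (Lemma~\ref{lem: line bundle as difference of flat divisors}) then writes \emph{every} line bundle $K$ on $\CC\times_B X$ globally as $\CO(D-E)$, with $D,E$ effective Cartier divisors finite flat over $X$. The steps are:
\begin{enumerate}
\item take an ample $\CN$ on $\CC\times_B X$;
\item use general hypersurface sections in $|\CN^{\ot dm}\ot f^*\CO_X(m)|$, where $f\colon\CC\times_B X\to X$, meeting every fiber properly in its smooth locus;
\item take differences in $m$ and then in $d$;
\item twist $K$ by $\CN^{\ot s}$ for large $s$.
\end{enumerate}
With global $D$ and $E$ pulled back to $\CC\times_B X\times_B Y$, every isomorphism in your chain is already global on $X\times_B Y$: the projection formula, additivity, and the norm formula for the finite flat $D_Y\to X\times_B Y$. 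So no canonicity or gluing question arises. This is exactly where the quasi-projectivity of $X$ enters the paper's proof.
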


\begin{proof}
    We first assume that $K \cong \CO(D-E)$ with $D$ and $E$ effective Cartier divisors on $\CC\times_B X$ that are finite flat over $X$. By the projection formula, we have an isomorphism
    \begin{equation}\label{sec1.1 eq 0}
        \CD_{\p23}(\CM \ot \varpi^{*}q_{12}^{*}K) \ot \CD_{\p23}(\CM)^{-1} 
        \cong \CD_{q_{23}}(\varpi_{*}\CM \ot q_{12}^{*}K) \ot \CD_{q_{23}}(\varpi_{*}\CM)^{-1}.
    \end{equation}
    We will compare this with $\CD_{\q23}(\det \varpi_{*}\CM \ot q_{12}^{*}K) \ot \CD_{\q23} (\det \varpi_{*}\CM)^{-1}$. 
    
    Consider the following exact sequences
    \begin{equation}\label{sec1.1 eq 1}
        0 \ra \varpi_{*}\CM (-E_Y) \ra \varpi_{*}\CM \ra (\varpi_{*}\CM)|_{E_Y} \ra 0,
    \end{equation}
    \begin{equation}\label{sec1.1 eq 2}
        0 \ra \varpi_{*}\CM(-E_Y) \ra \varpi_{*}\CM(D_Y-E_Y) \ra (\varpi_{*}\CM(D_Y-E_Y))|_{D_Y} \ra 0,
    \end{equation}
    where $D_Y$ and $E_Y$ denote the pullbacks of $D$ and $E$ to $\CC \times_B X \times_B Y$, respectively.
    Since the determinant of cohomology is additive (\cite[Proposition 44]{Esteves01}), we have
    \begin{equation}\label{sec1.1 eq 3}
        \CD_{q_{23}}(\varpi_{*}\CM) \cong \CD_{q_{23}}(\varpi_{*}\CM(-E_Y)) \ot \CD_{q_{23}}((\varpi_{*}\CM)|_{E_Y}),
    \end{equation}
    \begin{equation}\label{sec1.1 eq 4}
        \CD_{q_{23}}(\varpi_{*}\CM \ot q_{12}^{*}K) \cong \CD_{q_{23}}(\varpi_{*}\CM(-E_Y)) \ot \CD_{q_{23}}((\varpi_{*}\CM(D_Y-E_Y))|_{D_Y}).
    \end{equation}
    Combining \eqref{sec1.1 eq 3} and \eqref{sec1.1 eq 4}, we obtain
    \begin{equation}\label{sec1.1 eq 5}
        \CD_{q_{23}}(\varpi_{*}\CM \ot q_{12}^{*}K) \ot \CD_{q_{23}}(\varpi_{*}\CM)^{-1} \cong \CD_{q_{23}}((\varpi_{*}\CM(D_Y-E_Y))|_{D_Y}) \ot \CD_{q_{23}}((\varpi_{*}\CM)|_{E_Y})^{-1}.
    \end{equation}

    By considering the exact sequences \eqref{sec1.1 eq 1} and \eqref{sec1.1 eq 2} with all $\varpi_{*}\CM$ replaced by $\det(\varpi_{*}\CM)$, we get 
    \begin{equation}\label{sec1.1 eq 6}
    \begin{aligned}
        \CD_{q_{23}}(\det(\varpi_{*}\CM) \ot q_{12}^{*}K) \ot \CD_{q_{23}}(\det \varpi_{*}\CM)^{-1} \cong{}& \CD_{q_{23}}(\det(\varpi_{*}\CM)(D_Y-E_Y)|_{D_Y})\\
        &\ot \CD_{q_{23}}(\det(\varpi_{*}\CM)|_{E_Y})^{-1}.
    \end{aligned}
    \end{equation}

    Let $i \colon D\times_B Y \hookrightarrow \CC \times_B X \times_B Y $ be the closed embedding and $s_{23}= q_{23} \circ i$. By assumption $s_{23}$ is finite and flat. Since $\varpi$ is an \'etale double cover, the sheaf $\varpi_{*}\CM$ is locally free of rank~$2$. 
    Therefore we have
    \begin{align*}
        \CD_{q_{23}}((\varpi_{*}\CM(D_Y-E_Y)) |_{D_Y}) 
        & \cong \det Rq_{23,*}i_{*}i^{*}(\varpi_{*}\CM \ot q_{12}^{*}K)\\
        & \cong \det {s_{23}}_{*}(i^{*}\varpi^{*}\CM \ot i^{*}q_{12}^{*}K)\\
        & \cong \det {s_{23}}_{*}(i^{*}\det\varpi_{*}\CM \ot i^{*}q_{12}^{*}K) \ot \det {s_{23}}_{*}(i^{*}q_{12}^{*}K)\\
        & \cong \CD_{q_{23}}((\det \varpi_{*}\CM(D_Y-E_Y))|_{D_Y}) \ot \CD_{q_{23}}(q_{12}^{*}K|_{D_Y}).
    \end{align*}
    Here the third isomorphism follows from the Grothendieck--Riemann--Roch theorem for the finite flat morphism $s_{23}$. A similar argument shows that 
    \begin{equation*}
        \CD_{q_{23}}((\varpi_{*}\CM) |_{E_Y}) 
         \cong \CD_{q_{23}}((\det \varpi_{*}\CM)|_{E_Y}) \ot \CD_{q_{23}}(\CO_{\CC\times_B X \times_B Y}|_{E_Y}).
    \end{equation*}
    Combining these two calculations with isomorphisms \eqref{sec1.1 eq 0}, \eqref{sec1.1 eq 5}, and \eqref{sec1.1 eq 6}, we get
    \begin{align*}
        \CD_{\p23}(\CM \ot \varpi^{*}q_{12}^{*}K) \ot \CD_{\p23}(\CM)^{-1} \cong{}&
        \CD_{\q23}(\det \varpi_{*}\CM \ot q_{12}^{*}K) \ot \CD_{\q23} (\det \varpi_{*}\CM)^{-1}\\ 
         & \ot \CD_{q_{23}}(q_{12}^{*}K|_{D_Y}) 
        \ot \CD_{q_{23}}(\CO_{\CC\times_B X \times_B Y}|_{E_Y})^{-1}.
    \end{align*}
    Applying the same argument and replacing $\CM$ by $\CN$, we obtain the desired isomorphism \eqref{eq: important calculation}.

    Now it remains to prove that $K$ can be written as the difference of two effective Cartier divisors that are finite flat over $X$. Indeed, this follows from the following lemma.
\end{proof}

\begin{lem}\label{lem: line bundle as difference of flat divisors}
    Let $S$ be a quasi-projective integral scheme over $k$, and let $f \colon C_S \ra S$ be a flat projective family of integral curves with arithmetic genus $g$. Then every line bundle $\CV$ on $\CC_S$ can be written as $\CV \cong \CO_{\CC_S}(D-E)$, where $D$ and $E$ are effective Cartier divisors on $C_S$ that are finite flat over $S$. 
\end{lem}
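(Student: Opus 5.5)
We write $\CV$ as a difference of two sufficiently ample line bundles and represent each by a relative hyperplane section chosen to avoid the singular and embedded-point loci. That choice makes the divisor automatically finite flat over $S$.

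\emph{Step 1: reduction.} Since $S$ is quasi-projective over $k$ and $f$ is projective, $C_S$ is quasi-projective over $k$. Fix an ample line bundle $\CL$ on $C_S$; for instance, take a relatively ample $\CL_0$ tensored with the pullback of an ample bundle from $S$. For $n\gg 0$, both $\CV\ot\CL^{n}$ and $\CL^{n}$ are very ample and globally generated. We then write
\[\CV \cong (\CV\ot\CL^{n})\ot(\CL^{n})^{-1}.\]
It therefore suffices to show the following: if $\CA$ is a globally generated line bundle on $C_S$, then for a general section $s\in H^0(C_S,\CA)$, the zero scheme $D=Z(s)$ is an effective Cartier divisor that is finite and flat over $S$.

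\emph{Step 2: choosing the section.} Since $\CA$ is globally generated, we can choose a finite-dimensional base-point-free subspace $W\subset H^0(C_S,\CA)$ and use a Bertini-type avoidance argument. We require $s$ not to vanish identically on any irreducible component of $C_S$; as $C_S$ is integral (flat over integral $S$ with integral fibers), this just means $s\neq 0$. Then $Z(s)$ is an effective Cartier divisor.

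\emph{Step 3: finiteness and flatness.} This is the crux. We need $s$ not to vanish identically on any whole fiber $C_t$. A general $s$ could vanish on a fiber over a closed subset of $S$ of high codimension, so a dimension count is needed. The locus of $(s,t)$ with $s|_{C_t}=0$ is cut out by $h^0$-many conditions on the fiber. If $W$ surjects onto $H^0(C_t,\CA|_{C_t})$ with dimension at least $\dim S+1$, uniformly in $t$ (arranged by taking $n$ large and using relative Serre vanishing plus cohomology and base change), then the bad locus in $W\times S$ has codimension exceeding $\dim S$. Its projection to $W$ is therefore not dominant.

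Given this, $Z(s)\cap C_t$ is a proper closed subset of the integral curve $C_t$, hence finite, for every $t$. So $Z(s)\to S$ is projective and quasi-finite, hence finite. For flatness we apply the local criterion: $s$ is a nonzerodivisor on each fiber $C_t$, since $C_t$ is integral and $s|_{C_t}\neq0$. Since $C_S$ is $S$-flat, the slicing criterion gives that $Z(s)$ is $S$-flat and that $s$ is a nonzerodivisor, so $Z(s)$ is Cartier.

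We expect the main obstacle to be the uniform dimension count in Step 3, that is, ensuring that a single general section avoids vanishing on every fiber simultaneously. We would handle it as above via the incidence correspondence in $W\times S$.
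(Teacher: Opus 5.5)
Your strategy is the paper's: realize sufficiently positive line bundles as zero loci of general sections that meet every fiber properly, via an incidence dimension count, and then take differences. The bookkeeping differs. The paper never takes sections of an abstract globally generated bundle. It fixes an immersion $C_S\hookrightarrow\BP^N_k$ (from $\CN^{\ot d}$ and a Segre embedding with $S$) and uses only general degree-$m$ hypersurfaces, which represent $\CN^{\ot dm}\ot f^*\CO_S(m)$. It then recovers $\CN$, and afterwards $\CV$, by differencing in $m$ and $d$ and twisting by $\CN^{\ot s}$. Your direct decomposition $\CV\cong(\CV\ot\CL^{n})\ot\CL^{-n}$ is shorter, but it puts all the work into Step~3.

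Step~3 is not justified as written. Relative Serre vanishing and cohomology and base change only make $f_*\mathcal{A}$ locally free with fibers $H^0(C_t,\mathcal{A}|_{C_t})$. Because $S$ is only quasi-projective, a finite-dimensional $W\subset H^0(S,f_*\mathcal{A})$ surjecting onto every fiber requires $f_*\mathcal{A}$ to be globally generated on $S$, and nothing in your setup provides that.

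You do not need surjectivity, only $\operatorname{rank}\bigl(W\to H^0(C_t,\mathcal{A}|_{C_t})\bigr)\geq \dim S+1$ for every closed $t$. This is elementary in a fixed projective space, which is exactly why the paper works there.
\begin{itemize}
\item Replacing $\CL$ by a power, let it give an immersion $C_S\hookrightarrow\BP^N_k$. Each $C_t$ is then a closed integral curve in $\BP^N_k$, and degree-$n$ forms restrict to a space of dimension at least $n+1$ on it: a general linear projection $C_t\to\BP^1$ is finite and surjective, so $k[x_0,x_1]_n$ injects.
\item For $\mathcal{A}=\CV\ot\CL^{n}$, choose finitely many $\sigma_1,\dots,\sigma_r$ generating $\CV\ot\CL^{n_0}$ and put $W=\sum_i\sigma_i\cdot H^0(\BP^N_k,\CO(n-n_0))$. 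Some $\sigma_i$ is nonzero on the integral curve $C_t$, so the rank is at least $n-n_0+1$.
\item Do the incidence count over closed points and pass to arbitrary $t$ using flatness of $f$. A closed fiber $C_{t'}$ with $t'\in\overline{\{t\}}$ lies in the closure of $C_t$, so $s|_{C_t}=0$ would force $s|_{C_{t'}}=0$.
\end{itemize}

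With this repair the rest of your argument is correct. Your flatness step (a nonzerodivisor on each integral fiber plus the slicing criterion) is cleaner than the paper's appeal to constant intersection numbers. It also shows that the paper's requirement that the divisor lie in the regular locus of $C_S$ is unnecessary. That requirement cannot be met in general: for $C_S=C_0\times S$ with $C_0$ nodal and $\dim S\geq1$, a general hypersurface section meets the singular locus $\{\text{node}\}\times S$. So also drop the phrase about avoiding the singular and embedded-point loci from your overview.
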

\begin{proof}
    Since $C_S$ is quasi-projective over $k$, we may choose an ample line bundle $\CN$ on $C_S$. 
    By \cite[Tag 01VS]{stacks-project}, there exists $d_0 \geq 1$ such that for all $d \geq d_0$, there is an immersion~$i \colon C_S \hookrightarrow \BP_S^n$ over $S$ for some $n\geq 0$ such that $\CN^{\ot d}\cong i^{*}\CO_{\BP_S^n}(1)$. 
    Since $S$ is also quasi-projective, the Segre embedding gives us an immersion $C_S \hookrightarrow \mathbb{P}^N_k$, where $N$ depends on $d$. 

    Since S is of finite type, a Bertini-type dimension counting shows that there exists an integer $m_0$ depending on $N$ such that for every $m\geq m_0$ a general hypersurface $H$ of degree $m$ in $\BP_k^N$ intersects each fiber of $C_S \ra S$ properly and the intersection lies in the regular locus of $C_S$. The intersection number of $H$ with each fiber is a constant. Thus $C_S \cdot H$ defines a Cartier divisor on $C_S$ that is finite flat over $S$. This divisor corresponds to the line bundle 
    \begin{equation*}
        \CN^{\ot dm} \ot f^{*}\CO_S(m).
    \end{equation*}
    Replacing $m$ by $m+1$ in the above equation and taking difference, we see that the conclusion holds for
    \[\CN^{\ot d} \ot f^{*}\CO_S(1).\]
    Then replacing $d$ by $d+1$ and taking difference, we see that the conclusion holds for $\CN$.

    Now take an integer $s$ such that $\CV \ot \CN^{\ot s}$ is still ample on $C_S$. The same argument shows that the conclusion holds for $\CV \ot \CN^{\ot s}$, and thus $\CV$.     
\end{proof}

\section{Poincar\'e sheaf on the relative compactified Prym variety}\label{sec: construction}

\subsection{Notation and setup}\label{subsec1.1: notations}
As in the case of compactified Jacobians (see \cite[Remark (2) after Theorem~C]{Ari10b}), to prove Theorems~\ref{thm A}, \ref{thm B}, and \ref{thm: C} for arbitrary families, it suffices to prove them for the universal family. 

For the sake of exposition and to avoid the cumbersome notation associated with algebraic stacks, we will carry out the proofs in the setting of schemes. However, as explained in Remark~\ref{rmk: universal Prym}, the entire argument can be carried out over the universal family using stack-theoretic language. 

Throughout the rest of this article, we work in the following setting. Let $B$ be an irreducible~$k$-scheme. Let $\CC \ra B$ be a flat and projective morphism whose geometric fibers are integral curves of arithmetic genus $g > 1$ with planar singularities. Let $\pi: \widetilde{\CC} \ra \CC$ be an \'etale double cover such that the geometric fibers of $\widetilde{\CC} \ra B$ are integral curves of arithmetic genus~$2g-1$. 

We denote by $\ol{J}_{\CC}$ (resp.~$\ol{J}_{\widetilde{\CC}}$) the relative compactified Jacobian $\ol{\Jac}(\CC/B)$ (resp.~$\ol{\Jac}(\widetilde{\CC}/B)$) that parameterizes relative torsion-free, rank 1 degree 0 sheaves on $\CC$ (resp.~on $\widetilde{\CC}$). They are projective $B$-schemes, flat over $B$, and the geometric fibers are integral and local complete intersection schemes; see \cite[Theorem 8.5]{AK80} and \cite[Theorem 9]{AIK77}. 

We assume that $\ol{J}_{\CC}$ is nonsingular. In particular, this implies that $B$ is nonsingular. According to \cite[Theorem 4.20 (3)]{LSV17}, the relative compactified Prym variety $\ol{\Prym}(\widetilde{\CC}/\CC)$ is nonsingular and $\ol{\Prym}(\widetilde{\CC}/\CC) \ra B$ is flat of relative dimension $g-1$.

Let $J_{\widetilde{\CC}} = \Jac(\widetilde{\CC}/B)$ (resp.~$J_{\CC} = \Jac(\CC/B)$ ) be the relative generalized Jacobian, which is an open dense subset of $\ol{J}_{\widetilde{\CC}}$ (resp.~$\ol{J}_{\CC}$) and forms a nonsingular commutative group scheme over $B$. We define the Norm map 
\begin{equation*}
    \begin{aligned}
        \Nm \colon \Jac(\widetilde{\CC}/B)& \ra \Jac(\CC/B)\\
        L & \longmapsto \det(\pi_{*}L)\ot \det(\pi_{*}\CO_{\widetilde{\CC}})^{-1}. 
    \end{aligned}
\end{equation*}
Then the \emph{relative generalized Prym} of $\widetilde{\CC}$ over $\CC$ is defined to be the irreducible component of $\ker(\Nm)$ containing the zero section: 
\begin{equation}\label{eq: Prym= ker Nm}
    \Prym(\widetilde{\CC}/\CC) \coloneqq \ker(\Nm)_0.
\end{equation}
This is a commutative group scheme over $B$, smooth of relative dimension $g-1$. It is easy to check that 
\begin{equation}\label{Prym=Im}
    \Prym(\widetilde{\CC}/\CC) = \Fix(-\iota^{*})_0 = \mathrm{Im} (1-\iota^{*}) \subset J_{\widetilde{\CC}}.
\end{equation}
Then $\Prym(\widetilde{\CC}/\CC)$ is an open dense subset in $\ol{\Prym}(\widetilde{\CC}/\CC)$, and in particular
\begin{equation*}
    \ol{\Prym}(\widetilde{\CC}/\CC) = \ol{\mathrm{Im} (1-\iota^{*})}.
\end{equation*}
For simplicity, we set $P=\Prym(\widetilde{\CC}/\CC)$ and $\ol{P} = \ol{\Prym}(\widetilde{\CC}/\CC)$. 

\subsection{The Severi inequality}
For a proper curve $C$ over $k$, let $\widetilde{g}(C)$ be the geometric genus of $C$. For a closed point $b\in B$, define $\delta(b)$ to be the dimension of the maximal affine subgroup of $P_b$. According to the discussion following \cite[equation (4.2)]{LSV17}, we get $\delta(b) = g - \widetilde{g}(\CC_b)$. This defines an upper semi-continuous function 
\[\delta \colon B \to \BZ_{\geq 0}, \quad b \mapsto \delta(\CC_b).
\]
For a closed subvariety $Z \subset B$, we define $\delta_Z$ to be the minimal value of the function $\delta$ on $Z$. 

\begin{lem}\label{Severi inequality}
    The compactified Prym fibration $\ol{\Prym}(\widetilde{\CC}/\CC) \ra B$ satisfies the \emph{Severi inequality}
    \begin{equation}
        \mathrm{codim}_B (Z) \geq \delta_Z
    \end{equation}
    for any irreducible subset $Z\subset B$.
\end{lem}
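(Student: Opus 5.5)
The plan is to deduce the inequality from the smoothness of the total space $\ol{J}_{\CC}$, using the classical Severi inequality for families of planar curves with smooth relative compactified Jacobian. We rely on two inputs. First, $\delta(b) = g - \widetilde{g}(\CC_b)$ by the discussion following \cite[equation (4.2)]{LSV17}. Second, this number coincides with the dimension of the maximal affine subgroup of $J_{\CC_b}$, i.e.\ the $\delta$-invariant of $\CC_b$ in the sense used for compactified Jacobians. Indeed, for an integral curve $C$ with normalization $\nu\colon C^\nu\to C$, the generalized Jacobian $\Jac(C)$ is an extension of $\Jac(C^\nu)$ by an affine group of dimension $g(C)-g(C^\nu)$. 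Hence the function $\delta$ in the lemma is exactly the $\delta$-function attached to the compactified Jacobian fibration $\ol{J}_{\CC}\to B$. So it suffices to prove $\codim_B(Z)\geq\delta_Z$ for the family $\CC\to B$ itself.

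For that, we use the standing assumption that $\ol{J}_{\CC}$ is nonsingular. It is well known (e.g.\ \cite[Section~5]{MSY23} and the references there, or the deformation-theoretic argument going back to Fantechi--G\"ottsche--van Straten) that smoothness of $\ol{J}_{\CC}$ at points over $b$ forces the family $\CC\to B$ to be, \'etale locally near $b$, such that the induced map from a neighborhood of $b$ to the product of miniversal deformation spaces of the singularities of $\CC_b$ is transverse to the equigeneric (``$\delta$-constant'') stratum. In the miniversal deformation of a planar singularity, the locus where the total $\delta$-invariant stays $\geq \delta$ has codimension $\delta$ (Teissier, Diaz--Harris). Transversality then gives that $\{b'\in B : \delta(b')\geq \delta(b)\}$ has codimension $\geq \delta(b)$ near $b$. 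Applying this at a general point $b\in Z$, where $\delta(b)=\delta_Z$ and $Z$ lies inside the stratum $\{\delta\geq\delta_Z\}$, yields $\codim_B Z\geq\delta_Z$.

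The main obstacle is the transversality step, namely passing from smoothness of $\ol{J}_{\CC}$ to the codimension estimate. I would first try to cite this directly as the known Severi inequality for compactified Jacobians with nonsingular total space, as used in \cite{MSY23} for the proof of (FV). If that is not available in the needed form, I would argue directly. Smoothness of $\ol{J}_{\CC}$ at the point corresponding to the pushforward of a line bundle from the normalization (a point of the most degenerate stratum in the fiber) means the Kodaira--Spencer-type map from $T_bB$ to the local deformation spaces surjects onto the normal space of the equigeneric locus. This is the standard computation, and it gives the claim.

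A slicker alternative, which I would also keep in mind, goes through the abelian-fibration formalism. Since $\ol{P}$ is nonsingular (by \cite[Theorem 4.20]{LSV17}) and $\ol{P}\to B$ has integral fibers of dimension $g-1$ carrying the action of $P$, the standard argument of Ng\^o applies: a $\delta$-regularity statement follows from the existence of a symplectic-like pairing or from the support inequality. However, the direct deformation-theoretic route via $\ol{J}_{\CC}$ seems more robust, so I would lead with it.
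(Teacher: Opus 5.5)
Your main route is the paper's: identify $\delta(b)=g-\widetilde{g}(\CC_b)$ with the $\delta$-function of $J_{\CC}\to B$, then invoke the Severi inequality for families of planar curves with nonsingular $\ol{J}_{\CC}$, which the paper simply cites as \cite[Lemma~4.1]{MaulikShen20} instead of re-deriving it via transversality to the equigeneric stratum. Drop the Ng\^o-style alternative in your last paragraph, though: $\delta$-regularity is a hypothesis of Ng\^o's support theorem rather than a consequence of smoothness of $\ol{P}$, and the support inequality you mention bounds codimension in the opposite direction.
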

\begin{proof}
    Let $\delta'$ be the $\delta$-function associated to commutative group scheme $J_{\CC} \ra B$. Then we have $\delta'(b) = g -\widetilde{g}(\CC_b) = \delta(b)$ holds for every closed point $b\in B$. 
    Since $\ol{J}_{\CC}$ is nonsingular by assumption, \cite[Lemma 4.1]{MaulikShen20} shows that 
    $\mathrm{codim}_B (Z) \geq \delta'_Z = \delta_Z$. 
\end{proof}

Consider the stratification of $B$ by locally closed subsets $B^{(\widetilde{g})}$, where $B^{(\widetilde{g})}$ parameterizes points $b\in B$ for which $\widetilde{g}(\CC_b) = \widetilde{g}$. The Severi inequality then implies that 
\[\codim (B^{(\widetilde{g})}) \geq g-\widetilde{g}.\]

\subsection{The Poincar\'e line bundle on Prym varieties of \'etale double covers of smooth curves}\label{subsection: over smooth curves}
In this section, all the arguments remain valid for families of smooth curves. For simplicity, we only consider smooth curves over the field $k$. 

Let $C$ be a proper smooth curve of genus $g$ over $k$, and $\widetilde{C} \ra C$ be an \'etale double cover. We denote by $J = \Jac(\widetilde{C})$ the Jacobian variety of $\widetilde{C}$, which is an abelian variety of dimension~$2g-1$. It has a canonical principal polarization 
\[\varphi_{\Theta} \colon J \xrightarrow{\sim} \hat{J},\quad x \mapsto T_x^{*}(\CO_J(\Theta)) \ot \CO_J(-\Theta),\]
where $\Theta$ is the theta divisor of the Jacobian variety $J$, and $T_{x} \colon J \ra J$ is the translation by the point $x$. We denote by $\CP$ the pullback of the normalized universal sheaf on $J \times \hat{J}$ via the isomorphism $\id_J \times \varphi_{\Theta}$. This line bundle is known as the \emph{normalized Poincar\'e line bundle} on $J\times J$. For brevity, we refer to it simply as the Poincar\'e line bundle. Alternatively, it can be described as 
\[\CP = m^{*}\CO_J(\Theta) \ot p_1^{*} \CO_J(-\Theta) \ot p_2^{*} \CO_J(-\Theta),\] 
where $m \colon J \times J \ra J$ is the multiplication map, and $p_i$ for $i=1,2$ denotes the projection onto the $i$-th factor. 

We denote by $P = \Prym(\widetilde{C}/C)$ the Prym variety. Mumford \cite{Mumford74} proved that $P$ also has a canonical principal polarization $\varphi_{\Xi} \colon P \xrightarrow{\sim} \hat{P}$, where $\Xi$ is the theta divisor on the Prym variety~$P$. Moreover, the following relation holds:
\begin{equation}\label{eq: relation between polarization}
    \varphi_{\Theta|_P} = \varphi_{\Xi} \circ [2],
\end{equation}
where $[2]$ is the multiplication-by-2 map on $P$. Let $\CG$ be the normalized Poincar\'e line bundle on $P\times P$. In the rest of this section, we study the relationship between the sheaves $\CP$ and $\CG$. 

Let $\CR$ be the restriction of $\CP$ to $J\times P$, and let $f$ be the homomorphism $1-\iota^{*}\colon J \ra P$, which is faithfully flat.

\begin{lem}\label{lem: smooth case equivariant}
    We have an isomorphism $\CR \cong (f\times \id_{P})^{*}\CG$ of line bundles on $J \times P$.
\end{lem}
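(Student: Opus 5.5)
The plan is to compare the two line bundles through the homomorphisms $J \to \hat{P}$ they induce, and then invoke rigidity. Both $\CR$ and $(f\times \id_P)^{*}\CG$ are trivialized along $J\times\{0\}$ and along $\{0\}\times P$. For $\CR$ this holds because $\CP$ is normalized on both factors of $J\times J$. For the pullback it holds because $\CG$ is normalized and $f(0)=0$. By the seesaw principle together with the theorem of the square, a line bundle on the product of abelian varieties $J\times P$ that is rigidified along $\{0\}\times P$ is determined by the classifying morphism $J\to \Pic(P)$, $x\mapsto (\text{bundle})|_{\{x\}\times P}$. Rigidification along $J\times\{0\}$ forces this morphism to be a homomorphism $J\to\hat P$. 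So it suffices to show that the two classifying homomorphisms agree.

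Let $j\colon P\hookrightarrow J$ denote the inclusion. The classifying map of $\CR$ is $x\mapsto \CP|_{\{x\}\times P}=\hat{j}(\varphi_\Theta(x))$, that is, $\hat j\circ\varphi_\Theta$. The classifying map of $(f\times\id_P)^{*}\CG$ is $x\mapsto \CG|_{\{f(x)\}\times P}$, that is, $\varphi_\Xi\circ f$. I will prove $\hat j\circ\varphi_\Theta=\varphi_\Xi\circ f$ by checking it separately on two abelian subvarieties whose sum is $J$:
\begin{itemize}
\item $P=\Fix(-\iota^{*})_0$;
\item $A:=\Fix(\iota^{*})_0=\mathrm{Im}(1+\iota^{*})$.
\end{itemize}
These do sum to $J$: every $x\in J$ satisfies $2x=(x-\iota^{*}x)+(x+\iota^{*}x)$, and $[2]$ is surjective on $J$. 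Since both sides are homomorphisms, agreement on $P$ and on $A$ gives agreement on $P+A=J$.

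On $P$ we have $f(y)=y-\iota^{*}y=2y$. Hence $\varphi_\Xi\circ f\circ j=\varphi_\Xi\circ[2]$. By \eqref{eq: relation between polarization} this equals $\varphi_{\Theta|_P}=\hat j\circ\varphi_\Theta\circ j$, so the two maps agree on $P$.

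On $A$ the right-hand side vanishes, because $f$ kills $\mathrm{Im}(1+\iota^{*})$. It remains to show that $\hat j\circ\varphi_\Theta$ vanishes on $A$, and I expect this to be the main step.

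- The involution $\iota^{*}$ preserves the algebraic equivalence class of $\Theta$, since it is an automorphism preserving the principal polarization of the Jacobian. It follows that $\CP$ is $(\iota^{*}\times\iota^{*})$-invariant.
- Take $z\in A$ and $y\in P$. The $\iota$-invariance of $\CP$ gives $\CP|_{(z,y)}\cong\CP|_{(\iota^{*}z,\iota^{*}y)}=\CP|_{(z,-y)}$, uniformly in $y$.
- Since $\CP|_{\{z\}\times J}$ lies in $\hat J$, it satisfies $[-1]^{*}L\cong L^{-1}$. Therefore $L:=\hat j\varphi_\Theta(z)$ satisfies $L\cong L^{-1}$, so $L\in\hat P[2]$.
- Thus $\hat j\circ\varphi_\Theta$ maps the connected group $A$ into the finite group $\hat P[2]$, and sends $0$ to $0$. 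Hence it is trivial on $A$.

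This finishes the comparison of classifying homomorphisms and proves the isomorphism $\CR\cong(f\times\id_P)^{*}\CG$. The same argument works verbatim for families of smooth curves: one uses relative rigidity and the relative seesaw principle over the base.
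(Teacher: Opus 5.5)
Your argument is correct, and its skeleton matches the paper's. The paper also reduces to comparing classifying maps: it gets $\widetilde f\colon J\to P$ from the universal property of $\CG$ and checks $\widetilde f=f$ pointwise. It also splits $J=\pi^{*}\Jac(C)+P$, by writing $M\cong\pi^{*}L\otimes N$; your $A=\mathrm{Im}(1+\iota^{*})$ is exactly $\pi^{*}\Jac(C)$. And it handles the $P$-part with $\varphi_{\Theta|_P}=\varphi_\Xi\circ[2]$, as you do.

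The genuine difference is the key step, namely triviality of $\CP|_{\{z\}\times P}$ for $z\in\pi^{*}\Jac(C)$.
\begin{itemize}
\item The paper computes. It writes $\CP$ via determinants of cohomology of a universal sheaf $\CU$ and uses Proposition~\ref{prop: important calculation} to push the computation down to $C$, where $L$ pairs with $\det\pi_{*}\CU'$. It then uses that $\Nm$ is trivial on $P$. This is the adjunction $\langle\pi^{*}L,N\rangle=\langle L,\Nm N\rangle$ in disguise.
\item You argue by symmetry. Since $\iota^{*}$ preserves the polarization, $\CP$ is $(\iota^{*}\times\iota^{*})$-invariant. Because $\iota^{*}$ acts by $+1$ on $A$ and by $-1$ on $P$, the restriction lies in $\hat P[2]$. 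It is then trivial by connectedness of $A$ and finiteness of $\hat P[2]$.
\end{itemize}
Your route is shorter and avoids universal sheaves and Proposition~\ref{prop: important calculation} altogether. However, it relies on abelian-variety facts specific to smooth fibres: rigidity, $[-1]^{*}$ acting by inversion on $\Pic^{0}$, and finiteness of $\hat P[2]$. The paper's computation is reused essentially verbatim in Step~1 of Lemma~\ref{lem: equivariant}, over the whole base including singular curves, where those inputs are not directly available.

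One small slip: in your first paragraph the roles of the two rigidifications are swapped. Triviality along $J\times\{0\}$ is what removes the ambiguity $p_1^{*}M$ left by the seesaw principle. Triviality along $\{0\}\times P$ is what makes the classifying map send $0\mapsto 0$, and hence be a homomorphism $J\to\hat P$. For example, $p_1^{*}M$ with $M\in\hat J$ nontrivial is rigidified along $\{0\}\times P$ yet has zero classifying map. Since both of your bundles carry both rigidifications, the conclusion is unaffected.
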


Although this result is likely known to experts, we are unaware of an explicit reference in the literature. Therefore, we include a proof here for completeness. The key ingredient of the proof is to compare the polarization using \eqref{eq: relation between polarization}.
\begin{proof}
    According to the universal property of $\CG$, there exists a unique morphism $\widetilde{f} \colon J \ra P$ such that $\CR = (\widetilde{f}\times 1)^{*}\CG$. To show that $\widetilde{f} = f$, it suffices to verify that 
    \begin{equation}\label{eq: fiberwise equal}
        \CR|_{[M] \times P} \cong \CG|_{[M\ot \iota^{*}M^{-1}]\times P}
    \end{equation}
    for every line bundle $M$ on $\widetilde{C}$. 
    
    We have
    \[\CR|_{[M] \times P} \cong \CP|_{[M] \times P} \cong  (T_{[M]}^{*}\CO_J(\Theta) \ot \CO_J(-\Theta))|_P.\]
    Take $L\in \Jac(C)$ and $N\in \Prym(\widetilde{C}/C)$ such that 
    \[M \cong \pi^{*}L \ot N,\]
    we get 
    \[\CR|_{[M] \times P} \cong T_{[N]}^{*}((T_{[\pi^{*}L]}^{*}\CO_J(\Theta))|_P) \ot \CO_J(-\Theta)|_P.\]
    A similar calculation shows that 
    \[\CG|_{[M\ot \iota^{*}M^{-1}]\times P} \cong T_{2[N]}^{*}\CO_P(\Xi) \ot \CO_P(-\Xi).\]
    
    From \eqref{eq: relation between polarization} we get 
    \[T_{[N]}^{*}(\CO_J(\Theta)|_P) \ot \CO_J(-\Theta)|_P \cong T_{2[N]}^{*}\CO_P(\Xi) \ot \CO_P(-\Xi).\]
    Thus 
    \[\CR|_{[M] \times P} \ot \CG^{-1}|_{[M\ot \iota^{*}M^{-1}]\times P} \cong T_{[N]}^{*}((T_{[\pi^{*}L]}^{*}\CO_J(\Theta))|_P) \ot T_{[N]}^{*}(\CO_J(-\Theta))|_P).\]
    Then to prove \eqref{eq: fiberwise equal}, we only need to check
    \[\CR_{[\pi^{*}L]\times P} \cong (T_{[\pi^{*}L]}^{*}\CO_J(\Theta) \ot \CO_J(-\Theta))|_P\]
    is trivial for every $L \in \Jac(C)$. 

    Notice that the Poincar\'e line bundle can also be written as 
    \begin{equation}
    \CP \cong \CD_{\p23}(p_{12} ^{*}\CU \ot p_{13}^{*}\CU) \ot 
    \CD_{\p23}(p_{12}^{*}\CU)^{-1} \ot 
    \CD_{\p23}(p_{13}^{*}\CU)^{-1} \ot 
    \CD_{\p23}(\CO_{\widetilde{C}\times J_{\widetilde{C}} \times J_{\widetilde{C}}}),
    \end{equation}
    where $p_{ij}$ is the projection onto the $(i,j)$-th factors of $\widetilde{C} \times {J}_{\widetilde{C}} \times {J}_{\widetilde{C}}$, and $\CU$ is a universal sheaf on $\widetilde{C} \times {J}_{\widetilde{C}}$ (see, e.g., \cite[Remark 2.4]{Esteves99}). Also, according to \eqref{eq: Prym= ker Nm}, 
    $\det (\pi \times \id_P)_{*}\CU'$ is fiberwise isomorphic to $\det (\pi \times \id_P)_{*}\CO$ over $P$. Then the seesaw principle yields that there exists a line bundle $M$ on $P$ such that 
    \begin{equation}\label{lem1.5 eq 1}
        \det (\pi \times \id_P)_{*}\CU' \cong \det (\pi \times \id_P)_{*}\CO \ot q_2^{*}M.
    \end{equation}

    Therefore we get 
    \begin{align*}
        \CR|_{[\pi^{*}L] \times P} 
        \cong{}&  \CD_{p_2}((\pi \times \id_P)^{*}q_1^{*}L \ot \CU') \ot \CD_{p_2}(\CU')^{-1} \ot \CD_{p_2}((\pi \times \id_P)^{*}q_1^{*}L )^{=1} \ot \CD_{p_2}(\CO) \\
        \cong{}&  \CD_{q_2}(q_1^{*}L \ot \det (\pi \times \id_P)_{*}\CU') \ot \CD_{q_2}(\det (\pi \times \id_P)_{*}\CU')^{-1} \\
        & \ot \CD_{q_2}(q_1^{*}L \ot \det (\pi \times \id_P)_{*}\CO)^{-1} \ot \CD_{q_2}(\det (\pi \times \id_P)_{*}\CO) \\
        \cong{}&  \CO,
    \end{align*}
    where $\CU'$ is the restriction of $\CU$ on $\widetilde{C}\times P$, and $p_i$ (resp.~$q_i$) is the projection onto the $i$-th factor of $\widetilde{C} \times P$ (resp.~$C \times P$). The first isomorphism holds by the base change property of determinant of cohomology. The second holds by Proposition~\ref{prop: important calculation}. The third isomorphism follows from \eqref{lem1.5 eq 1} and the projection formula of the determinant of cohomology \cite[Proposition 44 (3)]{Esteves01}. 
\end{proof}

\subsection{Construction of the Poincar\'e line bundle on $P\times_B {P}$ under assumptions}
In this section, we assume that the families of curves $\widetilde{\CC} \ra B$ and $\CC \ra B$ satisfy the following two assumptions: 
\begin{enumerate}
    \item [(A1)] $\widetilde{\CC} \ra B$ admits a section whose image lies in the smooth locus of the fibers;
    \item [(A2)] $B$ is quasi-projective. 
\end{enumerate}
Assumption (A1) implies that $\CC \ra B$ also admits a section that lies in the smooth locus. It guarantees the existence of universal sheaves over relative Jacobians. Assumption (A2) is needed in the calculation; see Proposition~\ref{prop: important calculation}. We will explain how to remove these assumptions in Section~\ref{subsec: no section}.

We first briefly recall the construction of the Poincar\'e line bundle $\CP$ on $J_{\widetilde{\CC}}\times_B {J}_{\widetilde{\CC}}$. The existence of section allows us to consider the universal sheaf $\CU$ on $\widetilde{\CC} \times_B {J}_{\widetilde{\CC}}$ that is normalized in the sense that it is trivialized along the given section. Similarly, let $\CV$ be the normalized universal sheaf on $\CC \times_B {J}_{\CC}$. Consider the projections 
\begin{equation*}
    \begin{tikzcd}
        & \widetilde{\CC} \times_B J_{\widetilde{\CC}} \times_B J_{\widetilde{\CC}}  \arrow[dl,"p_{12}"'] \arrow[d,"p_{13}"] \arrow[dr,"p_{23}"]&  \\
        \widetilde{\CC} \times_B J_{\widetilde{\CC}} & \widetilde{\CC} \times_B J_{\widetilde{\CC}} & J_{\widetilde{\CC}} \times_B J_{\widetilde{\CC}}.
    \end{tikzcd}
\end{equation*}
The Poincar\'e line bundle on $J_{\tilC} \times_B {J}_{\tilC}$ is defined to be
\begin{equation}\label{eq: CP}
    \CP = \CD_{\p23}(p_{12} ^{*}\CU \ot p_{13}^{*}\CU) \ot 
    \CD_{\p23}(p_{12}^{*}\CU)^{-1} \ot 
    \CD_{\p23}(p_{13}^{*}\CU)^{-1} \ot 
    \CD_{\p23}(\CO_{\widetilde{\CC} \times_B {J}_{\widetilde{\CC}} \times_B {J}_{\widetilde{\CC}}}).
\end{equation}
It is easy to check that $\CP$ is normalized, being trivial along the zero sections of both factors; over the locus of smooth curves $B^{(g)}$, it restricts to the Poincar\'e line bundle defined in Section~\ref{subsection: over smooth curves} (see \cite[Remark 2.4]{Esteves99}). 

\begin{rmk}
    Since we are working on degree 0 compactified Jacobians, the construction of $\CP$ is independent of the choices of the universal sheaves and the sections of $\widetilde{\CC} \ra B$; see \cite[Proposition 3.1]{AddingtonDonovanMeachan16} or \cite[Section 3.6]{MSY23}. 
\end{rmk}

Let $\CR$ be the restriction of $\CP$ to ${J_{\widetilde{\CC}}\times_B P}$. Then by the base-change property of the determinant of cohomology (\cite[Proposition 44]{Esteves01}) we get 
\begin{equation}\label{eq: CR}
    \CR \cong \CD_{\p23}(p_{12} ^{*}\CU \ot p_{13}^{*}\CU') \ot 
    \CD_{\p23}(p_{12}^{*}\CU)^{-1} \ot 
    \CD_{\p23}(p_{13}^{*}\CU')^{-1} \ot 
    \CD_{\p23}(\CO_{\widetilde{\CC} \times_B {J}_{\widetilde{\CC}} \times_B P}), 
\end{equation}
where $p_{ij}$ is the projection onto the $(i,j)$-th factors of $\widetilde{\CC} \times_B {J}_{\widetilde{\CC}} \times_B P$, and $\CU'$ is the restriction of $\CU$ to $\widetilde{\CC}\times_B P$. 

We denote by $f$ the morphism
\begin{equation}\label{eq: def of f}
    f \coloneq 1-\iota^{*}: J_{\tilC} \ra P,
\end{equation}
which is obviously faithfully flat. Consider the morphism of $B$-group schemes induced by the \'etale double cover $\pi \colon \widetilde{\CC} \ra \CC$:  
\[\pi^{*} \colon J_{\CC} \ra J_{\widetilde{\CC}}, \quad F \mapsto \pi^{*}F. \]
Let $\Lambda \coloneqq \ker \pi^{*}$ be the subgroup scheme of $J_{\CC}$. It is generated by the 2-torsion line bundle 
\[\eta \coloneqq \det \pi_{*}\CO_{\widetilde{\CC}}.\]
Then the image $\pi^{*}J_{\CC}$ is a subgroup of $J_{\widetilde{\CC}}$ isomorphic to the quotient group scheme $J_{\CC} \big/ \Lambda$. 

\begin{lem}\label{lem: descent=equivariant}
    The group scheme $J_{\widetilde{\CC}}$ is a $\pi^{*}J_{\CC}$-torsor over $P$ in the fppf topology. 
\end{lem}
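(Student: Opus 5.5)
We need to show that the action map
\[
a\colon \pi^{*}J_{\CC} \times_B J_{\widetilde{\CC}} \to J_{\widetilde{\CC}} \times_P J_{\widetilde{\CC}}, \quad (\pi^{*}L, M) \mapsto (\pi^{*}L\ot M, M)
\]
is well defined and an isomorphism, and that $f=1-\iota^{*}\colon J_{\widetilde{\CC}} \to P$ is fppf surjective.

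The second point is already available: $f$ is faithfully flat and of finite presentation, as noted after \eqref{eq: def of f}, so $J_{\widetilde{\CC}} \to P$ is an fppf cover.

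For the first point, $\pi^{*}J_{\CC}$ acts on $J_{\widetilde{\CC}}$ by translation, and this action preserves fibers of $f$. Indeed, $\iota^{*}\pi^{*}L \cong \pi^{*}L$ because $\pi\circ\iota=\pi$, so
\[
f(\pi^{*}L\ot M) = \pi^{*}L\ot M\ot \pi^{*}L^{-1}\ot \iota^{*}M^{-1} = f(M).
\]
Since $J_{\widetilde{\CC}}$ is a commutative group scheme and $f$ is a homomorphism, the usual argument reduces the torsor statement to the equality of subgroup schemes
\[
\ker(f) = \pi^{*}J_{\CC}
\]
inside $J_{\widetilde{\CC}}$, as fppf sheaves. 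Given this, the map $(g,M)\mapsto (gM,M)$ has inverse $(M_1,M_2)\mapsto (M_1M_2^{-1},M_2)$, because $M_1M_2^{-1}\in\ker f$.

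The inclusion $\pi^{*}J_{\CC}\subseteq \ker f$ is the computation above. The reverse inclusion is the main obstacle. I would check it fiberwise and then use flatness.

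\textbf{Dimension count.} Both $\ker f$ and $\pi^{*}J_{\CC}\cong J_{\CC}/\Lambda$ are closed subgroup schemes. Since $P$ has relative dimension $g-1$ and $J_{\widetilde{\CC}}$ has relative dimension $2g-1$, $\ker f$ has relative dimension $g$, which equals that of $\pi^{*}J_{\CC}$. In characteristic $0$ the fibers $\ker(f)_b$ are smooth group schemes, so it suffices to show that on each geometric fiber $\ker(f)_b = \ker(1-\iota^{*})$ is connected, or at least that every $\iota$-invariant line bundle $M$ on $\widetilde{\CC}_b$ comes from $\CC_b$.

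\textbf{Descent of invariant bundles.} If $\iota^{*}M\cong M$, choose an isomorphism $\phi\colon \iota^{*}M \to M$. Then $\phi\circ\iota^{*}\phi$ is an automorphism of $M$, hence a scalar $c$, because $\widetilde{\CC}_b$ is integral and proper. Rescaling $\phi$ by $c^{-1/2}$, which is possible since $k$ is algebraically closed, makes $\phi$ a linearization. Since $\iota$ acts freely, $\pi$ being étale, étale descent gives $M\cong \pi^{*}L$. This proves the equality on geometric points.

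\textbf{Scheme-theoretic equality.} Both $\ker f$ and $\pi^{*}J_{\CC}$ are smooth over $B$: the first because $f$ is faithfully flat between smooth $B$-group schemes, so its kernel is flat with smooth fibers in characteristic zero; the second because it is a quotient of the smooth $J_{\CC}$ by the finite étale $\Lambda$. A closed immersion of flat finitely presented $B$-schemes that is an isomorphism on all geometric fibers is an isomorphism, by the fiberwise criterion. Hence $\ker f = \pi^{*}J_{\CC}$, and $J_{\widetilde{\CC}}\to P$ is a $\pi^{*}J_{\CC}$-torsor in the fppf topology.
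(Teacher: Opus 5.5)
Your proposal is correct. It has the same skeleton as the paper's proof: identify $\ker f$ with $\pi^{*}J_{\CC}$, then use faithful flatness of $f$. The difference is that you prove what the paper only cites. The paper takes the scheme-theoretic equality $\ker f=\pi^{*}J_{\CC}$ from a lemma of Sacc\`a. It then quotes a general result turning the exact sequence $1\to\pi^{*}J_{\CC}\to J_{\widetilde{\CC}}\to P\to 1$ of fppf sheaves, with $f$ faithfully flat, into a torsor.

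Your shearing map $(g,M)\mapsto(gM,M)$, with inverse $(M_1,M_2)\mapsto(M_1M_2^{-1},M_2)$, is the content of the second citation. For the first you give an actual argument:
\begin{enumerate}
\item On a geometric fiber, an $\iota$-invariant $M$ has an isomorphism $\phi\colon\iota^{*}M\to M$. Since $\Aut(M)=k^{*}$ on the integral proper curve $\widetilde{\CC}_b$, $\phi$ can be rescaled to a $\BZ/2$-linearization, so $M$ descends along the \'etale double cover. The descended $L$ has degree $0$ because $\deg\pi^{*}L=2\deg L$, a point you leave implicit.
\item Cartier's theorem makes $\ker(f)_b$ reduced, so equality on points suffices.
\item A fibral criterion then gives equality over $B$.
\end{enumerate}
This makes the proof self-contained and shows exactly where characteristic zero and integrality of the fibers of $\widetilde{\CC}\to B$ are used. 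The citation route buys brevity.

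One step needs fixing. You assert that $\pi^{*}J_{\CC}$ is a closed subgroup scheme of $J_{\widetilde{\CC}}$ and then use the criterion for closed immersions. Over a general base, closedness of this image is not clear beforehand; it is really a consequence of the lemma. You do not need it. Apply the fibral isomorphism criterion (EGA~IV, 17.9.5) directly to the homomorphism $J_{\CC}/\Lambda\to\ker f$ induced by $\pi^{*}$. Both sides are flat and of finite presentation over $B$. On each geometric fiber the map is a homomorphism of smooth groups in characteristic $0$ that is injective (because $\Lambda=\ker\pi^{*}$ by definition) and surjective on points (by your descent argument). Hence it is an isomorphism on fibers, and therefore an isomorphism.
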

\begin{proof}
    The morphism $f \colon J_{\widetilde{\CC}} \ra P$ has scheme-theoretic kernel $\pi^{*}J_{\CC}$; see \cite[Lemma 2.8]{Sacca13b}. Since~$f$ is faithfully flat, it follows that 
    \[1 \ra \pi^{*}J_{\CC} \ra J_{\widetilde{\CC}} \ra P \ra 1\]
    is an exact sequence in $\mathrm{ShAb_{B}}$, the category of sheaves of abelian groups on $(B)_{\mathrm{fppf}}$. 
    The claim now follows from \cite[Proposition 4.31]{BenMooen12}.
\end{proof}

Consider the group action 
\[\sigma \colon \pi^{*}J_{\CC} \times_B J_{\widetilde{\CC}} \ra J_{\widetilde{\CC}}, \quad (\pi^{*}F,L) \mapsto \pi^{*}F \ot L,\]
which lifts the action of $\pi^{*}J_{\CC}$ on $J_{\widetilde{\CC}} \times_B P$ via $\sigma \times \id_{P}$. We now show that $\CR$ is equivariant with respect to this action. 

\begin{lem}\label{lem: equivariant}
    The sheaf $\CR$ is $\pi^{*}J_{\CC}$-equivariant. More precisely, there is an isomorphism 
    \[\alpha \colon (\sigma \times \id_{P})^{*}\CR  \xrightarrow{\sim} p_{23}^{*}\CR\]
    on $\pi^{*}J_{\CC} \times_B J_{\widetilde{\CC}}\times_B P$
    satisfying the cocycle condition 
    \begin{equation}\label{eq: cocycle condition}
        (m \times \id_{J_{\widetilde{\CC}}} \times \id_{P})^{*} \alpha = p_{234}^{*}\alpha \circ (\id_{\pi^{*}J_{\CC}}\times \sigma \times \id_{P})^{*}\alpha
    \end{equation}
    on $\pi^{*}J_{\CC} \times_B \pi^{*}J_{\CC} \times_B J_{\widetilde{\CC}}\times_B P$, where $m$ denotes the group multiplication on $\pi^{*}J_{\CC}$.
\end{lem}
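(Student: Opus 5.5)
The plan is to exploit the determinantal formula \eqref{eq: CR} for $\CR$ and its bilinearity (the biextension structure of the Poincar\'e bundle). To avoid dealing directly with the quotient $\pi^{*}J_{\CC}\cong J_{\CC}/\Lambda$, I would first construct an isomorphism $\widetilde{\alpha}$ on $J_{\CC}\times_B J_{\widetilde{\CC}}\times_B P$ for the action $\widetilde{\sigma}(F,L)=\pi^{*}F\ot L$. I would then check that $\widetilde{\alpha}$ is invariant under $\Lambda$ and descend it along the fppf quotient $J_{\CC}\ra \pi^{*}J_{\CC}$.

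\textbf{Step 1 (bilinearity).} On $\widetilde{\CC}\times_B J_{\CC}\times_B J_{\widetilde{\CC}}$, the sheaf $(\id\times\widetilde{\sigma})^{*}\CU$ agrees with $(\pi\times\id)^{*}p_{12}^{*}\CV\ot p_{13}^{*}\CU$ up to a line bundle pulled back from $J_{\CC}\times_B J_{\widetilde{\CC}}$. By normalization along the section from (A1), this line bundle is trivial. Substituting into \eqref{eq: CR}, the standard biextension identity for $\CD$ gives a canonical isomorphism
\[
(\widetilde{\sigma}\times\id_P)^{*}\CR\cong p_{23}^{*}\CR\ot (\pi^{*}\times\id_P)^{*}\CR|_{J_{\CC}\times_B P}.
\]
This identity comes from the multiplicativity of the Deligne pairing, which one can check via Lemma~\ref{lem: line bundle as difference of flat divisors} by reducing to divisors that are finite and flat over the base.

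\textbf{Step 2 (canonical trivialization).} I would trivialize $(\pi^{*}\times\id_P)^{*}\CR$ on $J_{\CC}\times_B P$ canonically, exactly as in the proof of Lemma~\ref{lem: smooth case equivariant}, but now in families. The key inputs are:
\begin{itemize}
\item Proposition~\ref{prop: important calculation}, applied with $\CM=\CU'$, $\CN=\CO$ and $K=\CV$, which is legitimate by (A2);
\item the relation $\det(\pi\times\id_P)_{*}\CU'\cong\det(\pi\times\id_P)_{*}\CO\ot q_2^{*}M$, which follows from \eqref{eq: Prym= ker Nm} and the seesaw principle;
\item the projection formula for $\CD$.
\end{itemize}
Together these express the bundle as $\CD_{q_2}(q_1^{*}\CV\ot q_2^{*}M)\ot\CD_{q_2}(q_2^{*}M)^{-1}\ot(\cdots)^{-1}$, which the projection formula identifies with $\CO$. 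The normalization of $\CV$ and $\CU'$ rigidifies the choices, so $M$ and the resulting trivialization are canonical. Composing with Step 1 gives $\widetilde{\alpha}$.

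\textbf{Step 3 (cocycle condition and descent).} Both sides of \eqref{eq: cocycle condition} are isomorphisms between the same line bundles. Their ratio is therefore a unit on $J_{\CC}\times_B J_{\CC}\times_B J_{\widetilde{\CC}}\times_B P$, and similarly the $\Lambda$-invariance defect is a unit on $\Lambda\times_B J_{\widetilde{\CC}}\times_B P$. I would argue these units are $1$ in one of two ways:
\begin{itemize}
\item structurally, since every isomorphism used is one of the canonical biextension, additivity or projection-formula isomorphisms, whose associativity is built into the formalism of \cite[Proposition 44]{Esteves01};
\item by rigidity, restricting to the zero section of $P$. All bundles there are canonically trivial, since $\CR$ is normalized, and the isomorphisms restrict to the identity. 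One then uses that the ratio is constant along the fibers of $P$, because $P$ has geometrically integral fibers proper over the part where it matters. Concretely, I would instead restrict the unit to the dense open locus over $B^{(g)}$, where $P$ is an abelian scheme and the fibers are proper, and then extend by integrality.
\end{itemize}
Once $\widetilde{\alpha}$ satisfies the cocycle condition and is $\Lambda$-invariant, fppf descent along $J_{\CC}\ra\pi^{*}J_{\CC}$ (Lemma~\ref{lem: descent=equivariant}) yields $\alpha$ satisfying \eqref{eq: cocycle condition}.

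I expect the main obstacle to be this last step: checking that the units measuring the cocycle defect and the $\Lambda$-defect are exactly $1$ and not merely constants. This requires careful bookkeeping of which canonical isomorphisms were chosen in Step 2, in particular showing that the trivialization of $(\pi^{*}\times\id_P)^{*}\CR$ is itself multiplicative in $J_{\CC}$.
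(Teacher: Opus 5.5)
\textbf{Steps 1--2.} These do produce an isomorphism $\widetilde\alpha\colon(\widetilde\sigma\times\id_P)^*\CR\xrightarrow{\sim}\widetilde{p_{23}}^*\CR$ on $J_\CC\times_BJ_{\widetilde\CC}\times_BP$, which is the content of the paper's Step~1. The paper gets there more directly: it applies Proposition~\ref{prop: important calculation} with $\CM=p_{13}^*\CU\ot p_{14}^*\CU'$, $\CN=p_{13}^*\CU$, $K=\CV$, so your separate biextension identity is not needed.

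\textbf{The gap is in Step~3.} Both of your arguments that the cocycle defect and the $\Lambda$-defect equal $1$ rest on an unproved claim. You assume your particular $\widetilde\alpha$ is a coherent composite of canonical isomorphisms, or equivalently that it is the identity along $J_\CC\times_BJ_{\widetilde\CC}\times_B0_P$ with respect to the normalizations. Nothing in the proposal supports this. The isomorphism of Proposition~\ref{prop: important calculation} is built from a choice of divisors $D,E$ and a Grothendieck--Riemann--Roch identification for $s_{23}$, neither of which is part of the formalism of \cite[Proposition 44]{Esteves01}. The seesaw isomorphism $\det(\pi\times\id_P)_*\CU'\cong\det(\pi\times\id_P)_*\CO\ot q_2^*M$ is likewise only defined up to a unit. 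You flag this yourself as the main obstacle. You also treat a defect that is ``merely constant'' as a problem, but it is not one.

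\textbf{The missing idea: $\widetilde\alpha$ need not be canonical, and you are free to rescale it.} The paper shows that
\[\Aut(\widetilde{p_{23}}^*\CR)\cong H^0(X,\CO_X^\times)\cong H^0(\ol X,\CO_{\ol X}^\times)\cong H^0(B,\CO_B^\times),\qquad \ol X=\ol J_\CC\times_B\ol J_{\widetilde\CC}\times_B\ol P.\]
This holds because $\ol X$ is Cohen--Macaulay and proper over $B$ with geometrically integral fibers, and $\ol X\setminus X$ has codimension $\ge 2$: it lies over $B\setminus B^{(g)}$ and is fiberwise a proper closed subset. The same argument works on the fourfold product. Hence the cocycle defect of any $\widetilde\alpha$ is some $c\in H^0(B,\CO_B^\times)$. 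Since $a(\beta\widetilde\alpha)=\beta\,a(\widetilde\alpha)$ for such constants $\beta$, the isomorphism $c^{-1}\widetilde\alpha$ is the unique one satisfying \eqref{eq: cocycle condition for J_C}.

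This uniqueness then gives $\Lambda$-equivariance. Over $B^{(g)}$ one compares with the pullback of the equivariant structure coming from Lemma~\ref{lem: smooth case equivariant}, and concludes by density. The same uniqueness is what allows the gluing in Section~\ref{subsec: no section}.

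\textbf{An alternative fix within your approach.} Your rigidity argument can be completed without the compactification. Multiply $\widetilde\alpha$ by the inverse of its restriction to $0_P$, which is a unit on $J_\CC\times_BJ_{\widetilde\CC}$. Then both defects are $1$ along $0_P$. They are therefore $1$ over $B^{(g)}$, where $P$ is an abelian scheme, and hence $1$ everywhere by density. As written, though, the proposal makes no such renormalization and leaves the key step open.

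\textbf{Minor points.} The $\Lambda$-defect lives on $\Lambda\times_BJ_\CC\times_BJ_{\widetilde\CC}\times_BP$, not on $\Lambda\times_BJ_{\widetilde\CC}\times_BP$. Descent along $J_\CC\to\pi^*J_\CC$ uses that this map is a $\Lambda$-torsor, not Lemma~\ref{lem: descent=equivariant}.
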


\begin{proof}
    Consider the commutative diagram 
    \begin{equation*}
        \begin{tikzcd}
            J_{\CC} \times_B J_{\widetilde{\CC}}\times_B P 
            \arrow[rd, shift left, "\widetilde{p_{23}}"]
            \arrow[rd, shift right, "\widetilde{\sigma}\times \id_P"']
            \arrow[rr, "\pi^{*} \times \id_{J_{\widetilde{\CC}}\times P}"] 
            & & \pi^{*}J_{\CC} \times_B J_{\widetilde{\CC}}\times_B P
            \arrow[dl, shift left, "p_{23}"] \arrow[dl, shift right, "\sigma\times \id_{P}"']\\
             &J_{\widetilde{\CC}} \times_B P.&
        \end{tikzcd}
    \end{equation*} 
    We will first construct the descent data $\widetilde{\alpha}$ on $J_{\CC} \times_B J_{\widetilde{\CC}}\times_B P$, then show that it descends to a descent data $\alpha$ on $\pi^{*}J_{\CC} \times_B J_{\widetilde{\CC}}\times_B P$.
    
    \medskip
    \noindent {\bf Step 1.}
    We first prove that the sheaves $\widetilde{\p23}^{*}\CR$ and $(\widetilde{\sigma}\times \id_{P})^{*}\CR$ are isomorphic.     
    Using the base-change property \cite[Proposition 44 (1)]{Esteves01} and the projection formula of the determinant of cohomology, we obtain isomorphisms
    \begin{equation*}
        \widetilde{\p23}^{*}\CR 
        \cong \CD_{p_{234}}(p_{13}^{*}\CU \ot p_{14}^{*}\CU') \ot
        \CD_{p_{234}}(p_{13}^{*}\CU)^{-1} \ot \CD_{p_{234}}(p_{14}^{*}\CU')^{-1} 
        \ot \CD_{p_{234}}(\CO_{\widetilde{\CC} \times_B J_{\CC} \times_B J_{\widetilde{\CC}}\times_B P})
    \end{equation*}
    and 
    \begin{align*}
        (\widetilde{\sigma}\times \id_P)^{*}\CR 
        & \cong \CD_{p_{234}}(\varpi^{*} q_{12}^{*}\CV \ot p_{13}^{*}\CU \ot p_{14}^{*}\CU') 
        \ot \CD_{p_{234}}(\varpi^{*} q_{12}^{*}\CV \ot p_{13}^{*}\CU)^{-1} \\
        & \quad \quad \ot \CD_{p_{234}}(p_{14}^{*}\CU')^{-1} 
        \ot \CD_{p_{234}}(\CO_{\widetilde{\CC} \times_B J_{\CC} \times_B J_{\widetilde{\CC}}\times_B P}),
    \end{align*}
    where $\CV$ is the universal sheaf on $\CC\times_B J_{\CC}$, $\varpi = \pi \times \id_{J_{\CC} \times_B J_{\widetilde{\CC}} \times_B P}$, and $p_{\bullet}$ (resp.~$q_{\bullet}$) are projections from $\widetilde{\CC} \times_B J_{\CC} \times_B J_{\widetilde{\CC}} \times_B P$ (resp.~$\CC \times_B J_{\CC} \times_B J_{\widetilde{\CC}} \times_B P$) to corresponding factors. Together with Proposition~\ref{prop: important calculation}, 
    these two isomorphisms yield
    \begin{align*}
        &\quad  (\widetilde{\sigma}\times \id_P)^{*}\CR \ot \widetilde{\p23}^{*}\CR^{-1} \\
        &\cong \CD_{q_{234}}(q_{12}^{*}\CV \ot \det \varpi_{*}(p_{13}^{*}\CU \ot p_{14}^{*}\CU')) 
         \ot \CD_{q_{234}}(\det \varpi_{*}(p_{13}^{*}\CU \ot p_{14}^{*}\CU'))^{-1} \\
        & \quad \quad \ot \CD_{q_{234}}(q_{12}^{*}\CV \ot \det \varpi_{*}p_{13}^{*}\CU)^{-1} 
        \ot \CD_{q_{234}}(\det \varpi_{*}p_{13}^{*}\CU).
    \end{align*}
   
    Notice that \eqref{eq: Prym= ker Nm} together with \cite[Proposition 3.10]{HP12} implies that there exists a line bundle $M$ on $J_{\CC} \times_B J_{\widetilde{\CC}} \times_B P$ such that
    \[\det \varpi_{*}(p_{13}^{*}\CU \ot p_{14}^{*}\CU') \cong \det \varpi_{*}p_{13}^{*}\CU \ot q_{234}^{*}M.\]
    Using the projection formula of the determinant of cohomology, we obtain
    \[(\widetilde{\sigma}\times \id_P)^{*}\CR \ot \widetilde{\p23}^{*}\CR^{-1} \cong \CO_{J_{\CC} \times_B J_{\widetilde{\CC}}\times_B P}.\]

    \medskip
    \noindent {\bf Step 2.}
    Let $\widetilde{m}$ be the group multiplication on $J_{\CC}$. We now prove that there exists a unique element $\widetilde{\alpha} \in \mathrm{Isom}((\widetilde{\sigma}\times \id_P)^{*}\CR, \widetilde{\p23}^{*}\CR)$ that satisfies the cocycle condition 
    \begin{equation}\label{eq: cocycle condition for J_C}
        (\widetilde{m} \times \id_{J_{\widetilde{\CC}}} \times \id_{P})^{*} \widetilde{\alpha} = \widetilde{p}_{234}^{*}\widetilde{\alpha} \circ (\id_{J_{\CC}}\times \widetilde{\sigma} \times \id_{P})^{*}\widetilde{\alpha}.
    \end{equation}

    For simplicity, we set $F = (\id_{J_{\CC}} \times \widetilde{\sigma} \times \id_P)^{*} \widetilde{p_{23}}^{*}\CR$ and $G = (\widetilde{m}\times \id_{J_{\widetilde{\CC}}} \times \id_P)^{*}\widetilde{p_{23}}^{*}\CR$. Denote
    \begin{equation*}
        a(\widetilde{\alpha}) \coloneq \widetilde{p}_{234}^{*}\widetilde{\alpha} \circ (\id_{J_{\CC}}\times \widetilde{\sigma} \times \id_{P})^{*}\widetilde{\alpha} \circ [(\widetilde{m} \times \id_{J_{\widetilde{\CC}}} \times \id_{P})^{*} \widetilde{\alpha}]^{-1} \in \Aut(G).
    \end{equation*}
    For a fixed isomorphism $\widetilde{\alpha}' \in \mathrm{Isom}((\widetilde{\sigma}\times \id_P)^{*}\CR, \widetilde{\p23}^{*}\CR)$, we have a bijection 
    \[\Aut(\widetilde{p_{23}}^{*}\CR) \ra \mathrm{Isom}((\widetilde{\sigma}\times \id_P)^{*}\CR, \widetilde{\p23}^{*}\CR), \quad \beta \mapsto \beta \circ \widetilde{\alpha}'.\]
    Therefore, it suffices to show that there exists a unique $\beta \in \Aut(\widetilde{\p23}^{*}\CR)$ such that $a(\beta\circ \widetilde{\alpha}') = \id$. 
    
    Since $G$ is a line bundle, $\Aut(G)$ is a commutative group. Then a simple calculation yields that 
    \begin{equation}\label{eq: translation by beta}
        a(\beta\circ\widetilde{\alpha}') = a(\widetilde{\alpha}') \circ [\widetilde{p}_{234}^{*}\widetilde{\alpha}' \circ  (\id_{J_{\CC}} \times \widetilde{\sigma} \times \id_P)^{*}\beta  \circ (\widetilde{p}_{234}^{*}\widetilde{\alpha}')^{-1}] \circ [\widetilde{p_{234}}^{*}\beta \circ 
    (\widetilde{m}\times \id_{J_{\widetilde{\CC}}} \times \id_P)^{*}\beta^{-1}].
    \end{equation}   
    It remains to check 
    \begin{equation}\label{eq: pullback 1}
        (\widetilde{m}\times \id_{J_{\widetilde{\CC}}} \times \id_P)^{*}\beta = \widetilde{p_{234}}^{*}\beta, \quad \forall \beta \in \Aut(\widetilde{p_{23}}^{*}\CR),
    \end{equation}
    and 
    \begin{equation}\label{eq: pullback 2}
        \widetilde{p}_{234}^{*}\widetilde{\alpha}' \circ ((\id_{J_{\CC}} \times \widetilde{\sigma} \times \id_P)^{*}-)\circ (\widetilde{p}_{234}^{*}\widetilde{\alpha}')^{-1}  \colon \Aut(\widetilde{p_{23}}^{*}\CR) \ra \Aut(G)
    \end{equation}
    is an isomorphism.

    We set $X = J_{\CC} \times_B J_{\widetilde{\CC}} \times_B P$ and $\ol{X} = \ol{J}_{\CC} \times_B \ol{J}_{\widetilde{\CC}} \times_B \ol{P}$. 
    Let $p\colon X \ra B$ and $\ol{p} \colon \ol{X} \ra B$ be projection maps, then $\ol{p}$ is proper and flat with geometrically integral fibers. 
    We can regard an automorphism of $\widetilde{p_{23}}^{*}\CR$ as an element in $H^0(B, \CO_{B}^{\times})$ via the canonical group isomorphism
    \[H^0(B,\CO_B^{\times}) \cong H^0(\ol{X},\CO_{\ol{X}}^{\times}) \cong H^0(X, \CO_X^{\times}) \cong
    \Aut((\widetilde{p_{23}}^{*}\CR).\]
    The first isomorphism is induced by the canonical isomorphism $\CO_B \xrightarrow{\sim} \ol{p}_{*}\CO_{\ol{X}}$ by the Stein factorization. 
    The second isomorphism holds since the scheme $\ol{X}$ is Cohen--Macaulay and the complement of $X$ in $\ol{X}$ is of codimension at least $2$. 
    Set $Y \coloneq J_{\widetilde{\CC}}\times_B P$ and $\ol{Y} \coloneq \ol{J_{\widetilde{\CC}}}\times_B \ol{P}$. Let~$q\colon Y \to B$ and $\ol{q} \colon \ol{Y} \to B$ be the projection map. A similar argument shows that $\Aut(\CR)$ is canonically isomorphic to $H^0(B, \CO_{B}^{\times})$ and we have the commutative diagram
    \begin{equation*}
        \begin{tikzcd}
             \Aut(\CR) \arrow[rr,"\widetilde{{p_{23}}}^{*}"] \arrow[d, equal] & &\Aut(\widetilde{{p_{23}}}^{*}\CR) \arrow[d,equal]\\
             H^0(Y,\CO_Y^{\times}) \arrow[rr,"\widetilde{{p_{23}}}^{*}"]& & H^0(X,\CO_X^{\times})\\
            & H^0(B,\CO_B^{\times}). \arrow[ul, "{\cong}"',"q^{*}"] \arrow[ur,"p^{*}"',"\cong"]&
        \end{tikzcd}
    \end{equation*}
    This shows that $\widetilde{{p_{23}}}^{*} \colon \Aut(\CR) \xrightarrow{\sim} \Aut(\widetilde{p_{23}}^{*}\CR)$ is an isomorphism. 
    
    Now for any $\beta \in \Aut(\widetilde{p_{23}}^{*}\CR)$, set $\beta' = (\widetilde{{p_{23}}}^{*})^{-1}(\beta)$. Equation \eqref{eq: pullback 1} follows from
    \[(\widetilde{m}\times \id_{J_{\widetilde{\CC}}} \times \id_P)^{*}\beta = (\widetilde{m}\times \id_{J_{\widetilde{\CC}}} \times \id_P)^{*} \widetilde{{p_{23}}}^{*}\beta' =\widetilde{p_{234}}^{*} \widetilde{{p_{23}}}^{*}\beta' =\widetilde{p_{234}}^{*} \beta.\]
    Similarly, we can show that $(\id_{J_{\CC}} \times \widetilde{\sigma} \times \id_P)^{*} \colon \Aut(\widetilde{p_{23}}^{*}\CR) \ra \Aut(F)$ is an isomorphism and conclude \eqref{eq: pullback 2}.
    
    \medskip
    \noindent {\bf Step 3.} 
    We show that $\widetilde{\alpha}$ descends to an isomorphism $\alpha \colon (\sigma\times \id_P)^{*}\CR \xrightarrow{\sim}  p_{23}^{*}\CR$ satisfying the cocycle condition \eqref{eq: cocycle condition}. 
    Consider the action of $\Lambda$ on $J_{\CC} \times_B J_{\widetilde{\CC}}\times_B P$ by multiplication on the first factor. It follows that $(\widetilde{\sigma} \times \id_P)^{*} \CR$ and $\widetilde{p_{23}}^{*}\CR$ are $\Lambda$-equivariant sheaves, since they are pulled back from $\pi^{*}J_{\CC} \times_B J_{\widetilde{\CC}} \times_B P$. 
    Their $\Lambda$-equivariant structures are given by involution isomorphisms~$\mu_{\eta} \in \mathrm{Aut}((\widetilde{\sigma} \times \id_P)^{*} \CR)$ and $\lambda_{\eta} \in \mathrm{Aut}(\widetilde{p_{23}}^{*}\CR)$, respectively.

    In Section~\ref{subsection: over smooth curves}, we verified Lemma~\ref{lem: smooth case equivariant} over $B^{(g)}$. 
    This provides an isomorphism 
    \[\alpha^{\circ} \in \Hom(({\sigma}\times \id_P)^{*}\CR|_{B^{(g)}}, {p_{23}}^{*}\CR|_{B^{(g)}})\]
    that satisfies the cocycle condition. 
    Let the morphism $\widetilde{\alpha^{\circ}} \in \Hom((\widetilde{\sigma}\times \id_P)^{*}\CR|_{B^{(g)}} , \widetilde{p_{23}}^{*}\CR|_{B^{(g)}})$ be the pullback of $\alpha^{\circ}$. It then satisfies
    \begin{equation}\label{eq: Lambda-equivariant}
        \widetilde{\alpha^{\circ}} \circ \mu_{\eta} = \lambda_{\eta} \circ \widetilde{\alpha^{\circ}}.
    \end{equation}
    
    By the uniqueness result of Step~2, this pullback morphism $\widetilde{\alpha^{\circ}}$ is the unique isomorphism in $\Hom((\widetilde{\sigma}\times \id_P)^{*}\CR|_{B^{(g)}}, \widetilde{p_{23}}^{*}\CR|_{B^{(g)}})$ that satisfies the cocycle condition, and it thus equals the restriction of the map $\widetilde{\alpha}$ to $B^{(g)}$. The identity $\widetilde{\alpha} \circ \mu_{\eta} = \lambda_{\eta} \circ \widetilde{\alpha}$ then follows from \eqref{eq: Lambda-equivariant}, which implies that $\widetilde{\alpha}$ is $\Lambda$-equivariant. Consequently, the equivariant map $\widetilde{\alpha}$ descends to an isomorphism $\alpha \in \Hom(({\sigma}\times \id_P)^{*}\CR, {p_{23}}^{*}\CR)$ that extends $\alpha^{\circ}$. The cocycle condition \eqref{eq: cocycle condition} holds because it holds over $B^{(g)}$.
\end{proof}

\begin{thm}\label{thm: def G}
    There exists a line bundle $\CG$ on $P\times_B P$ satisfying the following conditions:
    \begin{enumerate}
        \item $\CR \cong (f \times \id_{P})^{*}\CG$;
        \item $\CG$ is trivialized along zero sections $0\times_B P$ and $P\times_B 0$.
    \end{enumerate}
\end{thm}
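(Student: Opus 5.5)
The plan is to obtain $\CG$ by fppf descent of $\CR$ along $f\times\id_P\colon J_{\widetilde{\CC}}\times_B P\to P\times_B P$, using Lemma~\ref{lem: descent=equivariant} and Lemma~\ref{lem: equivariant}.

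By Lemma~\ref{lem: descent=equivariant}, $f\colon J_{\widetilde{\CC}}\to P$ is an fppf $\pi^{*}J_{\CC}$-torsor. Its base change $f\times\id_P$ is therefore a $\pi^{*}J_{\CC}$-torsor over $P\times_B P$, and the action is $\sigma\times\id_P$. Consequently
\[
(J_{\widetilde{\CC}}\times_B P)\times_{P\times_B P}(J_{\widetilde{\CC}}\times_B P)\cong \pi^{*}J_{\CC}\times_B J_{\widetilde{\CC}}\times_B P,
\]
with the two projections identified with $p_{23}$ and $\sigma\times\id_P$. The triple fiber product is identified with $\pi^{*}J_{\CC}\times_B\pi^{*}J_{\CC}\times_B J_{\widetilde{\CC}}\times_B P$. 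Under these identifications, the isomorphism $\alpha$ of Lemma~\ref{lem: equivariant} is a descent datum for $\CR$ relative to $f\times\id_P$, and the cocycle condition \eqref{eq: cocycle condition} is the usual cocycle condition on the triple product. Writing this dictionary out explicitly, and checking that the three pullback maps to the triple product match $m\times\id$, $\id\times\sigma\times\id$ and $p_{234}$, is the step that needs the most care, although it is formal.

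Since $f\times\id_P$ is faithfully flat and of finite presentation (it is a base change of $f$), fppf descent for quasi-coherent sheaves gives a quasi-coherent sheaf $\CG$ on $P\times_B P$ with $(f\times\id_P)^{*}\CG\cong\CR$ compatibly with $\alpha$. Being a line bundle descends along fpqc covers, so $\CG$ is a line bundle. This proves (1).

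For (2), restrict to $0\times_B P$. Since $f(0)=0$ and the descent is compatible with base change, $\CG|_{0\times_B P}$ is a line bundle whose pullback under $0\mapsto 0$ is $\CR|_{0\times_B P}=\CP|_{0\times_B P}\cong\CO_P$, by the normalization of $\CP$. Concretely, $\CG|_{0\times_B P}$ is the pullback of $\CG$ along the section $0\times\id_P$, which factors through $f\times\id_P$ as $(f\times\id_P)\circ(0\times\id_P)$. Hence $\CG|_{0\times_B P}\cong\CR|_{0\times_B P}\cong\CO_P$.

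The restriction to $P\times_B 0$ is less immediate. Here $(f|_{J_{\widetilde{\CC}}})^{*}(\CG|_{P\times_B 0})\cong\CR|_{J_{\widetilde{\CC}}\times_B 0}\cong\CO_{J_{\widetilde{\CC}}}$, and the descent datum on this pullback is the restriction of $\alpha$. So $\CG|_{P\times 0}$ is the descent of $\CO$ with some $\pi^{*}J_{\CC}$-equivariant structure, that is, a character-type cocycle.

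I would argue that this equivariant structure is the trivial one. By the uniqueness in Step~2 of Lemma~\ref{lem: equivariant}, an equivariant structure on $\CO$ satisfying the cocycle condition is unique, since automorphisms are constants from $B$. The trivial structure satisfies the cocycle condition, so the two agree, and $\CG|_{P\times_B 0}\cong\CO_P$.

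If that uniqueness argument does not transfer cleanly, there is a fallback: replace $\CG$ by
\[
\CG\otimes p_1^{*}(\CG|_{P\times 0})^{-1}\otimes p_2^{*}(\CG|_{0\times 0}).
\]
This tensors $\CR$ by the pullback of a line bundle whose pullback to $J_{\widetilde{\CC}}$ is already trivial, so (1) is preserved and both normalizations hold.
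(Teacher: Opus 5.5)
Your proposal is correct and follows the paper's proof: descend $\CR$ along the $\pi^{*}J_{\CC}$-torsor $f\times\id_P$ using the equivariant structure $\alpha$ of Lemma~\ref{lem: equivariant}, then normalize. The paper normalizes exactly as in your fallback, twisting the descended sheaf $\CG'$ by $p_1^{*}(\CG'|_{P\times_B 0})^{-1}$, which preserves (1) because $f^{*}(\CG'|_{P\times_B 0})\cong\CR|_{J_{\widetilde{\CC}}\times_B 0}$ is trivial. So your uniqueness argument for equivariant structures on $\CO$ is not needed; it is also valid, but it requires the extra fact that units on $\pi^{*}J_{\CC}\times_B J_{\widetilde{\CC}}$ are pulled back from $B$.
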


\begin{proof}
    Lemma~\ref{lem: descent=equivariant} shows that $J_{\widetilde{\CC}} \times_B P$ is a $\pi^{*}J_{\CC}$-torsor over $P \times_B P$. 
    The sheaf $\CR$ on $J_{\widetilde{\CC}} \times_B P$ is a $\pi^{*}J_{\CC}$-equivariant sheaf by Lemma~\ref{lem: equivariant}. Therefore, it descends to a sheaf $\CG'$ on $P\times_B P$. Twisting $\CG'$ by $p_1^{*}(\CG'|_{P\times_B 0})^{-1}$ gives the desired line bundle.
\end{proof}

From the proof, we see that when $B=B^{(g)}$, the above Poincar\'e line bundle coincides with the one defined in Section~\ref{subsection: over smooth curves} using the theta divisor.

\subsection{Construction of the Poincar\'e line bundle without assumptions}\label{subsec: no section}
In this section, we show that the Poincar\'e line bundle constructed above descends to the general case where the family of curves may not admit a section in its smooth locus, and the base $B$ is not necessarily quasi-projective.

We first recall the definition of the Poincar\'e line bundle $\CP$ on $J_{\widetilde{\CC}} \times_B J_{\widetilde{\CC}}$, which is a sheaf locally defined by \eqref{eq: CP}. 
The fact that these locally defined sheaves can be glued into an untwisted global sheaf relies heavily on our restriction to degree~$0$ compactified Jacobians; see \cite[Proposition 3.1]{AddingtonDonovanMeachan16} for the case of any degree.  

Let $\{V_i\}_{i\in I}$ be an \'etale cover of $B$ such that assumptions (A1) and (A2) hold after base-changing to $V_i$. Let $\CU_i$ be a universal sheaf on $\widetilde{\CC}_{V_i} \times_{V_i} J_{\widetilde{\CC}_{V_i}}$. 
We define $\CP_i$ on $J_{\widetilde{\CC}_{V_i}} \times_{V_i} J_{\widetilde{\CC}_{V_i}}$ as in \eqref{eq: CP}. 
On the overlap $V_{ij} = V_i \times_B V_j$, there exists a line bundle $\CL_{ij}$ on $J_{\widetilde{\CC}_{V_{ij}}}$, unique up to isomorphism, such that 
\[\CU_i|_{V_{ij}} \ot p_2^{*}\CL_{ij} \cong \CU_j|_{V_{ij}},\]
where $p_2$ is the projection morphism to the second factor $J_{\widetilde{\CC}_{V_{ij}}}$.

The projection formula of the determinant of cohomology (\cite[Proposition 44]{Esteves01}) induces an isomorphism
\[\phi_{\CL_{ij}} \colon \CP_i|_{V_{ij}} \xrightarrow{\sim} \CP_j|_{V_{ij}}.\]
Since $\CL_{ij}|_{V_{ijk}} \ot \CL_{jk}|_{V_{ijk}} \cong \CL_{ik}|_{V_{ijk}}$, the functorial property (\cite[Proposition 44]{Esteves01}) yields that 
\[ \phi_{\CL_{jk}}|_{V_{ijk}} \circ \phi_{\CL_{ij}}|_{V_{ijk}} = \phi_{\CL_{ik}}|_{V_{ijk}}.\]
Therefore, $(\{\CP_i\}_{i\in I} , \{\phi_{\CL_{ij}}\}_{i,j \in I} )$ defines 
a sheaf $\CP$ on ${J}_{\widetilde{\CC}} \times_B J_{\widetilde{\CC}}$. It is known that the sheaf $\CP$ is independent of the choices of the cover $\{V_i\}_{i\in I}$.

Let $\CR$ be the restriction of $\CP$ to ${J}_{\widetilde{\CC}} \times_B P$. 
Our goal is to show that Lemma~\ref{lem: equivariant} still holds, then Theorem~\ref{thm: def G} follows.
To apply the same proof as in Lemma~\ref{lem: equivariant}, we only need to find an isomorphism satisfying \eqref{eq: cocycle condition for J_C}, which follows directly from the first two steps of Lemma~\ref{lem: equivariant}. Indeed, Step~1 shows that \'etale locally over $V_i$ such an isomorphism exists, then the uniqueness proved in Step~2 guarantees that they define a global isomorphism, as we now explain. 

Let $\CR_i$ be the restriction of $\CP_i$ to $J_{\widetilde{\CC}_{V_i}} \times_{V_i} P_{V_i}$. Then $\phi_{\CL_{ij}}$ induces
\[\phi_{ij} \colon \CR_i|_{V_{ij}} \xrightarrow{\sim} \CR_j|_{V_{ij}},\]
and $(\{\CR_i\}_{i\in I} , \{\phi_{ij}\}_{i,j \in I} )$ defines $\CR$. 
By Steps~1 and 2 of Lemma~\ref{lem: equivariant}, there exists a unique isomorphism
\[\widetilde{\alpha_i} \colon (\widetilde{\sigma} \times \id_P)^{*} \CR_i \xrightarrow{\sim} \widetilde{p_{23}}^{*}\CR_i\]
that satisfies the cocycle condition \eqref{eq: cocycle condition for J_C}. 
It remains to prove that $\{\widetilde{\alpha_i}\}_{i\in I}$ defines an isomorphism $\widetilde{\alpha} \colon (\widetilde{\sigma} \times \id_P)^{*} \CR \xrightarrow{\sim} \widetilde{p_{23}}^{*}\CR$, in other words, the diagram
\begin{equation*}
    \begin{tikzcd}
        (\widetilde{\sigma} \times \id_P)^{*} \CR_i|_{V_{ij}} \arrow[r,"\widetilde{\alpha_i}|_{V_{ij}}"] \arrow[d, "(\widetilde{\sigma} \times \id_P)^{*}\phi_{ij}"',"{\rotatebox{270}{$\sim$}}"] & \widetilde{p_{23}}^{*}\CR_i|_{V_{ij}} \arrow[d, "\widetilde{p_{23}}^{*}\phi_{ij}", "{\rotatebox{270}{$\sim$}}"'] \\        
        (\widetilde{\sigma} \times \id_P)^{*} \CR_j|_{V_{ij}} \arrow[r,"\widetilde{\alpha_j}|_{V_{ij}}"]  & \widetilde{p_{23}}^{*}\CR_j|_{V_{ij}}
    \end{tikzcd}
\end{equation*}
commutes. By the uniqueness argument, it suffices to check that the map 
\[ (\widetilde{p_{23}}^{*}\phi_{ij})^{-1} \circ \widetilde{\alpha}_j|_{V_{ij}}\circ (\widetilde{\sigma}\times \id_P)^{*}\phi_{ij}\]
also satisfies the cocycle condition \eqref{eq: cocycle condition for J_C}, 
which is true by a simple calculation similar to \eqref{eq: translation by beta}. 

\subsection{Stratification of Prym varieties}
In this section, we show that $\CG$ extends to a maximal Cohen--Macaulay sheaf on $\ol{P} \times_B \ol{P}$. 

To start with, we need to recall the following properties of compactified Jacobians (cf. \cite{Cook98}).

Let $\Gamma$ be an integral projective curve over $k$. 
In \cite{Cook98}, Cook defined a \emph{local type} on $\Gamma$ to be a collection $\ul{M} = \{M_p\}_{p\in \Sing(\Gamma)}$ of isomorphism classes of torsion-free, rank 1 $\CO_{\Gamma,p}$-modules for~$p$ running through all the singularities of $\Gamma$. 

A torsion-free, rank 1 sheaf $F$ on $\Gamma$ is said to be \emph{of local type} $\{M_p\}_{p\in \Sing(\Gamma)}$ if $F_p$ is isomorphic to $M_p$ as $\CO_{\Gamma,p}$-modules for every singular point $p$. 

\begin{prop}(\cite[Lemma 5]{Cook98})\label{statification of Jac}
    For every local type $\ul{M}$, let $U_{\ul{M}} \subset \ol{\Jac}(\Gamma)$ be the 
    subset parameterizing torsion-free, rank 1 degree 0 sheaves of local type $\ul{M}$. 
    Then $U_{\ul{M}}$ is a nonempty locally closed subvariety.     
    Take $F \in U_{\ul{M}}$, then $\Jac(\Gamma)$ acts transitively on $U_{\ul{M}}$, with affine stabilizer $\ker\big(n^{*} \colon \Jac(\Gamma) \ra \Jac(\Gamma')\big)$, where $\Gamma' \coloneqq \Spec_{\CO_{\Gamma}}(\endo(F))$ and $n \colon \Gamma' \ra \Gamma$ is the partial normalization.
\end{prop}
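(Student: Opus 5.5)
The statement is local at each singular point, so I plan to reduce it to commutative algebra of the completed local rings, using the action of $\Jac(\Gamma)$ on $\ol{\Jac}(\Gamma)$ by tensor product. Tensoring with a line bundle does not change the isomorphism class of any stalk, so $\Jac(\Gamma)$ preserves $U_{\ul{M}}$. The work is to show that the action is transitive and to compute the stabilizer; local closedness and non-emptiness come along the way.

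\emph{Transitivity.} Take $F, F' \in U_{\ul{M}}$. Then $F_p \cong F'_p$ at every singular point $p$, and both sheaves are invertible at smooth points. Hence $\hom(F,F')$ is locally isomorphic to $\endo(F)$, so it is an invertible $\endo(F)$-module on $\Gamma'=\Spec_{\CO_\Gamma}\endo(F)$. The evaluation map $F\ot_{\endo(F)}\hom(F,F')\to F'$ is an isomorphism, since this can be checked locally, where it reduces to $F_p\cong F'_p$.

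To land in $\Jac(\Gamma)$, I use that the pullback $n^*\colon\Pic(\Gamma)\to\Pic(\Gamma')$ is surjective. This holds because $n$ is finite and birational, and the cokernel of $\CO_\Gamma^\times \to n_*\CO_{\Gamma'}^\times$ is a skyscraper sheaf, whose $H^1$ vanishes. So there is a line bundle $L$ on $\Gamma$ with $n^*L\cong \hom(F,F')$, which gives $F\ot L\cong F'$. Comparing degrees shows $\deg L=0$, so $L \in \Jac(\Gamma)$.

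\emph{Stabilizer.} The stabilizer of $F$ consists of those $L$ with $F\ot L\cong F$. Such an isomorphism is a unit section of $\hom(F,F\ot L)=\endo(F)\ot L = n_*n^*L$, which trivializes $n^*L$. Conversely, if $n^*L$ is trivial then $F\ot L\cong F\ot_{\endo F} n^*L \cong F$. So the stabilizer is $\ker n^*$. By the exact sequence $0\to n_*\CO_{\Gamma'}^\times/\CO_\Gamma^\times \to \Pic\Gamma\to\Pic\Gamma'\to 0$ (after taking $H^0$), this kernel is a quotient of a product of local unit groups. Those unit groups are unipotent or torus-like, so the kernel is affine.

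\emph{Geometry of $U_{\ul M}$.} For non-emptiness, I will take a local $M_p$ for each singular $p$ and glue these to $\CO_\Gamma$ away from the singular points. This gives a torsion-free rank $1$ sheaf, and twisting by a line bundle at a smooth point adjusts the degree to $0$. For local closedness, the local type of a sheaf is determined by its completions $\hat F_p$. Only finitely many such data matter up to a fixed conductor-level truncation, because each $M_p$ sits between $\mathfrak c\hat{\CO}$ and $\hat{\CO}^{\mathrm{norm}}$ after scaling. So $U_{\ul M}$ is the fiber of a constructible map to a finite-type quotient, and being an orbit of an algebraic group, it is locally closed.

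The main obstacle is making this local-closedness step rigorous. An orbit of a connected algebraic group acting on a variety is always locally closed, so once transitivity is established it suffices to know that the $\Jac(\Gamma)$-action on $\ol{\Jac}(\Gamma)$ is algebraic. That makes $U_{\ul{M}}$ a single orbit and settles the step.
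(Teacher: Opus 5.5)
Your proof is correct. The paper does not prove this statement at all; it is quoted from Cook's Lemma~5. So the only comparison is with the standard argument, and that is what you have reconstructed:
\begin{itemize}
\item equal local types make $\hom(F,F')$ an invertible $\endo(F)$-module;
\item surjectivity of $\Pic(\Gamma)\to\Pic(\Gamma')$ then produces $L$ with $F'\cong F\otimes L$;
\item the stabilizer is read off from $\hom(F,F\otimes L)\cong n_*n^*L$;
\item local closedness follows because $U_{\ul M}$ is a single orbit of the algebraic group $\Jac(\Gamma)$.
\end{itemize}
Because the orbit argument settles local closedness, your earlier remarks on constructibility and conductor-level truncations can simply be dropped.

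Working with $\hom(F,F')$ rather than treating $F$ itself as a line bundle on $\Gamma'$ is the right choice. For planar singularities such as $y^3=x^4$, a torsion-free rank one module need not be free over its endomorphism ring. For example, take $R+tR\subset k[[t]]$ with $R=k[[t^3,t^4]]$: its endomorphism ring is $k[[t^3,t^4,t^5]]$, and it needs two generators over that ring.

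The one step stated too loosely is affineness: the local unit groups $(n_*\CO_{\Gamma'})_p^\times$ are not themselves algebraic groups. There are two ways to make this precise.
\begin{itemize}
\item Write $\ker n^*\cong\prod_p\bigl((n_*\CO_{\Gamma'})_p/\mathfrak c_p\bigr)^\times/\bigl(\CO_{\Gamma,p}/\mathfrak c_p\bigr)^\times$, where $\mathfrak c_p$ is the conductor.
\item Alternatively, let $\Gamma^{\nu}$ be the normalization of $\Gamma$. Pullback to $\Gamma^{\nu}$ factors through $n^*$, so $\ker n^*$ is a closed subgroup of the linear part $\ker\bigl(\Jac(\Gamma)\to\Jac(\Gamma^{\nu})\bigr)$. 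It is therefore affine.
\end{itemize}
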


From this proposition we see that the partial normalization $\Gamma'$ only depends on the local type, and we call $\Gamma'$ the \emph{partial normalization corresponding to} $\ul{M}$. 

Now let $\pi \colon \widetilde{\Gamma} \ra \Gamma$ be an \'etale double cover. 
Let $\{x_1,\dots,x_n\}$ be the singular points of $\Gamma$. For each $x_i$, write $\pi^{-1}(x_i) = \{p_i,q_i\}$, so that $\{p_1,\dots, p_n, q_1,\dots, q_n\}$ are the singular points of the curve $\widetilde{\Gamma}$. 
In particular, we have $\CO_{\Gamma,x_i} \cong \CO_{\widetilde{\Gamma},p_i} \cong \CO_{\widetilde{\Gamma},q_i}$.
Then Proposition~\ref{statification of Jac} has a parallel for compactified Prym varieties.

\begin{prop}\label{stratification of Prym}
    The compactified Prym variety $\ol{\Prym}(\widetilde{\Gamma}/\Gamma)$ is stratified by nonempty locally closed subvarieties
    \[V_{\ul{M}} \coloneqq \ol{\Prym}(\widetilde{\Gamma}/\Gamma) \cap U_{\ul{M}},\]
    where $\ul{M}$ runs through local types $\{M_{p_1}, \dots, M_{p_n}, M_{q_1}, \dots, M_{q_n}\}$ in which $M_{p_i}$ is isomorphic to $M_{q_i}^{\vee}$ as $\CO_{\Gamma,x_i}$-modules. 
    Furthermore, let $\widetilde{n} \colon \widetilde{\Gamma}' \ra \widetilde{\Gamma}$ be the partial normalization of~$\widetilde{\Gamma}$ corresponding to $\ul{M}$, and $n\colon \Gamma' \ra \Gamma$ be the partial normalization of $\Gamma$ corresponding to $\ul{N}=\{N_{x_1}, \dots, N_{x_n}\}$ where $N_{x_i} \cong M_{p_i} \cong M_{q_i}^{\vee}$.      
    Then $\Prym(\widetilde{\Gamma}/\Gamma)$ acts transitively on $V_{\ul{M}}$, with affine stabilizer $\ker\big(\widetilde{n}^{*} \colon \Prym(\widetilde{\Gamma}/\Gamma) \ra \Prym(\widetilde{\Gamma}'/\Gamma') \big)$.
\end{prop}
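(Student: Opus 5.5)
The plan is to study the involution $\tau\colon \CF\mapsto \iota^{*}\CF^{\vee}$ of $\ol{\Jac}(\widetilde{\Gamma})$ one stratum $U_{\ul{M}}$ of Proposition~\ref{statification of Jac} at a time.

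\emph{Step 1: which local types occur.} Fix a singular point $x_i$ and identify $\CO_{\widetilde{\Gamma},p_i}\cong\CO_{\Gamma,x_i}\cong\CO_{\widetilde{\Gamma},q_i}$ through $\pi$. Since $\iota$ swaps $p_i$ and $q_i$, we get
\[(\iota^{*}\CF^{\vee})_{p_i}\cong (\CF_{q_i})^{\vee}\quad\text{and}\quad (\iota^{*}\CF^{\vee})_{q_i}\cong (\CF_{p_i})^{\vee}.\]
Hence $\tau$ maps $U_{\ul{M}}$ onto $U_{\tau\ul{M}}$, where $\tau\ul{M}$ is the local type with $(\tau\ul{M})_{p_i}=M_{q_i}^{\vee}$ and $(\tau\ul{M})_{q_i}=M_{p_i}^{\vee}$. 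A fixed point can therefore lie only in a stratum with $M_{p_i}\cong M_{q_i}^{\vee}$ for all $i$. Since the $U_{\ul{M}}$ form a finite partition of $\ol{\Jac}(\widetilde{\Gamma})$ into locally closed subsets, the sets $V_{\ul{M}}=\ol{P}\cap U_{\ul{M}}$ give a locally closed stratification of $\ol{P}$.

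\emph{Step 2: the action and the stabilizer.} For $L\in P$ we have $\iota^{*}L\cong L^{-1}$, so $\tau(L\ot\CF)\cong L\ot\tau(\CF)$. Thus $P$ preserves $\Fix(\tau)$, and since $P$ is connected it also preserves the component $\ol{P}$. It preserves local types, hence acts on each $V_{\ul{M}}$.

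By Proposition~\ref{statification of Jac}, the stabilizer of $\CF\in V_{\ul{M}}$ in $\Jac(\widetilde{\Gamma})$ is $\ker(\widetilde{n}^{*})$, which is affine. Its intersection with $P$ is therefore $\ker(\widetilde{n}^{*}|_{P})$. The symmetry $M_{p_i}\cong M_{q_i}^{\vee}$ gives $\endo(M_{p_i})=\endo(M_{q_i})$ inside the common normalization. Hence $\widetilde{\Gamma}'$ is $\iota$-stable, and $\widetilde{\Gamma}'\to\Gamma'$ is an étale double cover, with $\Gamma'$ the partial normalization attached to $\ul{N}$. The map $\widetilde{n}^{*}$ commutes with $\iota^{*}$, so it sends $P=\mathrm{Im}(1-\iota^{*})$ into $\mathrm{Im}(1-\iota^{*})=\Prym(\widetilde{\Gamma}'/\Gamma')$ by \eqref{Prym=Im}. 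This identifies the stabilizer as stated.

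\emph{Step 3: transitivity.} By Proposition~\ref{statification of Jac}, $U_{\ul{M}}$ is a torsor under $G=\Jac(\widetilde{\Gamma})/\ker\widetilde{n}^{*}$. It satisfies $\tau(g\cdot x)=(-\iota^{*}g)\cdot\tau(x)$. Hence $\Fix(\tau)\cap U_{\ul{M}}$, if nonempty, is a torsor under the anti-invariants $G^{-\iota^{*}}$. The identity component of $G^{-\iota^{*}}$ is the image of $1-\iota^{*}$, which is the image of $P$. So $\Fix(\tau)\cap U_{\ul{M}}$ is a finite union of $P$-orbits indexed by $\pi_0(G^{-\iota^{*}})$, a $2$-group.

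The main obstacle is to show that exactly one of these orbits lies in the component $\ol{P}$. My first attempt would be to attach to each $\tau$-fixed sheaf a discrete invariant, namely the class of $\CF$ modulo the image of $P$ in the component group of anti-invariants (a parity / Nm-type invariant). I would then show this invariant is locally constant on $\Fix(\tau)$, for instance by computing it via $\det$ of cohomology of $\CF\ot\CK$ for a fixed $\tau$-symmetric twist $\CK$. It then takes the same value on the connected $\ol{P}$ as at $\CO_{\widetilde{\Gamma}}$. If this fails, I would fall back on a local deformation argument: the versal deformation of the planar singularities splits as a product over $\{p_i,q_i\}$, $\tau$ acts on it, and I would show the fixed locus is locally irreducible near a point of $V_{\ul{M}}$.

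\emph{Step 4: nonemptiness.} I would build a fixed point of type $\ul{M}$ directly. Take $M_{p_i}$ at $p_i$ and $M_{q_i}=M_{p_i}^{\vee}$ at $q_i$, glued to $\CO$ on the smooth locus by a $\tau$-compatible choice of local isomorphisms, and twist by a line bundle to reach degree $0$. I would then use the transitivity in Step 3 to translate this sheaf into the orbit lying in $\ol{P}$. Alternatively, I would obtain such a sheaf as a limit in $\ol{P}$ of a one-parameter degeneration of line bundles in $P$, by smoothing the local module pair $(M_{p_i},M_{p_i}^{\vee})$ symmetrically.
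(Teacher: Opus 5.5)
Your Steps 1 and 2 are fine. The stabilizer computation is the same as the paper's: Cook's stabilizer $\ker\widetilde{n}^{*}$ intersected with $P$, using that $\widetilde\Gamma'$ is $\iota$-stable and $\widetilde\Gamma'\to\Gamma'$ is an \'etale double cover. Your reduction in Step 3 is also correct: if nonempty, $\Fix(\tau)\cap U_{\ul{M}}$ is a torsor under $G^{-\iota^{*}}$, hence a finite union of $P$-orbits indexed by a finite $2$-group.

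\textbf{The gap.} The real content of the proposition is exactly what you leave open: that $\ol{P}$ meets each symmetric stratum at all, and that it contains only one of these orbits. Neither is proved, and your sketches have problems.
\begin{itemize}
\item Your ``parity'' invariant is defined relative to a base point chosen in each stratum, so it is not yet a function on $\Fix(\tau)$. Nothing explains why it would be locally constant across strata.
\item $\Fix(\tau)$ has several components. Already on $\Jac(\widetilde\Gamma)$ it is $\ker(1+\iota^{*})$, which contains more than one coset of $P$, and there may be extra components in the boundary. A locally constant function cannot separate orbits lying on components that meet.
\item The deformation fallback would at best give local irreducibility of a relative fixed locus. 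It does not count how many orbits of a boundary stratum lie in the closure of $P_b$.
\item Step 4's first route is circular. $P$ cannot move a sheaf from one $P$-orbit to another, and knowing which coset of $G^{-\iota^{*}}$ lies in $\ol{P}$ presupposes $V_{\ul{M}}\neq\emptyset$.
\end{itemize}
As a side remark, the partition into local types need not be finite: non-simple planar singularities have infinitely many isomorphism classes of rank one torsion-free modules. This does not affect the argument.

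\textbf{The missing idea.} It is the extended norm map $f\colon F\mapsto F\otimes\iota^{*}F^{\vee}$ on the open set $\ol{J}^{\circ}$ of sheaves locally free at one of $x,\iota x$ for every singular $x$. This map is regular by \cite[Lemma~4.12]{LSV17}. Since $\ol{\Jac}(\widetilde\Gamma)$ is irreducible with $\Jac(\widetilde\Gamma)$ dense, and $f(\Jac(\widetilde\Gamma))=P$, one gets $f(\ol{J}^{\circ})\subset\ol{P}$.
\begin{itemize}
\item \emph{Nonemptiness} is then immediate, with no gluing. By Cook's result, pick $F$ of local type $\{M_{p_1},\dots,M_{p_n},\CO_{q_1},\dots,\CO_{q_n}\}$. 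Then $E=F\otimes\iota^{*}F^{\vee}$ has type $M_{p_i}$ at $p_i$ and $M_{p_i}^{\vee}\cong M_{q_i}$ at $q_i$, so $E\in V_{\ul{M}}$. This is the precise form of your ``symmetric smoothing'' alternative.
\item \emph{Transitivity} is not obtained by an orbit count at all. The paper imports it from \cite[Corollary~4.17]{LSV17}, which controls all points of $\ol{P}$ through this norm map; the same result is used again for surjectivity in the flatness lemma. It gives $H\cong E\otimes L$ with $L\in P$ for every $H\in V_{\ul{M}}$.
\end{itemize}
Without this input, or an independent proof of it, your Step 3 does not establish transitivity.
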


\begin{proof}
    Fix $\ul{M}$ as in the statement of this proposition. Proposition~\ref{statification of Jac} yields that there exists a sheaf $F\in \ol{\Jac}(\widetilde{\Gamma})$ of local type 
    \[\{M_{p_1}, \dots, M_{p_n}, \CO_{q_1}, \dots, \CO_{q_n}\},\] 
    where $\CO_{q_i}\coloneqq\CO_{\widetilde{\Gamma},q_i}$. 
    In particular, $E \coloneq F \ot \iota^{*}F^{\vee}$ lies in $V_{\ul{M}}$. 

    For any sheaf $H \in V_{\ul{M}}$, there exists $L \in \Prym(\widetilde{\Gamma}/\Gamma)$ such that $H = E \ot L$ by \cite[Corollary~4.17]{LSV17}. 
    Thus the action of $\Prym(\widetilde{\Gamma}/\Gamma)$ on $ V_{\ul{M}}$ is transitive. 
    
    By definition $\widetilde{\Gamma}'=\Spec_{\CO_{\widetilde{\Gamma}}}(\endo(E))$. It is easy to check that we have the Cartesian diagram
    \begin{equation*}
        \begin{tikzcd}
            \widetilde{\Gamma}' \arrow["\epsilon"', loop, distance=1.5em, in=205, out=155] \arrow[r, "\pi'"] \arrow[d, "\widetilde{n}"'] & \Gamma' \arrow[d, "n"]\\
        \widetilde{\Gamma} \arrow["\iota"', loop, distance=1.5em, in=205, out=155] \arrow[r, "\pi"] & \Gamma,
        \end{tikzcd}
    \end{equation*}
    where $\epsilon$ and $\iota$ are involution maps associated to the \'etale double covers $\pi'$ and $\pi$, respectively. 
    Alternatively, the involution map $\epsilon$ on $\widetilde{\Gamma}'$ is induced by the canonical isomorphism
    \[\iota^{*}\endo(E) \cong \endo(\iota^{*}E) = \endo(E^{\vee}) \cong \endo(E).\] 
    
    Again, using Proposition \ref{statification of Jac} we see that the stabilizer is 
    \[\Prym(\widetilde{\Gamma}/\Gamma) \cap \ker\big( \Jac(\widetilde{\Gamma}) \ra \Jac(\widetilde{\Gamma}')\big) = \ker\big(\Prym(\widetilde{\Gamma}/\Gamma) \ra \Prym(\widetilde{\Gamma}'/\Gamma')\big).\qedhere\]
\end{proof}

Now we go back to our situation. 
Let $\ol{J}_{\widetilde{\CC}}^{\circ} \subset \ol{J}_{\widetilde{\CC}}$ be the open subset parameterizing relative torsion-free, rank~$1$, degree~$0$ sheaves $F$ that are locally free at least at one of the two points $x$ and $\iota x$ for every closed point $x$ in the singular locus of $\widetilde{\CC} \ra B$. 
Then the group homomorphism \eqref{eq: def of f} extends to a regular map 
\begin{equation}
    \begin{aligned}
        f \colon \ol{J}_{\widetilde{\CC}}^{\circ} & \ra  \ol{J}_{\widetilde{\CC}} \\
        F  & \longmapsto F \ot \iota^{*}F^{\vee}
    \end{aligned}
\end{equation}
by \cite[Lemma 4.12]{LSV17}.

\begin{lem}
    The morphism $f \colon \ol{J}_{\widetilde{\CC}}^{\circ} \ra \ol{P}$ is faithfully flat.
\end{lem}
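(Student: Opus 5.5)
Our plan is to use miracle flatness on the fibers over $B$ and then the fiberwise criterion of flatness. First we check that $f$ lands in $\ol{P}$. The image of $f$ lies in $\Fix(\tau)$: indeed $\tau(F\ot\iota^{*}F^{\vee}) = \iota^{*}(F\ot \iota^{*}F^{\vee})^{\vee}$. Since $F$ or $\iota^{*}F$ is locally free at each point, this dual is computed factorwise, giving $\iota^{*}F^{\vee}\ot F$. Moreover $\ol{J}_{\widetilde{\CC}}^{\circ}$ has irreducible fibers, because it is an open subset of the integral $\ol{J}_{\widetilde{\CC},b}$ that contains $J_{\widetilde{\CC},b}$. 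On $J_{\widetilde{\CC}}$ the map $f$ is $1-\iota^{*}$, with image $P$ by \eqref{Prym=Im}. Hence $f(\ol{J}^{\circ}_{\widetilde{\CC}})$ lies in the closure of $P$, which is $\ol{P}$, and $f\colon \ol{J}^{\circ}_{\widetilde{\CC}}\to \ol{P}$ is a morphism of $B$-schemes.

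\emph{Surjectivity.} This is fiberwise. By Proposition~\ref{stratification of Prym}, every point of $\ol{P}_b$ lies in some stratum $V_{\ul{M}}$. The proof of that proposition exhibits $F$ of local type $\{M_{p_i},\CO_{q_i}\}$, so $F\in \ol{J}^{\circ}_{\widetilde{\CC},b}$, with $f(F)=E\in V_{\ul{M}}$. By \cite[Corollary~4.17]{LSV17}, every $H\in V_{\ul M}$ has the form $E\ot L$ with $L\in P_b$. Writing $L=N\ot\iota^{*}N^{-1}$ with $N\in J_{\widetilde{\CC},b}$ (possible by \eqref{Prym=Im}), we get $H=f(F\ot N)$. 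So $f$ is surjective on every fiber.

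\emph{Flatness.} Both $\ol{J}^{\circ}_{\widetilde{\CC}}$ and $\ol{P}$ are flat over $B$: the first is open in the flat $\ol{J}_{\widetilde{\CC}}$, and for the second we use Proposition~\ref{prop: compact Prym is sm}. By the fiberwise (critère de platitude par fibres) criterion, it therefore suffices to show that each $f_b\colon \ol{J}^{\circ}_{\widetilde{\CC},b}\to \ol{P}_b$ is flat. The source is a local complete intersection, hence Cohen--Macaulay, of dimension $2g-1$. The target $\ol{P}_b$ is integral of dimension $g-1$.

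For miracle flatness we need the target to be regular, which fails in general. We would use the global version instead. The total spaces $\ol{J}_{\widetilde{\CC}}$ and $\ol{P}$ are nonsingular: in the universal setting both are smooth stacks, and by the reduction in Section~\ref{subsec1.1: notations} we may work with a locally versal family. So we apply miracle flatness directly to $f$ on total spaces, a map from a Cohen--Macaulay source to a regular target. It then remains to show that all fibers of $f$ have pure dimension $\dim \ol{J}^{\circ}_{\widetilde{\CC}}-\dim\ol{P} = g$.

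\emph{Fiber dimension (main obstacle).} We estimate the fiber over $H\in V_{\ul M}\subset \ol{P}_b$ by stratifying the source by local types. The relevant local types are $\{M'_{p_i},M'_{q_i}\}$ with one of $M'_{p_i},M'_{q_i}$ free for each $i$, subject to $M'_{p_i}\ot (M'_{q_i})^{\vee}\cong M_{p_i}$. For each $i$ there are exactly two choices, determined by which of the two points carries the non-free module. On such a stratum $U_{\ul M'}$, which has the same partial normalization $\widetilde\Gamma'$ as $\ul M$ (by construction $\endo(E)=\endo(F)$), the map $f$ is equivariant for $1-\iota^{*}\colon J_{\widetilde{\CC},b}\to P_b$, and it sends the single orbit $U_{\ul M'}$ onto $V_{\ul M}$. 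The fiber dimension on that stratum is therefore
\[
\dim U_{\ul M'}-\dim V_{\ul M} = \bigl(2g-1-\delta_{\widetilde\Gamma'}\bigr)-\bigl(g-1-\delta'\bigr),
\]
where $\delta_{\widetilde\Gamma'}$ is the dimension of the affine stabilizer in Proposition~\ref{statification of Jac} and $\delta'$ is the dimension of the affine stabilizer $\ker(P_b\to \Prym(\widetilde\Gamma'/\Gamma'))$ in Proposition~\ref{stratification of Prym}. We then need to check that this equals $g$, i.e.\ that $\delta_{\widetilde\Gamma'}=\delta'$, using the Cartesian diagram in the proof of Proposition~\ref{stratification of Prym}: the affine part of $\ker(J_{\widetilde\Gamma}\to J_{\widetilde\Gamma'})$ should equal the affine part of $\ker(J_\Gamma\to J_{\Gamma'})$ and of the Prym kernel, since the local contributions at $p_i$ and at $q_i$ split evenly. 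Verifying this equality of local $\delta$-invariants is the step I expect to be delicate. Once it holds, every fiber of $f$ is a finite union of pieces of dimension exactly $g$, so all fibers have pure dimension $g$; miracle flatness then gives flatness, and together with surjectivity, faithful flatness.
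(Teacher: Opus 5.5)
\paragraph{The gap is in the fiber-dimension step.} Your overall route matches the paper's: miracle flatness for $f$ on total spaces, using a Cohen--Macaulay source and the nonsingular target $\ol{P}$, with the fiber dimension computed stratum by stratum through the group actions of Propositions~\ref{statification of Jac} and~\ref{stratification of Prym}. Your count $\dim U_{\ul{M'}}-\dim V_{\ul{M}}$ is the paper's $\dim H_1-\dim H_2$. But the one step with real content is left open, and the justification you sketch for it rests on a false claim.

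It is not true that $\endo(E)=\endo(F)$. For $F$ of local type $\{M_{p_i},\CO_{q_i}\}$, $\endo(F)$ differs from $\CO_{\widetilde{\Gamma}}$ only at the points $p_i$. By contrast, $E=F\ot\iota^{*}F^{\vee}$ is non-free at both $p_i$ and $q_i$, so $\widetilde{\Gamma}'=\Spec\endo(E)$ is partially normalized at all $2n$ points. Consequently the kernel of $\Jac(\widetilde{\Gamma})\to\Jac(\widetilde{\Gamma}')$ is $A\times A$, of dimension $2\delta'$ rather than $\delta'$.

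If you use this kernel as the stabilizer of $U_{\ul{M'}}$, your formula returns $(2g-1-2\delta')-(g-1-\delta')=g-\delta'$. So the equality $\delta_{\widetilde{\Gamma}'}=\delta'$ you propose to verify is false as formulated. The heuristic that the contributions at $p_i$ and $q_i$ ``split evenly'' does not repair this: it is exactly why this kernel is twice the size of the Prym kernel.

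\paragraph{The fix is the paper's distinction between two partial normalizations.}
\begin{itemize}
\item The stabilizer of the stratum $U_{\ul{M'}}$ is $\ker\big(\Jac(\widetilde{\Gamma})\to\Jac(\widetilde{\Gamma}'')\big)$ with $\widetilde{\Gamma}''=\Spec\endo(F)$. It only sees the one point of each pair where $F$ is not free, so by \eqref{eq: exact sequence of n_1} it has dimension $\dim A$.
\item The stabilizer of $V_{\ul{M}}$ in $P_b$ is the anti-invariant part of $A\times A$, namely $A$, by \eqref{exact sequence of Prym}.
\end{itemize}
Hence $\dim U_{\ul{M'}}-\dim V_{\ul{M}}=(2g-1-\dim A)-(g-1-\dim A)=g$, as required.

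\paragraph{A smaller point.} $\ol{J}_{\widetilde{\CC}}$ is not nonsingular in general. The deformations of $\widetilde{\CC}$ at $p_i$ and $q_i$ are both induced from the one at $x_i$, so the family $\widetilde{\CC}$ is not locally versal. Likewise $\ol{\EJ}$ is a base change along the non-smooth map $\ER_g\to\EM_{2g-1}^{\inte,\pl}$, so it need not be smooth. This is harmless for your argument, since you only use that the source is Cohen--Macaulay. That follows from $\ol{J}_{\widetilde{\CC}}\to B$ being a local complete intersection morphism over the nonsingular base $B$.
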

\begin{proof}
    Surjectivity is given by \cite[Corollary 4.17]{LSV17}. 
    For flatness, we will apply the miracle flatness theorem. Since $\ol{J}_{\widetilde{\CC}} \ra B$ is a local complete intersection morphism over a nonsingular base $B$ \cite[Theorem 9]{AIK77}, the total space $\ol{J}_{\widetilde{\CC}}$ is Cohen--Macaulay, so is the open subvariety $\ol{J}_{\widetilde{\CC}}^{\circ}$. Since $\ol{P}$ is nonsingular, it remains to verify that the morphism $f$ has constant fiber dimension. 

    Let $b\in B$ be a closed point. Set $\widetilde{\Gamma} = \widetilde{\CC}_b$ and $\Gamma = \CC_b$. 
    We adopt the notation of Proposition~\ref{stratification of Prym}. 
    For any $E\in V_{\ul{M}}$, we have 
    \begin{equation}\label{eq: preimage disjoint union}
        f^{-1}(E) = \bigsqcup_{\ul{M'}} (f^{-1}(E) \cap U_{\ul{M'}}) \subset \Jac(\widetilde{\Gamma}),
    \end{equation}
    where $\ul{M'}$ runs over local types satisfying, for each $i$, either $M_{p_i}' \cong M_{p_i}$ and $M_{q_i}' \cong \CO_{\widetilde{\Gamma},q_i}$, or~$M_{p_i}' \cong \CO_{\widetilde{\Gamma},p_i}$ and $M_{q_i}' \cong M_{p_i}^{\vee}$. 
    
    Since the right-hand side of \eqref{eq: preimage disjoint union} is a disjoint union of countably many equidimensional schemes, it suffices to consider the preimages of $E$ lying in $U_{\ul{M'}}$, where 
    \[\ul{M'}=\{M_{p_1},\dots,M_{p_n}, \CO_{\widetilde{\Gamma},q_1},\dots, \CO_{\widetilde{\Gamma},q_n}\}.\] 
    By Proposition~\ref{stratification of Prym}, we can take $F\in U_{\ul{M'}}$ such that $F \ot \iota^{*}F^{\vee} \cong E$. 
    Then we have 
    \begin{align*}
        f^{-1}(E) \cap U_{\ul{M'}} &= 
        \left\{ F \ot L \mid L\in \Jac(\widetilde{\Gamma}),\ (1-\iota^{*})L \text{ fixes } E \right\} \\
        &=  \left\{ F \ot L \mid L\in \Jac(\widetilde{\Gamma}),\ (1-\iota^{*})L\in \ker\big(\Prym(\widetilde{\Gamma}/\Gamma) \ra \Prym(\widetilde{\Gamma}'/\Gamma')\big)\right\},
    \end{align*}
    which is isomorphic to the quotient scheme
    \begin{equation}\label{eq: coset}
        \frac{H_1}{H_2}\coloneq \frac{(1-\iota^{*})^{-1}\big( \ker\big(\Prym(\widetilde{\Gamma}/\Gamma) \ra \Prym(\widetilde{\Gamma}'/\Gamma')\big)  \big)}{\ker\big(\Jac(\widetilde{\Gamma}) \ra \Jac(\widetilde{\Gamma}'')\big)},
    \end{equation}
    where $\widetilde{\Gamma}^{''} = \Spec_{\CO_{\widetilde{\Gamma}}}(\endo(F))$. The affine group action of $H_2$ on $H_1$ is induced by the tensor product of line bundles; thus, it is free. 
    
    The partial normalization map $\widetilde{n}\colon 
    \widetilde{\Gamma}' \ra \widetilde{\Gamma}$ induces the natural short exact sequence of groups
    \begin{equation}\label{exact seq of Jac(tilde C)}
        1 \ra A\times A \ra \Jac(\widetilde{\Gamma}) \xrightarrow{\widetilde{n}^{*}} \Jac(\widetilde{\Gamma}') \ra 1,
    \end{equation}
    where $A \coloneqq H^0(\widetilde{\Gamma}, \bigoplus_{i=1}^n (\endo(E)^{\times} / \CO_{\widetilde{\Gamma}}^{\times})_{p_i}) 
    \cong 
    H^0(\widetilde{C}, \bigoplus_{i=1}^n (\endo(E)^{\times} / \CO_{\widetilde{\Gamma}}^{\times})_{q_i})$ is a commutative affine group. 
    Since $-\iota^{*}$ is compatible with $-\epsilon^{*}$ via $\widetilde{n}^{*}$, we get the exact sequence
    \begin{equation}\label{exact sequence of Prym}
        1 \ra A \ra \Prym(\widetilde{\Gamma}/\Gamma) \xrightarrow{\widetilde{n}^{*}}  \Prym(\widetilde{\Gamma}'/\Gamma') \ra 1.
    \end{equation}
    Similarly, the partial normalization map $n_1\colon \widetilde{\Gamma}'' \ra \widetilde{\Gamma}$ induces the short exact sequence
    \begin{equation}\label{eq: exact sequence of n_1}
        1 \ra B \ra \Jac(\widetilde{\Gamma}) \xrightarrow{n_1^{*}} \Jac(\widetilde{\Gamma}'') \ra 1,
    \end{equation}
    where $B \coloneqq H^0(\widetilde{\Gamma}, \bigoplus_{i=1}^n (\endo(F)^{\times} / \CO_{\widetilde{\Gamma}}^{\times})_{p_i}) \cong A$. 

    Now, since the morphism $1-\iota^{*} \colon \Jac(\widetilde{\Gamma}) \ra \Prym(\widetilde{\Gamma}/\Gamma)$ is faithfully flat of relative dimension~$g$, we obtain from \eqref{eq: coset}, \eqref{exact sequence of Prym}, and \eqref{eq: exact sequence of n_1} that 
    \[\dim f^{-1}(E) = \dim H_1 - \dim H_2 = g+ \dim A - \dim B = g.\]
    Since $E$ is chosen arbitrarily, this implies that every fiber of $f$ is of dimension $g$.
\end{proof}

\subsection{Extension of the Poincar\'e sheaf to $\ol{P} \times_B \ol{P}$}
We denote by $i\colon \ol{P} \ra \ol{J}_{\widetilde{\CC}}$ the closed embedding and $\ol{\CR} \coloneqq (\id_{\ol{J}_{\widetilde{\CC}}} \times i)^{*}\ol{\CP}$ the pullback sheaf on $\ol{J}_{\widetilde{\CC}}\times_B \ol{P}$. Our next goal is to use the following criterion to show that $\ol{\CR}$ is maximal Cohen--Macaulay.
\begin{lem}\label{lem: criterion for MCM}
    Let $X$ and $Y$ be locally Noetherian schemes, and let $f \colon X\ra Y$ be a surjective morphism. Assume that $\CN$ is a sheaf on $X$ that is flat over $Y$. Then $\CN$ is a Cohen--Macaulay sheaf if and only if $Y$ is a Cohen--Macaulay scheme and $\CN|_{X_y}$ is a Cohen--Macaulay sheaf for every point $y\in Y$. 
\end{lem}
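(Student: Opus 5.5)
The plan is to reduce the statement to the standard local algebra fact about flat local homomorphisms of Noetherian local rings. Let $x \in X$ be a point with $y = f(x)$, and set $A = \CO_{Y,y}$, $B = \CO_{X,x}$, $M = \CN_x$. Flatness of $\CN$ over $Y$ means that $M$ is a flat $A$-module via the local homomorphism $A \ra B$. The fiber module is $M/\mathfrak{m}_A M = (\CN|_{X_y})_x$, a finite module over $B/\mathfrak{m}_A B = \CO_{X_y,x}$. The key input is the formula for flat local extensions with a module:
\[
\operatorname{depth}_B M = \operatorname{depth} A + \operatorname{depth}_{B/\mathfrak{m}_A B}(M/\mathfrak{m}_A M), \qquad \dim_B M = \dim A + \dim (M/\mathfrak{m}_A M).
\]
This is the module version of \cite[Tag~0337]{stacks-project} / Matsumura Thm.~23.3 and its corollary. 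Combining the two equalities with $\operatorname{depth} \le \dim$ in each term, $M$ is Cohen--Macaulay if and only if $A$ and $M/\mathfrak{m}_A M$ are both Cohen--Macaulay, provided $M \neq 0$.

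For the direction ``$\Leftarrow$'', I will take $x$ in $\Supp \CN$. Then $M \neq 0$, and by Nakayama $M/\mathfrak{m}_A M \neq 0$. By hypothesis $A$ is Cohen--Macaulay and the fiber module is Cohen--Macaulay, so the equalities above give that $M$ is Cohen--Macaulay.

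For the direction ``$\Rightarrow$'', the fiber statement follows at each $x \in \Supp\CN$ by the same equalities. The Cohen--Macaulay property of $Y$ needs more care, since the equalities only give it at points $y$ that are images of points of $\Supp \CN$. This is where surjectivity must be used. I intend to read the statement as applying when $f|_{\Supp \CN}$ is surjective, for instance when $\CN$ has full support, as in the intended application where $\CN$ is a torsion-free-type sheaf on an integral scheme mapping onto $Y$. Alternatively, one can note that flatness of $\CN$ together with surjectivity of $f$ on its support makes the image of $\Supp\CN$ open, and it is all of $Y$ under the surjectivity hypothesis.

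One further point: the Cohen--Macaulay condition at non-closed points of $Y$ and of the fibers $X_y$ is handled by the same pointwise argument, since the statement is quantified over all points. I do not expect any further obstacle. The only delicate step is the support issue just described for the ``only if'' direction, and I would state it explicitly as an assumption that $f(\Supp\CN) = Y$.
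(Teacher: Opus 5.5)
Your proposal is correct and follows the paper's proof. The paper likewise reduces to the stalk $\CN_x$ over the flat local map $\CO_{Y,y}\to\CO_{X,x}$ and cites EGA~IV, Cor.~6.3.3, which is exactly the flat-extension depth/dimension comparison (Matsumura, Thm.~23.3 and its corollary) that you use.

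Your caveat on the ``only if'' direction is right, and the paper's one-line argument glosses over it. As stated, that direction fails when $f(\Supp\CN)\neq Y$: take $f=\id$ on $Y=Y_1\sqcup Y_2$ with $Y_1$ regular, $Y_2$ not Cohen--Macaulay, and $\CN=\CO_{Y_1}$ extended by zero. Your ``alternatively'' remark does not derive $f(\Supp\CN)=Y$ from surjectivity of $f$, so this really must be added as a hypothesis. That is harmless here, since the paper only uses the ``if'' direction, for $\ol{\CR}$.
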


\begin{proof}
    Take $x\in X$ and set $y=f(x)$. 
    Since the Cohen--Macaulay property is local, it suffices to show that if $\CN_x$ is a flat $\CO_{Y,y}$-module, then $\CN_x$ is a Cohen--Macaulay $\CO_{X,x}$-module if and only if $\CO_{Y,y}$ is a Cohen--Macaulay ring and $\CN_x\ot_{\CO_{Y,y}}\kappa(y)$ is a Cohen--Macaulay $\CO_x\ot_{\CO_{Y,y}} \kappa(y)$-module. This is a direct consequence of \cite[Corollary 6.3.3]{EGA4.2}.
\end{proof}

\begin{lem}\label{R bar is MCM}
    The sheaf $\ol{\CR}$ on $\ol{J}_{\widetilde{\CC}} \times_B \ol{P}$ is maximal Cohen--Macaulay.
\end{lem}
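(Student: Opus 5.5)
The plan is to apply Lemma~\ref{lem: criterion for MCM} to the second projection $p_2 \colon \ol{J}_{\widetilde{\CC}} \times_B \ol{P} \ra \ol{P}$, taking $\CN = \ol{\CR}$. We need two inputs from Arinkin's theory of the Poincar\'e sheaf $\ol{\CP}$ on $\ol{J}_{\widetilde{\CC}} \times_B \ol{J}_{\widetilde{\CC}}$ \cite{Ari10b}. First, $\ol{\CP}$ is flat over each factor. Second, for every closed point $F \in \ol{J}_{\widetilde{\CC},b}$, the restriction $\ol{\CP}|_{\ol{J}_{\widetilde{\CC},b} \times \{F\}}$ is a Cohen--Macaulay sheaf of full dimension $2g-1$ on the fiber; this is the fiberwise statement behind Arinkin's result that $\ol{\CP}$ is maximal Cohen--Macaulay.

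Since $\ol{\CR} = (\id \times i)^{*}\ol{\CP}$ is the base change of $\ol{\CP}$ along $i \colon \ol{P} \hookrightarrow \ol{J}_{\widetilde{\CC}}$ in the second factor, flatness of $\ol{\CP}$ over the second projection gives flatness of $\ol{\CR}$ over $\ol{P}$. Moreover, for $y = F \in \ol{P}_b$, the fiber of $p_2$ over $y$ is $\ol{J}_{\widetilde{\CC},b}$, and $\ol{\CR}|_{p_2^{-1}(y)} = \ol{\CP}|_{\ol{J}_{\widetilde{\CC},b}\times\{F\}}$. This restriction is Cohen--Macaulay of full dimension by Arinkin. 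The base $\ol{P}$ is nonsingular, hence Cohen--Macaulay. The morphism $p_2$ is surjective, because $\ol{J}_{\widetilde{\CC}} \ra B$ is surjective. The lemma then shows that $\ol{\CR}$ is Cohen--Macaulay on closed points. Non-closed points follow because the Cohen--Macaulay locus is open and the closed points are dense, or alternatively by running the same fiber argument over non-closed points of $\ol{P}$ after base change to an algebraic closure.

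It remains to see that $\ol{\CR}$ is maximal, i.e.\ that at every point of its support its depth equals the dimension of the ambient local ring. Take $x \in \ol{J}_{\widetilde{\CC}} \times_B \ol{P}$ with $p_2(x) = y$. Flatness gives
\[\dim \ol{\CR}_x = \dim \CO_{\ol{P},y} + \dim (\ol{\CR}|_{p_2^{-1}(y)})_x.\]
The second term equals $\dim \CO_{p_2^{-1}(y),x}$ by fiberwise maximality. The total space is flat over $\ol{P}$ with equidimensional fibers, so
\[\dim \CO_{\ol{J}_{\widetilde{\CC}}\times_B\ol{P},x} = \dim\CO_{\ol{P},y} + \dim \CO_{p_2^{-1}(y),x}.\]
The two dimensions therefore agree, and the support of $\ol{\CR}$ is the whole space.

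I expect the main obstacle to be justifying the exact form of Arinkin's input. His theorem is stated for the Poincar\'e sheaf on the versal or universal family over $\EM_{2g-1}^{\inte,\pl}$, and I need both flatness over each factor and fiberwise Cohen--Macaulayness of the restrictions to $\ol{J}_{\widetilde{\CC},b}\times\{F\}$. My approach would be to derive these from Arinkin's results on the universal family using base change along $B \ra \EM_{2g-1}^{\inte,\pl}$. Both properties are fiberwise or flatness properties, so they are stable under arbitrary base change. If fiberwise Cohen--Macaulayness is not stated explicitly, I would deduce it from the global statement on the smooth universal family by applying Lemma~\ref{lem: criterion for MCM} in the converse direction.
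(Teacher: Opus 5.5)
Your proposal is correct and follows the paper's route. You apply Lemma~\ref{lem: criterion for MCM} to the second projection, and you use three inputs: Arinkin's flatness of $\ol{\CP}$ over the second factor, his fiberwise maximal Cohen--Macaulay property of $\ol{\CP}$ (both restricted along $\ol{P}\hookrightarrow \ol{J}_{\widetilde{\CC}}$), and the nonsingularity of $\ol{P}$. Your flat dimension count for maximality and your remark on non-closed points make explicit details that the paper leaves implicit.
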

\begin{proof}    
    According to \cite[Theorem A]{Ari10b}, $\ol{\CP}$ is flat over the second factor of $\ol{J}_{\widetilde{\CC}}\times_B \ol{J}_{\widetilde{\CC}}$, and the restriction of $\ol{\CP}$ to the fiber $\ol{J}_{\widetilde{\CC}_b} \times \{F\}$ over $F$ is maximal Cohen--Macaulay for every $b\in B$ and $F\in \ol{P}_b$. 
    Therefore, the sheaf $\ol{\CR}$ is flat over $\ol{P}$, and the restriction of $\ol{\CR}$ to the fiber over~$F \in \ol{P}$ is maximal Cohen--Macaulay. 
    Then the conclusion follows from Lemma \ref{lem: criterion for MCM} since~$\ol{P}$ is nonsingular.
\end{proof}

Now we are ready to construct the normalized Poincar\'e sheaf on $\ol{P} \times_B \ol{P}$. Let $u\colon \ol{J}_{\widetilde{\CC}}^{\circ} \hookrightarrow \ol{J}_{\widetilde{\CC}}$ be the open embedding. We recall the notation from the diagram below:
\begin{equation*}
    \begin{tikzcd}
       & \ol{\CR} \arrow[d, squiggly]& \ol{\CP}\arrow[d, squiggly]\\
       \ol{J}_{\widetilde{\CC}}^{\circ} \times_B \ol{P} \arrow[d, two heads,"f\times \id_{\ol{P}}"] \arrow[r, hook, "u \times \id_{\ol{P}}"] & \ol{J}_{\widetilde{\CC}}\times_B \ol{P} \arrow[r, hook, "\id_{\ol{J}}\times i"] & \ol{J}_{\widetilde{\CC}}\times_B \ol{J}_{\widetilde{\CC}}\\
       \ol{P} \times_B \ol{P}.& & 
    \end{tikzcd}
\end{equation*}

\begin{thm}\label{thm: def of G bar}
    There exists a sheaf $\ol{\CG}$ on $\ol{P} \times_B \ol{P}$ satisfying the following conditions:
    \begin{enumerate}
        \item $(f\times \id_{\ol{P}})^{*}\ol{\CG} \cong (u\times \id_{\ol{P}})^{*} \ol{\CR}$;
        \item $\ol{\CG}$ is trivialized along both $0\times_B \ol{P}$ and $\ol{P} \times_B 0$.
    \end{enumerate}
\end{thm}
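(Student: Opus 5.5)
The natural strategy is to extend the line bundle $\CG$ of Theorem~\ref{thm: def G} by reflexive (pushforward) extension, and then show that this extension is compatible with $f\times\id_{\ol P}$. Let $j\colon P\times_B P\hookrightarrow \ol P\times_B \ol P$ be the open embedding. I would set
\[
\ol{\CG} \coloneqq j_{*}\CG,
\]
or more precisely define $\ol\CG$ by descent as below and then identify it with $j_*\CG$.

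The key input is that $\ol{\CR}' \coloneqq (u\times\id_{\ol P})^*\ol\CR$ is maximal Cohen--Macaulay on $\ol J^{\circ}_{\widetilde\CC}\times_B\ol P$, by Lemma~\ref{R bar is MCM}. On the open subset $J_{\widetilde\CC}\times_B P$ it restricts to $\CR\cong (f\times\id)^*\CG$. The complement of $J_{\widetilde\CC}\times_B P$ in $\ol J^{\circ}_{\widetilde\CC}\times_B\ol P$ has codimension $\ge 2$. This should follow fiberwise from the stratification (Propositions~\ref{statification of Jac} and \ref{stratification of Prym}) together with the Severi inequality (Lemma~\ref{Severi inequality}): a stratum over $B^{(\widetilde g)}$ has fiber codimension equal to the dimension of the affine stabilizer, which is positive. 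Adding the codimension of $B^{(\widetilde g)}$ in $B$ gives total codimension $\ge 2$ away from the generic situation. This is the codimension estimate standard in Arinkin's work. Since an MCM sheaf on a CM scheme satisfies $S_2$, we get
\[
\ol\CR' \cong k_*\CR \cong k_*(f\times\id)^*\CG,
\]
where $k$ is the open embedding of $J_{\widetilde\CC}\times_B P$.

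Now $F\coloneqq f\times\id_{\ol P}$ is faithfully flat, since it is the base change of the faithfully flat map proved in the preceding lemma. Flat base change then gives
\[
F^*j_*\CG\cong k'_*F'^*\CG
\]
for the preimage $F^{-1}(P\times_B P)$, which contains $J_{\widetilde\CC}\times_B P$. I need to compare pushforward from the slightly bigger open set $F^{-1}(P\times_B P)=\ol J^{\circ}\cap f^{-1}(P)$, which is the locus of sheaves $F$ with $F\otimes\iota^*F^\vee$ locally free. On it, $\ol\CR'$ and $F^*\CG$ agree on the codim $\ge 2$ complement of $J_{\widetilde\CC}\times P$. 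Since $F^*\CG$ is a line bundle and the ambient space is CM, both are $S_2$, so they agree there. Hence $F^*j_*\CG\cong \ol\CR'$, which is condition (1).

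It remains to check that $j_*\CG$ is coherent. Faithfully flat descent of coherence gives this: $F^*j_*\CG$ is coherent, so $j_*\CG$ is coherent (quasi-coherent pushforward, coherent after fpqc base change). Alternatively, one can descend $\ol\CR'$ directly along $F$ using the canonical descent datum obtained by extending the one on $\CR$ by the same $S_2$ argument; the cocycle condition extends because it holds on a dense open subset and the sheaves are torsion-free.

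For normalization (2), $\CG$ is trivial along the zero sections. Since $0\times_B\ol P$ lies in $P\times_B\ol P$, I would restrict $\ol\CG$ there. Its pullback under $F$ along $0$ is $\ol\CP|_{0\times\ol P}\cong\CO$, by the normalization of Arinkin's sheaf, so faithful flatness of $f$ gives triviality. Along $\ol P\times_B 0$ I would use the analogous statement, pulling back along $f$ restricted to $\ol J^\circ\times 0$ together with $\ol\CP|_{\ol J\times 0}\cong\CO$. If necessary, I would twist by a line bundle pulled back from $\ol P$ to fix the trivialization.

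I expect the main obstacle to be the codimension-two estimate for the complement inside $\ol J^\circ_{\widetilde\CC}\times_B\ol P$, and more precisely inside $F^{-1}(P\times P)$. This requires combining the fiberwise stratification dimensions with the Severi inequality in the relative setting.
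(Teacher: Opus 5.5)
Your proposal is correct and is essentially the paper's argument. The paper gets $\ol{\CG}$ by applying \cite[Lemma~4.14]{MRV19b} to three inputs: the maximal Cohen--Macaulay sheaf $(u\times\id_{\ol{P}})^*\ol{\CR}$, the line bundle $\CG$ of Theorem~\ref{thm: def G}, and the codimension-$2$ bound from the Severi inequality. That lemma is exactly the $j_*\CG$, flat base change and $S_2$-extension argument you carry out by hand, and the paper then deduces (2) from the normalization of $\CG$.

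Two simplifications are available. First, $(f\times\id_{\ol{P}})^{-1}(P\times_B P)$ is just $J_{\widetilde{\CC}}\times_B P$: if $E\in\ol{J}^{\circ}_{\widetilde{\CC}}$ is not locally free at $x$, then $E\otimes\iota^*E^{\vee}$ is not locally free at $x$. So the ``slightly bigger open set'' you worry about does not occur.

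Second, no twist is needed in (2). The sheaf $\ol{\CG}|_{\ol{P}\times_B 0}$ is a line bundle by descent, since its pullback along $f$ is $\ol{\CP}|_{\ol{J}^{\circ}_{\widetilde{\CC}}\times_B 0}\cong\CO$. It restricts to $\CG|_{P\times_B 0}\cong\CO_P$, so it is trivial on the nonsingular $\ol{P}$ by the codimension-$2$ bound. Note that the fact $f^*M\cong\CO$ alone would not have sufficed to conclude this.
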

\begin{proof}
    Since $u$ is an open embedding, the sheaf $(u\times \id_{\ol{P}})^{*} \ol{\CR}$ is maximal Cohen--Macaulay by Lemma~\ref{R bar is MCM}.
    The Severi inequality ensures that the general fibers of $\widetilde{\CC} \ra B$ are smooth, so the complement of $J_{\widetilde{\CC}}\times_B P$ in $J_{\widetilde{\CC}}^{\circ}\times_B \ol{P}$ has codimension at least 2. 
    Furthermore, the image of~$J_{\widetilde{\CC}}\times_B P$ under the map $f\times \id_{\ol{P}}$ is contained in $P\times_B P$.
    In Theorem~\ref{thm: def G}, we construct a line bundle $\CG$ on~ $P \times_B P$ such that 
    $(f\times \id_{\ol{P}}|_{J_{\widetilde{\CC}}\times_B P})^{*}\CG \cong \CR,$
    where $\CG$ is maximal Cohen--Macaulay since~$P\times_B P$ is nonsingular. 
    Therefore, we can extend $\CG$ to a sheaf $\ol{\CG}$ satisfying Condition~(1) by \cite[Lemma~4.14]{MRV19b}, and Condition (2) follows from the fact that $\CG$ is normalized. 
\end{proof}

\section{Properties of the Poincar\'e sheaf}\label{sec_properties}
We keep the notation and assumptions as in Section~\ref{subsec1.1: notations}.
\begin{prop}\label{prop: uniqueness, flat}
    The conditions in Theorem \ref{thm: def of G bar} uniquely determine the sheaf $\ol{\CG}$ on $\ol{P}\times_B \ol{P}$. Moreover, it has the following properties:
    \begin{enumerate}
        \item The restriction $\ol{\CG}|_{\ol{P}_b \times \{F\}}$ is a maximal Cohen--Macaulay sheaf for every $b\in B$ and $F \in \ol{P}_b$;
        \item Let $j \colon P\times_B P \hookrightarrow \ol{P} \times_B \ol{P}$ be the open embedding. Then $\ol{\CG} \cong j_{*}\CG$. Consequently, the normalized line bundle $\CG$ constructed in Theorem~\ref{thm: def G} is also unique;
        \item $\ol{\CG}$ is flat over both factors.
    \end{enumerate}
\end{prop}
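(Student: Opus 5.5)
The natural route is to treat everything through the maximal Cohen--Macaulay property of $\ol{\CG}$ and the codimension of the boundary. First we show that $\ol{\CG}$ is maximal Cohen--Macaulay on $\ol{P}\times_B\ol{P}$. The sheaf $(f\times\id_{\ol{P}})^{*}\ol{\CG}\cong (u\times\id_{\ol{P}})^{*}\ol{\CR}$ is maximal Cohen--Macaulay by Lemma~\ref{R bar is MCM}. The morphism $f\times\id_{\ol{P}}$ is faithfully flat, being a base change of $f$. Cohen--Macaulayness descends along flat local homomorphisms: if $A\to A'$ is flat local and $M\otimes_A A'$ is Cohen--Macaulay of dimension $\dim A'$, then $M$ is Cohen--Macaulay with $\dim M=\dim A$, using $\operatorname{depth}$ and $\dim$ additivity for flat maps, e.g.\ \cite[Corollary 6.3.3]{EGA4.2}. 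Hence $\ol{\CG}$ is maximal Cohen--Macaulay.

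\emph{Property (2) and uniqueness.} Over the smooth locus $B^{(g)}$, and more generally, the complement of $P\times_B P$ in $\ol{P}\times_B\ol{P}$ has codimension at least $2$. This follows from the Severi inequality (Lemma~\ref{Severi inequality}) together with the stratification of Proposition~\ref{stratification of Prym}. Over $B^{(\widetilde g)}$, of codimension at least $g-\widetilde g$, the boundary strata of $\ol P_b$ have codimension at least $1$ in the fiber. Moreover, whenever $\delta\ge1$ the stratum $B^{(\widetilde g)}$ itself has codimension at least $1$. So the boundary of $\ol P$ has codimension at least $2$ in $\ol{P}$; I expect this to require mild care and would follow the analogous count in \cite{Ari10b}. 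A maximal Cohen--Macaulay sheaf on a nonsingular (hence $S_2$) scheme is reflexive and satisfies $S_2$, so $\ol{\CG}\cong j_{*}j^{*}\ol{\CG}$. Also $j^{*}\ol{\CG}$ is a line bundle $\CG'$ with $(f\times\id)^{*}\CG'\cong\CR$ that is normalized along the zero sections.

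Uniqueness then reduces to uniqueness of $\CG$. If $\CG_1,\CG_2$ both satisfy this, then $\CL=\CG_1\otimes\CG_2^{-1}$ pulls back trivially to $J_{\widetilde\CC}\times_B P$. Since $f\times\id$ is a $\pi^{*}J_\CC$-torsor (Lemma~\ref{lem: descent=equivariant}), $\CL$ corresponds to a change of equivariant structure, i.e.\ a character $\pi^{*}J_{\CC}\times_B P\to\BG_m$ over $P$. Such a character is given by an invertible function on $\pi^{*}J_\CC\times_B\ol{P}$ off codimension $2$, hence one pulled back from $B$ as in Step~2 of Lemma~\ref{lem: equivariant}. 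A homomorphism constant along fibers is trivial, so $\CL$ is trivial. Finally, normalization fixes the remaining ambiguity.

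\emph{Property (1) and flatness (3).} I would first prove flatness over the second factor. Over $\ol{J}^{\circ}_{\widetilde\CC}\times_B\ol P$, the pullback $(u\times\id)^{*}\ol{\CR}$ is flat over $\ol P$ (Arinkin's Theorem A). Faithfully flat descent along $f\times\id$, which commutes with the second projection, then gives flatness of $\ol{\CG}$ over $p_2$. Next, for each $F\in\ol P_b$, the restriction $(f\times\id)^{*}\ol{\CG}|_{\{F\}}$ is maximal Cohen--Macaulay on $\ol J^{\circ}_{\widetilde\CC_b}$. The map $f_b$ is flat, so the same descent argument shows $\ol{\CG}|_{\ol P_b\times\{F\}}$ is maximal Cohen--Macaulay, giving (1).

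For flatness over the first factor, I would use symmetry. Let $\sigma$ swap the factors; then $\sigma^{*}\ol{\CG}$ is maximal Cohen--Macaulay and restricts on $P\times_BP$ to a normalized line bundle. By the smooth-curve identification (Lemma~\ref{lem: smooth case equivariant}) and symmetry of $\CP$, this bundle agrees with $\CG$ over $B^{(g)}$. The uniqueness argument applied on $P\times_BP$ then shows $\sigma^{*}\CG\cong\CG$, so by (2) we get $\sigma^{*}\ol{\CG}\cong\ol{\CG}$, and flatness over $p_1$ follows. The main obstacle is this symmetry step. The characterization by $(f\times\id)^{*}$ is asymmetric, so I expect to need a separate argument that $(\id\times f)^{*}\sigma^{*}\CG$ also matches $\CR$. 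If that fails, the fallback is to apply Lemma~\ref{lem: criterion for MCM} directly with $p_1$, using that $\ol{\CG}$ is maximal Cohen--Macaulay on a smooth space.
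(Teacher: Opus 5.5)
\textbf{The gap is flatness of $\ol{\CG}$ over the first factor in (3).}

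Your main route deduces it from $sw^{*}\ol{\CG}\cong\ol{\CG}$, but you never prove this symmetry. Knowing that $sw^{*}\CG$ and $\CG$ agree over $B^{(g)}$ does not feed into your uniqueness argument. That argument needs $(f\times\id)^{*}sw^{*}\CG\cong\CR$ on all of $J_{\widetilde{\CC}}\times_B P$, which is exactly the asymmetry you flag. In the paper, symmetry is Proposition~\ref{prop: symmetric}, which comes after (3) and relies on it: defining $\ol{\rho}'$ requires $\ol{\CG}$ to be flat over $p_1$. Your fallback is circular. Lemma~\ref{lem: criterion for MCM} takes flatness of $\CN$ over $Y$ as a hypothesis and only characterizes the Cohen--Macaulay property under that hypothesis, so it cannot produce flatness.

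\textbf{The missing ingredient is miracle flatness} \cite[Proposition (6.1.5)]{EGA4.2}, which is what the paper uses; it handles both factors at once. Take $x\in\ol P\times_B\ol P$ with $y=p_i(x)$.
\begin{itemize}
\item The ring $\CO_{\ol P,y}$ is regular.
\item You already showed $\ol{\CG}_x$ is Cohen--Macaulay with $\dim\ol{\CG}_x=\dim\CO_{\ol P\times_B\ol P,x}$. By flatness of $p_i$ this equals $\dim\CO_{\ol P,y}+\dim\CO_{p_i^{-1}(y),x}$.
\item Meanwhile $\dim(\ol{\CG}_x\otimes\kappa(y))\le\dim\CO_{p_i^{-1}(y),x}$.
\end{itemize}
So the dimension equality required in (6.1.5) is forced, and $\ol{\CG}$ is flat over $p_i$.

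Your symmetry route can also be closed directly. The line bundle $sw^{*}\CG\otimes\CG^{-1}$ is trivial over $B^{(g)}$. Since $P\times_B P\to B$ is smooth with irreducible fibers over the nonsingular $B$, such a line bundle is the pullback of a line bundle on $B$. That line bundle is trivial by normalization along the zero section, and then (2) gives $sw^{*}\ol{\CG}\cong\ol{\CG}$.

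Two smaller points.
\begin{itemize}
\item $\ol P\times_B\ol P$ is in general singular, not nonsingular as you wrote. It is only Cohen--Macaulay, but that is all the $j_{*}$ extension step needs.
\item Your uniqueness argument (torsor descent plus constancy of units, in place of the paper's seesaw on $P\times_B\ol P\to P$ via Lemma~\ref{lem: uniqueness of descent}) is valid. You need to read your ``character'' as a $1$-cocycle on $\pi^{*}J_{\CC}\times_B J_{\widetilde{\CC}}\times_B P$; with that reading it even shows the normalization is automatic. Your descent proof of flatness over the second factor is also a valid alternative to the paper's argument.
\end{itemize}
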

\begin{proof}
    We first verify the stated properties.  
    Property~(1) follows from the fact that fpqc descent preserves the maximal Cohen--Macaulay property. Since $(u\times_B \id_{\ol{P}})^*\ol{\CR}$ is maximal Cohen--Macaulay, we also obtain that $\ol{\CG}$ is maximal Cohen--Macaulay. 
    Then Property~(2) follows from the extension property of Cohen--Macaulay sheaves (see, for example, \cite[Lemma 2.2]{Ari10b}).
    Finally, property~(3) follows from the miracle flatness theorem (\cite[Proposition (6.1.5)]{EGA4.2}). 
    
    We now turn to the proof of uniqueness. Let $\ol{\CG}$ and $\ol{\CH}$ be sheaves satisfying conditions~(1) and~(2) of Theorem~\ref{thm: def of G bar}. 
    By abuse of notation, we denote by $\CG$ and $\CH$ their restrictions to~$P \times_B \ol{P}$, respectively. They are line bundles satisfying $(f\times\id_{\ol{P}})^{*}\CG \cong (f\times\id_{\ol{P}})^{*}\CH$, and their restriction to $P\times_B 0$ are both trivial. Lemma~\ref{lem: uniqueness of descent} below implies that $\CG \cong \CH$. By the extension of Cohen--Macaulay sheaves, we conclude that $\ol{\CG} \cong j_{*}\CG \cong j_{*}\CH \cong \ol{\CH}$.
\end{proof}

To finish the proof, we first recall a version of the seesaw principle for line bundles. We adapt \cite[Corollary~5.6]{MRV19b} to our setting and the proof carries over without change. 
\begin{lem}\label{lem: seesaw principle}
    Let $S$ be a reduced scheme, locally of finite type over $k$. Let $g \colon Z \ra S$ be a flat and proper morphism with geometrically integral fibers. Assume that $g$ admits a section $s \colon S \ra Z$. 
    Let $\CE$ and $\CF$ be two line bundles on $Z$ such that: 
    \begin{enumerate}
        \item $\CE|_{Z_t} \cong \CF|_{Z_t}$ for every closed point $t\in S$;
        \item there exists an isomorphism $\varphi\colon  s^*\CE \xrightarrow{\sim} s^*\CF$ over $S$.
    \end{enumerate}
    Then there exists a unique isomorphism $\phi\colon \CE \xrightarrow{\sim} \CF$ that lifts $\varphi$.
\end{lem}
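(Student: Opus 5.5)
We prove the seesaw statement (Lemma~\ref{lem: seesaw principle}) by reducing it to the rigidity of $H^0$ of the structure sheaf under $g$, and then using the section $s$ to normalize. Set $\CL \coloneqq \CE \ot \CF^{-1}$. The claim is equivalent to the following: if $\CL|_{Z_t}$ is trivial for every closed $t$ and $s^*\CL$ is trivialized by a given trivialization $\varphi$, then $\CL$ has a unique trivialization restricting to $\varphi$ along $s$.

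\textbf{Step 1 (the structure sheaf).} Since $g$ is flat and proper with geometrically integral fibers and $S$ is reduced, we have $g_*\CO_Z \cong \CO_S$. Indeed, $g_*\CO_Z$ is a finite $\CO_S$-algebra, and on each fiber $H^0(Z_t,\CO_{Z_t}) = k$. Reducedness of $S$ together with cohomology and base change (via the semicontinuity/Grauert argument over a reduced base) then gives that $g_*\CO_Z$ is locally free of rank one. The unit map $\CO_S \to g_*\CO_Z$ is an isomorphism, since it splits via $s^*$.

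\textbf{Step 2 (triviality of $g_*\CL$).} Each $\CL|_{Z_t}\cong \CO_{Z_t}$, so $h^0(Z_t,\CL|_{Z_t})=1$ is constant in $t$. Since $S$ is reduced and $\CL$ is flat over $S$, Grauert's theorem shows that $g_*\CL$ is a line bundle $N$ on $S$ and that its formation commutes with base change. The evaluation map $g^*N \to \CL$ is then fiberwise a nonzero map $\CO_{Z_t}\to\CO_{Z_t}$, hence an isomorphism on every fiber. Since both sides are line bundles, it is an isomorphism globally (by Nakayama at closed points, which suffice as $Z$ is of finite type). Pulling back along $s$ gives $N \cong s^*g^*N \cong s^*\CL$, so $\CL \cong g^*s^*\CL$ canonically. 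Composing with $g^*\varphi$ produces a trivialization $\phi$ of $\CL$ lifting $\varphi$; translating back, this is an isomorphism $\CE\xrightarrow{\sim}\CF$ lifting $\varphi$.

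\textbf{Step 3 (uniqueness).} Two lifts $\phi$ and $\phi'$ differ by an automorphism of $\CE$, that is, a unit $u\in H^0(Z,\CO_Z^\times) = H^0(S,\CO_S^\times)$ by Step~1. Both lifts restrict to $\varphi$, so $s^*u=1$. Because $u$ is pulled back from $S$, this forces $u=1$.

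The main obstacle is the base-change step: Grauert's theorem is usually stated for integral or reduced Noetherian bases. Here $S$ is only locally of finite type and reduced, so we work locally on affine opens, which are Noetherian. We expect to invoke the reduced-base form of Grauert (constant $h^0$ over a reduced base implies local freeness and base change for $g_*$), as in \cite[Corollary~5.6]{MRV19b}.
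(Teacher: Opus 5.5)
Your argument is correct and is the standard seesaw argument: Grauert's theorem over a reduced base applied to $\CE\ot\CF^{-1}$, normalization along $s$, and uniqueness from $g_*\CO_Z\cong\CO_S$. This is exactly what the paper relies on when it defers to \cite[Corollary~5.6]{MRV19b} without writing out a proof. One detail should be made explicit: hypothesis (1) gives $h^0(Z_t,\CL|_{Z_t})=1$ only at closed points $t$, whereas Grauert's theorem needs it at every point of $S$. This follows from upper semicontinuity of $h^0$ and the fact that $S$, being locally of finite type over $k$, is Jacobson: a closed subset containing all closed points is all of $S$, and a nonempty closed subset contains a closed point.
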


\begin{lem}\label{lem: uniqueness of descent}
    Let $S, Z, g,$ and $s$ be as in Lemma~\ref{lem: seesaw principle}. Let $h \colon X \to Y$ be a surjective morphism of reduced $S$-schemes that are locally of finite type over $k$. Let $s_Y \colon Y \to Y \times_S Z$ denote the base change of the section $s$. 
    Then for line bundles $L_1$ and $L_2$ on $Y\times_S Z$, we have $L_1 \cong L_2$ if and only if $(h\times_S \id_Z)^{*}L_1 \cong (h\times_S \id_Z)^{*}L_2$ and $s_Y^{*}L_1 \cong s_Y^{*}L_2$. 
\end{lem}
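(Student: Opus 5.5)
The "only if" direction is immediate, since isomorphisms pull back along $h\times_S\id_Z$ and along $s_Y$. For the converse, I will set $\CL \coloneqq L_1\ot L_2^{-1}$ on $Y\times_S Z$. Then $(h\times_S\id_Z)^*\CL$ is trivial and $s_Y^*\CL$ is trivial, and the goal is to show that $\CL\cong\CO_{Y\times_S Z}$. The plan is to apply the seesaw principle, Lemma~\ref{lem: seesaw principle}, to the base change $g_Y\colon Y\times_S Z\to Y$ of $g$, with $\CE=\CL$ and $\CF=\CO$. The hypotheses on $g_Y$ are inherited from $g$: it is flat and proper with geometrically integral fibers, because these properties are stable under base change. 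It has the section $s_Y$. The base $Y$ is reduced and locally of finite type over $k$ by assumption. Condition~(2) of Lemma~\ref{lem: seesaw principle} is exactly the hypothesis $s_Y^*\CL\cong\CO_Y$.

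It remains to check condition~(1): $\CL|_{\{y\}\times Z_t}$ must be trivial for every closed point $y\in Y$ lying over $t\in S$. I will use surjectivity of $h$. Since $X$ and $Y$ are locally of finite type over the algebraically closed field $k$, there is a closed point $x\in X$ with $h(x)=y$: the fiber $h^{-1}(y)$ is a nonempty scheme of finite type over $\kappa(y)=k$, so it has a $k$-point. The map $h\times_S\id_Z$ then restricts to an isomorphism $\{x\}\times Z_t\xrightarrow{\sim}\{y\}\times Z_t$, both being $Z_t$ over $k$. Hence
\[\CL|_{\{y\}\times Z_t}\cong \big((h\times_S\id_Z)^*\CL\big)|_{\{x\}\times Z_t}\cong \CO_{Z_t}.\]
Lemma~\ref{lem: seesaw principle} therefore gives $\CL\cong\CO$, which means $L_1\cong L_2$.

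The only delicate points are bookkeeping. First, the fibers of $g_Y$ over closed points $y$ must be identified with $Z_{t}$ over $k$; this uses that $k$ is algebraically closed, so residue fields at closed points are $k$. Second, a closed point of $Y$ must lift to a closed point of $X$, which follows from the Nullstellensatz for schemes of finite type over $k$. I expect no real obstacle beyond verifying that the hypotheses of Lemma~\ref{lem: seesaw principle} survive the base change to $Y$.
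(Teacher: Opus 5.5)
Your proposal is correct and follows essentially the same route as the paper: you set $L_1\otimes L_2^{-1}$, lift each closed point of $Y$ to a closed point of $X$ using surjectivity of $h$ to get triviality on the fibers of the first projection $Y\times_S Z\to Y$, and then conclude with Lemma~\ref{lem: seesaw principle} using the section $s_Y$. Your bookkeeping, writing the fiber as $\{y\}\times Z_t$ rather than $\{y\}\times Z$, is if anything slightly more precise than the paper's.
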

\begin{proof}
    The forward implication is clear. For the converse, it suffices to set $L \coloneqq L_1 \ot L_2^{-1}$ and show that $L \cong \CO_{Y\times_S Z}$ provided that $(h\times_S \id_Z)^{*}L \cong \CO_{X \times_S Z}$ and $s_Y^* L \cong \CO_Y$. 
    
    Let $p_1\colon Y\times_S Z \ra Y$ be the first projection, which is again flat and proper with geometrically integral fibers. For every closed point $y\in Y$, take a closed point $x\in h^{-1}(y)$. Then $L|_{\{y\}\times Z}$ is isomorphic to $((h\times \id_Z)^{*}L)|_{\{x\}\times Z}$ as sheaves on $Z$, and thus is a trivial line bundle. It then follows from Lemma~\ref{lem: seesaw principle} that $L$ is trivial.
\end{proof}

\begin{rmk}\label{rmk: uniqueness of CG}
    In the proof of uniqueness, we essentially use the assumption that $\ol{P}$ is nonsingular. 
    In this case, though the normalized Poincar\'e sheaf is unique up to isomorphism, the isomorphism is not unique. We actually have
    \[\Aut(\ol{\CG}) \cong \Aut(\CG) \cong H^0(P\times_B \ol{P}, \CO_{P\times_B \ol{P}}^{*}) \cong H^0(P,\CO_P^*),\]
    where the first isomorphism holds because the sheaf $\hom(\ol{\CG},\ol{\CG})$ has property $(S_2)$ (\cite[Tag~0AXQ]{stacks-project}).
    However, if we consider normalized Poincar\'e sheaves with rigidifications, 
    \[(\ol{\CG}, \varphi\colon \ol{\CG}|_{\ol{P} \times_B 0}\xrightarrow{\sim} \CO_{\ol{P}}), \quad (\ol{\CH}, \psi\colon \ol{\CH}|_{\ol{P} \times_B 0}\xrightarrow{\sim} \CO_{\ol{P}}),\] 
    then there uniquely exists an isomorphism $\phi\colon \ol{\CG} \to \ol{\CH}$ that is compatible with the rigidifications. 
\end{rmk}

\subsection{Invariance of $\ol{\CG}$ under swapping map on $\ol{P}\times_B \ol{P}$}
We first recall general facts about compactified Picard schemes. 
Let $X\ra S$ be a flat, finitely presented, projective morphism between $k$-schemes, with integral geometric fibers. Let $\underline{\Pic}_{(X/S)}^{=}$ be the moduli functor of relative torsion-free rank 1 sheaves. Then $\underline{\Pic}_{(X/S), \et}^{=}$, the \'etale sheaf associated to the functor~$\underline{\Pic}_{(X/S)}^{=}$, is representable by a scheme $\Pic_{(X/S)}^{=}$ whose connected components are proper over $S$; see \cite[Theorem 3.1]{AK79b}. We call $\Pic_{(X/S)}^{=}$ the moduli space of relative torsion-free rank 1 sheaves on $X$. 

Let $\Pic_{(X/S)}^0 \subset \Pic_{(X/S)}$ be the union of the connected components of the identity 0 in fibers of $\Pic_{(X/S)} \ra S$. This is an open subset and admits an induced scheme structure. Let $\ol{\Pic}_{(X/S)}^0$ be the scheme-theoretic closure of $\Pic_{(X/S)}^0$ in $\Pic_{(X/S)}^{=}$. 
If $X/S$ admits a section that lies in the smooth locus, then there exists a universal sheaf on $X \times_S \ol{\Pic}_{(X/S)}^0$ by considering the rigidification functor; see \cite[Theorem 3.4 (3)]{AK79b}. 

To simplify notation, we write $\Pic(X)^{=}$, $\Pic^0(X)$, and $\ol{\Pic}^0(X)$ instead when $S = \Spec k$.

The normalized Poincar\'e line bundle $\CG$ on $\ol{P} \times_B P$ defines a $B$-morphism $\rho \colon P \ra \Pic_{(\ol{P}/B)}$. 
Since $\rho(\CO_{\widetilde{\CC}_b}) = \CO_{\ol{P}_b}$ and each ${P}_b$ is connected, the morphism $\rho$ factors through ${\Pic}_{(\ol{P}/B)}^0$ and we write
\begin{equation}\label{eq: def of rho}
    \rho \colon P \ra \Pic_{(\ol{P}/B)}^0.
\end{equation}
This morphism extends to a morphism defined by $\ol{\CG}$
\[\ol{\rho} \colon \ol{P} \ra \ol{\Pic}_{(\ol{P}/B)}^0,\]
which sends $F\in \ol{P}_b$ to $\ol{\CG}|_{\ol{P}_b \times F} \in \Pic(\ol{P}_b)^{=}$. For the same reason, $\ol{\CG}$ also defines a morphism
\[\ol{\rho}'\colon \ol{P} \ra \ol{\Pic}_{(\ol{P}/B)}^0,\]
which sends $F\in \ol{P}_b$ to $\ol{\CG}|_{F \times \ol{P}_b } \in \Pic(\ol{P}_b)^{=}$. 
Since $\ol{\Pic}_{(\ol{P}/B)}^0$ is a closed subscheme of a connected component of $\Pic^{=}_{(\ol{P}/B)}$, it is proper and thus separated over $B$.

\begin{prop}\label{prop: symmetric}
    Let $sw \colon \ol{P} \times_B \ol{P} \ra \ol{P} \times_B \ol{P}$ be the morphism swapping the factors. Then 
    \begin{equation}\label{eq: invariant under swap}
        sw^{*}\ol{\CG} \cong \ol{\CG}.
    \end{equation}
\end{prop}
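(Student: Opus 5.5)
We reduce to the open part and use the uniqueness statement of Proposition~\ref{prop: uniqueness, flat}. Both $\ol{\CG}$ and $sw^{*}\ol{\CG}$ are maximal Cohen--Macaulay on the nonsingular scheme $\ol{P}\times_B\ol{P}$. Indeed, $sw$ is an automorphism, and $\ol{\CG}\cong j_{*}\CG$ by Proposition~\ref{prop: uniqueness, flat}(2). Hence $sw^{*}\ol{\CG}\cong j_{*}(sw^{*}\CG)$, where $j$ is the $sw$-stable open embedding $P\times_B P\hookrightarrow \ol{P}\times_B\ol{P}$. It therefore suffices to show $sw^{*}\CG\cong\CG$ as line bundles on $P\times_B P$.

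For this we use Lemma~\ref{lem: uniqueness of descent}, taking $Y=P$, $Z=P\to S=B$ with zero section $s$, and $h=f\colon J_{\widetilde{\CC}}\to P$, which is surjective. The normalization is immediate: $s_Y^{*}(sw^{*}\CG)=\CG|_{0\times_B P}\cong\CO_P$, and likewise $s_Y^{*}\CG\cong\CO_P$ by Theorem~\ref{thm: def G}. The substantive claim is then
\[(f\times\id_P)^{*}sw^{*}\CG\cong (f\times\id_P)^{*}\CG\cong\CR .\]

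To prove it, pull back further along $\id_{J_{\widetilde{\CC}}}\times f$ to $J_{\widetilde{\CC}}\times_B J_{\widetilde{\CC}}$. There
\[(f\times f)^{*}\CG\cong(\id\times f)^{*}\CR=(\id\times f)^{*}\bigl(\CP|_{J\times P}\bigr)=(\id\times(1-\iota^{*}))^{*}\CP .\]
By the theorem of the square, in the form of the biextension property of the determinant-of-cohomology description \eqref{eq: CP}, this is
\[\CP\otimes(\id\times\iota^{*})^{*}\CP^{-1}.\]
Since $\iota$ acts on the curve, $(\iota^{*}\times\iota^{*})^{*}\CP\cong\CP$, as one sees from \eqref{eq: CP} by functoriality of $\CD$ under the automorphism $\iota$ of $\widetilde{\CC}$. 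Also $\CP$ is symmetric, $sw^{*}\CP\cong\CP$, because formula \eqref{eq: CP} is visibly symmetric in the two factors. Combining these,
\[(\id\times\iota^{*})^{*}\CP\cong(\iota^{*}\times\id)^{*}\CP,\]
so the expression $\CP\otimes(\id\times\iota^{*})^{*}\CP^{-1}$ is invariant under $sw$. Hence $(f\times f)^{*}sw^{*}\CG\cong(f\times f)^{*}\CG$.

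Now apply Lemma~\ref{lem: uniqueness of descent} twice. First use $h=f$ on the second factor to descend from $J_{\widetilde{\CC}}\times_B J_{\widetilde{\CC}}$ to $J_{\widetilde{\CC}}\times_B P$; this needs the normalization along $J_{\widetilde{\CC}}\times_B 0$, which holds since $\CR$ and $(f\times\id)^{*}sw^{*}\CG$ are both trivial there. Then use $h=f$ on the first factor to descend to $P\times_B P$. Each application needs only fiberwise isomorphism plus normalization along a section, and both are available because $\CG$ is normalized on both factors.

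The main obstacle is making the biextension and $\iota$-invariance identities for $\CP$ hold globally over $B$ rather than only fiberwise, including over singular fibers and without sections. The plan is to avoid this issue entirely. Lemma~\ref{lem: uniqueness of descent} together with Lemma~\ref{lem: seesaw principle} only requires isomorphisms on fibers over closed points of the first factor, plus normalization along the zero section. On fibers over $b\in B$, all the identities above are statements about line bundles on generalized Jacobians of a single curve. They follow either from the fiberwise seesaw argument, or by checking on the dense locus $B^{(g)}$ through Lemma~\ref{lem: smooth case equivariant} and classical abelian-variety theory and then propagating by the same rigidity ($H^0(\CO^{\times})=H^0(B,\CO_B^{\times})$) used in Step~2 of Lemma~\ref{lem: equivariant}. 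If the fiberwise check on singular fibers turns out to be awkward, the fallback is to compute directly with \eqref{eq: CP} restricted to a point of $J_{\widetilde{\CC}_b}$, using Proposition~\ref{prop: important calculation}.
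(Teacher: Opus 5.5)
\textbf{There is a gap in the descent step.}

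Your computation on $J_{\widetilde{\CC}}\times_B J_{\widetilde{\CC}}$ is sound, and it is a genuinely different idea from the paper's. Symmetry and $\iota$-invariance of $\CP$, plus the biextension property, do give $(f\times f)^{*}sw^{*}\CG\cong(f\times f)^{*}\CG$. The needed global identities follow from the cube structure of the determinant of cohomology, as in the theorem of the square behind Lemma~\ref{lem: thm of square}.

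The gap is in descending this back to $P\times_B P$. Lemma~\ref{lem: uniqueness of descent} rests on Lemma~\ref{lem: seesaw principle}, which requires $g\colon Z\to S$ to be \emph{proper}. In both of your applications $Z$ is $P$ or $J_{\widetilde{\CC}}$, and neither is proper over $B$: over $b\notin B^{(g)}$ their fibers are extensions of abelian varieties by nontrivial affine groups. Properness is exactly what forces a fiberwise trivial, normalized line bundle to be trivial, since it gives $g_{*}\CO_Z=\CO_S$ and base change. On $P\times_B P$ there is no proper factor to which the lemma could be applied.

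Your fallback does not close this. The rigidity $H^0(\CO^{\times})=H^0(B,\CO_B^{\times})$ from Steps~2--3 of Lemma~\ref{lem: equivariant} propagates \emph{identities between given isomorphisms} from $B^{(g)}$. It does not give the \emph{existence} of an isomorphism between two line bundles. A minor slip: $\ol{P}\times_B\ol{P}$ is Cohen--Macaulay but not nonsingular in general. Your first reduction is still fine, because $\ol{\CG}\cong j_{*}\CG$ by Proposition~\ref{prop: uniqueness, flat}(2).

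\textbf{How to repair it.} Use the torsor structure of Lemma~\ref{lem: descent=equivariant} instead of the seesaw principle. Let $H=\pi^{*}J_{\CC}$, and let $\mathcal{A}$ be a line bundle on $P\times_B P$ with $(f\times f)^{*}\mathcal{A}\cong\CO$.
\begin{enumerate}
\item Then $\mathcal{A}$ is encoded by an $H\times_B H$-equivariant structure on $\CO$. This is a unit $c$ on $H\times_B H\times_B J_{\widetilde{\CC}}\times_B J_{\widetilde{\CC}}$ satisfying the cocycle identity.
\item Pull back to $J_{\CC}\times_B J_{\CC}\times_B J_{\widetilde{\CC}}\times_B J_{\widetilde{\CC}}$ and use the codimension-two argument of Step~2 of Lemma~\ref{lem: equivariant}. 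This shows $c$ comes from $H^0(B,\CO_B^{\times})$.
\item For such a constant $c$ the cocycle identity reads $c=c^{2}$, so $c=1$ and $\mathcal{A}\cong\CO$.
\end{enumerate}
Applied to $\mathcal{A}=sw^{*}\CG\otimes\CG^{-1}$, this completes your route. Your route then derives the symmetry intrinsically from $\CP$, without Mumford's symmetry of the Poincar\'e bundle of $\varphi_{\Xi}$.

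Alternatively, $P\times_B P$ is smooth over the nonsingular $B$ with geometrically integral fibers. So a normalized line bundle trivial over $B^{(g)}$ is trivial by a vertical-divisor argument. With that argument, though, the detour through $J_{\widetilde{\CC}}\times_B J_{\widetilde{\CC}}$ becomes unnecessary.

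\textbf{The paper's route.} The paper avoids line-bundle descent entirely. Classical symmetry over $B^{(g)}$ gives $\ol{\rho}=\ol{\rho}'$ on the dense open $P_{B^{(g)}}$. Reducedness of $\ol{P}$ and separatedness of $\ol{\Pic}_{(\ol{P}/B)}^0$ then give $\ol{\rho}=\ol{\rho}'$ everywhere. Pulling back the universal sheaf yields $sw^{*}\ol{\CG}\cong\ol{\CG}$ directly on $\ol{P}\times_B\ol{P}$.
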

\begin{proof}
    As before, let $B^{(g)}$ be the open subset of $B$ parameterizing smooth curves. Then the restriction of 
    $\ol{\CG}$ to $P_{B^{(g)}} \times_{B^{(g)}} P_{B^{(g)}}$ is the normalized Poincar\'e line bundle corresponding to the canonical principal polarization $\varphi_{\Xi}$, which is symmetric; see, e.g., \cite[\S8, Proposition 2]{Mumford70}.
    
    Therefore, $\rho$ and $\rho'$ are $B$-morphisms identified over ${B^{(g)}}$. 
    Since $P_{B^{(g)}}$ is a dense open subset of $\ol{P}$, $\ol{P}$ is reduced, and $\ol{\Pic}_{(\ol{P}/B)}^0$ is separated over $B$, it follows that $\ol{\rho} = \ol{\rho}'$. 
    Since $\ol{P} \ra B$ has a section, there exists a universal sheaf $\CI$ on $\ol{P} \times_B \ol{\Pic}_{(\ol{P}/B)}^0$. Therefore,  
    \[sw^{*}\ol{\CG} \cong (\id_{\ol{P}} \times \ol{\rho}')^{*}\CI = (\id_{\ol{P}} \times \ol{\rho})^{*}\CI \cong \ol{\CG}.\qedhere\]
\end{proof}

\subsection{Duality}
As in the case of abelian varieties, we have a duality theorem.
Let $\nu: \ol{P} \ra \ol{P}$ be the morphism sending $F$ to $F^{\vee} = \hom(F ,\CO_{\widetilde{\CC}})$. 
\begin{lem}\label{lem: symmetric}
    $(\nu \times \id_{\ol{P}})^{*}\ol{\CG} \cong (\id_{\ol{P}} \times \nu)^{*} \ol{\CG} \cong \ol{\CG}^{\vee}$.
\end{lem}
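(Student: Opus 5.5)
The plan is to reduce everything to the open locus $P\times_B\ol{P}$ (or $\ol{P}\times_B P$), where the relevant sheaves are line bundles. There I would apply the seesaw-type uniqueness of Lemma~\ref{lem: uniqueness of descent}, and then extend using the reflexive/Cohen--Macaulay extension property, as in the proof of Proposition~\ref{prop: uniqueness, flat}(2).

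First, note that $\nu$ preserves $\ol{P}$. On $P$ it is the inversion $L\mapsto L^{-1}$, since $L^\vee=L^{-1}$ for line bundles. On $\ol{P}$ it commutes with $\tau$, because $\tau(F^\vee)=\iota^*F^{\vee\vee}=\iota^*F=\nu(\tau F)$, and torsion-free rank~1 sheaves on curves with planar (Gorenstein) singularities are reflexive. So $\nu$ preserves $\Fix(\tau)$, fixes the zero section, and hence preserves the component $\ol{P}$. Moreover, $\nu$ is an involutive automorphism of $\ol{P}$. By symmetry of $\ol{\CG}$ (Proposition~\ref{prop: symmetric}), the isomorphism $(\nu\times\id)^*\ol{\CG}\cong(\id\times\nu)^*\ol{\CG}$ follows by applying $sw^*$ once the other two are known. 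So it suffices to prove $(\id_{\ol{P}}\times\nu)^*\ol{\CG}\cong\ol{\CG}^\vee$.

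Both sides are maximal Cohen--Macaulay on the nonsingular scheme $\ol{P}\times_B\ol{P}$. The left side is MCM because $\nu$ is an automorphism. For the right side, $\ol{\CG}$ is MCM on a regular scheme, hence locally free, so its dual is locally free. (If one prefers not to use this, note that the dual of an MCM sheaf over a Gorenstein scheme is MCM.) The complement of $P\times_B P$ has codimension $\ge 2$ by the Severi inequality, as in the proof of Theorem~\ref{thm: def of G bar}. Hence by \cite[Lemma 2.2]{Ari10b} it suffices to give an isomorphism on $P\times_B P$, where both sides are line bundles.

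On $P\times_B P$, I would apply Lemma~\ref{lem: uniqueness of descent} with $h=f\times\id\colon J_{\widetilde{\CC}}\times_B P\to P\times_B P$ and $Z=P$. There is a subtlety: Lemma~\ref{lem: seesaw principle} wants $Z$ proper. To handle this, I would instead take $Z=\ol{P}$ and work with the restrictions to $P\times_B\ol{P}$, which are still line bundles. Alternatively, I would run the argument with $Y=P$ in the first factor and $Z=\ol{P}$. The section conditions are immediate, since both sides are trivial along $P\times_B 0$ by normalization of $\ol{\CG}$ and because $\nu(0)=0$. It remains to compare the pullbacks to $J_{\widetilde{\CC}}\times_B\ol{P}$. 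By Theorem~\ref{thm: def of G bar}(1), $(f\times\id)^*\ol{\CG}\cong\ol{\CR}$ on $J_{\widetilde{\CC}}\times_B\ol{P}$. So I need $(\id\times\nu)^*\ol{\CP}|_{J_{\widetilde{\CC}}\times\ol{P}}\cong\ol{\CP}^\vee|_{J_{\widetilde{\CC}}\times\ol{P}}$, and this is the corresponding duality for Arinkin's Poincaré sheaf on compactified Jacobians. Concretely, on $J_{\widetilde{\CC}}\times_B\ol{J}_{\widetilde{\CC}}$ the sheaf $\ol{\CP}$ is given by formula \eqref{eq: CP} with the second $\CU$ replaced by a universal torsion-free sheaf. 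Its fibre over $(L,F)$ is $\CD(L\otimes F)\otimes\CD(L)^{-1}\otimes\CD(F)^{-1}\otimes\CD(\CO)$. Replacing $F$ by $F^\vee$ and using Serre/Grothendieck duality on the Gorenstein curve ($\CD(G^\vee\otimes\omega)\cong\CD(G)$, together with $(L\otimes F)^\vee=L^{-1}\otimes F^\vee$), one gets the inverse of the expression for $(L,F)$ after also using $\CD(L^{-1}\otimes F)$-type bilinearity in the line-bundle variable. Equivalently, since $\ol{\CP}|_{J\times\ol{J}}$ is a family of line bundles $L\mapsto$ "evaluation at $F$", which is multiplicative in $L$, $\nu$ on the second factor corresponds to inversion on the first, giving the dual. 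I would rather argue fibrewise in the $\ol{P}$-direction: over each $F\in\ol{P}_b$, both restrictions to $J_{\widetilde{\CC}_b}\times\{F\}$ lie in $\Pic^0$, and they agree by the duality computation. Then I would apply Lemma~\ref{lem: seesaw principle} once more, in the other direction and along the zero section of $J$.

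The main obstacle is this last step: making the duality identity $\CD(L\otimes F^\vee)\otimes\CD(F^\vee)^{-1}\cong(\CD(L\otimes F)\otimes\CD(F)^{-1})^{-1}$ precise in families for torsion-free $F$. I would try to prove it first for $L=\CO(D-E)$ with $D,E$ flat divisors in the smooth locus, by Lemma~\ref{lem: line bundle as difference of flat divisors}. With that choice, both sides reduce to determinants of the restrictions $F|_D$ and $F^\vee|_D$, which are mutually dual line bundles on $D$.
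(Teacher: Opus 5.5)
Your outline is the paper's proof. You reduce by the maximal Cohen--Macaulay extension property to $P\times_B\ol{P}$, then descend along $f\times\id$ with Lemma~\ref{lem: uniqueness of descent} taking $Z=\ol{P}$, which is exactly how the paper avoids the properness issue you noticed. This reduces everything to the duality statement for Arinkin's sheaf on $J_{\widetilde{\CC}}\times_B\ol{P}$; the step you call the main obstacle is not reproved in the paper but quoted as \cite[Lemma 6.2]{Ari10b}.

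The only structural difference is the order. The paper lets $\nu$ act on the $P$-factor, so the input it needs is $(\nu\times\id)^*\CR\cong\CR^{-1}$ on $J_{\widetilde{\CC}}\times_B\ol{P}$, which is just multiplicativity of $\CR$ in the line-bundle variable. You let $\nu$ act on the $\ol{P}$-factor and then invoke Proposition~\ref{prop: symmetric}. That commits you to an identity which, on $J_{\widetilde{\CC}}\times_B\ol{P}$, involves $F^\vee$ for torsion-free $F$.

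Two steps need repair.

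\textbf{The nonsingularity claim is false.} $\ol{P}\times_B\ol{P}$ is not nonsingular, and $\ol{\CG}$ is not locally free. Near boundary points over a one-nodal fibre, $\ol{P}\to B$ looks like $xy=t$, so the fibre product looks like $xy=uv$. As for Arinkin's sheaf, $\ol{\CG}$ is not invertible at pairs of boundary points, whereas an MCM sheaf on a regular scheme would be. Your parenthetical is the argument you must actually use: $\ol{P}\times_B\ol{P}$ is flat over the regular scheme $\ol{P}$ with Gorenstein fibres, hence Gorenstein, so $\ol{\CG}^\vee$ is MCM.

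\textbf{The reproof of the Jacobian-level duality fails as written.} If you want to reprove it rather than cite it, there are two problems.
\begin{enumerate}
\item Your fibrewise variant breaks at the second application of Lemma~\ref{lem: seesaw principle}. That application would be to $J_{\widetilde{\CC}}\times_B\ol{P}\to\ol{P}$, whose fibres are not proper.
\item Lemma~\ref{lem: line bundle as difference of flat divisors} only places $D,E$ in the regular locus of the total space, not in the relative smooth locus. So $F$ need not be locally free along $D$, and $(F^\vee)|_D$ need not be dual to $F|_D$.
\end{enumerate}
The cheap repair is to shrink further. Both sides are line bundles on the nonsingular scheme $J_{\widetilde{\CC}}\times_B\ol{P}$, and the complement of $J_{\widetilde{\CC}}\times_B P$ there has codimension $\geq 2$. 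So it suffices to compare them on $J_{\widetilde{\CC}}\times_B P$. There both arguments are line bundles, and your norm computation works for arbitrary flat $D,E$; it is just the bilinearity of the Poincar\'e bundle on $J_{\widetilde{\CC}}\times_B J_{\widetilde{\CC}}$.
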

\begin{proof}
    Since both sides are maximal Cohen--Macaulay sheaves, we only need to check that they are the same on $P\times_B \ol{P}$. By \cite[Lemma 6.2]{Ari10b}, we have
    \[(f \times \id_{\ol{P}})^{*}\circ(\nu \times \id_{\ol{P}})^{*}{\CG} \cong (\nu \times \id_{\ol{P}})^{*} \CR \cong \CR^{-1} \cong (f \times \id_{\ol{P}})^{*}\CG^{-1}.\]
    The first isomorphism then follows from Lemma \ref{lem: uniqueness of descent}. A similar proof establishes the second isomorphism.
\end{proof}

\subsection{Theorem of the square}
\begin{lem}\label{lem: thm of square}
    Consider the action 
    \[\mu \colon P \times_B \ol{P} \ra \ol{P}, \quad (L,F) \mapsto L\ot F.\]
    \begin{enumerate}
        \item Consider the diagram
            \begin{equation*}
                \begin{tikzcd}
                P\times_B \ol{P} & P \times_B \ol{P} \times_B \ol{P} \arrow[l, "p_{13}"'] \arrow[r, "p_{23}"] \arrow[d, "\mu \times \mathrm{id}_{\ol{P}}"] & \ol{P} \times_B \ol{P} \\
                & \ol{P} \times_B \ol{P}. &
            \end{tikzcd}
            \end{equation*}
            We have $(\mu \times \id_{\ol{P}})^{*}\ol{\CG} \cong p_{13}^{*}(\CG) \ot p_{23}^{*}(\ol{\CG})$.
        \item Consider the diagram
            \begin{equation*}
                \begin{tikzcd}
                    \ol{P}\times_B {P} & \ol{P}\times_B P \times_B \ol{P} \arrow[l, "p_{12}"'] \arrow[r, "p_{13}"] \arrow[d, "\id_{\ol{P}} \times \mu"] & \ol{P} \times_B \ol{P} \\
                    & \ol{P} \times_B \ol{P}. &
                \end{tikzcd}
            \end{equation*}
            We have $(\id_{\ol{P}} \times \mu)^{*}\ol{\CG} \cong p_{12}^{*}(\CG) \ot p_{13}^{*}(\ol{\CG})$.
    \end{enumerate}
    
\end{lem}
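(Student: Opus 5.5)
We prove (1); statement (2) then follows by applying the swap $sw$ and Proposition~\ref{prop: symmetric}, since $sw^{*}\ol{\CG}\cong\ol{\CG}$ exchanges the two diagrams. The strategy is to reduce to line bundles on a big open set and use the Cohen--Macaulay extension property. The scheme $P\times_B\ol{P}\times_B\ol{P}$ is nonsingular, because $P\to B$ is smooth and $\ol{P}$ is nonsingular. Hence both sides of (1) are determined by their restriction to an open subset whose complement has codimension at least $2$, provided both sides are maximal Cohen--Macaulay.

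\emph{Both sides are maximal Cohen--Macaulay.} The map $\mu$ is smooth: it is isomorphic to the projection, via $(L,F)\mapsto(L,L\otimes F)$. Hence $(\mu\times\id)^{*}\ol{\CG}$ is maximal Cohen--Macaulay. The right-hand side is a line bundle tensored with $p_{23}^{*}\ol{\CG}$, and $p_{23}$ is smooth, so it is also maximal Cohen--Macaulay. By the Severi inequality, the complement of $P\times_B\ol{P}\times_B P$ has codimension at least $2$. It therefore suffices to prove
\[(\mu\times\id)^{*}\CG\cong p_{13}^{*}\CG\otimes p_{23}^{*}\CG\]
on $P\times_BP\times_BP$, where all sheaves are line bundles. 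Equivalently, we must prove the theorem of the square for the group scheme $P$, in the form $(m\times\id)^{*}\CG\cong p_{13}^{*}\CG\otimes p_{23}^{*}\CG$.

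\emph{Proof on $P\times_BP\times_BP$.} We apply Lemma~\ref{lem: uniqueness of descent}. Take $Y=P\times_BP$ (the first two factors) and $Z=P$ (the third factor), with the zero section. Along the zero section of the third factor, both sides are trivial because $\CG$ is normalized. By Lemma~\ref{lem: uniqueness of descent}, it then suffices to check the isomorphism after pulling back along the surjection $f\times f\colon J_{\widetilde{\CC}}\times_BJ_{\widetilde{\CC}}\to P\times_BP$. Since $f$ is a homomorphism, $m\circ(f\times f)=f\circ m_J$. Using $(f\times\id)^{*}\CG\cong\CR=\CP|_{J_{\widetilde{\CC}}\times_BP}$, the statement becomes
\[(m_J\times\id_P)^{*}\CR\cong p_{13}^{*}\CR\otimes p_{23}^{*}\CR\]
on $J_{\widetilde{\CC}}\times_BJ_{\widetilde{\CC}}\times_BP$. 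This is the restriction of the theorem of the square for the Poincar\'e bundle $\CP$ on $J_{\widetilde{\CC}}\times_BJ_{\widetilde{\CC}}$, which is known from the determinant-of-cohomology formula \eqref{eq: CP}.

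Alternatively, we can verify this last isomorphism directly with the seesaw principle. Fiberwise over a point $x\in P$ of the last factor, $\CR|_{J\times\{x\}}$ is a line bundle in $\Pic^0$ of the generalized Jacobian, hence a character-like bundle satisfying $m_J^{*}\mathcal L\cong p_1^{*}\mathcal L\otimes p_2^{*}\mathcal L$ for a suitable rigidification. Both sides are also trivial along the zero sections.

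\emph{Main obstacle.} We expect the main obstacle to be justifying the theorem of the square for $\CP$ over singular fibers, where $J_{\widetilde{\CC}}$ is not an abelian scheme. We would first try to cite Arinkin's result \cite{Ari10b}, which gives $\CP$ as a biextension, i.e.\ additivity in each variable. Failing that, we would argue by density: the isomorphism holds over $B^{(g)}$ by the classical theorem of the square. The line bundle
\[(m_J\times\id)^{*}\CR\otimes p_{13}^{*}\CR^{-1}\otimes p_{23}^{*}\CR^{-1}\]
is normalized along zero sections. Its fiberwise triviality on each $J_b\times J_b\times\{x\}$ follows from the fact that every line bundle in $\Pic^0$ of a connected commutative group is multiplicative (primitive). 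The seesaw principle, Lemma~\ref{lem: seesaw principle}, then concludes.
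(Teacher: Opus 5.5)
\textbf{Gap: the descent step uses a non-proper factor.} Your overall plan matches the paper's: reduce via the Cohen--Macaulay extension property to a big open set where everything is invertible, pull back along $f\times f$, and descend with Lemma~\ref{lem: uniqueness of descent}. Deriving (2) from (1) through Proposition~\ref{prop: symmetric} is also legitimate. The problem is that you apply Lemma~\ref{lem: uniqueness of descent} with $Z=P$. That lemma, through Lemma~\ref{lem: seesaw principle}, requires $Z\to B$ to be \emph{proper}, flat, with geometrically integral fibers, and the generalized Prym $P\to B$ is not proper over the singular locus. Properness is really used: the seesaw argument needs $H^0(Z_t,\CO_{Z_t})=k$ to show that a fiberwise trivial line bundle is pulled back from the base. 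On a fiber $P_b$ with nontrivial affine part, $H^0(P_b,\CO_{P_b})$ is infinite-dimensional. So once you have shrunk to $P\times_BP\times_BP$, you have no justified way to descend the isomorphism from $J_{\widetilde{\CC}}\times_BJ_{\widetilde{\CC}}\times_BP$. Your fallback in the last paragraph has the same defect, since it runs the seesaw principle on the non-proper $J_b\times J_b$. Its claim that every ``$\Pic^0$'' line bundle on a connected commutative group is multiplicative is also unjustified.

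\textbf{How to repair it.} Do not shrink the third factor. Restrict only to $P\times_BP\times_B\ol{P}$. Its complement in $P\times_B\ol{P}\times_B\ol{P}$ still has codimension at least $2$, being the preimage of $\ol{P}\setminus P$ under a flat projection. On it both sides are already line bundles, because $\ol{\CG}|_{P\times_B\ol{P}}=\CG$ is invertible. Now Lemma~\ref{lem: uniqueness of descent} applies with $Y=P\times_BP$, $Z=\ol{P}$ and $h=f\times f$. The input you then need is the theorem of the square for $\ol{\CR}$ on $J_{\widetilde{\CC}}\times_BJ_{\widetilde{\CC}}\times_B\ol{P}$, which the paper takes directly from \cite[Lemma 6.5]{Ari10b}. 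If you want to use only the line-bundle statement on $J_{\widetilde{\CC}}\times_BJ_{\widetilde{\CC}}\times_BP$ (biadditivity of the Deligne pairing), that also works. The space $J_{\widetilde{\CC}}\times_BJ_{\widetilde{\CC}}\times_B\ol{P}$ is regular, being smooth over $\ol{P}$, so an isomorphism of line bundles on that big open subset extends.

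\textbf{Minor correction.} $P\times_B\ol{P}\times_B\ol{P}$ is not nonsingular in general, because $\ol{P}\times_B\ol{P}$ acquires singularities over singular curves. Your extension argument only needs the ambient scheme to be Cohen--Macaulay, and it is.
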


\begin{proof}
    To prove (1), notice that $\mu$ equals the composition
    \[P \times_B \ol{P} \xrightarrow{\sim} P \times_B \ol{P} \xrightarrow{p_2} \ol{P}, \quad (L,F) \mapsto (L, L \ot F) \mapsto L\ot F,\]
    which is therefore smooth. Since both $(\mu \times \id_{\ol{P}})^{*}\ol{\CG}$ and $p_{13}^{*}(\CG) \ot p_{23}^{*}(\ol{\CG})$ are maximal Cohen--Macaulay (\cite[Lemma 2.3]{Ari10b}), it suffices to check that their restrictions to $P\times_B P \times_B \ol{P}$ are isomorphic. 
    
    By \cite[Lemma 6.5]{Ari10b}, the pullbacks of these bundles along $f\times f \times \id_{\ol{P}}$ are isomorphic line bundles on $J_{\widetilde{\CC}} \times_B J_{\widetilde{\CC}} \times_B \ol{P}$, so (1) follows from Lemma~\ref{lem: uniqueness of descent}.
    Similarly, we get (2) by pulling back to $J_{\widetilde{\CC}} \times_B P \times_B \ol{P}$ and using Lemma~\ref{lem: uniqueness of descent}.
\end{proof}

\section{Autoequivalence of the derived category}\label{sec_Autoequivalence}
In this section we prove Theorem~\ref{thm B}. As before, we follow the notation and assumptions as in Section~\ref{subsec1.1: notations}. Recall that $B$ is an irreducible scheme, $\CC \to B$ is a flat and projective family of integral curves with planar singularities, and $\pi \colon \widetilde{\CC} \to \CC$ is an \'etale double cover. The relative compactified Prym variety $\ol{P}$ is nonsingular.

We denote by $\ol{P}^{n+1}$ the $(n+1)$-th relative product of $\ol{P}$ over $B$, $n\geq 0$. It has natural projections
\[\pi_{n+1} \colon \ol{P}^{n+1} \ra B, \quad p_i \colon \ol{P}^{n+1} \ra \ol{P}, \quad p_{i,j} \colon \ol{P}^{n+1} \ra \ol{P}^2, \quad \cdots.\]
Let $\Fj$ be the commutative Lie algebra of the abelian group scheme $p \colon P \ra B$, which is a rank~$g-1$ vector bundle on $B$. 

Recall that the relative compactified Prym variety $\ol{P}$ is nonsingular and that the morphism $\pi_1$ is flat; thus, it is a Gorenstein morphism of relative dimension $g-1$. The relative dualizing complex for $\pi_1$ reduces to an invertible sheaf $\omega_{\pi_1}$ on $\ol{P}$, called the \emph{relative dualizing sheaf}, concentrated at degree $-g+1$. In other words, $\omega_{\pi_1}^{\bullet} = \omega_{\pi_1}[g-1]$.

\begin{lem}\label{lem: relative dualizing sheaf of ol p}
    The relative dualizing sheaf $\omega_{\pi_1}$ is isomorphic to $\pi_1^{*}(\det \Fj) ^{-1}$.
\end{lem}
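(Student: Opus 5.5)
The plan is to copy Arinkin's argument for compactified Jacobians: identify $\omega_{\pi_1}$ on the open part $P$, extend it across the boundary, and then show the resulting twist by a line bundle on $\ol{P}$ is actually pulled back from $B$.

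First, over the group scheme $p\colon P\to B$, which is smooth of relative dimension $g-1$, the relative dualizing sheaf is $\Omega^{g-1}_{P/B}$. Since $P$ is a commutative group scheme, invariant differentials trivialize $\Omega^1_{P/B}$ as $p^{*}\Fj^{\vee}$. Hence
\[\omega_{\pi_1}|_P \cong \det \Omega^1_{P/B} \cong p^{*}(\det\Fj)^{-1}.\]
Set $\CL \coloneqq \omega_{\pi_1}\ot \pi_1^{*}\det\Fj$. This is a line bundle on $\ol{P}$, because $\pi_1$ is Gorenstein, and by the above it is trivial on the open subset $P$.

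Next I show $\CL$ is trivial on every fiber $\ol{P}_b$. The fibers are integral, Gorenstein and lci (Proposition~\ref{prop: compact Prym is sm}), and the dualizing sheaf commutes with base change for a flat Gorenstein morphism, so $\CL|_{\ol{P}_b}\cong\omega_{\ol{P}_b}$. The key input is that $P_b$ acts on $\ol{P}_b$ (via $\mu$ of Lemma~\ref{lem: thm of square}) with a dense open orbit $P_b$. Any invertible $P_b$-invariant $(g-1)$-form on $P_b$ then extends to a global section $s$ of $\omega_{\ol{P}_b}$. To prove this I would use the stratification of Proposition~\ref{stratification of Prym}: the boundary strata $V_{\ul M}$ are $P_b$-orbits of codimension equal to the dimension of the affine stabilizer. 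Since $\ol{P}_b$ is Cohen--Macaulay, it suffices to control $s$ in codimension one, namely at the generic points of the codimension-one strata.

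At such a point the stabilizer is one-dimensional, either $\BG_m$ or $\BG_a$, and the local model is a nodal or cuspidal transversal slice. In that model an invariant form is known to extend without zero or pole; for the node case this is the standard computation $\dd t/t$ on the nodal curve. So $s$ is a nowhere vanishing section off codimension $\ge 2$, and by the $S_2$ property of line bundles on a CM scheme, $\omega_{\ol{P}_b}\cong\CO$. This is the step I expect to be the main obstacle: knowing the local structure in codimension one. An alternative would be to use that $\omega_{\ol{P}_b}$ is $P_b$-invariant and is the restriction of $\omega_{\ol{J}}$-type data. Another would be to get triviality globally: $\CL$ is trivial on $P$, and the complement $\ol{P}\setminus P$ has codimension one only over the locus of $B$ where the fibers have $\delta=1$. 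I would argue on the total space, via the Severi inequality (Lemma~\ref{Severi inequality}), that any divisor supported on $\ol{P}\setminus P$ whose support is not pulled back from $B$ is the closure of a boundary divisor of a $\delta=1$ fiber family. Along such a divisor, $\CL$ has order zero by the nodal computation above.

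Finally, once $\CL$ is fiberwise trivial, the seesaw principle (Lemma~\ref{lem: seesaw principle}), applied with the zero section, gives $\CL\cong\pi_1^{*}(0^{*}\CL)$. I then compute
\[0^{*}\CL\cong 0^{*}\omega_{\pi_1}\ot\det\Fj\cong\CO_B,\]
since $0^{*}\omega_{P/B}=\det\Fj^{\vee}$. This yields $\omega_{\pi_1}\cong\pi_1^{*}(\det\Fj)^{-1}$.
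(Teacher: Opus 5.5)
Your first step, identifying $\omega_{\pi_1}|_P\cong p^{*}(\det\Fj)^{-1}$ via invariant differentials, is exactly what the paper does. Your Step~2, however, the fiberwise triviality $\omega_{\ol P_b}\cong\CO$, is not actually proved. You claim that at the generic point of each codimension-one stratum of $\ol P_b$ the transversal slice is a node or a cusp, so that the invariant form extends to a local generator. Nothing available supplies this. Proposition~\ref{stratification of Prym} describes orbits and stabilizers, not the local structure of the fixed locus of $\tau$. The conclusion is also sensitive to it: a smooth slice with $\BG_m$-stabilizer would give $\dd t/t$ a pole. Your alternative global argument rests on a miscount, because there are no divisors supported on $\ol P\setminus P$ at all.

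The missing idea is to run the Hartogs argument on the total space instead of on the fibers. Let $Z$ be an irreducible component of the locus $\{\delta=d\}\subset B$ with $d\ge 1$; for $d=0$ the curves are smooth and $\ol P_b=P_b$. Each fiber $\ol P_b$ is irreducible of dimension $g-1$ and contains $P_b$ as a dense open subset. So the part of $\ol P\setminus P$ lying over $Z$ has dimension at most $\dim Z+g-2$. Lemma~\ref{Severi inequality} gives $\codim_B Z\ge d$, hence $\codim_{\ol P}(\ol P\setminus P)\ge d+1\ge 2$.

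Since $\ol P$ is nonsingular, let $j\colon P\hookrightarrow\ol P$ be the open immersion. Your line bundle $\CL=\omega_{\pi_1}\ot\pi_1^{*}\det\Fj$ is trivial on $P$, so $\CL\cong j_{*}(\CL|_P)\cong j_{*}\CO_P=\CO_{\ol P}$. This is the paper's proof.

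Fiberwise Hartogs fails precisely because $\ol P_b\setminus P_b$ does have codimension one in $\ol P_b$; the Severi inequality is what pushes the boundary to codimension two in $\ol P$. With this, your seesaw step is unnecessary, and $\omega_{\ol P_b}\cong\CO$ becomes a corollary of the lemma rather than an input to it.
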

\begin{proof}
    Consider the smooth group scheme $p \colon P \ra B$, we have 
    \[\omega_{\pi_1}|_P \cong \omega_{p} \cong \Omega_{P/B}^{g-1} \cong (\pi_1^{*}(\det \Fj) ^{-1})|_P.\]
    Since $\codim(\ol{P} \setminus P) \geq 2$ by Lemma~\ref{Severi inequality}, and the invertible sheaves $\omega_{\pi_1}$ and $\pi_1^{*}(\det \Fj) ^{-1}$ on $\ol{P}$ are Cohen--Macaulay, the conclusion follows.
\end{proof}

For $n\geq 1$, we set
\begin{equation}\label{eq: Arinkin kernel K_n}
    \CK_n \coloneqq Rp_{2,3,\dots,n+1,*}(p_{1,n+1}^{*}\ol{\CG} \ot^{L} p_{2,n+1}^{*}\ol{\CG} \ot^{L} \cdots \ot^{L} p_{n,n+1}^{*}\ol{\CG}) \ot \pi_{n}^{*}(\det \Fj)^{-1}[g-1] \in D^b(\ol{P}^n).
\end{equation}
\begin{prop}\label{prop: kernel is diagonal}
    $(\nu \times \id_{\ol{P}})^{*}\CK_2 \simeq Rp_{23,*}(p_{12}^{*}\ol{\CG}^{\vee} \ot^{L} p_{13}^{*}\ol{\CG}) \ot \pi_2^{*}(\det \Fj)^{-1}[g-1] \simeq \CO_{\Delta}$.
\end{prop}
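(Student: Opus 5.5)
The first isomorphism is formal, so I will dispose of it first. By Lemma~\ref{lem: symmetric}, $(\nu\times\id_{\ol{P}})^{*}\ol{\CG}\cong\ol{\CG}^{\vee}$. Since $\nu$ is an involution, flat base change along $\nu\times\id_{\ol{P}}$ applied to $p_{23}$ replaces $p_{12}^{*}\ol{\CG}$ by $p_{12}^{*}\ol{\CG}^{\vee}$ inside the pushforward defining $\CK_2$. Because $\ol{\CG}$ is flat over both factors (Proposition~\ref{prop: uniqueness, flat}), the derived tensor products are underived on the relevant loci. The real content is the isomorphism with $\CO_\Delta$, and I will follow Arinkin's strategy \cite{Ari10b} in three steps: support, then dimension/Cohen--Macaulay, then identification.

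\textbf{Step 1 (support off the diagonal).} Write $\CK=Rp_{23,*}(p_{12}^{*}\ol{\CG}^{\vee}\ot p_{13}^{*}\ol{\CG})\ot\pi_2^{*}(\det\Fj)^{-1}[g-1]$. I will show that $\CK$ is supported on the diagonal $\Delta$.
\begin{itemize}
\item By Lemma~\ref{lem: thm of square}(2), translating the second argument by $L\in P$ multiplies $p_{12}^{*}\ol{\CG}$ by $\CG|_{\ol{P}\times L}$, and likewise for the third argument. Together with the projection formula, this shows that $(\id\times\mu)^{*}$ of the kernel, for the diagonal action of $P$ on $\ol{P}\times_B\ol{P}$, is $\CK\ot(\text{line bundle})$.
\item For a point $(F_1,F_2)$ with $F_1\neq F_2$, base change reduces the question to $H^{*}(\ol{P}_b,\ol{\CG}_{F_1}^{\vee}\ot\ol{\CG}_{F_2})$.
\item On the open set $P\times_B P$ this is the cohomology of a nontrivial element of $\Pic^0$ of an abelian-by-affine group. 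Its vanishing follows from the classical argument: the nontrivial line bundle is invariant under translation by $P$, which forces its cohomology to vanish, with Theorem~\ref{thm: C}-type injectivity over $B^{(g)}$ supplying nontriviality.
\item Off $P\times_B P$ I will not argue pointwise. Instead I will use the homological bound below.
\end{itemize}

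\textbf{Step 2 (dimension bound and Cohen--Macaulayness).} This is the key step and the main obstacle: Arinkin's dimension estimate, which forces $\CK$ to be a sheaf in degree $0$ supported on $\Delta$. Concretely, I want to prove that for $F\in\ol{P}_b$ lying in the stratum $V_{\ul{M}}$ of Proposition~\ref{stratification of Prym}, the locus where $\ol{\CG}_F$ fails to be a line bundle has codimension at least the $\delta$-invariant of the stratum. I will reduce this to the corresponding statement for $\ol{J}_{\widetilde{\CC}}$ in \cite{Ari10b}, pulling back via the faithfully flat map $f\colon\ol{J}^{\circ}_{\widetilde{\CC}}\to\ol{P}$ and matching local types with $M_{p_i}\cong M_{q_i}^{\vee}$. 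Combining this with the Severi inequality (Lemma~\ref{Severi inequality}), one obtains $\codim\Supp\CH^i(\CK)\geq i+\dim B+(g-1)$ for the relevant cohomology sheaves.

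With that bound in hand, duality finishes the step. Grothendieck duality for $p_{23}$, using Lemma~\ref{lem: relative dualizing sheaf of ol p} and Lemma~\ref{lem: symmetric} ($\ol{\CG}^{\vee}$ is again maximal Cohen--Macaulay), gives $\mathbb{D}(\CK)\cong\CK$ up to $\nu$ and a shift by $\dim\ol{P}^2$. Since $\CK$ is self-dual and has small support in every degree, the standard lemma \cite[Lemma 2.6]{Ari10b} shows that $\CK$ is a Cohen--Macaulay sheaf in degree $0$ with support of codimension $\dim\ol{P}-\dim B$. That is, $\CK$ is a Cohen--Macaulay sheaf set-theoretically supported on $\Delta$.

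\textbf{Step 3 (identification with $\CO_\Delta$).} I will check over $P\times_B P$, where the classical Mukai computation (adapted via \cite{Mumford70} over $B^{(g)}$) gives $\CK|_{\Delta_P}\cong\CO_{\Delta_P}$. The factor $(\det\Fj)^{-1}$ exactly cancels $R^{g-1}p_*\CO\cong(\det\Fj)^{-1}{}^{\vee}$, which is where Lemma~\ref{lem: relative dualizing sheaf of ol p} enters.

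From there I will show $\CK$ is an $\CO_\Delta$-module, so that it lies on the scheme-theoretic diagonal. This follows from the $P$-equivariance in Step 1 together with density of $\Delta_P$. The extension property of Cohen--Macaulay sheaves then gives $\CK\cong\CO_\Delta$, since the complement $\Delta\setminus\Delta_P$ has codimension $\ge 2$.
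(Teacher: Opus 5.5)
Your outline (support, Cohen--Macaulayness from duality plus a codimension bound, extension from $\Delta_P$) has the right shape, but the step that carries all the weight is missing. The proof needs the fibered estimate $\codim_{\ol{P}\times_B\ol{P}}\Supp(\CK_2)\geq g-1$, together with its $n=1$ analogue on $P$; this is Proposition~\ref{prop: Arinkin's dimension bound}. You propose to prove something else instead: that the non-locally-free locus of $\ol{\CG}_F$ has codimension at least $\delta$. That statement gives no control on where $R\Gamma(\ol{P}_b,\ol{\CG}_{F_1}^{\vee}\ot\ol{\CG}_{F_2})\neq 0$. Nor can the bound be imported from Arinkin's result for $\ol{J}_{\widetilde{\CC}}$ through $f$. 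The integration in $\CK_2$ runs over $\ol{P}_b$, not $\ol{J}_{\widetilde{\CC}_b}$, and $f\colon\ol{J}^{\circ}_{\widetilde{\CC}}\to\ol{P}$ is not proper, so cohomology on $\ol{P}_b$ is not computed upstairs. (The inequality $\codim\Supp\CH^i(\CK)\geq i+\dim B+(g-1)$ also cannot hold for codimension in $\ol{P}\times_B\ol{P}$, where $\Delta$ itself has codimension $g-1$.)

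The mechanism you are missing runs as follows. If the hypercohomology is nonzero, the $\BG_m$-extension $T$ of $P_b$ attached to $\CG_{F_1}^{-1}\ot\CG_{F_2}$ acts on it, via Lemma~\ref{lem: thm of square}. A common eigenvector then splits $T$, so this line bundle is trivial on $P_b$. Pulling back along the Abel--Prym map makes $F_1^{-1}\ot F_2$ trivial on $\widetilde{\CC}_b^{\mathrm{reg}}$. After translating by $P_b$, such loci have codimension $\geq\widetilde{g}(\CC_b)-1$, and Lemma~\ref{Severi inequality} upgrades this to $g-1$.

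The same point breaks your pointwise argument on $P\times_B P$ over singular fibres. Translation-invariance only shows that nonvanishing forces the $\BG_m$-torsor of $\CG_L|_{P_b}$ to be trivial. For $L\neq 0$ in the affine part of $P_b$ this torsor \emph{is} trivial, so there is no contradiction. Already on the nodal cubic, nontrivial degree-$0$ line bundles on $\ol{J}$ restrict trivially to $J=\BG_m$; this is why the dimension bound only yields triviality on $\widetilde{\CC}_b^{\mathrm{reg}}$. Injectivity over $B^{(g)}$ does not help at such fibres.

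The paper avoids any pointwise claim:
\begin{itemize}
\item the $n=1$ bound plus duality make $Rp_{2,*}\CG$ a Cohen--Macaulay sheaf in degree $g-1$;
\item Serre duality reduces its support to the locus where $H^0(\ol{P}_b,\ol{\CG}_{L^{-1}})\neq 0$;
\item algebraic equivalence with $\CO$ then forces $\ol{\CG}_{L^{-1}}\cong\CO_{\ol{P}_b}$ on all of $\ol{P}_b$, and Lemma~\ref{lem: left inverse} gives $L$ trivial;
\item scheme-theoretic equality with $\zeta(B)$ follows from unmixedness and Mumford over $B^{(g)}$.
\end{itemize}

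Two further steps fail as written.
\begin{itemize}
\item A Cohen--Macaulay sheaf with support of codimension $g-1$ need not be supported on $\Delta$. Components of codimension exactly $g-1$ lying over $B\setminus B^{(g)}$ would be $P$-invariant, so neither equivariance nor density of $\Delta_P$ excludes them. The paper excludes them with a strict inequality over $b\notin B^{(g)}$ plus equidimensionality, so that every component meets $\pi_2^{-1}(B^{(g)})$.
\item The identification $\CK|_{\Delta_P}\cong\CO_{\Delta_P}$ must be proved over all of $B$, not only over $B^{(g)}$. The complement of $\Delta\cap\pi_2^{-1}(B^{(g)})$ in $\Delta$ is a divisor, and the Cohen--Macaulay extension lemma needs codimension $\geq 2$.
\end{itemize}
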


We postpone the proof of Proposition~\ref{prop: kernel is diagonal} to Section~\ref{subsec: Proof of Proposition 2.2}. As a consequence of Proposition~\ref{prop: kernel is diagonal}, we deduce the following result.
\begin{thm}\label{thm: equivalence}
    Let $D^b(\ol{P})$ be the bounded derived category of coherent sheaves on $\ol{P}$. The integral functor
    \[\FF \colon D^b(\ol{P}) \xrightarrow{\sim} D^b(\ol{P}), \quad \CF \mapsto Rp_{2,*}(p_1^{*}\CF \ot^{L} \ol{\CG})\]
    is an equivalence of categories. Its quasi-inverse is given by
    \[\FF^{-1} \colon D^b(\ol{P}) \xrightarrow{\sim} D^b(\ol{P}), \quad Rp_{1,*}(p_2^{*}\CF \ot^{L} \ol{\CG}^{\vee} \ot^L \pi_2^{*} (\det \Fj) ^{-1}[g-1]).\] 
\end{thm}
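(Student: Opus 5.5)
We deduce Theorem~\ref{thm: equivalence} formally from Proposition~\ref{prop: kernel is diagonal}, following the standard argument for integral functors (as in Mukai's proof and \cite{Ari10b}). Write $\FF = \Phi_{\ol{\CG}}$ and let $\Psi$ be the proposed quasi-inverse. Its kernel is
\[\CQ \coloneqq \ol{\CG}^{\vee}\ot \pi_2^{*}(\det\Fj)^{-1}[g-1],\]
viewed as a kernel on $\ol{P}\times_B\ol{P}$ from the second factor to the first.

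\textbf{Step 1: both functors preserve bounded coherent complexes.} The morphism $\ol{P}$ is nonsingular, and $\ol{\CG}$ is coherent, flat over both factors, and maximal Cohen--Macaulay by Proposition~\ref{prop: uniqueness, flat}. Hence $\ol{\CG}^\vee = \hom(\ol{\CG},\CO)$ is again maximal Cohen--Macaulay, with no higher $\Ext$, so derived and underived duals agree. By Lemma~\ref{lem: symmetric}, $\ol{\CG}^{\vee}\cong(\nu\times\id)^*\ol{\CG}$, so it is also flat over both factors. Since the projections are proper and every object of $D^b$ on the smooth scheme $\ol{P}$ has finite Tor-dimension, both $\FF$ and $\Psi$ send $D^b(\ol{P})$ to $D^b(\ol{P})$.

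\textbf{Step 2: compute the composite kernel.} The composition $\Psi\circ\FF$ is the integral functor whose kernel is the convolution
\[Rp_{13,*}\bigl(p_{12}^{*}\ol{\CG}\ot^L p_{23}^{*}\CQ\bigr)\]
on $\ol{P}\times_B\ol{P}$. Here all products are relative over $B$, and we use flat base change together with the projection formula. Relabel the factors so that the middle (integrated) factor comes first, and apply the symmetry $sw^*\ol{\CG}\cong\ol{\CG}$ from Proposition~\ref{prop: symmetric}. The kernel then becomes
\[Rp_{23,*}\bigl(p_{12}^{*}\ol{\CG}\ot^L p_{13}^{*}\ol{\CG}^\vee\bigr)\ot\pi_2^{*}(\det\Fj)^{-1}[g-1].\]
Up to swapping the two remaining factors, this is exactly the complex that Proposition~\ref{prop: kernel is diagonal} identifies with $\CO_\Delta$; the twist by $\det\Fj$ is pulled back from $B$, so its placement does not matter. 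Since $\CO_\Delta$ is invariant under the swap, we get $\Psi\circ\FF\simeq\id$. The same argument, with the roles of $\ol{\CG}$ and $\ol{\CG}^\vee$ exchanged via $\nu$ (Lemma~\ref{lem: symmetric}), gives $\FF\circ\Psi\simeq\id$. Concretely, $(\nu\times\nu)^*\ol{\CG}\cong\ol{\CG}$, and $\nu$ is an involution fixing the diagonal up to the automorphism $\nu$, so $(\nu\times\nu)^*\CO_\Delta\cong\CO_\Delta$.

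\textbf{Main obstacle.} The delicate point is the bookkeeping in Step 2. One has to check that the factor being integrated out and the variance of the dual match the precise form in Proposition~\ref{prop: kernel is diagonal}. One also has to ensure that $\CO_\Delta$ comes out and not $\CO_{\Gamma_\nu}$, the graph of $\nu$; otherwise we would only get $\FF^{-1}\simeq\nu^*\circ\Psi$. The plan is to track which variable is dualized at each stage, using Lemma~\ref{lem: symmetric} to move $\nu$ between the two factors as needed.

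Alternatively, once $\Psi\circ\FF\simeq\id$ is established, $\FF$ is fully faithful. Since $\Psi$ is also both a left and right adjoint of $\FF$ up to twisting by the relative dualizing sheaf (Grothendieck duality, with $\omega_{\pi_1}\cong\pi_1^*(\det\Fj)^{-1}$ by Lemma~\ref{lem: relative dualizing sheaf of ol p}), the standard criterion shows that $\FF$ is an equivalence. This route avoids the second composite computation.
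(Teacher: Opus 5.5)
Your argument is correct but organized differently from the paper's. The paper computes only one composite, $\FF\circ\FF^{-1}$, whose kernel is verbatim the middle term of Proposition~\ref{prop: kernel is diagonal}. It then uses Grothendieck duality, Lemma~\ref{lem: relative dualizing sheaf of ol p} and Rizzardo's theorem to see that $\FF$ is simultaneously left and right adjoint to $\FF^{-1}$. From the single kernel identity it gets that $\FF^{-1}$ is fully faithful, and it concludes by Bridgeland's criterion, using that $D^b(\ol{P})$ is indecomposable.

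You instead identify both composites with $\CO_\Delta$. You reduce the second composite to the first via the swap-invariance $sw^*\ol{\CG}\cong\ol{\CG}$ of Proposition~\ref{prop: symmetric}, which the paper has already proved. This gives a self-contained argument that exhibits the quasi-inverse directly, with no adjoints, no indecomposability and no Bridgeland criterion; your Step~1 bookkeeping (flatness of $\ol{\CG}$, $\ol{\CG}^\vee$ over both factors and flatness of $\ol{P}\to B$) is what makes the kernel-convolution formula valid. The paper's route needs only the one kernel identity and records the adjunctions $\FF\dashv\FF^{-1}\dashv\FF$ along the way. Your alternative paragraph is essentially the paper's route, applied to the other composite.

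The bookkeeping worry you flag dissolves once written out, and the detour through $\nu$ is unnecessary. The kernel of $\FF\circ\Psi$ is literally $Rp_{23,*}(p_{12}^{*}\ol{\CG}^{\vee}\ot^{L} p_{13}^{*}\ol{\CG})\ot\pi_2^{*}(\det\Fj)^{-1}[g-1]$, which is the Proposition as stated. The kernel of $\Psi\circ\FF$ is the pullback of that complex under the swap of factors, because $\ol{\CG}$, and hence $\ol{\CG}^\vee$, is swap-invariant; this also absorbs the slot mismatch in your $p_{23}^{*}\CQ$. So both composites have kernel $\CO_\Delta$, never $\CO_{\Gamma_\nu}$.
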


\begin{proof}
    By Proposition~\ref{prop: uniqueness, flat}, Lemma~\ref{lem: relative dualizing sheaf of ol p}, and \cite[Theorem 1.1]{Rizzardo15}, $\FF$ is both the left adjoint functor and the right adjoint functor of $\FF^{-1}$. The composition $\FF \circ \FF^{-1}$ has kernel $(\nu \times \id_{\ol{P}})^{*}\CK_2$. Therefore, $\FF^{-1}$ is fully faithful according to Proposition~\ref{prop: kernel is diagonal}. Since the category $D^b(\ol{P})$ is indecomposable and essentially nonzero, we conclude that $\FF^{-1}$, and thus $\FF$, is an equivalence by a criterion of Bridgeland; see \cite[Proposition 1.54]{Huybrechts06}.
\end{proof}

\subsection{The Abel--Prym map}
Suppose there exists an invertible sheaf $\CL$ of degree $-1$ on $\widetilde{\CC}$ over $B$.
Let $\CI_{\Delta}$ be the ideal sheaf of the diagonal of $\widetilde{\CC} \times_B \widetilde{\CC}$. 
By viewing $\CI_{\Delta}^{\vee} \ot p_1^{*}\CL$ as a flat family of degree $0$ line bundles on $p_2 \colon \widetilde{\CC} \times_B \widetilde{\CC} \to \widetilde{\CC}$, we obtain the \emph{Abel--Jacobi map}
$\AJ_{\CL} \colon \widetilde{\CC} \ra \ol{J}_{\widetilde{\CC}}.$

Notice that $\AJ_{\CL}$ factors through $\ol{J}_{\widetilde{\CC}}^{\circ}$, we set $\AJ_{\CL}^{\circ} \colon \widetilde{\CC} \ra \ol{J}_{\widetilde{\CC}}^{\circ}$. Then the \emph{Abel--Prym map} is defined to be the $B$-morphism
\begin{equation}\label{eq: def of AP}
    \AP_{\CL} \coloneqq f \circ \AJ_{\CL}^{\circ} \colon \widetilde{\CC} \ra \ol{P}.
\end{equation}
The pullback of $\AP_{\CL}$ induces a $B$-morphism
\[\AP_{\CL}^{*} \colon \Pic_{(\ol{P}/B)}^0 \ra J_{\widetilde{\CC}}.\] 

\begin{lem}\label{lem: left inverse} 
    Let $\rho \colon P \ra \Pic_{(\ol{P}/B)}^0$ be as in \eqref{eq: def of rho}. Assume that $\CL$ is an invertible sheaf of degree $-1$ on $\widetilde{\CC}$. Then the image of $\AP_{\CL}^{*} \circ \rho$ lies in $P$, and $\AP_{\CL}^{*} \circ \rho = \id_{P}$.
\end{lem}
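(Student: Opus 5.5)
We want $\AP_{\CL}^{*}\circ\rho=\id_P$. Both sides are $B$-morphisms $P\to J_{\widetilde{\CC}}$ and $P$ is reduced. The plan is therefore to compare the two line bundles on $\widetilde{\CC}\times_B P$ that classify them, and to conclude with the seesaw principle (Lemma~\ref{lem: seesaw principle}) after fixing the normalization along the section. Concretely, $\AP_{\CL}^{*}\circ\rho$ is classified by $(\AP_{\CL}\times\id_P)^{*}\CG$. Here we view $\CG$ as a line bundle on $\ol{P}\times_B P$, which is legitimate by Proposition~\ref{prop: uniqueness, flat}(2) together with the symmetry of Proposition~\ref{prop: symmetric}. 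Since $\AP_{\CL}=f\circ\AJ^{\circ}_{\CL}$, this bundle equals $(\AJ_{\CL}^{\circ}\times\id_P)^{*}(f\times\id_{\ol P})^{*}\ol{\CG}$. By Theorem~\ref{thm: def of G bar}(1) (with the factors swapped), it is therefore $(\AJ_{\CL}\times\id_P)^{*}\ol{\CR}'$, where $\ol{\CR}'$ denotes the restriction of $\ol{\CP}$ to $\ol{J}_{\widetilde{\CC}}\times_B P$.

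The main step is to show that $(\AJ_{\CL}\times\id_P)^{*}\ol{\CP}$ restricted to $\widetilde{\CC}\times_B P$ is the universal family of $P$, up to a twist by a line bundle pulled back from $P$. This is the autoduality statement for compactified Jacobians: pulling the Poincaré sheaf back along the Abel--Jacobi map recovers the point of $J_{\widetilde{\CC}}$ (Arinkin \cite{Ari10b}, or \cite{Esteves99}). I would check it directly from the determinant-of-cohomology formula \eqref{eq: CP}. After base change along $\AJ_{\CL}$, the universal sheaf becomes $\CI_\Delta^{\vee}\ot p_1^{*}\CL$. Applying additivity of $\CD$ to
\[0\to \CU'\to \CU'\ot\CI_\Delta^{\vee}\to \CU'\ot \CI_\Delta^{\vee}|_{\Delta}\to 0\]
and to the analogous sequence with $\CO$ in place of $\CU'$, the four determinant factors telescope to $\CU'|_{\Delta}$ twisted by a line bundle from the base. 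This should be done after reducing to assumptions (A1) and (A2), as in Section~\ref{subsec: no section}.

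The remaining point is the sign and the normalization. The double dualization and the choice of degree $-1$ for $\CL$ must produce $+\id$ rather than $-\id$. I would verify this over $B^{(g)}$, where it is the classical statement that the theta divisor pulled back along Abel--Jacobi has the expected degree. Separately, the twist from the base must be trivial. For this I would restrict to the zero section of $P$ and use that $\ol{\CG}$ is trivialized along $\ol{P}\times_B 0$ by Theorem~\ref{thm: def of G bar}(2); the seesaw principle then finishes the argument. The conclusion that the image of $\AP_{\CL}^{*}\circ\rho$ lies in $P$ follows at once, since the composite equals the inclusion $P\hookrightarrow J_{\widetilde{\CC}}$.

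The main obstacle is the Abel--Jacobi computation on the singular locus, together with keeping track of the identification of $\CU'$ with its $\iota$-twists. The paper defines $P$ as $\mathrm{Im}(1-\iota^{*})$, and the Abel--Prym map involves $F\ot\iota^{*}F^{\vee}$. One might fear getting $L\ot\iota^{*}L^{-1}$ instead of $L$. This does not happen, because we pull back $\ol{\CR}'$, which is $\ol{\CP}$ itself, not its descent. If any ambiguity remains, it can be settled generically over $B^{(g)}$ using Lemma~\ref{lem: smooth case equivariant} together with the separatedness of $J_{\widetilde{\CC}}$.
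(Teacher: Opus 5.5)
Your route is the paper's. You rewrite $(\AP_{\CL}\times\id_P)^{*}\CG$, via $\AP_{\CL}=f\circ\AJ^{\circ}_{\CL}$ and Theorem~\ref{thm: def of G bar}(1), as the Abel--Jacobi pullback of the restriction of $\ol{\CP}$. No swap of factors is needed there, since $f$ already acts on the first factor. You then use that pulling $\ol{\CP}$ back along Abel--Jacobi returns $\CU$ up to a twist from the parameter space. The paper does not recompute this step; it quotes it as $\AJ_{\CL}^{*}\circ\beta=\id_{J_{\widetilde{\CC}}}$ from \cite[Proposition~2.2]{Esteves99}.

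You should drop your final normalization step, for three reasons:
\begin{itemize}
\item The twist is by a line bundle $M$ on $P$, and it is nontrivial in general. If $\CU$ is normalized along a section $s$, restricting to $s\times P$ gives $M\cong\CG|_{\AP_{\CL}(s)\times_B P}$.
\item Restricting to the zero section of $P$ cannot detect $M$.
\item The twist is harmless anyway. The morphism to $J_{\widetilde{\CC}}$ only sees families modulo such twists, so the proof is finished once you have $(\AP_{\CL}\times\id_P)^{*}\CG\cong\CU'\otimes p_2^{*}M$.
\end{itemize}

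Your telescoping sequence also needs a correction. The cokernel of $\CO\to\CI_{\Delta}^{\vee}$ is $\CI_{\Delta}^{\vee}/\CO$, which is an invertible sheaf on $\Delta$ because the fibres are Gorenstein. It is not $\CI_{\Delta}^{\vee}|_{\Delta}$, whose fibre at a singular point is $2$-dimensional. With this cokernel the computation goes through and gives the sign $+\id$.
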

\begin{proof}
    It suffices to check the equality after a base change, so we may assume that the smooth locus of $\widetilde{\CC}$ over $B$ admits a section. Then there is a normalized universal sheaf $\CU$ on $\widetilde{\CC}\times_B {J}_{\widetilde{\CC}}$.
    
    Let $\beta \colon J_{\widetilde{\CC}} \ra \Pic_{(\ol{J}_{\widetilde{\CC}}/B)}^0$ be the $B$-morphism induced by $\CP$ on $\ol{J}_{\widetilde{\CC}} \times_B J_{\widetilde{\CC}}$.
    Then $\AJ_{\CL}^{*} \circ \beta = \id_{J_{\widetilde{\CC}}}$ (\cite[Proposition 2.2]{Esteves99}) implies that 
    $(\AJ_{\CL} \times \id_{J_{\widetilde{\CC}}})^{*}\CP$ differs from $\CU$ by tensoring with the pullback of an invertible sheaf on $J_{\widetilde{\CC}}$. 
    
    Let $u \colon \ol{J}_{\widetilde{\CC}}^{\circ} \ra \ol{J}_{\widetilde{\CC}}$ and $i \colon P \ra {J}_{\widetilde{\CC}}$ be the embedding morphisms. 
    Then $\AP_{\CL}^{*} \circ \rho \colon P \ra J$ is characterized by the invertible sheaf
    \begin{align*}
        (\AP_{\CL} \times \id_P)^{*}\CG & \cong (\AP_{\CL}^{\circ} \times \id_P)^{*}(u\times \id_P)^{*}  (\id_{\ol{J}_{\widetilde{\CC}}} \times i)^{*}\CP\\
        & \cong (\AJ_{\CL} \times \id_P)^{*} (\id_{\ol{J}_{\widetilde{\CC}}} \times i)^{*}\CP\\
        & \cong (\id_{\widetilde{\CC}} \times i)^{*} (\AJ_{\CL} \times \id_{{J}_{\widetilde{\CC}}})^{*} \CP,
    \end{align*}
    which is an invertible sheaf that differs from $\CU' = \CU|_{\widetilde{\CC}\times_B P}$ by tensoring with the pullback of an invertible sheaf on $P$. The conclusion then follows.    
\end{proof}

The lemma implies that $\AP_{\CL}^{*}$ is independent of the choice of $\CL$ and is just the inverse morphism of $\rho$. Consequently, even without the existence of a degree $-1$ invertible sheaf $\CL$, we can define the morphism $\AP^{*} \colon \Pic_{(\ol{P}/B)}^0 \ra J_{\widetilde{\CC}}$ to be the inverse of $\rho$.

\subsection{Arinkin's dimension bound}
\begin{prop}[{\cite[Section 7.1]{Ari10b}, \cite[Proposition 3.2]{MSY23}}]\label{prop: Arinkin's dimension bound}
Let $\CK_n$ be the sheaf defined in \eqref{eq: Arinkin kernel K_n}.
Then we have 
\begin{equation}\label{eq: Arinkin's dimension bound 1}
    \codim_{\ol{P}^n}(\Supp(\CK_n)) \geq g-1, \quad \forall n \geq 1.
\end{equation}
\end{prop}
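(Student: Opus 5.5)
The plan is to follow Arinkin's argument \cite[Section 7.1]{Ari10b}, in the form of \cite[Proposition 3.2]{MSY23}. The bound will be split into a fiberwise estimate over each point $b\in B$ and the Severi inequality (Lemma~\ref{Severi inequality}) on the base. Concretely, for a closed point $b\in B$ I would show
\[
\dim\big(\Supp(\CK_n)\cap \ol{P}_b^n\big)\leq n(g-1)-(g-1-\delta(b)).
\]
Then for every irreducible $Z\subset B$ the part of $\Supp(\CK_n)$ lying over $Z$ has dimension at most $\dim Z + n(g-1)-(g-1)+\delta_Z$. Since $\codim_B Z\geq \delta_Z$ by Lemma~\ref{Severi inequality}, its codimension in $\ol{P}^n$ (of dimension $\dim B+n(g-1)$) is at least $g-1$. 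Taking the union over the finitely many strata where $\delta$ is constant gives \eqref{eq: Arinkin's dimension bound 1}.

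For the fiberwise estimate I would first use flatness. By Proposition~\ref{prop: uniqueness, flat}(3), $\ol{\CG}$ is flat over both factors, so base change reduces the question to the fiber. A tuple $(F_1,\dots,F_n)\in\ol{P}_b^n$ lies in $\Supp(\CK_n)$ only if $R\Gamma\big(\ol{P}_b,\ \ol{\CG}_{F_1}\ot^L\cdots\ot^L\ol{\CG}_{F_n}\big)\neq 0$, where $\ol{\CG}_{F}=\ol{\CG}|_{F\times\ol{P}_b}$. Next I would bring in the group action. By Lemma~\ref{lem: thm of square}(2), translating the integration variable by $L\in P_b$ gives
\[
\mu_L^{*}\Big(\bigotimes_i\ol{\CG}_{F_i}\Big)\cong\Big(\bigotimes_i\ol{\CG}_{F_i}\Big)\ot_k\bigotimes_i \CG|_{F_i\times L},
\]
where the last factor is a one-dimensional vector space. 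Hence the sheaf $\bigotimes_i\ol{\CG}_{F_i}$ carries a structure twisted by the character-like function $\chi_{\underline F}\colon L\mapsto\bigotimes_i\CG|_{F_i\times L}$ on $P_b$. Restricting $\chi_{\underline F}$ to the abelian part would require passing to the Chevalley decomposition
\[
1\to \mathrm{Aff}_b\to P_b\to A_b\to 1,\qquad \dim A_b=g-1-\delta(b).
\]
I would then argue that nonvanishing of cohomology forces the corresponding translate of the line bundle on $A_b$ to be trivial, by the vanishing of cohomology of nontrivial algebraically trivial line bundles on abelian varieties, used in families. This is Arinkin's mechanism.

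Concretely, the conclusion should be that $\Supp(\CK_n)\cap\ol{P}_b^n$ lies in the preimage of a codimension $\dim A_b$ locus under a "sum map" $\ol{P}_b^n\to \ol{\Pic}^0(\ol{P}_b)\to\widehat{A_b}$. This map sends $\underline F$ to the class of $\bigotimes_i\ol{\rho}(F_i)$ restricted to the abelian part, and it is made well defined using Lemma~\ref{lem: symmetric} and Proposition~\ref{prop: symmetric}. The locus is the fiber over $0$. Showing that this map is equidimensional on each $P_b$-stratum $V_{\ul M}$ from Proposition~\ref{stratification of Prym} would give the fiberwise estimate. The transitive action of $P_b$ on each stratum, with affine stabilizer, should make this a straightforward dimension count: the affine stabilizer does not affect the image in $\widehat{A_b}$.

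The main obstacle is making the equivariance argument rigorous for singular tuples. When some $F_i$ lies in $\ol{P}_b\setminus P_b$, the sheaves $\ol{\CG}_{F_i}$ are only maximal Cohen--Macaulay, not line bundles. So the "character" must be defined on the level of $P_b$-equivariant structures rather than via a group law on the $F_i$ themselves. I would handle this exactly as in \cite[Section 7.1]{Ari10b}. First, use Lemma~\ref{lem: thm of square}(2) to get a $P_b$-equivariant structure twisted by the line bundle $\bigotimes_i(\id\times\iota_{F_i})^{*}\CG$ on $P_b$. Then observe that the cohomology of a sheaf with such a twisted equivariant structure vanishes unless the twisting line bundle restricted to $A_b$ is trivial. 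This reduces the question to a statement about the group $P_b$ alone, which I expect to be the delicate point.
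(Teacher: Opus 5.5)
Your reduction to a fiberwise bound via Lemma~\ref{Severi inequality} agrees with the paper. So does your use of Lemma~\ref{lem: thm of square} to let the $\BG_m$-extension $T$ of $P_b$ attached to $\bigotimes_i\CG_{F_i}$ act on $\bigotimes_i\ol{\CG}_{F_i}$. In fact the weight-one character argument splits $T$ entirely, i.e.\ $\bigotimes_i\CG_{F_i}\cong\CO_{P_b}$. ``Restricting to $A_b$'' is not meaningful, since $A_b$ is a quotient of $P_b$, not a subgroup.

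The gap is in the dimension count that follows. Your ``sum map'' $\ol{P}_b^n\to\ol{\Pic}^0(\ol{P}_b)\to\widehat{A_b}$ is not defined. For non-locally-free $F_i$, the sheaf $\bigotimes_i\ol\rho(F_i)$ is not a point of $\ol{\Pic}^0(\ol{P}_b)$. Moreover, extensions of $P_b$ by $\BG_m$ are classified by $\widehat{A_b}$ modulo the countable image of the character group of the affine part. So the relevant locus is a countable union, not a fiber over $0$.

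More importantly, whatever form the map takes, your orbit-by-orbit count needs the following: for each fixed class, the set of $L\in P_b$ with $\CG_L|_{P_b}$ in that class has dimension at most $\delta(b)$. In other words, $L\mapsto\CG_L|_{P_b}$ must be nondegenerate on the abelian part over the special point $b$. This does not follow from transitivity of the action or from affineness of the stabilizers. It does not follow from the generic principal polarization either, since kernel dimension is only upper semicontinuous. You also cannot invoke Lemma~\ref{lem: autoduality for P}: its proof uses full faithfulness of $\FF$, which rests on this very proposition. So the ``statement about the group $P_b$ alone'' that you flag as delicate is exactly the missing step, and nothing in your plan proves it.

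The paper supplies this input with the Abel--Prym map. As in Lemma~\ref{lem: left inverse} (via $\AP=f\circ\AJ^{\circ}$), pulling $\CG_F$ back along $\AP\colon\widetilde{\CC}_b^{\mathrm{reg}}\to P_b$ recovers $F|_{\widetilde{\CC}_b^{\mathrm{reg}}}$ for every $F\in\ol{P}_b$. So the splitting of $T$ gives $\bigotimes_i F_i|_{\widetilde{\CC}_b^{\mathrm{reg}}}\cong\CO_{\widetilde{\CC}_b^{\mathrm{reg}}}$.

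The paper then pulls the support back along the flat map $\mu\times\id$ (the $P_b$-action on the first factor) and bounds the fibers of the projection to $\ol{P}_b^n$. For fixed $(F_1,\dots,F_n)$, the admissible $L$ satisfy $L|_{\widetilde{\CC}_b^{\mathrm{reg}}}\cong(\bigotimes_iF_i)^{-1}|_{\widetilde{\CC}_b^{\mathrm{reg}}}$. These $L$ form a countable union of $\delta(b)$-dimensional subvarieties of $P_b$, which gives codimension at least $g-1-\delta(b)=\widetilde{g}(\CC_b)-1$. With this input your stratum-by-stratum count would also close; without it, the argument does not.
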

\begin{proof}
    By Lemma~\ref{Severi inequality}, it suffices to show that for every closed point $b\in B$ we have 
    \[\codim_{\ol{P}_b^n}(\Supp(\CK_n) \cap \ol{P}_b^n) \geq \widetilde{g}(\CC_b) - 1.\]
    \medskip
    \noindent {\bf Step 1.}
    Let us fix an \'etale double cover $\widetilde{\CC}_b \ra \CC_b$. Let $\widetilde{\CC}_b^{\mathrm{reg}}$ be the regular locus of the curve~$\widetilde{\CC}_b$. 
    For any $F \in \ol{P}_b$, we denote by $\ol{\CG}_F$ the restriction of $\ol{\CG}$ to $\ol{P}_b \times \{F\}$. We further restrict this sheaf to $P_b$ and denote the line bundle by $\CG_F$.
    We first show that if $(F_1, \dots, F_n) \in \ol{P}_b^n$ lies in $\Supp(\CK_n)$, then $\bigotimes_{i=1}^n(F_i|_{\widetilde{\CC}_b^{\mathrm{reg}}}) \cong \CO_{\widetilde{\CC}_b^{\mathrm{reg}}}$. Here we regard a point $F_i \in \ol{P}_b$ as a sheaf on $\widetilde{\CC}_b$. 
    
    By base change, a point $(F_1, \dots, F_n) \in \ol{P}_b^n$ lies in $\Supp(\CK_n)$ if and only if 
    \[\BH^i(\ol{P}_b, \ol{\CG}_{F_1} \ot^{L} \ol{\CG}_{F_2} \ot^{L} \cdots \ot^{L} \ol{\CG}_{F_n}) \neq 0\]
    for some $i$. 
    Let $T_i \ra P_b$ be the $\BG_m$-torsor corresponding to $\CG_{F_i}$. Then $T_i$ is naturally an abelian group by regarding it as a group extension of $P_b$ by $\BG_m$. We have a $T_i$-action on $\ol{P}_b$ induced by $\mu \colon P_b \times \ol{P}_b \ra \ol{P}_b$. By Lemma~\ref{lem: thm of square} (1) we have 
    \[p_1^{*}\CG_{F_i} \ot p_2^{*}\ol{\CG}_{F_i} \cong \mu^{*}\ol{\CG}_{F_i}\]
    on $P_b \times \ol{P}_b$. This implies that we can lift the action of $P_b$ on $\ol{P}_b$ to an action of $T_i$ on $\ol{\CG}_{F_i}$.
    
    Now let $T \ra P_b$ be the $\BG_m$-torsor corresponding to ${\CG}_{F_1} \ot {\CG}_{F_2} \ot \cdots \ot {\CG}_{F_n}$. If there exists~$i \in \BZ$ such that $\BH^i(\ol{P}_b, \ol{\CG}_{F_1} \ot^{L} \ol{\CG}_{F_2} \ot^{L} \cdots \ot^{L} \ol{\CG}_{F_n}) \neq 0$, then the same argument as in the proof of \cite[Proposition~7.2]{Ari10b} shows that $T$ is a trivial torsor. In other words,
    \[\bigotimes_{i=1}^{n} \CG_{F_i} \cong \CO_{P_b}.\]
    Then the conclusion holds by pulling back the above isomorphism via the restriction of the Abel--Prym map 
    \[\AP \colon \widetilde{\CC}_b^{\mathrm{reg}} \ra P_b.\]
    
    \medskip
    \noindent {\bf Step 2.} Consider the action on the first factor $\mu \times \id_{\ol{P}_b^{n-1}} \colon P_b \times \ol{P}_b^n \ra \ol{P}_b^n$. This morphism is flat; thus, it suffices to show that 
    \[Z \coloneqq (\mu \times \id_{\ol{P}_b^{n-1}})^{-1}(\Supp(\CK_n) \cap \ol{P}_b^n)\]
    satisfies $\codim_{P_b \times \ol{P}_b^n}(Z) \geq \widetilde{g}(\CC_b) -1$.
    
    To prove this, we look at the restriction of the projection map $p_{2,3,\dots,n+1} \colon Z \ra \ol{P}_b^n$ and show that each fiber has codimension at least $\widetilde{g}(\CC_b)-1$ in $P_b$. 
    For a fixed point $(F_1, \dots, F_n) \in \ol{P}_b^n$, if $(L, F_1, \dots, F_n) \in Z$, then by Step~1 we have
    \[(L \ot F_1\ot \cdots \ot F_n)|_{\widetilde{\CC}_b^{\mathrm{reg}}} \cong \CO_{\widetilde{\CC}_b^{\mathrm{reg}}}. \]
    Such $L$ form a countable union of subvarieties of $P_b$ of dimension $\delta(b) = g - \widetilde{g}(\CC_b)$. In particular, 
    \[\codim_{P_b}(Z \cap P_b \times \{F_1\} \times \cdots \times \{F_n\}) \geq \widetilde{g}(\CC_b)-1. \qedhere\]
\end{proof}

\subsection{Proof of Proposition~\ref{prop: kernel is diagonal}}\label{subsec: Proof of Proposition 2.2}
The proof follows an argument analogous to the proof of \cite[Theorem C]{Ari10b}. 
\begin{proof}[Proof of Proposition~\ref{prop: kernel is diagonal}]
    Let $\Phi \coloneqq Rp_{23,*}(p_{12}^{*}\ol{\CG}^{\vee} \ot^L p_{13}^{*}\ol{\CG}) \in D^b(\ol{P}\times_B \ol{P})$. The goal is to prove 
    \begin{equation}\label{eq: Phi=Delta}
        \Phi \simeq \CO_{\Delta} \ot \pi_2^{*}\det \Fj [-g+1].
    \end{equation}
    \medskip
    \noindent {\bf Step 1.} We first show that \eqref{eq: Phi=Delta} holds when restricted on $P \times_B P$. 
    Let $\zeta \colon B \ra P$ be the zero section. We define the action $\mu' \colon P\times_B P \ra P, \quad (L,M) \mapsto L^{-1}\ot M$. Consider the following Cartesian diagrams
    \begin{equation*}
        \begin{tikzcd}
            \ol{P} \times_B P \times_B P \arrow[r, "\id_{\ol{P}} \times \mu'"] \arrow[d,"p_{23}"'] & \ol{P} \times_B P \arrow[d,"p_2"] \\
            P \times_B P \arrow[r, "\mu'"] & P \arrow[d, bend right, "p"']\\
            \Delta_P \arrow[u, hook] \arrow[r] & B. \arrow[u, "\zeta"']
        \end{tikzcd}
    \end{equation*}
    We denote by $\CG$ the restriction of $\ol{\CG}$ to the open subset $\ol{P} \times_B P$. Then we have 
     \begin{align*}
        \Phi|_{P\times_B P}
        & \cong Rp_{23,*}(p_{12}^{*}\CG^{-1} \ot p_{13}^{*}\CG)\\
        & \cong Rp_{23,*} (\id_{\ol{P}} \times \mu')^{*}\CG\\
        & \cong \mu'^{*} Rp_{2,*}\CG,
    \end{align*}
    where the second isomorphism follows from Lemmas~\ref{lem: symmetric}, and \ref{lem: thm of square} (2), and the third from flat base change.
    Thus it suffices to prove
    \begin{equation}\label{eq: p2*CG}
        Rp_{2,*} \CG \cong \zeta_{*}(\det \Fj)[-g+1] \cong \zeta_{*}\CO_B \ot p^{*}(\det \Fj)[-g+1],
    \end{equation}
    where the second isomorphism follows from $\det \Fj \cong \zeta^{*}p^{*}\det \Fj$ and the projection formula.

    By Grothendieck duality and the projection formula, we have
    \[(Rp_{2,*}\CG)^{\vee} = R\hom(Rp_{2,*}\CG, \CO_P) \cong Rp_{2,*} R\hom(\CG, Rp_2^{!}\CO_P) \cong Rp_{2,*}\CG^{\vee} \ot p^{*}(\det \Fj)^{-1}[g-1].\]
    Since $p_2$ is proper flat of relative dimension $g-1$, we have $\CH^i(Rp_{2,*}\CG) = 0$ for $i > g-1$ and~$\CH^i((Rp_{2,*}\CG)^{\vee}) = 0$ for $i > 0$. 
    Taking $n=1$ in Proposition~\ref{prop: Arinkin's dimension bound}, we obtain 
    \[\codim_P(\Supp(Rp_{2,*}\CG)) = \codim_P(\Supp(\CK_1|_P)) \geq g-1.\]
    Then $Rp_{2,*}\CG$ is a Cohen--Macaulay sheaf of codimension $(g-1)$ concentrated at degree $(g-1)$, according to \cite[Lemma 7.6]{Ari10b}. 
    By base change, we have a canonical morphism 
    \[\zeta^{*} R^{g-1}p_{2,*}\CG \ra R^{g-1}\pi_{1,*}\CO_{\ol{P}},\]
    where $R^{g-1}\pi_{1,*}\CO_{\ol{P}} \cong \det \Fj$ by Serre duality. 
    By adjunction, we get morphism
    \[R^{g-1}p_{2,*}\CG \ra \zeta_{*}\det \Fj,\]
    which is an isomorphism on $\zeta(B)$ by definition. Thus it suffices to prove that
    \[\Supp(R^{g-1}p_{2,*}\CG)=\zeta(B)\]
    as schemes. 
    
    We first check it set-theoretically. One side of the inclusion is obvious. For the other side, notice that $L \in P_b$ is in $\Supp(R^{g-1}p_{2,*}\CG) \cap P_b$ if and only if $H^{g-1}(\ol{P}_b, \ol{\CG}_L) \neq 0$. By Serre duality, this is equivalent to $H^0(\ol{P}_b, \ol{\CG}_L^{-1}) \neq 0$. Thus $\ol{\CG}_L^{-1}$ admits a subbundle isomorphic to $\CO_{\ol{P}_b}$. Also, the line bundles  $\ol{\CG}_L^{-1} \cong \ol{\CG}_{L^{-1}}$ and $\CO_{\ol{P}_b} \cong \ol{\CG}_{\CO_{\widetilde{\CC}_b}}$ are {algebraically equivalent}. Therefore, we have $\ol{\CG}_{L^{-1}} \cong \ol{\CG}_{\CO_{\widetilde{\CC}_b}}$. 
    By Lemma~\ref{lem: left inverse}, pulling back via the Abel--Prym map shows that $L^{-1}\cong \CO_{\widetilde{\CC}_b}$, i.e., $L \in \zeta(B)$. 

    We conclude the proof by proving that $\Supp(R^{g-1}p_{2,*}\CG) = \zeta(B)$ as schemes. Since the sheaf $R^{g-1}p_{2,*}\CG$ is Cohen--Macaulay, by the unmixed property we only need to check this generically. By restricting to $B^{(g)}$, over which $\CC_b$ are nonsingular, this is a classical result of abelian schemes proved by Mumford \cite[\S 13]{Mumford70}. 
    
    \medskip
    \noindent {\bf Step 2.}
    We first prove that $\Phi$ is Cohen--Macaulay. By Grothendieck duality,
    \[(\Phi)^{\vee} \cong Rp_{23,*}(p_{12}^{*}\ol{\CG} \ot^L p_{13}^{*}\ol{\CG}^{\vee}) \ot \pi_2^{*}\det \Fj ^{-1}[g-1].\]
    Since $p_{23}$ is proper and flat of relative dimension $g-1$, we have $\CH^i(\Phi) =0$ for $i>g-1$ and~$\CH^i(\Phi^{\vee}) = 0$ for $i>0$.     
    Taking $n=2$ in Proposition~\ref{prop: Arinkin's dimension bound}, we obtain 
    \begin{equation}\label{eq: codim of supp Phi}
        \codim_{\ol{P} \times_B \ol{P}}(\Supp (\Phi)) = \codim_{\ol{P} \times_B \ol{P}}(\Supp (\CK_2)) \geq g-1.
    \end{equation}
    Then $\Phi[g-1] \in \Coh(\ol{P} \times_B \ol{P})$ is a Cohen--Macaulay sheaf of codimension $g-1$. 
    
    By Step 1 we have 
    \begin{equation}\label{eq: Supp Phi in P times P}
        \Supp(\Phi) \cap P\times_B P = \Delta \cap P\times_B P
    \end{equation}
    as schemes. Using this, we can strengthen \eqref{eq: codim of supp Phi} to
    \[\codim_{\ol{P} \times_B \ol{P}}(\Supp(\Phi) \setminus \pi_{2}^{-1}(B^{(g)})) > g-1.\]
    Indeed, in the Step 2 of Proposition~\ref{prop: Arinkin's dimension bound}, the projection map $p_{23} \colon Z \ra \ol{P}^2$ has zero-dimensional fibers over $P \times_B P$. Thus for $b \notin B^{(g)}$, we have
    \[\codim (Z) > \widetilde{g}-1.\]
    This is the same as saying that every maximal-dimensional irreducible component of $\Supp(\Phi)$ intersects $\pi_{12}^{-1}(B^{(g)})$. By the Cohen–Macaulay unmixed theorem, $\Supp(\Phi)$ is of equidimension. Thus, every irreducible component of $\Supp(\Phi)$ intersects $\pi_{12}^{-1}(B^{(g)})$.     
    Since curves over $B^{(g)}$ are all nonsingular, by Mumford's result (\cite{Mumford70}) we have
    \[\Supp(\Phi) \cap \pi_{12}^{-1}(B^{(g)}) = \Delta \cap \pi_{12}^{-1}(B^{(g)}).\]
    Taking the closure of both sides, we see that 
    \[\Supp(\Phi) \subset \Delta.\]
    Thus $\Phi$ is a Cohen--Macaulay sheaf on $\Delta$. 

    Since both sides of \eqref{eq: Phi=Delta} are Cohen--Macaulay sheaves on $\Delta$ that are isomorphic over $\Delta_P$, and~$\codim_{\Delta}(\Delta\setminus \Delta_P) \geq 2$, the equality \eqref{eq: Phi=Delta} follows from \cite[Lemma 2.2]{Ari10b}. 
\end{proof}

\section{Autoduality}\label{sec_autoduality}
In this section we prove Theorem~\ref{thm: C}. Again, we follow the notation and assumptions as in Section~\ref{subsec1.1: notations}.

Recall that $\rho\colon P \ra \Pic_{(\ol{P}/B)}^0$ is the morphism defined by the normalized Poincar\'e line bundle $\CG$ on $\ol{P} \times_B P$. By analogy with \cite[Theorem C]{Arinkin10}, we have:
\begin{lem}\label{lem: autoduality for P}
    $\rho \colon P \xrightarrow{\sim} \Pic_{(\ol{P}/B)}^0$.
\end{lem}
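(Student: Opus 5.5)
We will show that $\rho$ is an isomorphism by proving three things: it is a monomorphism, it is surjective on geometric points, and the target is smooth with the same relative dimension, so a fiberwise comparison of tangent spaces applies. The natural tool is the left inverse from Lemma~\ref{lem: left inverse}. After an étale base change on $B$ we may assume there is a degree $-1$ invertible sheaf $\CL$ on $\widetilde{\CC}$. The statement is étale local on $B$, and both $\rho$ and $\AP^{*}$ are compatible with base change, so this reduction is harmless. Lemma~\ref{lem: left inverse} then gives $\AP_{\CL}^{*}\circ\rho=\id_P$. Hence $\rho$ is a closed immersion: $\Pic^0_{(\ol{P}/B)}$ is separated over $B$, and a section of a separated morphism is a closed immersion.

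It then suffices to show that $\rho$ is surjective and that $\Pic^0_{(\ol{P}/B)}$ is reduced. A cleaner route is to show that $\AP_{\CL}^{*}\colon \Pic^0_{(\ol{P}/B)}\to J_{\widetilde{\CC}}$ is a monomorphism with image $P$, i.e.\ that $\rho\circ\AP_{\CL}^{*}=\id$. We will argue via the Fourier--Mukai equivalence of Theorem~\ref{thm: equivalence}. Let $T$ be a $B$-scheme and $\CM$ a $T$-family of line bundles on $\ol{P}_T$ lying in $\Pic^0$. Apply the inverse transform $\FF^{-1}$ (relative over $T$) to $\CM$.

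Fiberwise, for $\CM_t\in\Pic^0(\ol{P}_t)$ we compute $\FF^{-1}(\CM_t)$ by the Step~1 argument in the proof of Proposition~\ref{prop: kernel is diagonal}. Using the theorem of the square (Lemma~\ref{lem: thm of square}), $\CM_t$ is translation invariant up to twist, and the support of $\FF^{-1}(\CM_t)$ is contained in the locus of $F$ where $\ol{\CG}_F^{\vee}\otimes\CM_t$ has cohomology. The argument of \cite[Proposition~7.2]{Ari10b}, as used in Step~1 of Proposition~\ref{prop: Arinkin's dimension bound}, forces the corresponding $\BG_m$-torsor to be trivial there, so the support consists of points $F$ with $\ol{\CG}_F|_{P_t}\cong\CM_t|_{P_t}$. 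Because $\FF^{-1}$ is an equivalence, $\FF^{-1}(\CM_t)$ is nonzero. Its image under $\FF$ is a line bundle, which has endomorphism ring $k$, so $\FF^{-1}(\CM_t)$ is simple. Together with the support constraint this forces $\FF^{-1}(\CM_t)\simeq k(F)[-(g-1)]$, twisted by a line, for a unique point $F\in\ol{P}_t$. Applying $\FF$ then gives $\CM_t\cong\ol{\CG}_F$.

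We also need $F\in P_t$ and not merely $F\in\ol{P}_t$. For this, note that $\CM_t$ is invertible. When $F\notin P_t$, the sheaf $\ol{\CG}_F$ is not locally free: its pullback along $\AP$ is $F$ itself up to a twist, by the argument of Lemma~\ref{lem: left inverse} extended to $\ol{P}$ via Cohen--Macaulay extension. So $\rho$ is bijective on geometric points. In families, the transform $\FF^{-1}_T(\CM)$ is then a $T$-flat family of shifted skyscrapers, which defines a morphism $T\to P$ that inverts $\rho$. This shows $\rho$ is an isomorphism of functors, and hence of schemes.

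The main obstacle is the family version of the last step: checking that $\FF^{-1}_T(\CM)$ is a flat family of skyscraper sheaves over an arbitrary, possibly non-reduced, base $T$. The fiberwise statement requires cohomology and base change for the relative transform. We expect to handle this as in \cite[Theorem~C]{Arinkin10}: the fiberwise concentration in a single degree gives flatness of $\CH^{g-1}$, and a length-one sheaf on $\ol{P}_T$ that is flat over $T$ is the graph of a section.
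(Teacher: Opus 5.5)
Your reduction to a closed immersion via $\AP_{\CL}^{*}\circ\rho=\id_P$ is the same as the paper's. The surjectivity argument, however, has a genuine gap.

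First, your support bound for $\FF^{-1}(\CM_t)$ relies on the torsor argument of \cite[Proposition~7.2]{Ari10b}. That argument needs the $P_t$-action on $\ol{P}_t$ to lift to the sheaf $\CM_t\otimes\ol{\CG}_F^{\vee}$. Lemma~\ref{lem: thm of square} supplies such a lift only for $\ol{\CG}_F$. For an arbitrary $\CM_t\in\Pic^0(\ol{P}_t)$ you would need translation invariance under $P_t$. The usual rigidity argument does not give this when the groups have additive parts, and a posteriori it is essentially the statement you are proving.

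Second, and more seriously, even granting that bound, the locus $\{F\in\ol{P}_t:\ \ol{\CG}_F|_{P_t}\cong\CM_t|_{P_t}\}$ is in general positive dimensional. The boundary $\ol{P}_t\setminus P_t$ can be a divisor in the fibre. As in Step~2 of Proposition~\ref{prop: Arinkin's dimension bound}, the condition only constrains $F$ on $\widetilde{\CC}_t^{\mathrm{reg}}$, which leaves families of dimension up to $\delta(t)$. In the analogous case of the compactified Jacobian of a nodal cubic, every $F$ satisfies it, because $\Pic(\BG_m)=0$. Since $\mathrm{End}=k$ does not force zero-dimensional support, you cannot conclude that $\FF^{-1}(\CM_t)\simeq k(F)[-(g-1)]$. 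Separately, your use of an extension of $\AP^{*}$ to $\ol{P}$ to rule out $F\notin P_t$ borrows from Theorem~\ref{thm: autoduality}, whose proof uses the present lemma.

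The missing idea is the dimension count you announce in your first sentence but never carry out. The fibrewise transform is fully faithful and sends $k(\zeta)$ to $\ol{\CG}_{\zeta}\cong\CO_{\ol{P}_b}$. Hence $h^1(\ol{P}_b,\CO_{\ol{P}_b})=\dim\Ext^1(k(\zeta),k(\zeta))=g-1$. It follows that $d\rho_b$, which is injective by the left inverse, is an isomorphism. Also $\Pic^0(\ol{P}_b)$ is connected, reduced in characteristic $0$, and of dimension at most $g-1$. So the closed image of the connected $(g-1)$-dimensional $P_b$ is all of it, and $\rho_b$ is an isomorphism. This is the paper's route.

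No family version of your skyscraper argument is needed. Let $\CI$ be the ideal of $\rho(P)$ in the target. Since $P$ is flat over $B$, the sheaf $\CI\otimes k(b)$ injects onto the ideal of $\rho_b(P_b)$, which is zero. Hence $\CI=0$ by Nakayama.
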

\begin{proof}
   To prove that $\rho$ is an isomorphism, we can change the base by an \'etale cover and assume that the smooth locus of $\widetilde{\CC}/B$ admits a section. Then there exists an invertible sheaf~$\CL$ of degree $-1$ on $\widetilde{\CC}/B$, and Lemma~\ref{lem: left inverse} yields that $\rho$ is a right inverse. Therefore, $\rho$ is a closed embedding. Since $P$ is flat over $B$, it suffices to check that $\rho$ is an isomorphism on each geometric fiber. 
   Thus, we are reduced to proving that $\rho_b \colon P_b \xrightarrow{\sim} \Pic^0(\ol{P}_b)$ is an isomorphism for every closed point $b\in B$. 

    We first notice that the differential of $\rho_b$ at $\zeta \coloneqq [\CO_{\widetilde{\CC}_b}] \in P$,
    \[\dd \rho_b \colon \mathrm{T}_{\zeta}P \xrightarrow{\sim} \mathrm{T}_{0} \Pic^0(\ol{P}_b),\]
    is an isomorphism of $k$-vector spaces, as we now explain. 
    In fact, the injectivity follows from Lemma~\ref{lem: left inverse}. Then the isomorphism follows, since 
    \[\mathrm{T}_{\zeta}P_b \cong \Ext_P^1(\CO_{\zeta},\CO_{\zeta}) \cong \Ext_{\ol{P}}^1(\CO_{\ol{P}_b}, \CO_{\ol{P}_b}) \cong H^1(\ol{P}_b, \CO_{\ol{P}_b}) \cong \mathrm{T}_{0} \Pic^0(\ol{P}_b)\]
    shows that both sides are $(g-1)$-dimensional $k$-vector spaces.
    Here the second isomorphism follows from the fact that $\FF$ is fully faithful. 

    Then the very same argument as in the proof of \cite[Theorem C]{Arinkin10} shows that $\rho_b$ is surjective. We conclude that $\rho_b$ is an isomorphism since the $k$-group scheme $\Pic^0(\ol{P}_b)$ is reduced. 
\end{proof}

From the proof we see that $\rho$ is \'etale locally a homomorphism of group schemes. Hence $\rho$ is an isomorphism of $B$-group schemes.

\begin{thm}\label{thm: autoduality}
    The morphism $\ol{\rho} \colon \ol{P} \xrightarrow{\sim} \ol{\Pic}_{(\ol{P}/B)}^0$ is an isomorphism of $B$-schemes. 
\end{thm}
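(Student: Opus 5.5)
We follow the strategy of \cite[Theorem C]{Ari10b}, deducing the statement from the fully faithfulness of the Fourier--Mukai functor $\FF$ of Theorem~\ref{thm: equivalence} together with Lemma~\ref{lem: autoduality for P}. The plan has three steps: show that $\ol{\rho}$ is a closed embedding; identify its image set-theoretically with $\ol{\Pic}^0_{(\ol{P}/B)}$; and then compare scheme structures.

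\textbf{Injectivity and closed embedding.} Since $\ol{P}$ and $\ol{\Pic}^0_{(\ol{P}/B)}$ are both proper over $B$, the morphism $\ol{\rho}$ is proper. It therefore suffices to show that $\ol{\rho}$ is a monomorphism, i.e.\ injective on geometric points and on tangent spaces.

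For injectivity on points, take $F_1,F_2\in\ol{P}_b$ with $\ol{\CG}_{F_1}\cong\ol{\CG}_{F_2}$. These sheaves are $\FF(\CO_{F_1})$ and $\FF(\CO_{F_2})$ up to base change; the base change is legitimate because $\ol{\CG}$ is flat over both factors by Proposition~\ref{prop: uniqueness, flat}. Full faithfulness of $\FF$ on $D^b(\ol{P})$ then gives
\[\Hom(\CO_{F_1},\CO_{F_2})\cong\Hom(\ol{\CG}_{F_1},\ol{\CG}_{F_2})\neq0,\]
so $F_1=F_2$. For tangent spaces, the differential at $F$ is the map
\[\Ext^1_{\ol{P}}(\CO_F,\CO_F)\to\Ext^1_{\ol{P}_b}(\ol{\CG}_F,\ol{\CG}_F)\]
induced by $\FF$ (after restricting to the fiber over $b$ and handling the base directions compatibly). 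This map is injective, indeed bijective, by full faithfulness. Hence $\ol{\rho}$ is a proper monomorphism, and so a closed embedding.

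\textbf{Surjectivity onto the closure.} By Lemma~\ref{lem: autoduality for P}, $\ol{\rho}$ maps $P$ isomorphically onto $\Pic^0_{(\ol{P}/B)}$. Because $\ol{P}$ is proper, its image is closed, and it therefore contains the scheme-theoretic closure $\ol{\Pic}^0_{(\ol{P}/B)}$. Conversely, since $P$ is dense in $\ol{P}$, the image is contained in that closure. So $\ol{\rho}$ identifies $\ol{P}$ with a closed subscheme $Z$ of $\Pic^{=}_{(\ol{P}/B)}$ whose underlying set is $\ol{\Pic}^0$.

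\textbf{Comparing scheme structures; the main obstacle.} The remaining issue is nilpotent structure: we must show that $Z$ equals the scheme-theoretic closure of $\Pic^0$. Here $Z\cong\ol{P}$ is reduced and irreducible, since $\ol{P}$ is nonsingular, and it contains $\Pic^0$ as a dense open subscheme. Any closed subscheme containing $\Pic^0$ contains $Z$, because $Z$ is the closure of its dense open $\rho(P)$ in the reduced sense. Conversely, the scheme-theoretic closure $\ol{\Pic}^0$ is contained in $Z$, since $Z$ is closed and contains $\Pic^0$. Together these force $Z=\ol{\Pic}^0$.

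The genuinely delicate input is the tangent-space injectivity in the first step at points $F$ of the boundary. There we must justify that the deformation map of $\ol{\CG}_F$ inside $\Pic^{=}$ matches the action of $\FF$ on $\Ext^1$, including the directions along $B$. I would handle this by working with the relative version: use flatness of $\ol{\CG}$ over $p_1$ and the equivalence $\FF$ on $D^b(\ol{P})$, and argue as in \cite[Section 8]{Ari10b} that infinitesimal deformations of $\CO_F$ over $k[\epsilon]$ correspond bijectively to deformations of $\ol{\CG}_F$ as a torsion-free sheaf.
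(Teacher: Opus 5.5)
Your argument is correct, but it takes a different route from the paper. The paper never shows directly that $\ol{\rho}$ is a closed embedding. Instead, after an \'etale base change producing a degree $-1$ line bundle $\CL$, it constructs an explicit inverse. Using Lemma~\ref{lem: autoduality for P} and the argument of \cite[Theorem~2.6]{Esteves04}, it shows that $(\AP_{\CL}\times\id)^{*}\CI$ is a relative torsion-free rank~$1$ sheaf on $\widetilde{\CC}\times_B\ol{\Pic}^0_{(\ol{P}/B)}$. Hence $\AP_{\CL}^{*}$ extends to $\ol{\AP}_{\CL}^{*}\colon\ol{\Pic}^0_{(\ol{P}/B)}\to\ol{P}$, and both composites with $\ol{\rho}$ are the identity because they are so on the (schematically) dense opens $P$ and $\Pic^0_{(\ol{P}/B)}$, with separated targets. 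You instead use full faithfulness of $\FF$ at every point, boundary included, to get a proper monomorphism; after that only formal density arguments remain. The paper's route gives an explicit inverse (pullback along the Abel--Prym map) and avoids deformation theory at boundary points, at the cost of Esteves' torsion-freeness input. Yours uses only Theorem~\ref{thm: equivalence} and Lemma~\ref{lem: autoduality for P}. With a little more work (injectivity of the map from $T_{\ol{\rho}(F)}\Pic^{=}_{(\ol{P}/B)}$ into the $\Ext^1$ below), it even shows that $\ol{\rho}$ is open, so $\ol{\Pic}^0_{(\ol{P}/B)}$ is a connected component of $\Pic^{=}_{(\ol{P}/B)}$.

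One step needs to be stated correctly. As written, a map $\Ext^1_{\ol{P}}(\CO_F,\CO_F)\to\Ext^1_{\ol{P}_b}(\ol{\CG}_F,\ol{\CG}_F)$ cannot be injective when $\dim B>0$: for $F\in P_b$ the source has dimension $\dim B+g-1$ and the target has dimension $g-1$. The right target is $\Ext^1_{\ol{P}}(\iota_*\ol{\CG}_F,\iota_*\ol{\CG}_F)$, with $\iota\colon\ol{P}_b\hookrightarrow\ol{P}$. Pushing a first-order deformation on $\ol{P}\times_B\Spec k[\epsilon]$ forward to $\ol{P}\times\Spec k[\epsilon]$ gives a natural linear map $T_{\ol{\rho}(F)}\Pic^{=}_{(\ol{P}/B)}\to\Ext^1_{\ol{P}}(\iota_*\ol{\CG}_F,\iota_*\ol{\CG}_F)$. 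Its composite with $\dd\ol{\rho}_F$ is the map $\Ext^1_{\ol{P}}(\CO_F,\CO_F)\to\Ext^1_{\ol{P}}(\FF(\CO_F),\FF(\CO_F))$ induced by $\FF$, which is an isomorphism by Theorem~\ref{thm: equivalence}. Here $\FF(\CO_F)\cong\iota_*(\ol{\CG}|_{\{F\}\times\ol{P}_b})$ by flatness of $\ol{\CG}$ over $p_1$, and Proposition~\ref{prop: symmetric} identifies this with $\iota_*$ of the sheaf $\ol{\rho}(F)$. Hence $\dd\ol{\rho}_F$ is injective, directions along $B$ included, and no separate analysis of the base directions is needed.
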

The theorem obviously holds over $B^{(g)}$. Indeed, the relative Prym variety $P_{B^{(g)}} \ra B^{(g)}$ has a canonically defined principal polarization $\varphi_{\Xi} \colon P_{B^{(g)}} \xrightarrow{\sim} \Pic_{(P_{B^{(g)}}/B^{(g)})}^0$, which equals $\ol{\rho}|_{B^{(g)}}$ as we have seen in the proof of Theorem~\ref{thm: def G}.
\begin{proof}
    As before, we may change the base by an \'etale cover, so that the smooth locus of $\widetilde{\CC}/B$ admits a section. 
    Then there exists an invertible sheaf $\CL$ of degree $-1$ on $\widetilde{\CC}/B$. 

    To simplify notation, set $U = \Pic_{(\ol{P}/B)}^0$ and $\ol{U} = \ol{\Pic}_{(\ol{P}/B)}^0$. 
    Let $\CI$ be the universal sheaf on $\ol{P}\times_B \ol{U}$. Then using Lemma~\ref{lem: autoduality for P}, the same argument as in the proof of \cite[Theorem 2.6]{Esteves04} shows that~$(\AP_{\CL} \times \id_{\ol{U}})^{*} \CI$ is a relative torsion-free rank 1 sheaf on $\widetilde{\CC} \times_B \ol{U}$ over $\ol{U}$. 
    Consequently, we can extend $\AP_{\CL}^{*}$ to 
    \[\ol{\AP}_{\CL}^{*} \colon \ol{\Pic}_{(\ol{P}/B)}^0 \ra \ol{P}.\]

    It follows from $\AP_{\CL}^{*} \circ \rho = \id_{P}$ and the separatedness of $P$ that $\ol{\AP}_{\CL}^{*} \circ \ol{\rho} = \id_{\ol{P}}$. For the same reason, $\ol{\rho} \circ \ol{\AP}_{\CL}^{*} = \id_{\ol{U}}$. Hence $\ol{\rho}$ is an isomorphism. 
\end{proof}

\begin{rmk}
    Fiberwise, the inverse map of $\ol{\rho}$ can be written explicitly as 
    \[\ol{\Pic}^0(\ol{P}_b) \ra \ol{P}_b, \quad M \mapsto \nu(\Supp (\FF(M))).\]
    Indeed, for any $M\in \ol{\Pic}^0(\ol{P}_b)$, there exists $F\in \ol{P}_b$ such that $M = \ol{\rho}(F)$. 
    Since 
    \[\ol{\rho}(F) \cong \ol{\CG}_F \cong \FF^{-1}(\CO_{\nu(F)})[-g+1],\]
    we obtain $F \cong \nu(\Supp (\FF(M)))$. 
    The proof above shows that this map is algebraic.
\end{rmk}

\section{Applications}\label{sec_application}
In this section, we apply our main results to give several examples of dualizable abelian fibrations that satisfy (FV) defined by Maulik--Shen--Yin in \cite{MSY23}. 

\subsection{Dualizable abelian fibrations}\label{sec_app_definition}
A morphism $\pi\colon M \ra B$ is an abelian fibration if the varieties $M$ and $B$ are nonsingular and irreducible, $\pi$ is proper with equidimensional fibers, and~$M$ contains an open subset $P$ that is a smooth commutative $B$-group scheme whose restriction to some open subset $\pi \colon P_U \ra U \subset B$ is an abelian scheme. 

\begin{defn}(\cite[Section~1.4]{MSY23})\label{def: dualizable abelian fibration}
    An abelian fibration $\pi\colon M \ra B$ of relative dimension $n$ is called a \emph{dualizable abelian fibration} if it satisfies the following conditions:
\begin{enumerate}
    \item[(a)] There exists an abelian fibration $\pi^{\vee}\colon M^{\vee} \ra B$ such that $\pi^{\vee}_U$ is the dual abelian scheme of $\pi_U$ over the open subset $U\subset B$;
    \item[(b)] The morphism $\pi$ has full support;
    \item[(c)] There exists a complex $\CP\in D^b\Coh(M^{\vee}\times_B M)$ which extends the \emph{normalized} Poincar\'e line bundle on $M_U^{\vee} \times_U M_U$, and which admits an inverse $\CP^{-1}\in D^b\Coh(M\times_B M^{\vee})$ such that 
    \[\CP^{-1} \circ \CP \simeq \CO_{\Delta_{M^{\vee}/B}},\quad \CP \circ \CP^{-1} \simeq \CO_{\Delta_{M/B}};\]
    \item[(d)] The convolution kernel 
    \[\CK = \CP^{-1} \circ \CO_{\Delta_{M/B}^{\mathrm{small}}} \circ (\CP \boxtimes \CP) \in D^b\Coh(M^{\vee} \times_B M^{\vee} \times_B M^{\vee})\]
    is supported in codimension $n$.
\end{enumerate}
\end{defn}

To give the definition of Fourier vanishing condition, we recall the definition of Chern characters of coherent sheaves on the singular variety $M^{\vee}\times_B M$ \cite[Chapter~18]{Fulton}. 
We fix the closed embedding $i \colon M^{\vee}\times_B M \to M^{\vee}\times M$. We denote by $K_{M^{\vee}\times_B M}(M^{\vee}\times M)$ the Grothendieck group of perfect complexes on $M^{\vee}\times M$ which are supported on the closed subscheme $M^{\vee}\times_B M$, and by $\Chow^k(M^{\vee}\times_B M \to M^{\vee}\times M)$ the degree $k$ bivariant Chow group of $i$. Let $\ch_{M^{\vee}\times_B M}^{M^{\vee}\times M}(-)$ be the localized Chern character operation \cite[Definition 18.1]{Fulton}. Consider the diagram
\begin{equation}\label{eq: diagram of Chern}
    \begin{tikzcd}
    K_{M^{\vee}\times_B M}(M^{\vee}\times M) \arrow[r, "\cap \CO_{M^{\vee}\times M}"] \arrow[d, "{\ch_{M^{\vee}\times_B M,k}^{M^{\vee}\times M}}"'] & K_{*}(M^{\vee}\times_B M) \arrow[d, "\widetilde{\ch}_k"] \\
    \Chow^k(M^{\vee}\times_B M \to M^{\vee}\times M) \arrow[r, "{\cap [M^{\vee}\times M]}"]                                 & \Chow_{2\dim M-k}(M^{\vee}\times_B M),                   
    \end{tikzcd}
\end{equation}
where the horizontal maps are canonical isomorphisms and $\widetilde{\ch}_k$ is defined to make the diagram commute.

Consider the Todd-twisted Chern character
\[\tau \colon K_{*}(M^{\vee} \times_B M) \to \Chow_{*}(M^{\vee} \times_B M), \quad F \mapsto \td(i^{*}T_{M^{\vee}\times M}) \cap \widetilde{\ch}(F)\]
defined in \cite[Theorem~18.3]{Fulton}. 
It follows from \cite[Section 2.4]{MSY23} that the Poincar\'e complex $\CP$ induces the Chow-theoretic Fourier transforms 
\begin{equation}\label{eq: def Chow Fourier}
    \FF \coloneq \td(-T_{M^{\vee} \times_B M}) \cap \tau(\CP) = \sum_i \FF_i, \quad \FF_i \in \Corr_B^{i-n}(M^{\vee},M),
\end{equation}
where $T_{M^{\vee} \times_B M}$ is the virtual tangent bundle of $M^{\vee} \times_B M$.
Similarly, $\CP^{-1}$ induces
\[\FF^{-1} \coloneq  \td(-(\pi \times_B \pi^{\vee})^{*}T_B) \cap \tau(\CP^{-1}) = \sum_i \FF_i^{-1}, \quad \FF_i^{-1} \in \Corr_B^{i-n}(M,M^{\vee}).\]

A dualizable abelian fibration~$\pi\colon M \ra B$ of relative dimension $n$ is said to satisfy the \emph{Fourier vanishing condition} (FV) if 
\begin{equation}\label{eq: def of FV}
    \FF_i^{-1} \circ \FF_j = 0 \in \Corr_B^{i+j-2n}(M^{\vee},M^{\vee}), \quad i+j < 2n.
\end{equation}

\subsection{Relative compactified Prym varieties}
We adapt the same notation and assumptions as in Section~\ref{subsec1.1: notations}. In particular, $\CC \ra B$ is projective and $\ol{P}$ is nonsingular.

\begin{prop}\label{prop: full support}
    The morphism $\pi: \ol{P} \ra B$ is of full support.
\end{prop}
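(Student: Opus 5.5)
We prove that every simple perverse summand of $R\pi_*\BQ_{\ol{P}}$ has support equal to $B$, following Ng\^o's support inequality as used for compactified Jacobians by Migliorini--Shende--Viviani and in \cite[Section~3]{MSY23}. The input is a $\delta$-regular weak abelian fibration structure on $\ol{P}\ra B$. The smooth commutative group scheme $P\ra B$ acts on $\ol{P}$ through $\mu$, with affine stabilizers at every point by Proposition~\ref{stratification of Prym}. The Tate module of $P$ is polarizable, since the polarization of the abelian part of $P_b$ is induced from the Prym polarization of the normalization. The fibers are equidimensional of dimension $g-1$. By Lemma~\ref{Severi inequality}, the function $\delta$ satisfies $\codim_B Z\geq \delta_Z$, which is exactly $\delta$-regularity. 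Ng\^o's support theorem then implies that for every support $Z$ of a simple summand we have $\codim_B Z\leq \delta_Z$. It also gives equality, together with a top-degree statement: the generic point of $Z$ contributes to $R^{2(g-1)}\pi_*\BQ$ restricted over an open subset of $Z$.

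Next we show that no proper $Z$ can occur, following Ng\^o's freeness argument. Over a general point $z\in Z$, the top cohomology $H^{2(g-1)}(\ol{P}_z)$ is spanned by the fundamental classes of the irreducible components of $\ol{P}_z$. By Proposition~\ref{prop: compact Prym is sm} (1), the fiber $\ol{P}_z$ is integral, so this group is one-dimensional. A one-dimensional top cohomology on every fiber is already accounted for by the summand with full support $B$ (the IC extension of $R^{2(g-1)}$ over $U$). Hence no extra summand supported on a proper $Z$ can contribute to top degree. Combined with Ng\^o's theorem, which says that any support $Z$ must be visible in the top-degree sheaf (through the free action of $\pi_0$ of the abelian part on components), this forces $Z=B$.

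The hard step is making sure Ng\^o's support theorem applies with our hypotheses. Its dimension hypotheses are stated for the group scheme acting on the family, and it requires $\ol{P}$ to be nonsingular (which holds, as in Section~\ref{subsec1.1: notations}) and $\pi$ to be proper. The key point is to identify the $\delta$ of the Prym fibration with the $\delta$ of $J_{\CC}$, which is done in the proof of Lemma~\ref{Severi inequality}. That is also where we use the inequality from \cite[Lemma 4.1]{MaulikShen20}. We also need polarizability of the Tate module of the abelian quotient of $P_b$. We would check this via the exact sequence~\eqref{exact sequence of Prym} applied with the normalization, reducing to the Prym of the étale cover of normalizations, where Mumford's polarization exists.

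Alternatively, we could argue via the equivalence of Theorem~\ref{thm: equivalence} and Proposition~\ref{prop: Arinkin's dimension bound} (1), exactly as in \cite[Theorem 0.2]{MSY23}, where full support is deduced for compactified Jacobians. We expect the Ng\^o route above to be the most direct.
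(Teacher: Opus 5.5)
Your route is the paper's. You verify Ng\^o's axioms \cite[7.1.2--7.1.5]{Ngo10} for $(P,\ol{P},B)$: equidimensionality, affine stabilizers from Proposition~\ref{stratification of Prym}, polarizability of the Tate module, and $\delta$-regularity from Lemma~\ref{Severi inequality}. You then apply \cite[Theorem 7.2.1]{Ngo10}, using that $\ol{P}$ is nonsingular with integral fibers. Your unpacking of that last step is exactly what applying Ng\^o's theorem amounts to: integrality makes $R^{2(g-1)}\pi_*\BQ$ a rank-one local system on $B$, so it has no direct summand $i_*L$ with $L$ a nonzero local system on a proper closed $Z$.

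The one axiom your proposal does not actually establish is polarizability of the Tate module. Ng\^o requires, \'etale locally on $B$, a single alternating form on the $\ell$-adic Tate module sheaf $T(P)$ over $B$. Its restriction to every geometric fiber must have kernel exactly $T(R_b)$, where $R_b$ is the affine part of $P_b$, and must be nondegenerate on the abelian quotient. Mumford's polarization on the Prym of the double cover of normalizations gives a nondegenerate form on $T(P_b/R_b)$ for each $b$ separately, but it gives no form on the sheaf. The content of the axiom is precisely that such forms exist in a family and specialize correctly across the strata where $\delta$ jumps. So the fiberwise reduction via \eqref{exact sequence of Prym} does not suffice.

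The paper takes this from \cite[Theorem 1.2]{Ancona24} (or \cite[Lemma 9.7]{ACLS23}). A direct fix is to restrict the polarization of $T(J_{\widetilde{\CC}})$, given by the relative cup product on $R^1$ of $\widetilde{\CC}\to B$, to the subsheaf $T(P)$. On each fiber the kernel of the restricted form contains $T(P_b)\cap T(R^J_b)=T(R_b)$, where $R^J_b$ is the affine part of $J_{\widetilde{\CC}_b}$. It is no larger, because on the image of $P_b$ in the abelian quotient of $J_{\widetilde{\CC}_b}$ the form is the Weil pairing of a restricted polarization, hence nondegenerate.

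Finally, the alternative you mention via Theorem~\ref{thm: equivalence} and Proposition~\ref{prop: Arinkin's dimension bound} is not a substitute. In the paper these results give conditions (c) and (d) of Definition~\ref{def: dualizable abelian fibration}, and they say nothing directly about the supports of the perverse summands of $R\pi_*\BQ_{\ol{P}}$.
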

\begin{proof}
    We first check by definition \cite[7.1.1]{Ngo10} that $(P,\ol{P},B)$ forms a $\delta$-regular abelian fibration of relative dimension $g-1$. The condition \cite[7.1.2]{Ngo10} holds since both $P \ra B$ and $\ol{P} \ra B$ are flat of relative dimension $g-1$. 
    Then the affine stabilizer condition (\cite[7.1.3]{Ngo10}) follows from Proposition~\ref{stratification of Prym}. 
    Since $P \ra B$ is a quasi-projective group scheme,
    the Tate module is polarizable (\cite[7.1.4]{Ngo10}) follows from \cite[Theorem 1.2]{Ancona24}. (Alternatively, this is also proved in \cite[Lemma 9.7]{ACLS23}.) 
    That the group scheme $P \to B$ is $\delta$-regular (\cite[7.1.5]{Ngo10}) follows from Lemma~\ref{Severi inequality}.
    Since $\ol{P}$ is nonsingular and the fibers of $\pi$ are irreducible, Ng\^{o}'s Support Theorem (\cite[Theorem 7.2.1]{Ngo10}) implies that $\pi$ is of full support.
\end{proof}

\begin{thm}\label{thm: Prym is dualizable}
    The relative compactified Prym $\ol{P} \ra B$ is a dualizable abelian fibration of relative dimension $g-1$ that satisfies (FV).
\end{thm}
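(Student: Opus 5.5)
We verify conditions (a)--(d) of Definition~\ref{def: dualizable abelian fibration} with $M = M^{\vee} = \ol{P}$, and then deduce (FV) by following the argument of \cite[Theorem~0.2]{MSY23} for compactified Jacobians.

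First, $\ol{P}\ra B$ is an abelian fibration. The varieties $\ol{P}$ and $B$ are nonsingular, and $\ol{P}$ is irreducible because it is flat over the irreducible $B$ with integral fibers. The morphism is proper with equidimensional fibers of dimension $g-1$. The open subset $P$ is a smooth commutative group scheme, and it restricts to an abelian scheme over $U=B^{(g)}$. For (a) we take $M^{\vee}=\ol{P}$ itself. Over $B^{(g)}$ the principal polarization $\varphi_\Xi$ identifies $P_U$ with its dual, and by Theorem~\ref{thm: autoduality} this identification is $\ol{\rho}|_U$. Condition (b) is Proposition~\ref{prop: full support}.

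For (c) we take $\CP = \ol{\CG}$. Over $U$ it restricts to the normalized Poincar\'e bundle of $P_U$ under $\varphi_\Xi$, by the construction in Theorem~\ref{thm: def G}. As inverse we take
\[
\CP^{-1} = \ol{\CG}^{\vee}\ot \pi_2^{*}(\det\Fj)^{-1}[g-1].
\]
Theorem~\ref{thm: equivalence} and Proposition~\ref{prop: kernel is diagonal} show that this kernel inverts $\ol{\CG}$ in both orders. The symmetry of Proposition~\ref{prop: symmetric} lets us switch factors where needed. A base-change argument identifies $B$-relative convolution of kernels with composition of the Fourier--Mukai functors; this is legitimate since $\ol{\CG}$ is flat over both factors by Proposition~\ref{prop: uniqueness, flat}.

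For (d), we compute the convolution kernel $\CK=\CP^{-1}\circ\CO_{\Delta^{\mathrm{small}}}\circ(\CP\boxtimes\CP)$. It is $Rp_{*}$ of $p_{14}^*\ol{\CG}\ot p_{24}^*\ol{\CG}\ot p_{34}^*\ol{\CG}^{\vee}$ on $\ol{P}^4$ pushed to $\ol{P}^3$, up to a line bundle twist and a shift. By Lemma~\ref{lem: symmetric} we have $\ol{\CG}^{\vee}\cong(\id\times\nu)^*\ol{\CG}$, so $\CK$ is the pullback of $\CK_3$ from \eqref{eq: Arinkin kernel K_n} along the automorphism $\id\times\id\times\nu$, up to twist and shift. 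Proposition~\ref{prop: Arinkin's dimension bound} with $n=3$ then gives support in codimension $\geq g-1=n$.

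Finally, (FV) follows formally from (a)--(d) by the Chow-theoretic argument of \cite[Section~2]{MSY23}. The point is that $\FF^{-1}\circ\FF_j$ is computed from the Chern character of $\CK$, and the codimension bound forces the low-degree components to vanish; that argument uses no special features of Jacobians. The main obstacle is step (d): we must check that the convolution is genuinely computed by $\CK_3$, which requires compatibility of derived tensor products with the flatness of $\ol{\CG}$ and the correct handling of the twist by $\det\Fj$. We must also verify that the MSY23 argument needs only the dimension bound together with the smoothness of $\ol{P}$ and $B$. If a projectivity hypothesis on $\ol{P}$ is required there, we would supply it from the projectivity of $\ol{P}$ over $B$ as a closed subscheme of $\ol{J}_{\widetilde{\CC}}$.
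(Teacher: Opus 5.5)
Your treatment of (a)--(d) is essentially the paper's, including $\CK\cong(\id\times\id\times\nu)^*\CK_3$ up to twist and shift and the use of Arinkin's bound with $n=3$. The final step, however, has a genuine gap: (FV) does not follow formally from (a)--(d). Both here and in \cite{MSY23}, (FV) is an extra condition on top of (a)--(d). The results of \cite[Section~2]{MSY23} that you invoke take (FV) as a hypothesis. For compactified Jacobians, \cite{MSY23} prove (FV) by a separate argument in their Section~3.5, and that argument is what the paper adapts.

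The obstruction is that nothing in (a)--(d) separates degrees. At the level of cycles, (c) gives only $\FF^{-1}\circ\FF=[\Delta]$, i.e.\ $\sum_{i+j=k}\FF_i^{-1}\circ\FF_j=0$ for $k\neq 2(g-1)$. Also, contrary to your claim, $\FF^{-1}\circ\FF_j$ is not computed from the Chern character of $\CK$. The bound in (d) concerns a composite of $\ol{\CG}^{-1}$ with two copies of $\ol{\CG}$, and it again constrains only a sum over all degrees of $\widetilde{\ch}(\ol{\CG})$. On an abelian scheme one isolates each $\FF_j$ using $(\id\times[N])^*\CP\cong\CP^{\otimes N}$, but $[N]$ does not extend to $\ol{P}$.

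The missing argument, which is the real content of the paper's proof, has two parts.

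\emph{First}, a strengthened support bound is needed for every $N\geq 1$. Let $\widetilde{\CK}(N)=\ol{\CG}^{-1}\circ(i_*\ol{\CG})^{\otimes N}$, with the tensor power taken on the \emph{absolute} product $\ol{P}\times\ol{P}$ via $i\colon\ol{P}\times_B\ol{P}\hookrightarrow\ol{P}\times\ol{P}$. One must show it is supported on $\ol{P}\times_B\ol{P}$ in codimension $\geq g-1$. The proof filters $R\iota^*\iota_*\ol{\CG}_{F_1}$, for the regular embedding $\iota$ of a fiber $\ol{P}_b$, with graded pieces sums of copies of $\ol{\CG}_{F_1}$. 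This reduces the question to nonvanishing of $\BH^*(\ol{P}_b,\ol{\CG}_{F_1}^{\otimes N}\otimes\ol{\CG}_{F_2}^{\vee})$. That in turn requires Arinkin's bound for $\CK_n$ with \emph{arbitrary} $n$, not just $n=3$.

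\emph{Second}, one applies Gillet--Soul\'e Adams operations. The class $\ol{\CG}^{\vee}\circ\psi^N(\ol{\CG})$ is a combination of direct summands of $\widetilde{\CK}(N)$, so it is supported in codimension $\geq g-1$. Since $\widetilde{\ch}_k(\psi^N(-))=N^k\widetilde{\ch}_k(-)$, varying $N$ separates the degrees. This gives $\FF_i^{-1}\circ\widetilde{\ch}_{j+\dim B}(\ol{\CG})=0$ for $i+j<2(g-1)$. (FV) then follows by expanding the Todd correction $\td(\pi_2^*T_B)$.

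So the main difficulty is not step (d), as you suggest, but this (FV) argument, which your proposal omits.
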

\begin{proof}
    We first check the four conditions in Definition~\ref{def: dualizable abelian fibration}. 
    The relative compactified Prym is a principally polarized abelian scheme over $B^{(g)}$; thus, it is self-dual and Condition (a) holds. 
    Condition~(b) holds by Proposition~\ref{prop: full support}.
    By construction, the restriction of the sheaf~$\ol{\CG}$ to~$\ol{P} \times_{B^{(g)}} \ol{P}$ is the normalized Poincar\'e line bundle. Set $\ol{\CG}^{-1}\coloneq \ol{\CG}^{\vee} \ot \pi_2^{*}(\det \Fj)^{-1}[g-1]$. Then Condition~(c) follows from Theorem~\ref{thm: equivalence}. 
    Condition (d) follows from Arinkin's dimension bound (Proposition~\ref{prop: Arinkin's dimension bound}) since $\CK \cong (\id_{\ol{P} \times_{B} \ol{P}} \times \nu)^{*} \CK_3$.

    It remains to verify (FV); this follows from the same argument as in \cite[Section~3.5]{MSY23}. For the reader's convenience, we include a sketch of the proof. Throughout, all tensor and exterior products are taken in the derived sense.
    
    \medskip
    \noindent {\bf Step 1.} We first prove a variant of Arinkin's dimension bound \eqref{eq: Arinkin's dimension bound 1}.
    Consider the closed embedding
    \[i \colon \ol{P} \times_B \ol{P} \to \ol{P} \times \ol{P}.\]
    For any positive integer $N$, set $\widetilde{\CK}(N) \coloneq \ol{\CG}^{-1} \circ (i_{*}\ol{\CG})^{\ot N} \in D^b(\ol{P} \times \ol{P})$. Now we prove that~$\widetilde{\CK}(N)$ is supported on $\ol{P} \times_B \ol{P}$ and 
    \begin{equation}\label{eq: Arinkin dimension bound 2}
        \codim_{\ol{P} \times_B \ol{P}}(\Supp(\widetilde{\CK}(N))) \geq g-1.
    \end{equation}

    By the flat base change and the projection formula, the object $\widetilde{\CK}(N)$ is isomorphic to the pushforward of an object in $D^b(\ol{P}\times_B \ol{P}\times_B \ol{P})$ via the composition map
    \[\ol{P}\times_B \ol{P}\times_B \ol{P} \xhookrightarrow{i \times id_{\ol{P}}} \ol{P}\times \ol{P}\times_B \ol{P} \xrightarrow{q_{13}} \ol{P}\times \ol{P}.\]
    This map equals the composition map
    \[\ol{P}\times_B \ol{P}\times_B \ol{P} \xrightarrow{p_{13}} \ol{P}\times_B \ol{P} \xhookrightarrow{i} \ol{P}\times \ol{P}.\]
    Thus, $\widetilde{\CK}(N)$ is supported on $\ol{P}\times_B \ol{P}$.

    To prove \eqref{eq: Arinkin dimension bound 2}, it suffices to show that for every closed point $b \in B$ we have
    \begin{equation}\label{eq: dimension bound 3}
        \codim_{\ol{P}_b^2}(\Supp(\widetilde{\CK}(N)) \cap \ol{P}_b^2) \geq \widetilde{g}(\CC_b) - 1.
    \end{equation} 
    By base change, a closed point $(F_1, F_2) \in \ol{P}_b^2$ lies in $\Supp(\widetilde{\CK}(N))$ if and only if 
    \begin{equation}\label{eq: hypercoho nonzero}
        \BH^i(\ol{P}_b, (R\iota^{*}\iota_{*}\ol{\CG}_{F_1})^{\ot N} \ot \ol{\CG}_{F_2}^{\vee}) \neq 0
    \end{equation}
    for some $i$, where $\iota\colon \ol{P_b} \to \ol{P}$ is the closed embedding. 
    Since $\iota$ is a regular embedding, the same argument as in \cite[Step~3 of Proposition~3.3]{MSY23} shows that 
    the increasing filtration on~$R\iota^{*}\iota_{*}\ol{\CG}_{F_1}$ induced by the standard truncation has graded pieces given by finite direct sums of copies of~$\ol{\CG}_{F_1}$. This induces a bounded filtration of the object $(R\iota^{*}\iota_{*}\ol{\CG}_{F_1})^{\ot N} \ot \ol{\CG}_{F_2}^{\vee}$. 
    Each term of the~$E_1$ page of the hypercohomology spectral sequence is given by finite direct sums of copies of 
    \begin{equation}\label{eq: hypercohom nonzero 2}
        \BH^{*}(\ol{P}_b, \ol{\CG}_{F_1}^{\ot N} \ot \ol{\CG}_{F_2}^{\vee}).
    \end{equation}
    In particular, \eqref{eq: hypercoho nonzero} implies that there exists nonzero hypercohomology of the type \eqref{eq: hypercohom nonzero 2}. 
    We can now conclude \eqref{eq: dimension bound 3} from Proposition~\ref{prop: Arinkin's dimension bound}. 

    \medskip
    \noindent {\bf Step 2.} It remains to use Adams operators together with \eqref{eq: Arinkin dimension bound 2} to show that (FV) holds. 
    Recall that the Adams operators 
    \[\psi^N \colon K_{\ol{P} \times_B \ol{P}}(\ol{P} \times\ol{P}) \ra K_{\ol{P} \times_B \ol{P}}(\ol{P} \times\ol{P}), \quad N \geq 1\]
    are constructed by Gillet--Soul\'e \cite[Section~4]{Gillet87_Adams} using the induction formula
    \[\psi^N - \psi^{N-1} \ot \lambda^1 + \cdots + (-1)^{N-1}\psi^1 \ot \lambda^{N-1} + (-1)^{N}N \ot \lambda^{N} = 0,\]
    where $\lambda^k$ is the $k$-th exterior power operation. 
    By the canonical map in the first line of \eqref{eq: diagram of Chern}, they can be regarded as operators on $K_{*}(\ol{P} \times_B \ol{P})$.

    Then the class $\ol{\CG}^{\vee} \circ \psi^N(\ol{\CG}) \in K_{*}(\ol{P} \times_B \ol{P})$ is a linear combination of 
    \[\widetilde{\CK}^{\lambda}(N_1, N_2, \dots, N_k) \coloneq \ol{\CG}^{\vee} \circ \bigotimes_{i=1}^k(\wedge^{N_i}i_{*}\ol{\CG}), \quad \sum_{i=1}^k N_i = N.\]
    Each object above is a direct summand of $\widetilde{\CK}(N)$ and hence is supported on a codimension $g-1$ subset of $\ol{P} \times_B \ol{P}$, as explained in the argument before \cite[Corollary~3.4]{MSY23}. Therefore, $\ol{\CG}^{\vee} \circ \psi^N(\ol{\CG})$ is also supported on a codimension $g-1$ subset of $\ol{P} \times_B \ol{P}$.
    As explained in \cite[Section~3.5.3]{MSY23}, this yields that $\FF^{-1} \circ \widetilde{\ch}(\psi^N(\ol{\CG}))$ vanishes in codimension $< g-1$ for all $N \geq 1$.
    Using the compatibility of Chern characters and Adams operators, namely $\widetilde{\ch}_k(\psi^N(-)) = N^k\widetilde{\ch}_k(-)$ (see \cite[Theorem~3.1]{Kurano00_Adams}), we deduce that
    \[\FF_i^{-1} \circ \widetilde{\ch}_{j+\dim B}(\ol{\CG})=0, \quad i+j < 2(g -1).\]
    Finally, it follows from the definition of $\FF_j$ that 
    \[\FF_i^{-1} \circ \FF_j = \sum_{j'+j'' = j} \td_{j'}(\pi_2^{*}T_B) \cap (\FF_i^{-1} \circ \widetilde{\ch}_{j''+\dim B}(\ol{\CG}))=0, \quad i+j < 2(g-1) \qedhere\]
\end{proof}

\subsection{LSV fibrations}
Let $X \subset \BP_{\BC}^5$ be a general cubic 4-fold. 
We denote by $\CY \ra B \coloneqq (\BP^5)^{\vee}$ the universal family of cubic 3-folds obtained as hyperplane sections of $X$. 
Let $U\subset B$ (resp.~$U_1\subset B$) be the open set that parameterizes hyperplane sections that are nonsingular (resp.~that have a single ordinary node). Denote by $\pi_U \colon \CJ_U \ra U$ the relative intermediate Jacobian fibration associated to $\CY_U \to U$.  
According to \cite[\S 8.5.2]{DM96b}, this fibration admits a flat projective extension $\pi_{U_1} \colon \CJ_{U_1} \ra U_1$, as we now describe. The relative Picard scheme of the relative Fano surface of lines {is a semi-abelian scheme} $\CJ_{U_1}^{\circ} \ra U_1$. For a point $t \in U_1 \setminus U$ in the boundary, the fiber of $\CJ_{U_1}^{\circ}$ is a $\BG_m$-bundle over $J(\widetilde{Y_t})$, where $\widetilde{Y_t}$ is the blow-up of $Y_t$ at the node. Then $\CJ_{U_1}$ is defined to be the Mumford compactification of $\CJ_{U_1}^{\circ}$; its fiber over~$t \in U_1 \setminus U$ is a $\BP^1$-bundle over $J(\widetilde{Y_t})$. 

In \cite{LSV17}, Laza, Sacc\`a, and Voisin constructed a smooth projective compactification $\ol{\CJ}$ of $\CY_{U_1}$. We briefly recall the construction here. For a cubic 3-fold $Y$ and a line $l \subset Y$, let $C_l$ be the curve of the discriminant locus of the conic bundle $\mathrm{Bl}_lY \ra \BP^2$, and let $\widetilde{C}_l$ be the curve of lines in $Y$ that intersect $l$. 
A line $l \subset Y$ is called \emph{very good} if $\widetilde{C_l} \to C_l$ is \'etale and $\widetilde{C}_l$ is irreducible. 
Let $\CF$ be the relative Fano variety of lines of $\CY \to B$. 
We write $\CF^{\circ} \subset \CF$ for the locus of very good lines. 
Then there is a projective family of integral plane curves $C_{\CF^{\circ}} \ra \CF^{\circ}$ together with an \'etale double cover $\widetilde{C}_{\CF^{\circ}} \ra C_{\CF^{\circ}}$ such that the fibers of $\widetilde{C}_{\CF^{\circ}} \to \CF^{\circ}$ are integral curves. 

For general cubic 4-fold $X$, $\CF^{\circ} \ra B$ is smooth and surjective, and the relative compactified Prym variety $\ol{P}_{\CF^{\circ}}$ is nonsingular. The compactification $\ol{\CJ}$ is then constructed by descending~$\ol{P}_{\CF^{\circ}}$ along $\CF^{\circ} \to B$. 

We set $\CF_U^{\circ} = \CF^{\circ} \times_B U$. Let $\widetilde{\CC}_{\CF_U^{\circ}}$ (resp.~$\CC_{\CF_U^{\circ}}$) be the restriction of the family of curves~$\widetilde{\CC}_{\CF^{\circ}}$ (resp.~$\CC_{\CF^{\circ}}$). Then $\Prym(\widetilde{\CC}_{\CF_U^{\circ}}/\CC_{\CF_U^{\circ}}) \ra \CF_U^{\circ}$ is a principally polarized abelian scheme with a canonical principal polarization $\varphi_{\Xi}$. 
In \cite[Section 13]{CG72}, Clemens and Griffiths constructed the theta divisor $\Theta$ on the intermediate Jacobian, which induces a canonically defined principal polarization $\varphi_{\Theta}$ of the abelian scheme $\CJ_U \ra U$. 
Let $\CJ_{\CF_U^{\circ}} = \CJ_U \times_U \CF_U^{\circ}$, and let $q_1 \colon \CJ_{\CF_U^{\circ}} \ra \CJ_U$ be the first projection. Then $\varphi_{q_1^{*}\Theta}$ is a principal polarization of the abelian scheme $\CJ_{\CF_U^{\circ}} \ra \CF_U^{\circ}$. 
Mumford \cite{Mumford74} proved that the relative Abel--Jacobi map induces an isomorphism of principally polarized abelian schemes
\begin{equation}\label{eq: AJ map between Prym and intermediate Jacobian}
    \left( \Prym(\widetilde{\CC}_{\CF_U^{\circ}}/\CC_{\CF_U^{\circ}}) , \varphi_{\Xi} \right) \xrightarrow{\sim}  \left( \CJ_{\CF_U^{\circ}}, \varphi_{q_1^{*}\Theta} \right).
\end{equation}

\begin{prop}\label{prop: Poincare sheaf on OG10}
    Let $q \colon \ol{P}_{\CF^{\circ}} \times_{\CF^{\circ}} \ol{P}_{\CF^{\circ}} \ra \ol{\CJ} \times_B \ol{\CJ}$ be the morphism induced by the projection map $\ol{P}_{\CF^{\circ}} \cong \ol{\CJ}\times_B \CF^{\circ} \ra \ol{\CJ}$. Let $\ol{\CG}$ be the normalized Poincar\'e sheaf on $\ol{P}_{\CF^{\circ}} \times_{\CF^{\circ}} \ol{P}_{\CF^{\circ}}$. Then there exists a maximal Cohen--Macaulay sheaf $\ol{\CQ}$ on $\ol{\CJ} \times_B \ol{\CJ}$ such that 
    \[\ol{\CG} \cong q^{*}\ol{\CQ}.\]
\end{prop}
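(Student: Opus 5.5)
We plan to obtain $\ol{\CQ}$ by descending $\ol{\CG}$ along the smooth surjective morphism $\CF^{\circ}\to B$. Note that $q$ is the base change of $\CF^{\circ}\to B$ along $\ol{\CJ}\times_B\ol{\CJ}\to B$, since
\[\ol{P}_{\CF^{\circ}}\times_{\CF^{\circ}}\ol{P}_{\CF^{\circ}}\cong (\ol{\CJ}\times_B\ol{\CJ})\times_B \CF^{\circ}.\]
Hence $q$ is smooth and surjective, in particular fppf. It therefore suffices to construct a descent datum on $\ol{\CG}$ relative to $q$. Write $\CF^{\circ,2}=\CF^{\circ}\times_B\CF^{\circ}$ with projections $r_1,r_2$. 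We must produce an isomorphism $\phi\colon r_1^{*}\ol{\CG}\xrightarrow{\sim} r_2^{*}\ol{\CG}$ on $(\ol{\CJ}\times_B\ol{\CJ})\times_B\CF^{\circ,2}$ satisfying the cocycle condition on the triple product.

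The first step is to note that both $r_1^{*}\ol{\CG}$ and $r_2^{*}\ol{\CG}$ are normalized Poincar\'e sheaves for the same family. Over $\CF^{\circ,2}$ the two pulled-back families of compactified Pryms are both identified with $\ol{\CJ}\times_B\CF^{\circ,2}$. By base change of the properties in Theorem~\ref{thm A}, each $r_i^{*}\ol{\CG}$ is maximal Cohen--Macaulay, trivialized along the zero sections, and restricts over the open set $U$ to a normalized Poincar\'e line bundle of the abelian scheme $\CJ_U\times_U\CF^{\circ,2}_U$.

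The key point is that over $U$ these Poincar\'e line bundles are intrinsic. By Mumford's isomorphism \eqref{eq: AJ map between Prym and intermediate Jacobian}, the Prym polarization $\varphi_{\Xi}$ on each factor corresponds to the pullback of the Clemens--Griffiths polarization $\varphi_{\Theta}$. So $r_1^{*}\CG$ and $r_2^{*}\CG$ are both the normalized Poincar\'e bundle attached to $q_1^{*}\varphi_\Theta$, which yields an isomorphism over $U$.

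To extend this isomorphism, we rigidify. Fix the rigidifications $\ol{\CG}|_{\ol{P}\times 0}\cong\CO$. By the uniqueness in Remark~\ref{rmk: uniqueness of CG}, there is a unique isomorphism compatible with the rigidifications. We will either run the uniqueness argument of Proposition~\ref{prop: uniqueness, flat} over the base $\CF^{\circ,2}$, or argue directly as follows. On $P\times\ol{P}$ the two sheaves are line bundles agreeing fiberwise, by the seesaw principle Lemma~\ref{lem: seesaw principle} and the density of $U$ together with Proposition~\ref{prop: symmetric}-type separatedness arguments. The isomorphism then extends to the whole space via $j_{*}$, since both sheaves are maximal Cohen--Macaulay and the complement has codimension at least $2$.

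Uniqueness of rigidified isomorphisms gives the cocycle condition on the triple product automatically, because both sides are rigidified isomorphisms between the same sheaves. Fppf descent then produces a coherent $\ol{\CQ}$ with $q^{*}\ol{\CQ}\cong\ol{\CG}$. The Cohen--Macaulay property descends along the smooth surjection $q$ by Lemma~\ref{lem: criterion for MCM}, or by faithful flatness of local rings.

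The main obstacle is the identification step over $\CF^{\circ,2}$. The base $\CF^{\circ,2}$ may be disconnected or non-reduced a priori, although it is smooth over the smooth $\CF^{\circ}$, hence smooth. More seriously, the theorem of Section~\ref{sec_properties} was stated for a family of curves. Here the two pullbacks come from different curve families $\widetilde{\CC}$ parametrized by two different lines, and only the Pryms are identified, not the curves. So we must check that uniqueness uses only the abelian scheme structure and the maximal Cohen--Macaulay extension property. We would handle this by proving fiberwise agreement over $U$ through Mumford's theorem, then using Lemma~\ref{lem: uniqueness of descent} and codimension-two extension on the smooth total space $\ol{\CJ}\times_B\ol{\CJ}\times_B\CF^{\circ,2}$, which is smooth since $\ol{\CJ}$ is smooth.
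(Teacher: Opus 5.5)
Your proposal is correct, but it takes a different route from the paper.

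\textbf{Your route.} You descend $\ol{\CG}$ itself along the smooth surjection $q$. The descent datum $r_1^{*}\ol{\CG}\cong r_2^{*}\ol{\CG}$ over $\CF^{\circ}\times_B\CF^{\circ}$ is built in four steps:
\begin{enumerate}
\item agreement over $U$, via Mumford's isomorphism \eqref{eq: AJ map between Prym and intermediate Jacobian};
\item equality of the two classifying maps to $\Pic_{(\ol{\CJ}/B)}$, by density and separatedness;
\item seesaw on the line-bundle locus;
\item $S_2$-extension across codimension $\geq 2$.
\end{enumerate}
The cocycle condition is then forced by uniqueness of rigidified isomorphisms (Remark~\ref{rmk: uniqueness of CG}).

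\textbf{The paper's route.} The paper never writes down a descent datum. It builds $\CQ$ directly downstairs on the open set $\CJ_{U_1}\times_{U_1}\CJ^{\circ}_{U_1}$ as $(\id\times\phi)^{*}\CI$. Here $\phi$ is the extension, given by Faltings--Chai, of the Clemens--Griffiths polarization $\varphi_{\Theta}$ to the semi-abelian scheme $\CJ^{\circ}_{U_1}$. It proves $q^{*}\CQ\cong\CG$ by showing $\phi\times\id=\rho$, using Faltings--Chai uniqueness and the fact that $\rho$ is a homomorphism (Lemma~\ref{lem: autoduality for P}). Finally it sets $\ol{\CQ}=j_{*}\CQ$.

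\textbf{Comparison.} The paper's route gives an intrinsic description of $\ol{\CQ}$ as the Cohen--Macaulay extension of the Poincar\'e bundle of the extended polarization, with no cocycle bookkeeping. The price is importing the Donagi--Markman model over $U_1$ and the Faltings--Chai extension theorem. Your route needs neither, works uniformly over all of $B$, and would apply verbatim to descending any rigidified Poincar\'e sheaves that agree generically.

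\textbf{Points to make explicit.}
\begin{enumerate}
\item The two group loci $r_1^{*}P$ and $r_2^{*}P$ coincide inside $\ol{\CJ}\times_B\CF^{\circ}\times_B\CF^{\circ}$. Each is the locus where the map to the base is smooth, and the identification $r_1^{*}\ol{P}\cong r_2^{*}\ol{P}$ preserves that locus.
\item The two zero sections coincide, since they agree over the dense preimage of $U$ and the target is separated.
\item Descending the maximal Cohen--Macaulay property is the local statement for flat maps with regular fibres (EGA~IV, 6.3.3), not Lemma~\ref{lem: criterion for MCM} as stated.
\end{enumerate}
Your line-bundle locus and your rigidifications both depend on the first two points.
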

\begin{proof}
    By abuse of notation, we denote by $q \colon \ol{P}_{\CF_{U_1}^{\circ}} \times_{\CF_{U_1}^{\circ}} {P}_{\CF_{U_1}^{\circ}} \ra \CJ_{U_1} \times_{U_1} \CJ_{U_1}^{\circ}$ the natural morphism and by $\CG$ the restriction $\ol{\CG}|_{\ol{P}_{\CF_{U_1}^{\circ}} \times_{\CF_{U_1}^{\circ}} {P}_{\CF_{U_1}^{\circ}}}$.

    Recall that over $U$ we have an isomorphism of abelian group schemes 
    \[\CJ_{U_1}^{\circ}|_U = \CJ_U \xrightarrow{\varphi_{\Theta}}  {\Pic}_{(\CJ_{U}/U)}^0 = {\Pic}_{(\CJ_{U_1}/U_1)}^0|_U. \]
    Note that $U$ is an open dense subset of $U_1$, which is a Noetherian normal scheme. Moreover, both ${\Pic}_{(\CJ_{U_1}/U_1)}^0$ and $\CJ_{U_1}^{\circ}$ are semi-abelian group schemes over $U_1$.
    It follows from \cite[Proposition~2.7]{FC90} that $\varphi_{\Theta}$ extends uniquely to an isomorphism of semi-abelian group schemes 
    \[\phi\colon \CJ_{U_1}^{\circ} \xrightarrow{\sim} \Pic_{(\CJ_{U_1}/U_1)}^0.\]   
    Let $\CI$ be the universal sheaf on $\CJ_{U_1} \times_{U_1} {\Pic}_{(\CJ_{U_1}/U_1)}^0$ that is normalized along the zero section of $\pi_{U_1}$. 
    We set $\CQ= (\id_{\CJ_{U_1}} \times \phi)^{*}\CI$, which is a sheaf on $\CJ_{U_1} \times_{U_1} \CJ_{U_1}^{\circ}$. 
    
    Since $\Pic_{\CJ_{U_1}/U_1}^0$ is compatible with flat base change, we have a homomorphism over $\CF_{U_1}^{\circ}$,
    \[P_{\CF_{U_1}^{\circ}} \cong \CJ_{U_1}^{\circ} \times_{U_1} \CF_{U_1}^{\circ}  \xrightarrow{\phi \times \id_{\CF_{U_1}^{\circ}}} \Pic_{(\CJ_{U_1}/U_1)}^0 \times_{U_1} \CF_{U_1}^{\circ} \cong \Pic_{(\ol{P}_{\CF_{U_1}^{\circ}}/\CF_{U_1}^{\circ})}^0,\]
    which corresponds to the line bundle $q^{*}Q$ on $\ol{P}_{\CF_{U_1}^{\circ}} \times_{\CF_{U_1}^{\circ}} {P}_{\CF_{U_1}^{\circ}}$. By \eqref{eq: AJ map between Prym and intermediate Jacobian}, $q^{*}Q$ restricted to~$P_{\CF_U^{\circ}} \times_{\CF_U^{\circ}} P_{\CF_U^{\circ}}$ coincides with the normalized Poincar\'e line bundle $\CG$ associated to $\varphi_{\Xi}$. 
    Therefore
    \begin{equation}
        \phi \times \id_{\CF_{U_1}^{\circ}}|_{\pi_2^{-1}(\CF_U^{\circ})} = \varphi_{\Xi} = \rho|_{\pi_2^{-1}(\CF_U^{\circ})},
    \end{equation}
    where $\rho$ is the morphism defined by $\CG$, and $\pi_2 \colon \ol{P}_{\CF^{\circ}} \times_{\CF^{\circ}} \ol{P}_{\CF^{\circ}} \ra \CF^{\circ}$ is the natural projection. 
    According to Lemma~\ref{lem: autoduality for P}, $\rho$ is also a group scheme homomorphism. 
    Since $\CF_{U_1}^{\circ}$ is a Noetherian normal scheme, the uniqueness of extensions of homomorphisms between semi-abelian group schemes (see \cite[Proposition 2.7]{FC90}) implies that
    \[\phi\times\id_{\CF_{U_1}^{\circ}} = \rho,\]
    and hence the normalized sheaves $q^{*}\CQ$ and $\CG$ are isomorphic. 
    
    Finally, let $j \colon \CJ_{U_1} \times_{U_1} \CJ_{U_1}^{\circ} \hookrightarrow \ol{\CJ} \times_B \ol{\CJ}$ be the open embedding. Set $\ol{\CQ} = j_{*}\CQ$. 
    Noting that the complement of $\CJ_{U_1} \times_{U_1} \CJ_{U_1}^{\circ}$ in $\ol{\CJ} \times_B \ol{\CJ}$ has codimension at least 2, and that $\ol{\CG}$ is a maximal Cohen--Macaulay sheaf, we conclude from \cite[Lemma 4.14]{MRV19b} that $\ol{\CQ}$ is the desired sheaf. 
\end{proof}

\begin{thm}\label{thm: LSV is dualizable}
    The LSV fibration $\pi\colon \ol{\CJ} \ra B$ is a dualizable abelian fibration of relative dimension $g-1$ that satisfies (FV).
\end{thm}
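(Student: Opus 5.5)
The plan is to deduce everything from Theorem~\ref{thm: Prym is dualizable} for $\ol{P}_{\CF^{\circ}} \to \CF^{\circ}$ by smooth descent along $s\colon \CF^{\circ} \to B$, which is smooth and surjective. We use $\ol{P}_{\CF^{\circ}} \cong \ol{\CJ}\times_B \CF^{\circ}$ and the sheaf $\ol{\CQ}$ of Proposition~\ref{prop: Poincare sheaf on OG10}, which satisfies $q^{*}\ol{\CQ}\cong \ol{\CG}$.

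First we check the four conditions of Definition~\ref{def: dualizable abelian fibration}. For (a) we take $M^{\vee}=\ol{\CJ}$; over $U$ the polarization $\varphi_{\Theta}$ identifies $\CJ_U$ with its dual. For (b), full support of $\ol{\CJ}\to B$ can be checked after the smooth surjective base change $s$. The reason is that decomposition-theorem summands and their supports pull back along smooth maps up to shift, and by Proposition~\ref{prop: full support} the pullback has full support. Alternatively, one can run Ngô's support theorem directly on $\ol{\CJ}$ with the group scheme $\CJ^{\circ}$ descended from $P_{\CF^{\circ}}$. For (c) we set $\CP=\ol{\CQ}$ and $\CP^{-1}=\ol{\CQ}^{\vee}\ot \pi_2^{*}(\det\Fj_{\CJ})^{-1}[g-1]$, where $\Fj_{\CJ}$ is the Lie algebra of the descended group scheme. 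By flat base change, the convolutions $\CP^{-1}\circ\CP$ and $\CP\circ\CP^{-1}$ pull back under $s$ to the corresponding convolutions for $\ol{\CG}$, which are $\CO_{\Delta}$ by Theorem~\ref{thm: equivalence}. To conclude, we need a natural map $\CP^{-1}\circ \CP\to \CO_\Delta$, for instance the adjunction counit coming from relative duality. Once that map exists, it is an isomorphism because $s$ is faithfully flat. For (d), the support of the kernel $\CK$ pulls back to the support of the corresponding kernel over $\CF^{\circ}$, and codimension is preserved under smooth surjective base change. Proposition~\ref{prop: Arinkin's dimension bound} then gives codimension $\geq g-1$.

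For (FV), we would not descend Chow cycles, since pullback along $s$ need not be injective on Chow groups. Instead we rerun the argument of Theorem~\ref{thm: Prym is dualizable} directly on $\ol{\CJ}$. Step~1 there only needs the variant dimension bound \eqref{eq: Arinkin dimension bound 2} for $\widetilde{\CK}(N)=\ol{\CQ}^{-1}\circ(i_{*}\ol{\CQ})^{\ot N}$. This bound is a support statement, so it can again be checked after base change to $\CF^{\circ}$, where it is Step~1 of the Prym case; base change is compatible here because the fiberwise hypercohomology criterion is local on $B$ up to the smooth cover. Step~2, which uses Adams operations and $\widetilde{\ch}_k\circ\psi^N=N^k\widetilde{\ch}_k$, is formal once the support bound holds for $\ol{\CQ}$ on $\ol{\CJ}\times\ol{\CJ}$. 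It therefore yields $\FF_i^{-1}\circ\FF_j=0$ for $i+j<2(g-1)$.

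The main obstacle is (c): producing a globally defined isomorphism on $\ol{\CJ}\times_B\ol{\CJ}$ rather than one that exists only after pullback. I would first try to define the comparison maps via Grothendieck duality, exhibiting $\CP^{-1}$ as the right adjoint kernel, and then check that their pullbacks along $s$ recover the maps used in Theorem~\ref{thm: equivalence}. A second subtle point is verifying that the relative dualizing sheaf of $\ol{\CJ}\to B$ is $\pi^{*}(\det\Fj_{\CJ})^{-1}$. This follows as in Lemma~\ref{lem: relative dualizing sheaf of ol p}, using the codimension-two complement of the group-scheme locus.
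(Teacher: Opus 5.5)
Your proposal is correct and follows the paper's overall strategy. Everything is transported from $\ol{P}_{\CF^{\circ}}\to\CF^{\circ}$ along the smooth surjection $s\colon\CF^{\circ}\to B$ using $q^{*}\ol{\CQ}\cong\ol{\CG}$. For (d) and (FV), the two Arinkin-type support bounds are checked after base change, and then Steps~1--2 of Theorem~\ref{thm: Prym is dualizable} are rerun on $\ol{\CJ}$ rather than descending cycles.

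\textbf{Condition (b).} This is the one genuine difference. The paper cites \cite{ACLS23} or \cite{Ancona24}, which establish full support directly for $\ol{\CJ}$. You instead deduce it from Proposition~\ref{prop: full support} for $\ol{P}_{\CF^{\circ}}$, using proper base change $s^{*}R\pi_{*}\BQ\cong R\pi'_{*}\BQ$ and perverse t-exactness of $s^{*}[d]$ with $d=\dim\CF^{\circ}-\dim B$. A summand $\mathrm{IC}_Z(L)$ with $Z\subsetneq B$ would pull back to a nonzero summand supported on $s^{-1}(Z)\subsetneq\CF^{\circ}$. This keeps the argument internal to the paper. The cost is that $\ol{P}_{\CF^{\circ}}\to\CF^{\circ}$ must satisfy the standing hypotheses of Section~\ref{subsec1.1: notations}, but the paper already needs this for the dimension bounds.

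\textbf{Condition (c).} The obstacle you identify is exactly what \cite[Theorem~2.4]{HLS07} handles in the paper. It is also what Step~3 of Theorem~\ref{thm: equivalence for twisted derived category} does by hand: the adjunction maps are compatible with flat base change, so they are isomorphisms because their pullbacks are. Two small additions:
\begin{enumerate}
\item Use the relative dualizing sheaf $\omega_{\pi}$ in place of $\pi^{*}(\det\Fj_{\CJ})^{-1}$. It commutes with flat base change, so your second subtle point disappears.
\item Record, as the paper does, that $\ol{\CQ}$ restricts over $U$ to the normalized Poincar\'e bundle of $(\CJ_U,\varphi_{\Theta})$. This holds by the construction in Proposition~\ref{prop: Poincare sheaf on OG10}.
\end{enumerate}

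\textbf{(FV).} The object $\widetilde{\CK}(N)$ lives on the absolute product $\ol{\CJ}\times\ol{\CJ}$. It is not literally the pullback of its Prym counterpart, because a fiber has normal bundle of rank $\dim B$ rather than $\dim\CF^{\circ}$. The comparison must therefore go through the fiberwise reduction to $\BH^{*}(\ol{\CQ}_{F_1}^{\ot N}\ot\ol{\CQ}_{F_2}^{\vee})$, which does not see that rank. Your parenthetical remark about the fiberwise hypercohomology criterion is the right fix; state it explicitly.
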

\begin{proof}
    We verify the four conditions in Definition~\ref{def: dualizable abelian fibration} as well as (FV).
    Since $\ol{\CJ}\ra B$ is a self-dual abelian fibration of relative dimension $g-1$, Condition (a) holds. 
    Condition (b) follows from \cite[Proposition 9.5]{ACLS23} or \cite[Corollary 1.3]{Ancona24}. 
    For Condition (c), first notice that the restriction of $\ol{Q}$ to~$\ol{\CJ} \times_U \ol{\CJ}$ is the normalized Poincar\'e line bundle by construction. 
    Then the Fourier--Mukai transform induced by $\ol{\CQ}$ is an autoequivalence of $D^b(\ol{\CJ})$ by Theorem~\ref{thm: equivalence} and \cite[Theorem~2.4]{HLS07}. 
    Finally, since $\ol{\CQ}$ descends from $\ol{\CG}$, Arinkin's dimension bound inequalities~\eqref{eq: Arinkin's dimension bound 1} and \eqref{eq: Arinkin dimension bound 2} for~$\ol{\CG}$ also hold for $\ol{\CQ}$. Then the same argument as in the proof of Theorem~\ref{thm: Prym is dualizable} shows that Condition~(d) and (FV) hold.
\end{proof}

\begin{proof}[Proof of Corollary~\ref{cor: motivic multiplicativity}]
    Let $\pi \colon M \to B$ be either the compactified Prym fibration as in Theorem~\ref{thm: compactified Prym is dualizable abelian fibration s.t. FV} or an LSV fibration. By Theorems~\ref{thm: Prym is dualizable} and \ref{thm: LSV is dualizable}, it is a dualizable abelian fibration satisfying (FV). Then (1) follows from \cite[Corollary~2.7]{MSY23}, and (2) follows from \cite[Theorem~2.6]{MSY23}. 
    
    The case where $\pi \colon M \to B$ is a twisted LSV fibration will be
    treated after the proof of Theorem~\ref{thm: equivalence for twisted derived category}.
\end{proof}

\subsection{Twisted Poincar\'e sheaves on twisted LSV fibrations}
For simplicity, from now on let $\pi \colon X \ra B$ be an LSV fibration associated to a general cubic 4-fold, and let $\CP$ denote the Poincar\'e sheaf on $X\times_B X$ constructed in Proposition~\ref{prop: Poincare sheaf on OG10}. Recall that $\pi$ has integral fibers. 

According to \cite[Theorem 2]{AF16}, there exists a canonically defined smooth commutative group scheme $\pi_P \colon P \ra B$ together with an action 
\[\mu \colon P \times_B X \ra X,\]
such that the smooth locus $X^{\mathrm{sm}}$ of $\pi$ is a $P$-torsor. 
The group scheme $P \ra B$ is called the \emph{relative Albanese fibration} of $X \ra B$. 

Sacc\`a proved that $P$ admits a smooth projective hyper-K\"ahler compactification $\ol{P}$, and the two Lagrangian fibrations $\ol{P} \ra B$ and $X \ra B$ are isomorphic in the \'etale topology (\cite[Theorem 6.13]{Sacca24_compactifying}). 
Since $X^{\mathrm{sm}}$ is
a $P$-torsor, it defines a class in $\Sha_{\et} \coloneqq H^1_{\et}(B,P)$, and the fibration $X \to B$ is the corresponding twist of $\ol{P} \to B$. 
We call $\Sha_{\et}$ the \emph{\'etale Shafarevich--Tate group} to distinguish it from the Shafarevich--Tate group $\Sha \coloneq H^1(B,\ul{\Aut}^0_{(X/B)})$ defined in the analytic topology in \cite{AbashevaRogov21}. 

Since $\pi$ has integral fibers, the group $\Sha_{\et}$ is torsion \cite[Theorem 6.29, Proposition 6.32]{Kim25}, and it is canonically isomorphic to the torsion subgroup of $\Sha$ \cite[Proposition 6.34]{Kim25}. 
For any~$t\in \Sha_{\et}$ we denote by $X^t$ the twist of $X$ by the class $t$. Then $X^t$ is a smooth projective hyper-K\"ahler variety \cite[Theorem A]{Abasheva24}, and the induced map $\pi^t \colon X^t \to B$ is a Lagrangian fibration \cite[Corollary~3.7]{AbashevaRogov21}. 
The relative Albanese fibration of $\pi^t$ is still $P \to B$. Moreover, the induced fibration $X^t \to B$ still has full support by Ng\^{o}'s Support Theorem; see, for example, \cite[Remark 9.6]{ACLS23}. 
\begin{exmp}
    Let $\pi^T \colon X^T \ra B$ be the twisted LSV constructed by Voisin~\cite{VoisinTwisted17}. 
    By construction, Lagrangian fibrations $\pi\colon X \to B$ and $\pi^T\colon X^T \to B$ have the same relative Albanese fibration. Therefore, $X^T \ra B$ is an \'etale Shafarevich--Tate twist of $X \ra B$. 
\end{exmp}

For an \'etale Shafarevich--Tate twist $X^t$, we can construct a twisted Poincar\'e sheaf that induces an equivalence of derived categories. The proof is parallel to \cite[Theorem~3.3]{Bottini24} and the main input is the theorem of the square Lemma~\ref{lem: thm of square}. 

Recall that if $\SX \to X$ is a $\mu_n$-gerbe, then we denote by $\Coh(\SX)_{(k)}$ (resp.~$D^b(\SX)_{(k)}$) the isotypic component of $\Coh(\SX)$ (resp.~$D^b(\SX)$) corresponding to the character $\lambda \mapsto \lambda^k$ of $\mu_n$.

\begin{thm}\label{thm: equivalence for twisted derived category}
    Let $t \in \Sha_{\et}$ be $n$-torsion. Then there exists a $\mu_n$-gerbe $\sigma\colon \SX \to X$ and a Cohen--Macaulay sheaf $\CP^t \in \Coh(X^t \times_B \SX)_{(1)}$ such that 
    \begin{enumerate}
        \item There exists an \'etale cover $\{U_i\}_{i\in I}$ of $B$ and a $U_i$-isomorphism $\rho_i\colon X_{U_i}^t \xrightarrow{\sim} X_{U_i}$ for each~$i$, such that over $U_i$ we have
        \[\CP^t_{U_i} \cong (\rho_i \times_{U_i} \sigma)^* \CP_{U_i} \ot p_2^*T_i,\]
        where $T_i$ is a line bundle on $\SX_{U_i}$ whose restriction to each fiber $\SX_b$ over $b\in U_i$ has trivial first Chern class in $H^2(\SX_b,\BQ)$;
        \item The Fourier--Mukai transform
    \[\Phi_{\CP^t}\colon D^b(X^t) \xrightarrow{\sim} D^b(\SX)_{(1)}\]
    is an equivalence.
    \end{enumerate}
\end{thm}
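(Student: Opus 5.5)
We follow the strategy of \cite[Theorem~3.3]{Bottini24}: build $\CP^t$ by gluing the local kernels $(\rho_i\times\sigma)^*\CP_{U_i}$, and absorb the failure of the cocycle condition into a $\mu_n$-gerbe on $X$. First choose an \'etale cover $\{U_i\}$ of $B$ on which $t$ is represented by a \v{C}ech cocycle $(s_{ij})$, $s_{ij}\in P(U_{ij})$. Also choose trivializations $\rho_i\colon X^t_{U_i}\xrightarrow{\sim} X_{U_i}$ such that $\rho_i\circ\rho_j^{-1}=\mu(s_{ij},-)$ on $U_{ij}$. Using $X\cong\ol{P}$ étale locally and Lemma~\ref{lem: autoduality for P} (descended as in Proposition~\ref{prop: Poincare sheaf on OG10}), each section $s_{ij}$ determines a line bundle $L_{ij}\coloneqq \CP|_{X_{U_{ij}}\times\{s_{ij}\}}$ on $X_{U_{ij}}$. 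This bundle is fiberwise algebraically trivial, so its first Chern class vanishes in $H^2(X_b,\BQ)$.

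The key input is the theorem of the square, Lemma~\ref{lem: thm of square}(1), transported to $X$. It gives
\[(\mu(s_{ij},-)\times\id)^*\CP\cong \CP\ot p_2^*L_{ij}.\]
On $U_{ij}$, the local kernels $\CP_i\coloneqq(\rho_i\times\id)^*\CP_{U_i}$ and $\CP_j$ therefore differ by $p_2^*L_{ij}$. On triple overlaps, $L_{ij}\ot L_{jk}\ot L_{ik}^{-1}$ is trivial by the cocycle condition together with additivity of $s\mapsto\CP|_{X\times\{s\}}$, which is again Lemma~\ref{lem: thm of square}. Choosing isomorphisms $\phi_{ijk}$ then produces a \v{C}ech $2$-cocycle with values in $\pi_*\CO_X^\times=\CO_B^\times$, because $\pi$ has integral fibers. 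The next step is to show that this $\BG_m$-class can be reduced to a $\mu_n$-class. Since $nt=0$, we can write $n\cdot(s_{ij})$ as a coboundary, and rescaling the $\phi_{ijk}$ by $n$-th roots (available étale locally) makes the obstruction $\mu_n$-valued. This gives a $\mu_n$-gerbe $\SX\to X$, pulled back from $B$, together with local line bundles $T_i$ on $\SX_{U_i}$ of weight $1$ that realize the twisted gluing. Gluing the $(\rho_i\times\sigma)^*\CP_{U_i}\ot p_2^*T_i$ then defines $\CP^t\in\Coh(X^t\times_B\SX)_{(1)}$, and property~(1) holds by construction. The $T_i$ are, up to $n$-th roots, pulled back from the fiberwise trivial classes $L_{ij}$, so their rational first Chern classes vanish on fibers. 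The Cohen--Macaulay property is local, so it follows from Proposition~\ref{prop: Poincare sheaf on OG10}.

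For~(2), we use that being an equivalence is étale local on $B$ for relative Fourier--Mukai kernels. Concretely, we check that the unit and counit of the adjunction with the twisted inverse kernel $\CP^{t,\vee}\ot\omega[g-1]$ are isomorphisms. Over each $U_i$, the functor $\Phi_{\CP^t}$ is the composition of three functors: $\rho_{i*}$, then $\Phi_{\CP}$, then tensoring with the weight-one line bundle $T_i$, which is an equivalence $D^b(X_{U_i})\simeq D^b(\SX_{U_i})_{(1)}$. By Theorem~\ref{thm: LSV is dualizable} (via Theorem~\ref{thm: equivalence}), each of these three is an equivalence. The convolution kernels are compatible with étale base change, so the relevant cones vanish locally and hence globally.

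The main obstacle is the cocycle bookkeeping in the second step. We must check that the $2$-cocycle built from the $\phi_{ijk}$ is exactly the obstruction and is $n$-torsion, so that a $\mu_n$-gerbe rather than a general $\BG_m$-gerbe suffices. For this we use that the isomorphisms in Lemma~\ref{lem: thm of square} are canonical once normalized along zero sections (Remark~\ref{rmk: uniqueness of CG}). We then compare the obstruction class with the image of $t$ under the map $H^1(B,P)\to H^2(B,\BG_m)$ induced by the Poincar\'e biextension.
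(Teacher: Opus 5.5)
\textbf{There is a genuine gap: the $\mu_n$-gerbe cannot be pulled back from $B$.} The bundles $L_{ij}$ live on $X_{U_{ij}}$ and are fiberwise \emph{nontrivial}: over $b$ they restrict to $\CP|_{X_b\times\{s_{ij}(b)\}}$, which is nontrivial whenever $s_{ij}(b)\neq 0$. So the twisting is not an $\CO_B^\times$-valued $2$-cocycle, and no gerbe from $B$ can absorb it. Here is why.

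\begin{itemize}
\item Suppose $\SX=\SX_B\times_B X$. After refining the cover, every weight-one line bundle on $\SX_{U_i}$ has the form $T_i^B\ot N_i$, with $T_i^B$ coming from the base gerbe and $N_i$ a line bundle on $X_{U_i}$.
\item Restrict the two local descriptions of $\CP^t$ over $U_{ij}$ to $\rho_i^{-1}(0)\times\SX$. This forces $T_i\cong T_j\ot L_{ij}^{\pm 1}$.
\item Hence, up to bundles pulled back from $U_{ij}$, $L_{ij}^{\pm1}\cong N_j\ot N_i^{-1}$. Then $t$ would be a coboundary of local sections of $\Pic_{X/B}$, i.e.\ it would come from a section of $\Pic_{X/B}/\Pic^0_{X/B}$. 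This is false in general.
\end{itemize}

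As a sanity check, $\Br((\BP^5)^\vee)=0$. So any gerbe pulled back from $B$ is essentially trivial, with $D^b(\SX)_{(1)}\simeq D^b(X)$, and could not encode the twist.

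It is true that automorphisms of line bundles on $X_{U_{ijk}}$ are scalars from $U_{ijk}$. But that only controls the ambiguity in the isomorphisms $L_{ij}\ot L_{jk}\cong L_{ik}$; it does not move the class to $B$. The same confusion shows up at the end of your proposal: the relevant map is $H^1(B,P)\to H^2(X,\BG_m)$, not $H^1(B,P)\to H^2(B,\BG_m)$.

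Two further steps fail as stated:
\begin{itemize}
\item Arbitrary choices of $\phi_{ijk}$ need not satisfy associativity.
\item Rescaling scalars by $n$-th roots does not give what a $\mu_n$-reduction needs, namely trivializations of $L_{ij}^{\ot n}$ compatible with the $\phi_{ijk}$.
\end{itemize}

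\textbf{What the paper does instead.} It keeps the $L_{ij}$ themselves as Hitchin data on $X$.
\begin{itemize}
\item It first changes the cocycle so that $t_{ij}^n=1$ in $P(U_{ij})$, by extracting $n$-th roots in $P$ \'etale locally.
\item It fixes $a_{ij}\colon (t_{ij}\times\id)^*\CP\xrightarrow{\sim}\CP\ot p_2^*L_{ij}$.
\item It defines $\mu_{ijk}$ canonically: compare $a_{ik}$ with $(a_{ij}\ot\id)\circ(t_{ij}\times\id)^*a_{jk}$ and restrict to the zero section. Associativity then follows from associativity of composing the $t_{ij}$.
\item Iterating $a_{ij}$ $n$ times gives $\tau_{ij}\colon L_{ij}^{\ot n}\xrightarrow{\sim}\CO$. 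It is compatible with $\mu_{ijk}$ because both are built from the same $a_{ij}$.
\end{itemize}
This gives a Hitchin presentation, on the cover $\{X_{U_i}\to X\}$, of a class $\alpha_t\in H^2(X,\mu_n)$. The gerbe $\SX$ is Lieblich's stack of explicit twisted torsors. The $T_i$ are its tautological weight-one bundles; they are $n$-torsion, which is why their rational first Chern classes vanish. The sheaves $(\rho_i\times\id)^*\CP_{U_i}$, glued via $a_{ij}^{-1}$, form a twisted sheaf, and this is $\CP^t$.

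Once $\CP^t$ is built this way, your argument for~(2) --- checking unit and counit \'etale locally --- is the paper's argument and goes through.
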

\begin{proof}
    Recall that the natural morphism $\CF^{\circ} \to B$ from the relative Fano variety of very good lines is smooth and surjective. Thus we can take an \'etale cover $U \to B$ such that the base change map $\CF^{\circ}_U \to U$ has a section $s$. Let $\widetilde{\CC}_U \to \CC_U$ be the pullback of $\widetilde{\CC}_{\CF^{\circ}} \to \CC_{\CF^{\circ}}$ along $s$. Then we have a unique isomorphism
    \[\phi_s \colon X_U \xrightarrow{\sim} \ol{\Prym}(\widetilde{\CC}_U/\CC_U),\]
    where $X_U$ is the base change of $X$ along $U \to B$. 

    \medskip
    \noindent {\bf Step 1.} We first show that $\CP$ \'etale locally satisfies the theorem of the square. 
    Let $V \to U$ be an \'etale morphism. We assume that $V$ is irreducible. Then the pullback of $\phi_s$ to $V$ identifies~$X_V$ and $ \ol{\Prym}(\widetilde{\CC}_V/\CC_V)$, which we still denote as $\phi_s$. 
    According to Proposition~\ref{prop: Poincare sheaf on OG10}, we have
    \begin{equation}\label{eq: identify G with Q}
        (\phi_s \times \phi_s)^* \ol{\CG}_V \cong \CP_V,
    \end{equation}
    where $\ol{\CG}_V$ is a normalized Poincar\'e sheaf on $ \ol{\Prym}(\widetilde{\CC}_V/\CC_V)$ and $\CP_V$ is the pullback of $\CP$ to~$X_V \times_V X_V$.  
    Since $X_V \to V$ has a section, the smooth locus of $X_V \to V$ is isomorphic to the base change $P_V\to V$ of the group scheme $P\to B$ \cite[Proposition~8.7]{AF16}. Hence we can regard $P_V$ as an open subvariety of $ X_V$ and the zero section of $P_V \to V$ can be regarded as a section of $X_V \to V$. 
    We have the commutative diagram
    \begin{equation*}
        \begin{tikzcd}
            P_V \times_V X_V \arrow[r, "\mu_V"] \arrow[d, "\phi_s|_{P_V} \times \phi_s"']
            & X_V \arrow[d, "\phi_s"]\\
            {\Prym}(\widetilde{\CC}_V/\CC_V) \times_V  \ol{\Prym}(\widetilde{\CC}_V/\CC_V) \arrow[r, "\mu"] 
            &  \ol{\Prym}(\widetilde{\CC}_V/\CC_V),
        \end{tikzcd}
    \end{equation*}
    where $\mu$ on the second line is the natural action of $\Prym(\widetilde{\CC}_V/\CC_V)$ on $\ol{\Prym}(\widetilde{\CC}_V/\CC_V)$. Then by Lemma~\ref{lem: thm of square} and \eqref{eq: identify G with Q} we have 
    \begin{equation}\label{eq: et local thm of square}
        (\mu_V \times \id_{X_V})^* \CP_V \cong p_{13}^*(\CP_V|_{P_V \times_V X_V}) \ot p_{23}^{*}(\CP_V)
    \end{equation}
    on $P_V \times_V X_V \times_V X_V$. 
    
    Consequently, if $u \colon V \to P_V$ is a section, then it defines an automorphism $a_u$ of $X_V$ over $V$ via the group action $\mu_V$. 
    Pulling back \eqref{eq: et local thm of square} along $u\times \id \times \id$ yields
    \[(a_u \times \id)^*\CP_V \cong  (u \times \id)^* \CP_V \ot \CP_V.\]
    Since $\CP$ is invertible on $P_V \times_V X_V$, $(u \times \id)^* \CP_V$ is a line bundle on $X_V$. 

    \medskip
    \noindent {\bf Step 2.} 
    We now prove property~(1). We apply Step~1 to the transition automorphisms that define the twist $X^t$ and use the resulting line bundles to construct the required $\mu_n$-gerbe. 
    
    For the given $t\in \Sha_{\et}$, we can take an \'etale cover $\{U_i\}_{i\in I}$ of $B$ such that it is a refinement of~$U \to B$ and that $t$ is represented by a \v{C}ech 1-cocycle $\{t_{ij} \in P(U_{ij})\}$. We can also assume that all the $U_i$ are irreducible. By abuse of notation, we also regard $t_{ij}$ as an element in~$\Aut(X_{U_{ij}})$. 
    
    Since $t$ is $n$-torsion, after refining the \'etale cover, there exist sections $c_i \in P(U_i)$ such that 
    \[t_{ij}^n = (c_i|_{U_{ij}} )(c_j^{-1}|_{U_{ij}}).\]
    Since the multiplication-by-$n$ map on $P\to B$ is \'etale locally surjective, after a further refinement of the cover, we may choose sections $r_i \in P(U_i)$ such
    that $r_i^n=c_i$. 
    Replacing $t_{ij}$ by~$t_{ij} (r_i^{-1}|_{U_{ij}}) (r_j|_{U_{ij}})$, we obtain a \v{C}ech $1$-cocycle representing the same class $t$ and satisfying
    \begin{equation}\label{eq: automorphisms are n-torsion}
        t_{ij}^n = 1 \in P(U_{ij}).
    \end{equation}   

    By the construction of $X^t$, there exist isomorphisms of schemes $\rho_i \colon X_{U_i}^t \xrightarrow{\sim} X_{U_i}$ such that over $U_{ij}$ we have
    \[t_{ij} = \rho_j\circ \rho_i^{-1} \colon X_{U_{ij}} \xrightarrow{\sim} X_{U_{ij}}.\]
    By Step~1, there exists an isomorphism
    \[a_{ij} \colon (t_{ij} \times \id)^* \CP_{U_{ij}} \xrightarrow{\sim} \CP_{U_{ij}} \ot p_2^* L_{ij}\]
    for some line bundle $L_{ij}$ on $X_{U_{ij}}$. It is clear that the line bundles $L_{ij}$ may be chosen so that $L_{ii} = \CO_{X_{U_{i}}}$ and $L_{ij} = L_{ji}^{-1}$. 
    From now on, we fix such choices of $\{L_{ij}\}$ and corresponding isomorphisms $\{a_{ij}\}$. 

    Consider the following diagram over $U_{ijk}$, where we omit all
    restriction symbols:
    \begin{equation*}
        \begin{tikzcd}
            (t_{ik} \times \id)^* \CP \arrow[rr, "a_{ik}"] \arrow[d, equal]
            & & \CP \ot p_2^{*}L_{ik} \\ 
            (t_{ij} \times \id)^* (t_{jk}\times \id)^* \CP \arrow[d, "(t_{ij} \times \id)^*a_{jk}"']
            &&\\
            (t_{ij} \times  \id)^* (\CP \ot p_2^* L_{jk}) \arrow[rr, "a_{ij}\ot \id"]
            && \CP \ot p_2^* L_{ij} \ot p_2^* L_{jk}
            \arrow[uu, dashed, "m_{ijk}"']  
        \end{tikzcd}
    \end{equation*}
    Here the left vertical equality is the canonical identification induced by $t_{ik}=t_{jk}\circ t_{ij}$, and $m_{ijk}$ is the unique isomorphism making the diagram commute. Restricting $m_{ijk}$ to $0\times_{U_{ijk}} X_{U_{ijk}}$, where $0$ is the zero section of $P_{U_{ijk}}$ regarded as a section of $X_{U_{ijk}}\to U_{ijk}$, we obtain an isomorphism 
    \[\mu_{ijk} \colon L_{ij} \ot L_{jk} \xrightarrow{\sim} L_{ik}.\]

    Since $t_{ij}^n = 1$, the iterated composition of $a_{ij}^{-1}$ induces an isomorphism
    \[\CP \ot p_2^*L_{ij}^{\ot n} \xrightarrow{(a_{ij}^{-1})^n} (t_{ij}^n \times \id)^* \CP = \CP.\]
    Restricting this isomorphism to $0\times_{U_{ij}}X_{U_{ij}}$ gives an isomorphism
    \[\tau_{ij}\colon L_{ij}^{\ot n} \xrightarrow{\sim} \CO_{X_{U_{ij}}}.\]

    The associativity condition for the isomorphisms of $\mu_{ijk}$ \eqref{eq: associativity of mu} follows from the equality
    \[t_{kl} \circ (t_{jk} \circ t_{ij}) = t_{il} =  (t_{kl} \circ t_{jk}) \circ t_{ij}. \]
    Take $M_i = \CO_{X_{U_i}}$. 
    Since the isomorphisms $\tau_{ij}$ and $\mu_{ijk}$ are both constructed from the same collection of isomorphisms $a_{ij}$, they are compatible, i.e.~the diagram \eqref{diag: tau and mu compatible} commutes. 
    Therefore, with respect to the \'etale cover $\{X_{U_i}\to X\}$, the data $(L_{ij}, M_i, \mu_{ijk}, \tau_{ij})$ give a Hitchin presentation of a cohomology class $\alpha_t \in H^2(X,\mu_n)$ by Appendix~\ref{sec: Hitchin presentation}. 
    
    Let $F_i \coloneq (\rho_i \times_{U_i} \id_{X_{U_i}})^*(\CP_{U_i})$. 
    Then $a_{ij}$ induces an isomorphism over $U_{ij}$
    \[\psi_{ij} \colon F_i \ot p_2^*L_{ij} \xrightarrow{(\rho_i \times \id)^*(a_{ij}^{-1})} (\rho_i \times \id)^* (t_{ij} \times \id)^* \CP_{U_{ij}}  = (\rho_j \times \id)^* \CP_{U_{ij}}  = F_j.\]
    By construction, the isomorphisms $\psi_{ij}$ satisfy the twisted cocycle condition \eqref{eq: cocycle for twisted sheaf}. 
    Therefore, by Sections~\ref{sec: stack of explicit twisted torsors} and \ref{sec: twisted sheaves on gerbes}, there exists a $\mu_n$-gerbe 
    \[\sigma\colon \SX \to X\]
    representing $\alpha_t$ and a weight-1 sheaf $\CP^t$ on $X^t \times_B \SX$ such that, over $U_i$, we have 
    \[\CP^t_{U_i} \cong (\id_{X^t_{U_i}} \times_{U_i} \sigma_{U_i})^* F_i \ot p_2^* T_i \cong (\rho_i\times_{U_i}\sigma_{U_i})^* \CP_{U_i} \ot p_2^*T_i,\]
    where $T_i$ is an $n$-torsion line bundle on $\SX_{U_i}$. This proves property~(1).

    \medskip
    \noindent {\bf Step 3.}
    To prove (2), set 
    \[(\CP^t)^{-1} \coloneq R\hom_{X^t\times_B \SX}(\CP^t, p_1^{*}\omega_{\pi^t})[5],\]
    where $\omega_{\pi^t}$ is the relative canonical bundle of $\pi^t \colon X^t \to B$. Then the Fourier--Mukai transform~$\Phi_{(\CP^t)^{-1}}$ is the right adjoint of $\Phi_{\CP^t}$. Over $U_i$ we have 
    \[(\CP^t)_{U_i}^{-1} \cong (\rho_i \times_{U_i} \sigma)^* \CP_{U_i}^{-1} \ot p_2^*T_i^{-1}.\]
    Notice that we have natural adjunction maps
    \[\id_{D^b(X^t)} \to \Phi_{(\CP^t)^{-1}} \circ \Phi_{\CP^t}, \quad   \Phi_{\CP^t} \circ \Phi_{(\CP^t)^{-1}} \to \id_{D^b(\SX)_{(1)}}.\]
    After pulling back to each $U_i$, these morphisms are isomorphisms by the untwisted local calculation. Since being an isomorphism can be checked \'etale locally, they are isomorphisms globally.
\end{proof}

We can now follow the same argument as in \cite[Section~4.4]{MSY23} to show that Corollary~\ref{cor: motivic multiplicativity} holds for $\pi^t \colon X^t \to B$. We briefly explain why their argument applies in our setting. 

We first verify that $\SX$ is a Deligne--Mumford quotient stack. Since $X$ is projective, Gabber's theorem \cite{Gabber03} identifies the Brauer group with the cohomological Brauer group
\[\Br(X) = H^2_{\et}(X, \BG_m)_{\mathrm{tor}}.\]
Therefore, by \cite[Theorem~3.6]{Edidin_etal_01} the $\mu_n$-gerbe $\SX$ over $X$ is a quotient stack. Moreover, by construction $\SX\to X$ is a $\mu_n$-gerbe and $\mu_n$ is finite \'etale in characteristic zero. By \cite[Tag~06QB]{stacks-project}, $\SX$ is a Deligne--Mumford stack.

Once we know that $\SX$ is a Deligne--Mumford quotient stack, Step~1 of \cite[Section~4.4]{MSY23} applies, using Theorem~\ref{thm: equivalence for twisted derived category}~(2) in place of \cite[Proposition~4.2]{MSY23}. Then Steps~2 and 3 apply identically. Finally, Step~4 carries over using Theorem~\ref{thm: equivalence for twisted derived category}~(1) in place of \cite[Corollary~4.4]{MSY23}.

\appendix
\section{Gerbes and twisted sheaves}
Let $X$ be a scheme over $\BC$. Let $A$ be a sheaf of commutative groups on $X$ with a character 
\[\chi\colon A \to \BG_m.\]\
In this section, we consider the cases where $A=\mu_n$ with the natural character $\chi\colon \mu_n \hookrightarrow \BG_m$, or $A=\BG_m$ with the character $\id \colon \BG_m \to \BG_m$. 
In these cases, equivalence classes of $A$-gerbes are classified by $H^2(X_{\et},A)$. 

Let $\alpha \in H^2(X,A)$ be represented by an \'etale cover $\CU = \{U_i\}_{i\in I}$ of $X$ and a \v{C}ech 2-cocycle 
\[a = (a_{ijk}) \in \Gamma(U_{ijk},A).\] 
An $\alpha$-twisted sheaf defined by C\u{a}ld\u{a}raru is a collection of $\CO_{U_i}$-modules $F_i$, together with transition functions $\psi_{ij} \colon F_i|_{U_{ij}} \xrightarrow{\sim} F_j|_{U_{ij}}$ that satisfy the cocycle condition up to the 2-cocycle~$a$; see \cite[Section~1.2]{Caldararu_thesis}. We refer to such an object as a C\u{a}ld\u{a}raru $\alpha$-twisted sheaf.

Fix an $A$-gerbe $\SX_{\alpha} \to X$ that represents the cohomology class $\alpha$. 
Following Lieblich, one may define an $\alpha$-twisted sheaf to be a sheaf of weight $1$ on $\SX_{\alpha}$, i.e., an $\CO_{\SX_{\alpha}}$-module on which the stabilizer acts through the standard character $\chi$. We call such sheaves Lieblich $\alpha$-twisted sheaves. Lieblich proved that the stack of Lieblich $\alpha$-twisted sheaves is naturally equivalent to the stack of C\u{a}ld\u{a}raru $\alpha$-twisted sheaves; see \cite[Proposition~2.1.3.11]{Lieblich_thesis}.

In the main body of the paper, the cohomology classes we encounter are not given directly by \v{C}ech 2-cocycles. Instead, they arise through Hitchin's equivalent description, where the cocycle is encoded by line bundles on double intersections together with additional data. We call such data a Hitchin presentation. 
In this appendix, we follow \cite[Section~2.1.3]{Lieblich_thesis} and give an explicit description, in Hitchin's notation, of the Lieblich $\alpha$-twisted sheaf corresponding to a given C\u{a}ld\u{a}raru $\alpha$-twisted sheaf.

\subsection{Hitchin presentations of gerbe classes}\label{sec: Hitchin presentation}
We first recall Hitchin's presentation of a class in $H^2(X,\BG_m)$; see \cite[Section~1.2]{Hitchin99_Lecture_Special_Lagrangian}, \cite[Section~1.1]{Caldararu_thesis}. 
Let $\CU = \{U_i\}_{i\in I}$ be an \'etale cover of $X$, and write 
\[U_{ij} = U_i \times_X U_j,\quad U_{ijk} = U_i \times_X U_j \times_X U_k.\] 
For simplicity, we omit restriction symbols when there is no confusion. 

A cohomology class in $H^2(X,\BG_m)$ can be represented, after choosing an \'etale cover, by the following data:
\begin{enumerate}
    \item line bundles $L_{ij}$ on $U_{ij}$ such that $L_{ii} = \CO_{U_i}$ and $L_{ij} = L_{ji}^{-1}$;
    \item isomorphisms $\mu_{ijk} \colon L_{ij}|_{U_{ijk}} \ot L_{jk}|_{U_{ijk}} \xrightarrow{\sim} L_{ik}|_{U_{ijk}}$ satisfying the associativity condition 
    \begin{equation}\label{eq: associativity of mu}
        \mu_{ikl}\circ(\mu_{ijk} \ot \id) = \mu_{ijl}\circ(\id \ot \mu_{jkl}).
    \end{equation}
\end{enumerate}

Similarly, a class in $H^2(X,\mu_n)$ can be represented by the following data:
\begin{enumerate}
    \item Hitchin data $(L_{ij}, \mu_{ijk})$ representing a class in $H^2(X,\BG_m)$;
    \item line bundles $\{M_i\}$ on $U_i$ together with isomorphisms 
    \[\tau_{ij}\colon M_i|_{U_{ij}} \ot L_{ij}^{\ot n} \xrightarrow{\sim} M_j|_{U_{ij}}\] 
    that are compatible with $\mu_{ijk}$ in the sense that the following diagram commutes over $U_{ijk}$:
\begin{equation}\label{diag: tau and mu compatible}
    \begin{tikzcd}
        M_i \ot L_{ij}^{\ot n} \ot L_{jk}^{\ot n} \arrow[d, "\tau_{ij} \ot \id"'] \arrow[r, "\id \ot \mu_{ijk}^{\ot n}"] & M_i \ot L_{ik}^{\ot n} \arrow[d, "\tau_{ik}"] \\
        M_j \ot L_{jk}^{\ot n}  \arrow[r, "\tau_{jk}"'] & M_k.   
    \end{tikzcd}
\end{equation}
\end{enumerate}

Let us explain why this is equivalent to the usual \v{C}ech cocycle representation.  
After a refinement of the cover $\CU$, we can assume that the line bundles $L_{ij}$ and $M_i$ are trivial. After choosing trivializations for $L_{ij}$ and $M_i$, the isomorphisms $\mu_{ijk}$ and $\tau_{ij}$ are given by sections 
\[c_{ijk} \in \Gamma(U_{ijk}, \BG_m), \quad  a_{ij} \in \Gamma(U_{ij},\BG_m).\] 
The associativity condition for $\mu_{ijk}$ implies that $c= (c_{ijk})$ is a 2-cocycle in~$ Z^2(\CU, \BG_m)$, while the compatibility of $\tau_{ij}$ with $\mu_{ijk}$ implies that $a_{ij} a_{jk} = a_{ik} c_{ijk}^n$. 
Refining the cover if necessary, we may take $b_{ij} \in \Gamma(U_{ij}, \BG_m)$ such that $b_{ij}^n=a_{ij}$. Then $c'_{ijk} = c_{ijk} (b_{ij} b_{jk} b_{ik}^{-1})^{-1}$ defines a \v{C}ech 2-cocycle $c' = (c'_{ijk})$ in $ Z^2(\CU, \mu_n)$, which represents an element in $H^2_{\et}(X,\mu_n)$. 
Changing the choices of the $n$-th roots $b_{ij}$ changes $c'$ by a \v{C}ech 2-coboundary in $ B^2(\CU, \mu_n)$ and thus represents the same cohomology class. One can also easily show that the resulting class is independent of the chosen refinement of the \'etale cover. 

In particular, if $\alpha \in H^2(X,\mu_n)$ is a class represented by the data $(L_{ij}, M_i, \mu_{ijk}, \tau_{ij})$, then its image under $i_*\colon H^2(X,\mu_n)\to H^2(X,\BG_m)$ is represented by the Hitchin data $(L_{ij}, \mu_{ijk})$, where $i\colon \mu_n \hookrightarrow \BG_m$ is the natural inclusion.  

\subsection{The stack of explicit twisted torsors}\label{sec: stack of explicit twisted torsors}
From now on, we fix a class $\alpha \in H^2(X,\mu_n)$ and a Hitchin presentation $(L_{ij},M_i,\mu_{ijk},\tau_{ij})$ of $\alpha$ with respect to the \'etale cover 
$\CU=\{U_i\}_{i\in I}$. We denote by $\beta$ the image of $\alpha$ in $H^2(X,\BG_m)$. 

We first recall the definition of C\u{a}ld\u{a}raru $\alpha$-twisted sheaf on $X$ with respect to the given Hitchin presentation. 

\begin{defn}(C\u{a}ld\u{a}raru)\label{def: Caldararu twisted sheaf}
    A C\u{a}ld\u{a}raru $\alpha$-twisted sheaf on $X$ is a collection of $\CO_{U_i}$-modules~$F_i$, together with isomorphisms 
    $\psi_{ij} \colon F_i|_{U_{ij}} \ot L_{ij} \to F_j|_{U_{ij}}$
    that are compatible with~$\mu_{ijk}$ in the sense that the following diagram commutes over $U_{ijk}$:
    \begin{equation}\label{eq: cocycle for twisted sheaf}
    \begin{tikzcd}
        F_i \ot L_{ij} \ot L_{jk} \arrow[d, "\psi_{ij} \ot \id"'] \arrow[r, "\id \ot \mu_{ijk}"] & F_i \ot L_{ik} \arrow[d, "\psi_{ik}"] \\
        F_j \ot L_{jk}  \arrow[r, "\psi_{jk}"'] & F_k.   
    \end{tikzcd}
\end{equation}
\end{defn}
The data $M_i$ and $\tau_{ij}$ do not appear in this definition. Therefore, a C\u{a}ld\u{a}raru $\alpha$-twisted sheaf is the same as a C\u{a}ld\u{a}raru $\beta$-twisted sheaf. 

We now follow \cite[Section~2.1.3]{Lieblich_thesis} to construct a $\mu_n$-gerbe with class $\alpha$. Let $\SX(\CU, \alpha)$ be the stack of explicit twisted right $\mu_n$-torsors over $X$ defined as follows.
For a morphism $S \to X$, we set 
\[S_i = S \times_X U_i, \quad S_{ij} = S \times_X U_{ij}, \quad S_{ijk} = S \times_X U_{ijk}.\] 
For simplicity, we use the same notation for the pullbacks of $L_{ij}$, $M_i$, $\mu_{ijk}$, and $\tau_{ij}$ to these schemes. 
An object of $\SX(\CU, \alpha)(S)$ consists of the following data: 
\begin{enumerate}
    \item a line bundle $N_i$ on $S_i$ for each $i\in I$;
    \item isomorphisms $\phi_{ij} \colon N_i \ot L_{ij}|_{S_{ij}} \to N_j|_{S_{ij}}$ such that the following diagram commutes over $S_{ijk}$:  
        \begin{equation*}
        \begin{tikzcd}
        N_i \ot L_{ij} \ot L_{jk} \arrow[d, "\phi_{ij} \ot \id"'] \arrow[r, "\id \ot \mu_{ijk}"] & N_i \ot L_{ik} \arrow[d, "\phi_{ik}"] \\
        N_j \ot L_{jk}  \arrow[r, "\phi_{jk}"'] & N_k;   
        \end{tikzcd}
        \end{equation*}
        \item $\theta_i \colon N_i^{\ot n} \xrightarrow{\sim} M_i|_{S_i}$ for all $i\in I$ such that the following diagram commutes over $S_{ij}$:
        \begin{equation*}
            \begin{tikzcd}
            N_i^{\ot n} \ot L_{ij}^{\ot n} \arrow[d, "\theta_i \ot \id"'] \arrow[r, "\phi_{ij}^{\ot n}"] & N_j^{\ot n}  \arrow[d, "\theta_j"] \\
            M_i \ot L_{ij}^{\ot n}  \arrow[r, "\tau_{ij}"'] & M_j.   
            \end{tikzcd}
        \end{equation*}
\end{enumerate}
A morphism in $\SX(\CU, \alpha)(S)$ is a collection of isomorphisms $N_i \cong N_i'$ compatible with $\phi_{ij}$ and ~$\theta_i$.

By \cite[Proposition~2.1.3.4]{Lieblich_thesis}, $\SX(\CU,\alpha)$ is a $\mu_n$-gerbe over $X$ with class $\alpha$. 
For simplicity, we denote $\SX_{\alpha} = \SX(\CU,\alpha)$.  
Let $\sigma\colon \SX_{\alpha} \to X$ be the natural map. We also set $\SX_{\alpha,i} = \SX_{\alpha} \times_X U_i$ and $\sigma_i \colon \SX_{\alpha,i} \to U_i$. 

\subsection{Twisted sheaves on gerbes}\label{sec: twisted sheaves on gerbes}
We now specialize to the case where each $M_i$ is trivial. After choosing a trivialization of each $M_i$, the Hitchin presentation of $\alpha$ gives isomorphisms
\[\tau_{ij} \colon L_{ij}^{\ot n} \xrightarrow{\sim} \CO_{U_{ij}}\]
compatible with $\mu_{ijk}$. 

For each $i \in I$, there is an object $s_{i} \in \SX_{\alpha}(U_{i})$ defined by the twisted $\mu_n$-torsor 
\[(L_{ij}, \mu_{ijk},\tau_{ij})_{j,k \in I}\]
over $U_{i}$. 
It gives a section of $\SX_{\alpha, i} \to U_{i}$, and hence identifies $\SX_{\alpha, i}$ with the trivial $\mu_n$-gerbe over $U_i$. 
Using these sections, we define a tautological right $\mu_n$-torsor $P_i$ on $\SX_{\alpha,i}$ as follows. For a scheme $S$ over $U_i$ and an object $\xi \in \SX_{\alpha,i}(S)$, we set 
\[P_i(\xi) = \ul{\Isom}(\xi, s_i|_S).\]
Let $T_i = P_i \times^{\mu_n} \BA^1$ be the associated line bundle on $\SX_{\alpha,i}$. Then $T_i$ is an $n$-torsion line bundle on $\SX_{\alpha,i}$, equipped with a trivialization of $T_i^{\ot n}$, and has weight 1. 

From now on we identify the section $s_i$ with the corresponding right twisted $\mu_n$-torsor over~$U_i$. 
For $i,j \in I$, we denote by $s_i|_{U_{ij}} \in \SX_{\alpha}(U_{ij})$ the pullback of $s_i$ to $U_{ij}$. Then $s_i|_{U_{ij}} \ot L_{ji}$ is naturally an object of $\SX_{\alpha}(U_{ij})$. Also, we have a morphism 
\[s_i|_{U_{ij}} \ot L_{ji} \to s_{j}|_{U_{ij}}\]
in $\SX_{\alpha}(U_{ij})$ given by a collection of isomorphisms
\[\mu_{jik}\colon L_{ik}|_{U_{ijk}} \ot L_{ji}|_{U_{ijk}} \xrightarrow{\sim} L_{jk}|_{U_{ijk}}, \quad \forall k\in I,\]
which induces an isomorphism of weight-1 sheaves on $\SX_{\alpha,ij}$:
\[\alpha_{ij} \colon T_i|_{\SX_{\alpha,ij}} \xrightarrow{\sim}  \sigma_{ij}^{*}L_{ij} \ot T_j|_{\SX_{\alpha,ij}}.\]
By construction we have 
\begin{equation}\label{eq: cocycle for tauto l.b.}
    \alpha_{ik} = (\mu_{ijk} \ot \id_{T_k}) \circ (\id_{L_{ij}} \ot \alpha_{jk}) \circ \alpha_{ij}.
\end{equation}

By \cite[Proposition~2.1.3.11]{Lieblich_thesis}, the stack of Lieblich $\alpha$-twisted sheaves is naturally equivalent to the stack of C\u{a}ld\u{a}raru $\alpha$-twisted sheaves. We now describe this equivalence explicitly in the present notation. 

Let $(F_i, \psi_{ij})$ be a C\u{a}ld\u{a}raru $\alpha$-twisted sheaf. 
Consider the weight-1 sheaf $\sigma_i^{*}F_i \ot T_i$ on $\SX_{\alpha,i}$ and the transition function over $\SX_{\alpha,ij}$:
\[\Phi_{ij} = (\sigma_{ij}^{*}\psi_{ij} \ot \id_{T_j}) \circ(\id_{\sigma_i^{*}F_i} \ot \alpha_{ij}) \colon \sigma_i^{*}F_i \ot T_i \xrightarrow{\sim} \sigma_j^{*}F_j \ot T_j.\]
Set $\widetilde{\psi}_{ij}=\sigma_{ij}^*\psi_{ij}$ and $\widetilde{\mu}_{ijk}=\sigma_{ijk}^*\mu_{ijk}$.
By \eqref{eq: cocycle for twisted sheaf} and \eqref{eq: cocycle for tauto l.b.}, over the triple intersection $\SX_{\alpha,ijk}$ we have
\begin{align*}
    \Phi_{jk} \circ \Phi_{ij} 
    & = \Big[ (\widetilde{\psi}_{jk} \otimes \id_{T_k}) \circ (\widetilde{\psi}_{ij} \otimes \id_{\sigma^{*}L_{jk} \otimes T_k}) \Big] \circ \Big[ (\id_{\sigma^{*}F_i \otimes \sigma^{*}L_{ij}} \otimes \alpha_{jk}) \circ (\id_{\sigma^{*}F_i} \otimes \alpha_{ij}) \Big]\\
    & = \Big[(\widetilde{\psi}_{ik} \ot \id_{T_k}) \circ (\id_{\sigma^{*}F_i} \ot \widetilde{\mu}_{ijk} \ot \id_{T_k}) \Big] \circ \Big[ (\id_{\sigma^{*}F_i} \ot \widetilde{\mu}_{ijk}^{-1} \ot \id_{T_k}) \circ (\id_{\sigma^* F_i} \ot \alpha_{ik}) \Big]\\
    & = \Phi_{ik}.
\end{align*}
Therefore the sheaves $\sigma_i^{*}F_i \ot T_i$ glue to a weight-1 sheaf $\CF_{\alpha}$ on $\SX_{\alpha}$. This is the Lieblich $\alpha$-twisted sheaf corresponding to the C\u{a}ld\u{a}raru $\alpha$-twisted sheaf $(F_i,\psi_{ij})$.

\raggedright
\bibliography{ref}
\bibliographystyle{acm}
\end{document}